\def\@tocline#1#2#3#4#5#6#7{\relax
  \ifnum #1>\c@tocdepth 
  \else
    \par \addpenalty\@secpenalty\addvspace{#2}%
    \begingroup \hyphenpenalty\@M
    \@ifempty{#4}{%
      \@tempdima\csname r@tocindent\number#1\endcsname\relax
    }{%
      \@tempdima#4\relax
    }%
    \parindent\z@ \leftskip#3\relax \advance\leftskip\@tempdima\relax
    \rightskip\@pnumwidth plus4em \parfillskip-\@pnumwidth
    #5\leavevmode\hskip-\@tempdima
      \ifcase #1
       \or\or \hskip 1em \or \hskip 2em \else \hskip 3em \fi%
      #6\nobreak\relax
      \dotfill
      \hbox to\@pnumwidth{\@tocpagenum{#7}}
    \par
    \nobreak
    \endgroup
  \fi}
\newtheorem{theorem}{Theorem}[section]
\newtheorem{lemma}[theorem]{Lemma}
\newtheorem{proposition}[theorem]{Proposition}
\theoremstyle{definition}
\newtheorem{definition}[theorem]{Definition}
\newcommand{\N}{{\mathbb N}}
\newcommand{\R}{{\mathbb R}}
\newcommand{\BB}{\mathbb{B}}
\newcommand{\tr}{\mathrm{tr}^*}
\newcommand{\beqn}{\begin{eqnarray}}
\newcommand{\eeqn}{\end{eqnarray}}   
\newcommand{\beq}{\begin{eqnarray*}}
\newcommand{\eeq}{\end{eqnarray*}}
\newcommand{\be}{\small\begin{equation}}
\newcommand{\bel}[1]{\small\begin{equation}\label{#1}}
\newcommand{\ee}{\end{equation}\normalsize}
\newcommand{\BA}{\begin{array}}
\newcommand{\EA}{\end{array}}
\newcommand{\BAN}{\renewcommand{\arraystretch}{1.2}
\setlength{\arraycolsep}{2pt}\begin{array}}
\newcommand{\BAV}[2]{\renewcommand{\arraystretch}{#1}
\setlength{\arraycolsep}{#2}\begin{array}}
\newcommand{\BSA}{\begin{subarray}}
\newcommand{\ESA}{\end{subarray}}
\newcommand{\BAL}{\begin{aligned}}
\newcommand{\EAL}{\end{aligned}}
\newcommand{\forevery}{\quad \forall}
\newcommand{\abs}[1]{\left |#1\right |}
\newcommand{\norm}[1]{\left \|#1\right \|}
\newcommand{\supp}{\mathrm{supp}\,}
\newcommand{\dist}{\mathrm{dist}\,}
\newcommand{\sign}{\mathrm{sign}}
\newcommand{\diam}{\mathrm{diam}\,}
\newcommand{\prt}{\partial}
\newcommand{\sbs}{\subset}
\def\dist{\mathrm{dist}}
\def \dd {\mathrm{d}}
\def\ga{\alpha}            
       \def\gd{\delta}      \def\ge{\epsilon}
\def\gf{\phi}           
            \def\gl{\lambda}
\def\gm{\mu}        \def\gn{\nu}         
\def\gs{\sigma}       \def\gt{\tau}
      \def\gw{\omega}
     \def\Gd{\Delta}      
\def\Gl{\Lambda}          
\def\Gw{\Omega}              
\def\CS{{\mathcal S}}      
\def\CA{{\mathcal A}}   \def\CB{{\mathcal B}}   \def\CC{{\mathcal C}}
\def\CD{{\mathcal D}}      \def\CF{{\mathcal F}}
      \def\CL{{\mathcal L}}
   \def\BBB {\mathbb B}    
\def\BBG {\mathbb G}       
   \def\BBK {\mathbb K}    
   \def\BBN {\mathbb N}    
\def\BBP {\mathbb P}   \def\BBR {\mathbb R}    
\def\BBT {\mathbb T}       
\def\BBW {\mathbb W}
\def\GTM {\mathfrak M}
\def\tr{\mathrm{tr}}
\newcommand{\ei}{{\phi_{\xm }}}
\newcommand{\xa}{\alpha}
\newcommand{\xb}{\beta}
\newcommand{\xg}{\gamma}
\newcommand{\xG}{\Gamma}
\newcommand{\xd}{\delta}
\newcommand{\xD}{\Delta}
\newcommand{\xe}{\varepsilon}
\newcommand{\xz}{\zeta}
\newcommand{\xl}{\lambda}
\newcommand{\xm}{\mu}
\newcommand{\xn}{\nu}
\newcommand{\xr}{\rho}
\newcommand{\xS}{\Sigma}
\newcommand{\xf}{\phi}
\newcommand{\xF}{\Phi}
\newcommand{\xo}{\omega}
\newcommand{\xO}{\Omega}
\newcommand{\myfrac}[2]{{\displaystyle \frac{#1}{#2} }}
\newcommand{\myint}[2]{{\displaystyle \int_{#1}^{#2}}}
\def \dd {\mathrm{d}}
\def \dS {\mathrm{d}S}
\newcommand{\ap}{{\xa_{\scaleto{+}{3pt}}}}
\newcommand{\am}{{\xa_{\scaleto{-}{3pt}}}}
\newcommand\1{{\ensuremath {\mathds 1} }}
\def\bal#1\eal{\small\begin{align*}#1\end{align*}\normalsize}
\def\ba#1\ea{\small\begin{align}#1\end{align}\normalsize}
\numberwithin{equation}{section}
\begin{document}

\title[Semilinear elliptic Schr\"odinger equations]{Semilinear elliptic Schr\"odinger equations with singular potentials and absorption terms}
\author{Konstantinos T. Gkikas}
\address{Konstantinos T. Gkikas, Department of Mathematics, National and Kapodistrian University of Athens, 15784 Athens, Greece}
\email{kugkikas@math.uoa.gr}

\author[P.T. Nguyen]{Phuoc-Tai Nguyen}
\address{Phuoc-Tai Nguyen, Department of Mathematics and Statistics, Masaryk University, Brno, Czech Republic}
\email{ptnguyen@math.muni.cz}

\date{\today}

\begin{abstract}
Let $\Omega \subset \mathbb{R}^N$ ($N \geq 3$) be a $C^2$ bounded domain and  $\Sigma \subset \Omega$ be a compact, $C^2$ submanifold without boundary, of dimension $k$ with $0\leq k < N-2$. Put $L_\mu = \Delta + \mu d_\Sigma^{-2}$ in $\Omega \setminus \Sigma$, where $d_\Sigma(x) = \dist(x,\Sigma)$ and $\mu$ is a parameter. We investigate the boundary value problem (P) $-L_\mu u + g(u) = \tau$ in $\Omega \setminus \Sigma$ with condition $u=\nu$ on $\partial \Omega \cup \Sigma$, where $g: \mathbb{R} \to \mathbb{R}$ is a nondecreasing, continuous function, and $\tau$ and $\nu$ are positive measures. The complex interplay between the competing effects of the inverse-square potential $d_\Sigma^{-2}$, the absorption term $g(u)$ and the measure data $\tau,\nu$ discloses different scenarios in which problem (P) is solvable. We provide sharp conditions on the growth of $g$ for the existence of solutions. When $g$ is a power function, namely $g(u)=|u|^{p-1}u$ with $p>1$, we show that problem (P) admits several critical exponents in the sense that singular solutions exist in the subcritical cases (i.e. $p$ is smaller than a critical exponent) and singularities are removable in the supercritical cases (i.e. $p$ is greater than a critical exponent). Finally, we establish various necessary and sufficient conditions expressed in terms of appropriate capacities for the solvability of (P).
\medskip

\noindent\textit{Key words: Hardy potentials, critical exponents, absorption term, capacities, good measures.}

\medskip

\noindent\textit{Mathematics Subject Classification: 35J10, 35J25, 35J61, 35J75}

\end{abstract}

\maketitle
\tableofcontents
\section{Introduction}
\subsection{Background and aim}
Let $\Omega \subset \R^N$ ($N \geq 3$) be a $C^2$ bounded domain and $\Sigma\subset\xO$ be a compact, $C^2$ submanifold in $\R^N$ without boundary, of dimension $k$ with $0 \leq k < N-2$. Denote $d(x)=\dist(x,\partial\xO)$ and $d_\Sigma(x) = \dist(x,\Sigma)$. For $\mu \in \R$, let $L_\mu$ be the Schr\"odinger operator with the inverse-square potential $d_\Sigma^{-2}$
\bal L_\mu = L_\mu^{\Omega,\Sigma}:=\Delta + \frac{\mu}{d_\Sigma^2}
\eal
in $\Omega \setminus \Sigma$. 
The study of $L_\mu$ is closely connected to the optimal Hardy constant $\CC_{\Omega,\Sigma}$ and the fundamental exponent $H$ given below
\be \label{H} \CC_{\Omega,\Sigma}:=\inf_{\varphi \in H^1_0(\Omega)}\frac{\int_\Omega |\nabla \varphi|^2\dd x}{\int_\Omega d_\Sigma^{-2}\varphi^2 \dd x} \quad \text{and} \quad H:=\frac{N-k-2}{2}.
\ee

Obviously, $H \leq \frac{N-2}{2}$ and $H=\frac{N-2}{2}$ if and only if $\Sigma$ is a singleton. It is well known that ${\mathcal C}_{\xO,\Sigma}\in (0,H^2]$ (see D\'avila and Dupaigne \cite{DD1, DD2} and Barbatis, Filippas and Tertikas \cite{BFT}) and  ${\mathcal C}_{\Omega,\{0\}}=\left(\frac{N-2}{2} \right)^2$. Moreover, ${\mathcal C}_{\Omega,\Sigma}=H^2$ provided that $-\Delta d_\Sigma^{2+k-N} \geq 0$ in the sense of distributions in $\Omega \setminus \Sigma$ or if $\Omega=\Sigma_\beta$ with $\beta$  small enough (see \cite{BFT}), where
\bal
\Sigma_\beta :=\{ x \in \R^N \setminus \Sigma: d_\Sigma(x) < \beta \}.
\eal
For $\mu \leq H^2$, let $\am$ and $\ap$ be the roots of the algebraic equation $\ga^2 - 2H\ga + \mu=0$, i.e.
\be \label{apm}
\am:=H-\sqrt{H^2-\mu}, \quad \ap:=H+\sqrt{H^2-\mu}.
\ee
We see that $\am\leq H\leq\ap \leq 2H$, and $\am \geq 0$ if and only if $\mu \geq 0$.

By \cite[Lemma 2.4 and Theorem 2.6]{DD1} and \cite[page 337, Lemma 7, Theorem 5]{DD2},
\bal\lambda_\mu:=\inf\left\{\int_{\Gw}\left(|\nabla u|^2-\frac{\xm }{d_\Sigma^2}u^2\right)\dd x: u \in C_c^1(\Omega), \int_{\Gw} u^2 \dd x=1\right\}>-\infty.
\eal
Note that $\lambda_\mu$ is the first eigenvalue associated to $-L_\mu$ and its corresponding eigenfunction $\phi_\mu$, with normalization $\| \phi_\mu \|_{L^2(\Omega)}=1$, satisfies two-sided estimate $\phi_\mu \approx d\,d_\Sigma^{-\am}$ in $\Omega \setminus \Sigma$ (see subsection \ref{subsec:eigen} for more detail). The sign of $\lambda_\mu$ plays an important role in the study of $L_\mu$. If $\mu<\CC_{\Omega,\Sigma}$ then $\lambda_\xm>0$. However, in general, this does not hold true.  Under the assumption $\lambda_\mu>0$, the authors of the present paper obtained the existence and sharp two-sided estimates of the Green function $G_\mu$ and Martin kernel $K_\mu$ associated to $-L_\mu$ (see \cite{GkiNg_linear}) which are crucial tools in the study of the boundary value problem with measures data for linear equations involving $L_\mu$
\be \label{eq:linear} \left\{ \begin{aligned}
-L_\mu u &= \tau \quad &&\text{in } \Omega \setminus \Sigma, \\
\tr(u) &= \nu, &&\,
\end{aligned} \right.
\ee
where $\tau \in \GTM(\Omega;\phi_\mu)$ (i.e. $\int_{\Omega \setminus \Sigma}\phi_\mu \dd |\tau|<\infty$) and $\nu \in \GTM(\partial \Omega \cup \Sigma)$ (i.e. $\int_{\partial \Omega \cup \Sigma}\dd |\nu| < \infty$).

In \eqref{eq:linear}, $\tr(u)$ denotes the \textit{boundary trace} of $u$ on $\partial \Omega \cup \Sigma$ which was defined in \cite{GkiNg_linear} in terms of harmonic measures of $-L_\mu$  (see Subsection \ref{subsec:boundarytrace}). A highlighting property of this notion is $\tr(\BBG_\mu[\tau]) = 0$ for any $\tau \in \GTM(\Omega \setminus \Sigma;\phi_\mu)$ and  $\tr(\BBK_\mu[\tau]) = \nu$ for any $\nu \in \GTM(\partial \Omega \cup \Sigma)$, where
\bal
\BBG_\mu[\tau](x) &= \int_{\Omega \setminus \Sigma}G_\mu(x,y)\, \dd\tau(y), \quad \tau \in \GTM(\Omega \setminus \Sigma;\phi_\mu), \\
\BBK_\mu[\nu](x) &= \int_{\Omega \setminus \Sigma}K_\mu(x,y)\, \dd\nu(y), \quad \nu \in \GTM(\partial \Omega \cup \Sigma).
\eal
Note that for a positive measure $\tau$, $\BBG_\mu[\tau]$ is finite in $\Omega \setminus \Sigma$ if and only if $\tau \in \GTM(\Omega \setminus \Sigma; \ei)$.

It was shown in \cite{GkiNg_linear} that $\BBG_\mu[\tau]$ is the unique solution of \eqref{eq:linear} with $\nu=0$, and $\BBK_\mu[\nu]$ is the unique solution of \eqref{eq:linear} with $\tau=0$. As a consequence of the linearity, the unique solution to \eqref{eq:linear} is of the form
\bal
u = \BBG_\mu[\tau] + \BBK_\mu[\nu] \quad \text{a.e. in } \Omega \setminus \Sigma.
\eal
Further results for linear problem \eqref{eq:linear} are presented in Subsection \ref{subsec:linear}.

Semilinear equations driven by $L_\mu$ with an absorption term have been treated in some particular cases of $\Sigma$. In the free-potential case, namely $\mu=0$ and $\Sigma=\emptyset$, the study of the boundary value problem for such equations in measure frameworks has been a research objective of  numerous mathematicians, and greatly pushed forward by a series of celebrated papers of Marcus and V\'eron (see the excellent monograph \cite{MVbook} and references therein). The singleton case, namely $\Sigma =\{0\} \subset \Omega$, has been investigated in different directions, including the work of Guerch and V\'eron \cite{GuV} on the local properties of solutions to the stationary Schr\"odinger equations in $\R^N$,  interesting results by C\^irstea \cite{Cir} on isolated singular solutions, and recent study of Chen and V\'eron \cite{CheVer} on the existence and stability of solutions with zero boundary condition.

In the present paper, we study the boundary value problem for semilinear equation with an absorption term of the form
\be\label{NLin} \left\{ \BAL
- L_\gm u+g(u)&=\tau\qquad \text{in }\;\Gw\setminus \Sigma,\\
\tr(u)&=\nu,
\EAL \right. \ee
where $\Sigma$ is of dimension $0 \leq k < N-2$, $g: \R\to \R$ is a nondecreasing continuous function such that $g(0)=0$, $\tau\in\GTM(\Omega \setminus \Sigma;\ei)$ and $\nu \in \GTM(\partial\Omega \cup \Sigma)$. A typical model of the absorption term to keep in mind is $g(t)=|t|^{p-1}t$ with $p>1$.

Problem \eqref{NLin} has the following features.
\begin{itemize}
\item The potential $d_\Sigma^{-2}$ blows up on $\Sigma$ and is bounded on $\partial \Omega$. Hence, considering $\partial \Omega \cup \Sigma$ simply as the `whole boundary' does not provide profound enough understanding of the effect of the potential. Therefore, we have to take care of $\partial \Omega$ and $\Sigma$ separably.
\item The dimension of  $\Sigma$, the value of the parameter $\mu$ and the concentration of the measures $\nu,\tau$ give rise to several critical exponents.
\item Heuristically, in measure framework, the growth of $g$ plays an important role in the solvability of \eqref{NLin}.
\end{itemize}
The complex interplay between the above features yields substantial difficulties and reveals new aspects of the study of \eqref{NLin}. We aim to perform a profound analysis of the interplay to establish the existence, nonexistence, uniqueness and a prior estimates for solutions to \eqref{NLin}.


\subsection{Main results}
Let us assume throughout the paper that
\be \label{assump1}
\mu \leq H^2 \quad \text{and} \quad \lambda_\mu > 0.	
\ee
Under the above assumption, a theory for linear problem \eqref{eq:linear} was developed (see Subsection \ref{subsec:linear}), which forms a basis for the study of \eqref{NLin}.

Before stating our main results, we clarify the sense of solutions we will deal with in the paper.
\begin{definition} \label{def:weak-sol}
	 A function $u$ is a \textit{weak solution} of \eqref{NLin} if $u\in L^1(\Omega;\ei)$, $g(u) \in L^1(\Omega;\ei)$  and
	\be \label{nlinearweakform}
	- \int_{\xO}u L_{\xm }\zeta \, \dd x+ \int_{\xO}g(u)\zeta \, \dd x=\int_{\xO \setminus \Sigma} \zeta \, \dd \tau - \int_{\Gw} \mathbb{K}_{\xm}[\xn]L_{\xm }\zeta \, \dd x
	\qquad\forall \zeta \in\mathbf{X}_\xm(\xO\setminus \Sigma),
	\ee
where the \textit{space of test function} ${\bf X}_\mu(\Gw\setminus \Sigma)$ is defined by
\ba \label{Xmu} {\bf X}_\mu(\Gw\setminus \Sigma):=\{ \zeta \in H_{loc}^1(\Omega \setminus \Sigma): \phi_\mu^{-1} \zeta \in H^1(\Gw;\phi_\mu^{2}), \, \phi_\mu^{-1}L_\mu \zeta \in L^\infty(\Omega)  \}.
\ea
\end{definition}

The space ${\bf X}_\mu(\Omega \setminus \Sigma)$ was introduced in \cite{GkiNg_linear} to study linear problem \eqref{eq:linear}. From \eqref{Xmu}, it is easy to see that the first term on the left-hand side of \eqref{nlinearweakform} is finite. By \cite[Lemma 7.3]{GkiNg_linear}, for any $\zeta \in {\bf X}_\mu(\Omega \setminus \Sigma)$, we have $|\zeta| \lesssim \phi_\mu$, hence the second term on the left-hand side  and the first term on the right-hand side of \eqref{nlinearweakform} are finite. Finally, since $\BBK_{\mu}[\nu] \in L^1(\Omega;\ei)$, the second term on the right-hand side of \eqref{nlinearweakform} is also finite. 	

By Theorem \ref{linear-problem}, $u$ is a weak solution of \eqref{NLin} if and only if
\bal u + \BBG_\mu[g(u)] = \BBG_\mu[\tau] + \BBK_\mu[\nu] \quad \text{in } \Omega \setminus \Sigma.
\eal

 \begin{definition} \label{goodmeasure}
A couple $(\tau,\nu) \in \GTM(\Omega \setminus \Sigma; \ei) \times \GTM(\partial \Omega \cup \Sigma)$ is called $g$-\textit{good couple} if problem \eqref{NLin} has a solution. When $\tau =0$, a measure $\nu \in \GTM(\partial \Omega \cup \Sigma)$ is called $g$-\textit{good measure} if problem \eqref{NLin} has a solution. When there is no confusion, we simply say `a good couple' (resp. `a good measure') instead of `a $g$-good couple' (resp. `a $g$-good measure').	
\end{definition}

Note that if $(\tau,\nu)$ is a good couple then the solution is unique.

Our first result provides a sufficient condition for a couple of measures to be good.
\begin{theorem} \label{existGK}
Assume $\mu \leq H^2$ and $g$ satisfies
	\bel{g(GK)} g(-\BBG_\mu[\tau^-] - \BBK_{\mu}[\nu^-]),  g(\BBG_\mu[\tau^+] + \BBK_{\mu}[\nu^+]) \in L^1(\Omega;\ei).
	\ee
Then any couple $(\tau,\nu) \in \GTM(\Omega \setminus \Sigma; \ei) \times \GTM(\partial \Omega \cup \Sigma)$ is a $g$-good couple. Moreover, the solution $u$ satisfies
	\be \label{U12}
	-\BBG_\mu[\tau^-] - \BBK_{\mu}[\nu^-] \leq u \leq \BBG_\mu[\tau^+] + \BBK_{\mu}[\nu^+] \quad \text{in } \Omega \setminus \Sigma.
	\ee
\end{theorem}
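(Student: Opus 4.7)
The plan is to combine a sub/super-solution scheme with a truncation of $g$ and Schauder's fixed-point theorem applied to each truncated problem, then pass to the limit. The candidate super- and sub-solutions are
\[
\usup := \BBG_\mu[\tau^+] + \BBK_\mu[\nu^+], \qquad \usub := -\BBG_\mu[\tau^-] - \BBK_\mu[\nu^-].
\]
By the linear theory summarized in Subsection \ref{subsec:linear}, these satisfy $-L_\mu \usup = \tau^+$, $\tr(\usup) = \nu^+$ and $-L_\mu \usub = -\tau^-$, $\tr(\usub) = -\nu^-$. Since $\usup \ge 0 \ge \usub$ and $g$ is nondecreasing with $g(0) = 0$, I have $g(\usup) \ge 0 \ge g(\usub)$; together with hypothesis \eqref{g(GK)} this yields $g(\usup), g(\usub) \in L^1(\Omega;\ei)$, which is precisely the integrability needed to treat $\usub, \usup$ as admissible weak sub/super-solutions of \eqref{NLin}.

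For each $n \in \BBN$ I introduce the bounded truncation $g_n(t) := \max(-n, \min(g(t), n))$ and seek $u_n$ as a fixed point of the map $T_n(v) := \BBG_\mu[\tau] + \BBK_\mu[\nu] - \BBG_\mu[g_n(v)]$ acting on $L^1(\Omega;\ei)$. The continuity of $g_n$ together with the linear estimates from \cite{GkiNg_linear} yield continuity of $T_n$, while the smoothing property of $\BBG_\mu$ combined with the uniform $L^\infty$ bound on $g_n$ supplies the compactness required to apply Schauder's theorem on a sufficiently large closed ball of $L^1(\Omega;\ei)$. The resulting $u_n$ satisfies $-L_\mu u_n + g_n(u_n) = \tau$ with $\tr(u_n) = \nu$ in the weak sense of Definition \ref{def:weak-sol}.

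I would then establish the two-sided bound $\usub \le u_n \le \usup$ via a Kato-type inequality adapted to $-L_\mu$. Subtracting the equation for $\usup$ from that for $u_n$ gives $-L_\mu(u_n - \usup) = -\tau^- - g_n(u_n)$; on the set $\{u_n > \usup\} \subset \{u_n > 0\}$ the right-hand side is nonpositive since $g_n(u_n) \ge g_n(0) = 0$ and $\tau^- \ge 0$, so the weak maximum principle (available because $\lambda_\mu > 0$) together with the trace inequality $\tr(u_n - \usup) = -\nu^- \le 0$ forces $(u_n - \usup)_+ \equiv 0$; the analogous argument for $\usub$ yields the lower bound. In particular $|g_n(u_n)| \le \max(|g(\usub)|, g(\usup))$ pointwise and uniformly in $n$, and the right-hand side lies in $L^1(\Omega;\ei)$ by \eqref{g(GK)}.

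It remains to pass to the limit $n \to \infty$. Compactness of $\BBG_\mu$ on $L^1(\Omega;\ei)$ yields a subsequence with $u_n \to u$ in $L^1(\Omega;\ei)$ and a.e., inheriting the bounds $\usub \le u \le \usup$, which is exactly \eqref{U12}. Dominated convergence then gives $g_n(u_n) \to g(u)$ in $L^1(\Omega;\ei)$, and passing to the limit in the identity $u_n + \BBG_\mu[g_n(u_n)] = \BBG_\mu[\tau] + \BBK_\mu[\nu]$ produces the required weak solution of \eqref{NLin}. The main technical obstacle is the Kato-inequality step: because $\tau$ is merely a measure with $\ei$-weighted integrability and the boundary trace is understood in the generalized sense of \cite{GkiNg_linear}, the weak comparison principle must be formulated within the test space ${\bf X}_\mu(\Omega \setminus \Sigma)$ rather than in the classical $H^1$ framework, requiring a careful approximation of the truncation $(u_n - \usup)_+$ by admissible test functions and a delicate handling of contributions near $\Sigma$ and $\partial \Omega$.
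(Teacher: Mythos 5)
Your proposal is correct and follows essentially the paper's route: the paper takes the same barriers $U_1=-\BBG_\mu[\tau^-]-\BBK_\mu[\nu^-]$, $U_2=\BBG_\mu[\tau^+]+\BBK_\mu[\nu^+]$, notes $U_1,U_2\in L^1(\Omega;\ei)$ and $g(U_1),g(U_2)\in L^1(\Omega;\ei)$, and invokes the sub/supersolution Theorem \ref{existencesubcr}, whose proof is precisely your truncation--Schauder--limit scheme. The only difference is organizational: the paper clips the truncated nonlinearity between the sub- and supersolutions ($\tilde g_n$) and performs the comparison once in the limit, whereas you keep the plain truncation $g_n$ and compare at each $n$; both work because the Kato inequalities \eqref{poi4}--\eqref{poi5} are already supplied by the linear theory (Theorem \ref{linear-problem}), so the test-function approximation you flag as the main obstacle is not actually needed, and the same inequality with $\zeta=\ei$ also yields the uniqueness implicit in ``the solution''.
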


The existence part of Theorem \ref{existGK} is based on sharp weak Lebesgue estimates on the Green kernel and Martin kernel (Theorems \ref{lpweakgreen}--\ref{lpweakmartin1}) and the sub and super solution theorem (see Theorem \ref{existencesubcr}). The uniqueness is derived from Kato inequalities (see Theorem \ref{linear-problem}).

When $g$ satisfies the so-called \textit{subcritical integral condition}
\be \label{subcricondintro} \int_1^\infty  s^{-q-1} (g(s)-g(-s)) \, \dd s<\infty
\ee
for suitable $q>0$, we can show that condition \eqref{g(GK)} holds (see Lemma \ref{subcrcon}) and consequently, $(\tau,\nu)$ is a good couple.
\begin{theorem} \label{exist-subGK} Assume $\mu <(\frac{N-2}{2})^2$ and $g$ satisfies \eqref{subcricondintro} with
\bal
q=\min\left\{\frac{N+1}{N-1},\frac{N-\am}{N-\am-2}\right\},
\eal
where $\am$ is defined in \eqref{apm}.
Then any couple $(\tau,\nu) \in \GTM(\Omega \setminus \Sigma; \ei) \times \GTM(\partial \Omega \cup \Sigma)$ is a $g$-good couple. Moreover, the solution $u$ satisfies \eqref{U12}.
\end{theorem}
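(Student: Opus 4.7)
The plan is to reduce Theorem \ref{exist-subGK} to Theorem \ref{existGK} by verifying its hypothesis \eqref{g(GK)} directly from the subcritical integral condition \eqref{subcricondintro}; this reduction is precisely the content of Lemma \ref{subcrcon}. Once \eqref{g(GK)} is established, Theorem \ref{existGK} delivers both the existence of a weak solution and the two-sided bound \eqref{U12} with no further work.

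To verify \eqref{g(GK)}, I would set $w := \BBG_\mu[\tau^+] + \BBK_\mu[\nu^+] \geq 0$ and show that $g(w) \in L^1(\Omega;\phi_\mu)$; the negative counterpart is handled symmetrically. Since $g$ is nondecreasing continuous with $g(0)=0$, the layer-cake formula gives
\begin{equation*}
\int_\Omega g(w)\,\phi_\mu\,\dd x = \int_0^\infty \lambda_w(s)\,\dd g(s), \qquad \lambda_w(s) := \int_{\{w > s\}}\phi_\mu\,\dd x.
\end{equation*}
The key ingredient is then a uniform weak-type bound $\lambda_w(s) \leq C s^{-q}$ for $s \geq 1$, which follows from the weighted weak Lebesgue estimates on the Green kernel $G_\mu$ and Martin kernel $K_\mu$ supplied by Theorems \ref{lpweakgreen}--\ref{lpweakmartin1} and the sublinearity of the distribution function under sums. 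By a standard integration-by-parts argument, this bound combined with \eqref{subcricondintro} yields
\begin{equation*}
\int_\Omega g(w)\,\phi_\mu\,\dd x \lesssim 1 + \int_1^\infty s^{-q-1} g(s)\,\dd s < \infty,
\end{equation*}
and the analogous computation applied to $\BBG_\mu[\tau^-] + \BBK_\mu[\nu^-]$ with $s \mapsto -g(-s)$ delivers the remaining half of \eqref{g(GK)}.

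The specific exponent $q = \min\{\tfrac{N+1}{N-1},\tfrac{N-\am}{N-\am-2}\}$ arises because problem \eqref{NLin} carries two distinct sources of singularity: the Martin kernel associated to data on the smooth component $\partial\Omega$ contributes the classical boundary exponent $\frac{N+1}{N-1}$, whereas the Green kernel and the Martin kernel attached to the singular set $\Sigma$ both produce the Hardy-type exponent $\frac{N-\am}{N-\am-2}$. The strict inequality $\mu < (\frac{N-2}{2})^2$ forces $\am < N-2$, so this second exponent is finite and strictly greater than $1$; taking the minimum of the two yields a single weak-type bound valid on all of $\Omega \setminus \Sigma$.

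The main technical obstacle is precisely the derivation of the weak-type estimate $\lambda_w(s) \lesssim s^{-q}$ with the sharp exponent above: one must add in a unified weighted scale the two contributions coming from the neighbourhood of $\partial \Omega$ and of $\Sigma$, where the natural weight $\phi_\mu \approx d\,d_\Sigma^{-\am}$ has markedly different behaviour, and confirm that the minimum exponent indeed governs the full distribution function. Once this estimate is in hand, both the layer-cake calculation and the final appeal to Theorem \ref{existGK} are routine.
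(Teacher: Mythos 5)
Your proposal follows essentially the same route as the paper: the paper takes $U_1,U_2$ as in Theorem \ref{existGK}, uses Theorems \ref{lpweakgreen}--\ref{lpweakmartin1} to place them in $L_w^{p_{\am}}(\Omega\setminus\Sigma;\ei)$ with $p_{\am}=\min\{\frac{N+1}{N-1},\frac{N-\am}{N-\am-2}\}$, and then cites Lemma \ref{subcrcon} (whose layer-cake/integration-by-parts content you re-derive) to verify \eqref{g(GK)}, concluding via Theorem \ref{existGK} exactly as you do. The only slight inaccuracy is your explanation of the hypothesis $\mu<(\frac{N-2}{2})^2$: since $\am\leq H<N-2$ for all admissible $\mu$, the exponent $\frac{N-\am}{N-\am-2}$ is always finite and larger than $1$; the strict inequality is really needed to exclude the critical case $k=0$, $\mu=(\frac{N-2}{2})^2$, where the Martin kernel only obeys the logarithmically corrected weak estimate \eqref{54a} and one must instead impose the stronger condition \eqref{subcriticalg-ln}, as in Theorem \ref{exist-measureK}(ii).
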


The value of $q$ in condition \eqref{subcricondintro} under which problem \eqref{NLin} with $\tau=0$, namely problem
\be \label{pro:g-tau=0} \left\{ \BAL -L_\mu u + g(u)  &= 0 \quad \text{in } \Gw\setminus \Sigma, \\
\tr(u) &= \xn,
\EAL \right. \ee
admits a unique solution, can be enlarged according to the concentration of the boundary measure data. The case when $\nu$ is concentrated in $\partial \Omega$ is treated in the following theorem.

\begin{theorem} \label{measureO} Assume $\mu \leq H^2$ and $g$ satisfies \eqref{subcricondintro} with $q=\frac{N+1}{N-1}$. Then any measure $\nu \in \GTM(\partial \Omega \cup \Sigma)$ with compact support in $\partial \Omega$ is a $g$-good measure. Moreover, the solution $u$ satisfies
	\be \label{K<u<K}
	- \BBK_{\mu}[\nu^-] \leq u \leq \BBK_{\mu}[\nu^+] \quad \text{in } \Omega \setminus \Sigma.
	\ee	 	
\end{theorem}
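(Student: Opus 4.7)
\smallskip

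\noindent\textit{Proof plan.} The strategy is to reduce to Theorem \ref{existGK}, applied with $\tau=0$. Concretely, I will verify that
\[
g(\BBK_\mu[\nu^+]),\ g(-\BBK_\mu[\nu^-]) \in L^1(\Omega;\phi_\mu),
\]
so that \eqref{g(GK)} holds and the existence together with the bound \eqref{K<u<K} are immediate consequences of Theorem \ref{existGK}. The novelty with respect to Theorem \ref{exist-subGK} is that the critical exponent is improved from $\min\{(N+1)/(N-1),(N-\am)/(N-\am-2)\}$ to $(N+1)/(N-1)$, which reflects the fact that the boundary measure is now kept away from the singular set $\Sigma$.

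\smallskip

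\noindent\textit{Step 1: localisation away from $\Sigma$.} Since $\supp\nu$ is a compact subset of $\partial\Omega$ and $\Sigma\subset\Omega$, we have $\delta_0:=\dist(\supp\nu,\Sigma)>0$. Hence for every $y\in\supp\nu$, the function $K_\mu(\cdot,y)$ is smooth in a neighbourhood of $\Sigma$, and the two-sided estimates of the Martin kernel established in \cite{GkiNg_linear} reduce, on the set where $d_\Sigma\geq\delta_0/2$, to the classical estimate
\[
K_\mu(x,y) \;\asymp\; \frac{d(x)}{|x-y|^{N}}, \qquad y\in\supp\nu,
\]
with constants depending on $\delta_0$. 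In particular $\BBK_\mu[\nu^\pm]$ is uniformly bounded on $\{d_\Sigma\leq\delta_0/2\}$, and on this set it is harmless for the $L^1(\Omega;\phi_\mu)$ analysis of $g(\BBK_\mu[\nu^\pm])$ (note $\phi_\mu\in L^1(\Omega)$ by the estimate $\phi_\mu\approx d\,d_\Sigma^{-\am}$ recalled in the introduction).

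\smallskip

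\noindent\textit{Step 2: weak Lebesgue estimate with exponent $(N+1)/(N-1)$.} On the complementary region $\{d_\Sigma\geq\delta_0/2\}$ the weight $\phi_\mu$ is comparable to $d(x)$. The preceding pointwise bound on $K_\mu$ is exactly the one that yields, by the classical Marcus--V\'eron computation for the Poisson kernel of $-\Delta$, the weak Marcinkiewicz estimate
\[
(\phi_\mu\,\mathrm dx)\bigl(\{x\in\Omega:\BBK_\mu[\nu^\pm](x)>s\}\bigr) \;\leq\; C\,\|\nu^\pm\|_\GTM^{q}\,s^{-q},\qquad s>0,
\]
with $q=(N+1)/(N-1)$. (The constant $C$ depends on $\delta_0$, $\Omega$, $\Sigma$ and $\mu$.) This is essentially the proof of Theorem \ref{lpweakmartin1}, but restricted to boundary measures supported away from $\Sigma$, where the $\alpha_-$--dependent correction disappears; the only extra bookkeeping is to absorb the bounded contribution from $\{d_\Sigma<\delta_0/2\}$ identified in Step 1.

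\smallskip

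\noindent\textit{Step 3: subcritical condition implies $L^1(\Omega;\phi_\mu)$.} A standard layer-cake computation shows that for any nondecreasing continuous $g$ with $g(0)=0$ and any function $w\in L^q_w(\Omega;\phi_\mu)$,
\[
\int_\Omega |g(w)|\,\phi_\mu\,\mathrm dx \;\leq\; C\Bigl(\,1+\int_1^{\infty} s^{-q-1}\bigl(g(s)-g(-s)\bigr)\,\mathrm ds\Bigr)\|w\|_{L^q_w(\Omega;\phi_\mu)}^{q},
\]
so the subcritical hypothesis \eqref{subcricondintro} with $q=(N+1)/(N-1)$, combined with Step 2, yields $g(\pm\BBK_\mu[\nu^\pm])\in L^1(\Omega;\phi_\mu)$. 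Theorem \ref{existGK} is then applicable and gives the solvability of \eqref{pro:g-tau=0} together with the sandwich bound \eqref{K<u<K}.

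\smallskip

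\noindent\textit{Main difficulty.} The crux is Step 2: making the comparison between $K_\mu$ and the classical boundary Poisson kernel of $-\Delta$ sufficiently quantitative to recover the \emph{sharp} Marcinkiewicz exponent $(N+1)/(N-1)$, while keeping careful track of the weight $\phi_\mu\approx d\,d_\Sigma^{-\am}$ in the region where $d_\Sigma$ is small. Once the support condition $\dist(\supp\nu,\Sigma)\geq\delta_0$ is exploited to confine the singular effect of $d_\Sigma^{-\am}$ to a set where $\BBK_\mu[\nu^\pm]$ is bounded, the remaining analysis reduces to the classical Marcus--V\'eron estimate and the proof concludes cleanly via Theorem \ref{existGK}.
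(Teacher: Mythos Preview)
Your approach is correct and essentially identical to the paper's: verify $g(\pm\BBK_\mu[\nu^\pm])\in L^1(\Omega;\phi_\mu)$ via the weak-$L^{(N+1)/(N-1)}$ bound on $\BBK_\mu[\nu]$ and the subcritical integral condition, then invoke the sub/supersolution machinery (Theorem~\ref{existGK}, equivalently Theorem~\ref{existencesubcr}). The only difference is packaging: the paper simply cites Theorem~\ref{lpweakmartin1}\,(I) for your Step~2 and Lemma~\ref{subcrcon} for your Step~3, rather than re-deriving them, so your Steps~1--2 are redundant with results already established in the paper (and the displayed inequality in Step~3 is not quite homogeneously correct as written---the bounded-level contribution is $g(s_0)\|\phi_\mu\|_{L^1}$, not proportional to $\|w\|_{L^q_w}^q$---but the conclusion is right).
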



It is worth mentioning that, without requiring condition \eqref{subcricondintro}, one can show that any $L^1$ datum concentrated in $\partial \Omega$ is $g$-good. (see Theorem \ref{solL1} for more detail).

When $\nu$ is concentrated in $\Sigma$, it is $g$-good  under the condition \eqref{subcricondintro} with $q=\frac{N-\am}{N-\am-2}$ if $\mu <\left( \frac{N-2}{2} \right)^2.$ However, if $k=0$ and $\mu = \left( \frac{N-2}{2} \right)^2$, which implies that $\am=\frac{N-2}{2},$ condition \eqref{subcricondintro} with $q=\frac{N+2}{N-2}$ is not enough to ensure that $\xn$ is $g$-good. In this case we need to impose a slightly stronger condition on $g$.  This is stated in the following theorem.

\begin{theorem} \label{exist-measureK} ~~
	
(i) Assume $\mu < \left( \frac{N-2}{2} \right)^2$ and $g$ satisfies \eqref{subcricondintro} with $q=\frac{N-\am}{N-\am-2}$. Then any measure $\nu \in \GTM(\partial \Omega \cup \Sigma)$ with compact support in $\Sigma$ is a $g$-good measure. Moreover, the solution $u$ satisfies \eqref{K<u<K}.
	
(ii) Assume $k=0$, $\Sigma=\{0\}$, $\mu = \left( \frac{N-2}{2} \right)^2$ and $g$ satisfies
	\bel{subcriticalg-ln} \int_1^\infty s^{-\frac{N+2}{N-2}-1}(\ln s)^{\frac{N+2}{N-2}}g(s) \, \dd s < \infty.
	\ee
	Then for any $\xr>0$, $\nu = \xr \delta_0$ is $g$-good. Here $\delta_0$ is the Dirac measure concentrated at $0$.
\end{theorem}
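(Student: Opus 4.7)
Both parts reduce to verifying the hypothesis of Theorem~\ref{existGK} with $\tau=0$: namely that $g(\BBK_{\mu}[\nu^+])$ and $g(-\BBK_{\mu}[\nu^-])$ both belong to $L^1(\Omega;\ei)$. Once this is in hand, Theorem~\ref{existGK} produces a weak solution satisfying the two-sided envelope \eqref{K<u<K}, and uniqueness follows from the Kato-type bound in Theorem~\ref{linear-problem}. Since $\nu^{\pm}$ inherit compact support in $\Sigma$ from $\nu$, I may assume throughout (by decomposing) that $\nu\ge 0$.

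\textbf{Part (i).} Since $\mu<((N-2)/2)^2$, the exponents $\am<\ap$ are distinct and the Martin kernel $K_\mu(\cdot,y)$ for $y\in\Sigma$ obeys two-sided power-type estimates with no logarithmic correction. The plan is to invoke the weak Lebesgue bound for the Martin operator (Theorem~\ref{lpweakmartin1}) to conclude
\[
\BBK_\mu[\nu] \in L^{q,\infty}(\Omega;\ei), \qquad q=\frac{N-\am}{N-\am-2},
\]
where the exponent is precisely the one appearing in \eqref{subcricondintro}. Because $\supp\nu \Subset \Sigma$, only the critical exponent attached to $\Sigma$ is seen (the $(N+1)/(N-1)$ threshold from $\partial\Omega$ plays no role). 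Next, a layer-cake argument, bundled in the technical lemma producing \eqref{g(GK)} from \eqref{subcricondintro} (that is, Lemma~\ref{subcrcon}), upgrades the weak-$L^q$ bound to $g(\BBK_\mu[\nu])\in L^1(\Omega;\ei)$. Theorem~\ref{existGK} then yields the solution.

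\textbf{Part (ii).} In the borderline regime $k=0$, $\mu=((N-2)/2)^2$, the two roots collapse, $\am=\ap=(N-2)/2$, and the Martin kernel at the origin degenerates to the logarithmically-corrected profile
\[
\BBK_\mu[\delta_0](x) \, \asymp \, |x|^{-\frac{N-2}{2}}\,\bigl|\log|x|\bigr| \qquad \text{as } x\to 0,
\]
taken from the linear theory; meanwhile $\ei(x)\asymp d(x)|x|^{-(N-2)/2}$ near the origin. The plan is to estimate the distribution function of $u:=\BBK_\mu[\xr\delta_0]$ with respect to the reference measure $\ei\,\dd x$. Setting $r=|x|$ and inverting the relation $r^{-(N-2)/2}|\log r|\asymp s$ gives $r\asymp (s^{-1}\log s)^{2/(N-2)}$; integrating $\ei$ on the corresponding ball contributes a factor $r^{N/2+1}$, yielding
\[
\ei\bigl(\{u>s\}\bigr) \, \lesssim \, s^{-\frac{N+2}{N-2}}\,(\log s)^{\frac{N+2}{N-2}} \qquad \text{for all large } s.
\]
Combining with the layer-cake identity $\int_\Omega g(u)\ei\,\dd x=\int_0^\infty g'(s)\,\ei(\{u>s\})\,\dd s$ and condition \eqref{subcriticalg-ln} (the extra logarithmic factor in \eqref{subcriticalg-ln} is precisely what absorbs the $(\log s)^{(N+2)/(N-2)}$ term) gives $g(u)\in L^1(\Omega;\ei)$. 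One last application of Theorem~\ref{existGK} delivers the solution.

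\textbf{Main obstacle.} Part (i) is essentially a clean application of the weak-type machinery already set up for the linear problem. The genuine difficulty lies in part (ii), where $\am=\ap$ is a resonant case and the Martin kernel is no longer a pure power: one must justify the sharp logarithmic profile of $\BBK_\mu[\delta_0]$ at $0$ and then propagate the logarithm faithfully through the inversion $v\log v\asymp s$ and the subsequent $N/2+1$-th power. Getting both the polynomial weight $(N+2)/(N-2)$ and the logarithmic weight $(\log s)^{(N+2)/(N-2)}$ in the distribution function with the correct exponents (and verifying that \eqref{subcriticalg-ln}, and not some stronger integral condition, is indeed sufficient) is the delicate step.
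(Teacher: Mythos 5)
Your proposal is correct and follows essentially the same route as the paper: both parts reduce to checking $g(\pm\BBK_\mu[\nu^\pm])\in L^1(\Omega;\ei)$ via the weak Lebesgue/distribution-function estimates of Theorem \ref{lpweakmartin1} combined with Lemma \ref{subcrcon} (with $q=\frac{N-\am}{N-\am-2}$, $m=0$ in (i) and $q=m=\frac{N+2}{N-2}$ in (ii)), and then invoking the sub/supersolution machinery of Theorems \ref{existGK}--\ref{existencesubcr} with $U_1=-\BBK_\mu[\nu^-]$, $U_2=\BBK_\mu[\nu^+]$. The only cosmetic difference is that in part (ii) you re-derive the logarithmically corrected bound $\ei(\{u>s\})\lesssim s^{-\frac{N+2}{N-2}}(\ln s)^{\frac{N+2}{N-2}}$ from the Martin kernel asymptotics, whereas the paper simply cites this as estimate \eqref{54a}; the computation you sketch is the same one.
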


When $g$ is a power function, namely $g(t) = |t|^{p-1}t$ with $p>1$, condition \eqref{subcricondintro} with $q=\frac{N+1}{N-1}$ is fulfilled if and only if $1<p<\frac{N+1}{N-1}$, while condition \eqref{subcricondintro} with $q=\frac{N-\am}{N-\am-2}$ is satisfied if and only if $1<p<\frac{N-\am}{N-\am-2}$. In these ranges of $p$, by Theorem \ref{measureO} and Theorem \ref{exist-measureK}, problem \eqref{pro:g-tau=0} admits a unique solution. In particular,  in these ranges of $p$, existence results hold when $\nu$ is a Dirac measure. We will point out below that in case $p \geq \frac{N+1}{N-1}$ or $p \geq \frac{N-\am}{N-\am-2}$ according to the concentration of the boundary data, isolated singularities are removable. This justifies the fact that the values  $\frac{N+1}{N-1}$ and $\frac{N-\am}{N-\am-2}$ are \textit{critical exponents}.

To this purpose, we introduce a weight function which allows to normalize the value of solutions near $\Sigma$. Let $\beta_0$ be the constant in Subsection \ref{assumptionK} and $\eta_{\beta_0}$ be a smooth function such that $0 \leq \eta_{\beta_0} \leq 1$, $\eta_{\beta_0}=1$ in $\overline{\Sigma}_{\frac{\xb_0}{4}}$ and $\supp \eta_{\beta_0} \subset \Sigma_{\frac{\beta_0}{2}}$. We define
\bal W(x):=\left\{ \BAL &d_\Sigma(x)^{-\ap}\qquad&&\text{if}\;\mu <H^2, \\
&d_\Sigma(x)^{-H}|\ln d_\Sigma(x)|\qquad&&\text{if}\;\mu =H^2,
\EAL \right. \quad x \in \Omega \setminus \Sigma,
\eal
and
\bel{tildeW}
\tilde W:=1-\eta_{\beta_0}+\eta_{\beta_0}W \quad \text{in } \Omega \setminus \Sigma.
\ee

It was proved in \cite{GkiNg_linear} that for any $h\in C(\partial\xO\cup\xS)$, the problem
\be \label{linearintro} \left\{ \BAL
L_{\mu}v&=0\qquad \text{in}\;\;\xO\setminus \Sigma\\
v&=h\qquad \text{on}\;\;\partial\xO\cup \Sigma,
\EAL \right. \ee
admits a unique solution $v$. Here the boundary value condition in \eqref{linearintro} is understood as
\bal
\lim_{x \in \Omega \setminus \Sigma,\; x \to y }\frac{v(x)}{\tilde W(x)}=h(y) \quad \text{uniformly w.r.t. } y \in \partial \Omega \cup \Sigma.
\eal



\begin{theorem} \label{remov-1}
Assume $\mu \leq H^2$ and $p\geq \frac{2+\ap}{\ap}$. If $u \in C(\overline \Omega \setminus \Sigma)$ is a nonnegative solution of
	\be \label{eq:power-a}
	-L_\mu u+|u|^{p-1}u=0 \quad \text{in } \Omega \setminus \Sigma
	\ee
in the sense of distributions in $\Omega \setminus \Sigma$	such that
	\be\label{as1intro}
	\lim_{x\in\xO\setminus\xS,\;x\rightarrow\xi}\frac{u(x)}{\tilde W(x)}=0\qquad\forall \xi\in \partial\xO,
	\ee
	locally uniformly in $\partial\xO$, then $u\equiv 0.$
\end{theorem}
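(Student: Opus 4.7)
The plan is to upgrade the distributional solution $u$ to a weak solution in the sense of Definition~\ref{def:weak-sol} with vanishing measure data $\tau=0$, $\nu=0$, and then exploit the strict positivity $\lambda_\mu>0$. Since $u\geq 0$ satisfies $L_\mu u=u^{p}\geq 0$, the function $u$ is $L_\mu$-subharmonic on $\Omega\setminus\Sigma$. A Keller--Osserman-type argument based on the radial supersolution $\psi(x)=C\,d_\Sigma(x)^{-2/(p-1)}$, which satisfies $-L_\mu\psi+\psi^{p}\geq 0$ in a tubular neighbourhood of $\Sigma$ once $C$ is chosen sufficiently large (depending on $\alpha_+,\,p$, and the second fundamental form of $\Sigma$), yields the pointwise bound
\begin{equation*}
u(x)\leq C\,d_\Sigma(x)^{-2/(p-1)}\qquad\text{in }\Sigma_{\beta_0/2}\setminus\Sigma,
\end{equation*}
with a logarithmic sharpening in the borderline case $\mu=H^{2}$. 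The hypothesis \eqref{as1intro}, together with $\tilde W\approx 1$ off $\Sigma$, simultaneously forces $u(x)\to 0$ as $x\to\partial\Omega$, so one gains uniform control of $u$ along the whole of $\partial\Omega\cup\Sigma$.

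The exponent assumption $p\geq\frac{2+\alpha_+}{\alpha_+}$ rewrites as $\frac{2}{p-1}\leq\alpha_+=N-k-2-\alpha_-$, which, coupled with the two-sided estimate $\phi_\mu\approx d\,d_\Sigma^{-\alpha_-}$, is precisely what makes $u\,\phi_\mu$ and $u^{p}\,\phi_\mu$ integrable against the $(N-k)$-dimensional tubular volume element around $\Sigma$. With this integrability in hand, I perform a cut-off argument: test the distributional equation against $\eta_\varepsilon\zeta$, where $\zeta\in\mathbf{X}_\mu(\Omega\setminus\Sigma)$ is arbitrary and $\eta_\varepsilon$ is a smooth function vanishing on $\Sigma_\varepsilon$ and equal to $1$ outside $\Sigma_{2\varepsilon}$. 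The commutator terms involving $\nabla\eta_\varepsilon$ and $\Delta\eta_\varepsilon$ are dominated via the Keller--Osserman bound and tend to $0$ as $\varepsilon\to 0$; the contribution from $\partial\Omega$ vanishes because of \eqref{as1intro} and the fact that $|\zeta|\lesssim\phi_\mu\approx d$ near $\partial\Omega$. The limiting identity shows that $u$ satisfies \eqref{nlinearweakform} with $\tau=0$ and $\nu=0$, i.e.\ $u+\BBG_\mu[u^{p}]=0$ a.e.\ in $\Omega\setminus\Sigma$.

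To conclude, take $\zeta=\phi_\mu\in\mathbf{X}_\mu(\Omega\setminus\Sigma)$; since $L_\mu\phi_\mu=-\lambda_\mu\phi_\mu$, the weak formulation collapses to
\begin{equation*}
\lambda_\mu\int_\Omega u\,\phi_\mu\,\dx+\int_\Omega u^{p}\,\phi_\mu\,\dx=0.
\end{equation*}
Both terms are nonnegative and $\lambda_\mu>0$, forcing $u\equiv 0$. The principal obstacle is the cut-off step of the second paragraph: showing that the boundary trace of $u$ on $\Sigma$ vanishes is exactly where the threshold $p\geq\frac{2+\alpha_+}{\alpha_+}$ is used sharply, and the borderline case $p=\frac{2+\alpha_+}{\alpha_+}$ (especially when combined with $\mu=H^{2}$, where $\alpha_+=\alpha_-=H$) requires a logarithmic refinement of the Keller--Osserman estimate, calibrated to match the weight $\tilde W$ defined in \eqref{tildeW}.
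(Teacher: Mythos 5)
There is a genuine gap, and it sits exactly at the borderline exponent $p=\frac{2+\ap}{\ap}$, which the theorem includes (and which is in fact the case the paper's proof is designed for). Your reduction hinges on the claim that $\frac{2}{p-1}\leq \ap$ together with the Keller--Osserman bound $u\lesssim d\,d_\Sigma^{-2/(p-1)}$ makes $u^{p}\phi_\mu$ integrable and makes the cut-off commutator terms vanish. Quantitatively, near $\Sigma$ one has $u^{p}\phi_\mu\lesssim d_\Sigma^{-\frac{2p}{p-1}-\am}$ and $\int_{\Sigma_\beta}d_\Sigma^{-\alpha}\,\dd x<\infty$ iff $\alpha<N-k$; since $\frac{2p}{p-1}+\am\leq 2+\ap+\am=N-k$ with \emph{equality} precisely when $p=\frac{2+\ap}{\ap}$, the integral is borderline divergent in the critical case. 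The same exponent bookkeeping shows the commutator terms are only $O\big(\varepsilon^{\,N-k-\frac{2}{p-1}-\am-2}\big)=O(1)$ (not $o(1)$) at the critical exponent, so the limiting identity with $\nu=0$ is not obtained, and a nonzero measure concentrated on $\Sigma$ can survive in the limit --- which is exactly the obstruction one must rule out. Your remedy, a ``logarithmic refinement'' of the Keller--Osserman estimate, is not substantiated and cannot be obtained from a supersolution comparison alone: the candidate singular profile is $\BBK_\mu$ of a measure on $\Sigma$, which behaves like $d_\Sigma^{-\ap}$ (or $d_\Sigma^{-H}|\ln d_\Sigma|$ when $\mu=H^2$), i.e.\ it saturates the KO bound, so improving the pointwise bound is essentially equivalent to proving the removability statement itself. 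For $p>\frac{2+\ap}{\ap}$ strictly, your outline (cut-off to get the weak formulation with zero data, then either $u=-\BBG_\mu[u^p]\leq 0$ or the $\phi_\mu$ test function and $\lambda_\mu>0$) is viable, modulo justifying the term $\int u\,\nabla\eta_\varepsilon\cdot\nabla\zeta$ for $\zeta\in\mathbf{X}_\mu$, which only has weighted $H^1$ regularity.

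For comparison, the paper's proof avoids any pointwise log-refinement. It dominates $u$ by solutions $v_n$ of the exterior problems on $O_n=\Omega\setminus\overline{\Sigma}_{1/n}$ with boundary value $V:=2C\,\diam(\Omega)\,\BBK_\mu[\1_\Sigma\,\omega^{x_0}_{\Omega\setminus\Sigma}]$ on $\partial\Sigma_{1/n}$ (the KO constant $C$ makes $u\leq v_n$ because $V\gtrsim d_\Sigma^{-\ap}$ near $\Sigma$), passes to the limit $v=\lim v_n$ with $v+\BBG_\mu[v^p]=\BBK_\mu[\nu]$ for some $\nu\geq 0$ carried by $\Sigma$, and then runs a doubling argument: solving on a full exhaustion with boundary data $2v$ produces $\tilde v$ with $\tilde v+\BBG_\mu[\tilde v^p]=2\BBK_\mu[\nu]$, while the KO bound and the maximum principle force $\tilde v\leq v$, hence $2\nu\leq\nu$, so $\nu=0$ and $u\equiv 0$. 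You would need an argument of this type (or some other mechanism exploiting the structure of the boundary trace) to cover $p=\frac{2+\ap}{\ap}$; as written, your proof only covers the strictly supercritical range.
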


The idea of the proof of Theorem \ref{remov-1} is to construct a function $v$ dominating $u$ by using to the Keller-Osserman type estimate (see Lemma \ref{lem:KO-1}). Then, by making use of the Representation Theorem \ref{th:Rep} and a subtle argument based on the maximum principle, we are able to deduce $v \equiv 0$, which implies $u \equiv 0$.

When $\frac{N-\am}{N-\am-2}\leq p< \frac{2+\ap}{\ap}$, an additional condition on the behavior of solutions near $\Sigma$ is required to obtain a removability result.
\begin{theorem} \label{remove-2}
Assume $\mu \leq H^2$, $z\in \Sigma$ and $\frac{N-\am}{N-\am-2}\leq p< \frac{2+\ap}{\ap}$. If $u\in C(\xO\setminus \Sigma)$ is a nonnegative solution of  \eqref{eq:power-a} in the sense of distributions in $\xO\setminus \Sigma$ such that
	\be\label{as2intro}
	\lim_{x\in\xO\setminus\xS,\;x\rightarrow\xi}\frac{u(x)}{\tilde W(x)}=0\qquad\forall \xi\in \partial\xO\cup \Sigma\setminus \{z\},
	\ee
	locally uniformly in $\partial\xO\cup \Sigma\setminus \{z\}$, then $u\equiv 0.$
\end{theorem}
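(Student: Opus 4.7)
The plan is to adapt the proof of Theorem \ref{remov-1}, weakening the hypothesis so that the boundary trace may concentrate at the prescribed point $z \in \Sigma$, and then to use the supercriticality $p \geq (N-\am)/(N-\am-2)$ to rule out such concentration.

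\emph{A priori bound.} I would apply the Keller--Osserman estimate of Lemma \ref{lem:KO-1} to obtain a pointwise upper bound on $u$ by a universal singular profile depending only on $d_\Sigma(x)$, $|x-z|$ and the exponent $p$. The assumption $p < (2+\ap)/\ap$ ensures that this bound is integrable against $\phi_\mu \approx d\, d_\Sigma^{-\am}$ on $\Omega \setminus B_\rho(z)$ for every $\rho>0$; in particular, $u,\, u^p \in L^1_{\mathrm{loc}}(\overline{\Omega} \setminus \{z\};\phi_\mu)$.

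\emph{Representation with singularity confined to $\{z\}$.} Exhaust $\Omega$ by smooth subdomains $\Omega_n$ avoiding a shrinking ball around $z$, solve the linear boundary value problem \eqref{eq:linear} on each $\Omega_n$ with source $u^p$ and the restriction of $u$ as boundary datum, and pass to the limit via Theorem \ref{th:Rep}. By \eqref{as2intro}, any weak-$*$ limit of the resulting boundary traces is concentrated at $\{z\}$, so there exists $c \geq 0$ with
\[
u + \BBG_\mu[u^p] = c\, K_\mu(\cdot, z) \quad \text{a.e.\ in } \Omega\setminus\Sigma.
\]

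\emph{Supercriticality kills the Dirac part.} If $c>0$, then $c\delta_z$ would be a $g$-good measure for $g(t)=|t|^{p-1}t$ with solution $u$. The sharp two-sided Martin kernel asymptotics near $z \in \Sigma$ from \cite{GkiNg_linear} yield $\int_\Omega K_\mu(x,z)^p\phi_\mu(x)\,dx = \infty$ precisely when $p \geq (N-\am)/(N-\am-2)$. A truncation scheme (replace $g$ by $g_n = g \wedge n$, solve the associated problem with datum $c\delta_z$, monotone-pass to the limit $u_n \nearrow u^*$, and compare with the linear problem to localise $u^* \approx c K_\mu(\cdot,z)$ in a nontangential approach to $z$) then shows that this divergence is incompatible with the existence of a $g$-good solution. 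Hence $c=0$, and $u + \BBG_\mu[u^p] = 0$ combined with $u \geq 0$ forces $u \equiv 0$.

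The main obstacle is the third step: converting the non-integrability of $K_\mu(\cdot,z)^p$ into a genuine obstruction to the solvability of \eqref{NLin} with datum $c\delta_z$ requires both the Martin kernel asymptotics along $\Sigma$ and the capacity/truncation analysis that constitutes one of the paper's main technical contributions. Once this machinery is in place, the rest of the argument proceeds by the same representation plus maximum principle template used for Theorem \ref{remov-1}.
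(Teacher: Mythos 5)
Your outline (a priori bound, representation with a possible atom at $z$, then use supercriticality to kill the atom) matches the paper's skeleton, but two essential steps are missing and the second one is, as you partly concede, the substance of the theorem. First, the representation $u+\BBG_\mu[u^p]=c\,K_\mu(\cdot,z)$ requires $u^p\in L^1(\Omega;\phi_\mu)$ \emph{globally}, including near $z$; your a priori step only gives integrability on $\Omega\setminus B_\rho(z)$, and the exhaustion you sketch cannot produce the identity with a finite Green potential without that global bound (a priori the limit trace could even fail to be a finite measure, and Theorem \ref{th:Rep} is not applicable to $u+\BBG_\mu[u^p]$ unless $\BBG_\mu[u^p]$ is finite). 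In the paper this is exactly Steps 1--2 of the proof: one multiplies by a cutoff $\zeta_\varepsilon$ vanishing on $B_\varepsilon(z)$, uses the Keller--Osserman-type bounds \eqref{3.4.24}--\eqref{3.4.24*} of Proposition \ref{prop19} (not just Lemma \ref{lem:KO-1}; the gradient estimate is needed for the term $\nabla\zeta_\varepsilon\cdot\nabla u$), and checks that the commutator terms are bounded uniformly in $\varepsilon$ precisely because $N-\tfrac{2}{p-1}-\am-2\geq 0$, i.e. $p\geq\tfrac{N-\am}{N-\am-2}$. This is where the supercriticality actually enters; in your proposal it plays no role in this step, so the representation is not justified.

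Second, your mechanism for showing $c=0$ --- ``$c\delta_z$ would be $p$-good, but $\int_\Omega K_\mu(\cdot,z)^p\phi_\mu\,\dd x=\infty$ for $p\geq\tfrac{N-\am}{N-\am-2}$, contradiction'' --- is not a proof: non-integrability of $K_\mu(\cdot,z)^p\phi_\mu$ does not by itself preclude a solution with trace $c\delta_z$, since the solution satisfies $u\leq cK_\mu(\cdot,z)$ and one would need a matching lower bound (precise boundary asymptotics of $u$ near $z$, or a capacity-type necessity result for measures on $\Sigma$, neither of which is available in the paper); you acknowledge this as ``the main obstacle'' and leave it as a black box, which makes the argument circular. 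The paper avoids this entirely by an amplification argument: assuming $\rho>0$ in $u+\BBG_\mu[u^p]=\rho K_\mu(\cdot,z)$, it solves the problem on exhaustions with boundary datum $Mu$, obtains $v_M+\BBG_\mu[v_M^p]=M\rho K_\mu(\cdot,z)$, and uses the $M$-independent Keller--Osserman bound together with the same Step-2 cutoff estimate (each $v_M\leq Mu$ satisfies \eqref{as2intro}) to keep $\{v_M\}$ locally uniformly bounded and uniformly bounded in $L^p(\Omega;\phi_\mu)$; letting $M\to\infty$ the left-hand side stays finite a.e. while the right-hand side blows up. To repair your proof you would need to supply both the global $L^p(\phi_\mu)$ estimate near $z$ and a genuine non-existence argument for the Dirac datum (or adopt the paper's amplification scheme, which sidesteps it).
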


The technique used in the proof of Theorem \ref{remove-2} is different from that of Theorem \ref{remov-1}. In the range $\frac{N-\am}{N-\am-2}\leq p< \frac{2+\ap}{\ap}$, by employing appropriate test functions and Keller-Osserman type estimate (see Lemma \ref{lem:KO-1}), we can show that the solution $u$, which may admit an isolated singularity at $z$, belongs to $L^p(\Omega)$. Then by using a delicate argument based on the properties of the boundary trace, we assert that $u$ cannot have positive mass at $z$, which implies that the isolated singularity is removable and hence $u \equiv 0$.

Next, we introduce an appropriate capacity framework which enables us to obtain the solvability for
\be\label{problempower-1} \left\{ \BAL -L_\mu u + \abs{ u}^{p-1}u  &= 0 \quad \text{in } \Gw\setminus \Sigma \\
\tr(u) &= \xn.
\EAL \right. \ee
A measure $\nu \in \GTM(\partial \Omega \cup \Sigma)$ for which problem \eqref{problempower-1} admits a (unique) solution is called \textit{$p$-good measure}.

For $\ga\in\BBR$ we defined the Bessel kernel of order $\ga$ by  $ \CB_{d,\ga}(\xi):=\CF^{-1}\left((1+|.|^2)^{-\frac{\ga}{2}} \right)(\xi)$, where $\CF$ is the Fourier transform in space $\CS'(\R^d)$ of moderate distributions in $\BBR^d$. For $\kappa>1$, the Bessel space $L_{\ga,\kappa}(\BBR^d)$ is defined by
\bal
L_{\ga,\kappa}(\BBR^d):=\{f=\CB_{d,\alpha}\ast g:g\in L^{\kappa}(\BBR^d)\},
\eal
with norm
\bal
\|f\|_{L_{\ga,\kappa}}:=\|g\|_{L^\kappa}=\|\CB_{d,-\ga}\ast f\|_{L^\kappa}.
\eal
The Bessel capacity $\mathrm{Cap}^{\BBR^d}_{\ga,\kappa}$ is defined for compact subsets
$K \subset\BBR^d$ by
\bal
\mathrm{Cap}^{\BBR^d}_{\ga,\kappa}(K):=\inf\{\|f\|^\kappa_{L_{\ga,\kappa}}, f\in\CS'(\BBR^d),\,f\geq \1_K \}.
\eal
See Section \ref{sec:goodmeasure} for further discussion on the Bessel spaces and capacities.
\begin{definition} \label{abscont}
	Let $\nu \in \GTM^+(\partial \Omega \cup \Sigma)$. We say that $\nu$ is \textit{absolutely continuous} with respect to the Bessel capacity $\mathrm{Cap}_{\alpha,\kappa}^{\R^d}$ if 	
	\bal\BA {lll}
	\forall E \subset \partial \Omega \cup \Sigma,\,E\text{ Borel}, \mathrm{Cap}^{\BBR^{d}}_{\alpha,\kappa}(E)=0\Longrightarrow \xn(E)=0.
	\EA\eal
\end{definition}

When $\frac{N-\am}{N-\am-2}\leq p<\frac{2+\ap}{\ap}$ anf $\nu$ is concentrated in $\Sigma$, a sufficient condition expressed in terms of a suitable Bessel capacity for a measure to be $p$-good is provided in the next theorem.
\begin{theorem}\label{supcrK} Assume $k \geq 1$, $\mu \leq H^2$, $\frac{N-\am}{N-\am-2}\leq p<\frac{2+\ap}{\ap}$ and $\xn\in\mathfrak M^+(\partial\Gw\cup \Sigma)$ with compact support in $\Sigma$. Put
\be \label{gamma} \vartheta: = \frac{2-(p-1)\ap}{p}.
\ee
If $\nu$ is absolutely continuous with respect to  $\mathrm{Cap}^{\BBR^{k}}_{\vartheta,p'}$, where $p'=\frac{p}{p-1}$, then $\nu$ is $p$-good.
\end{theorem}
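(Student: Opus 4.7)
The plan is to combine a capacitary $L^p$--bound on the Martin potential $\BBK_\mu[\nu]$ with a decomposition of absolutely continuous measures, so as to reduce the problem to the sub/super solution theorem (Theorem \ref{existGK}). Since $\Sigma$ is a compact $C^2$ submanifold of dimension $k$, a finite partition of unity lets me localize $\nu$ and, via coordinate charts on $\Sigma$, identify each piece with a compactly supported positive measure on $\R^k$.

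The analytic heart of the argument is the capacitary inequality
\begin{equation}\label{pro:cap-est}
\int_\Omega \BBK_\mu[\lambda]^p\, \phi_\mu \, \dd x \leq C\, \|\lambda\|^p_{(L_{\vartheta,p'}(\R^k))^*}
\end{equation}
for every nonnegative Radon measure $\lambda$ supported in a single coordinate chart of $\Sigma$. To prove \eqref{pro:cap-est}, I would invoke the sharp two-sided estimates of the Martin kernel $K_\mu(x,y)$ for $y \in \Sigma$ from \cite{GkiNg_linear}, together with $\phi_\mu \approx d\, d_\Sigma^{-\am}$, and integrate $K_\mu(\cdot, y)^p\,\phi_\mu$ first in the $(N-k)$ directions transverse to $\Sigma$. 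The transverse integration converges precisely when $p \geq \frac{N-\am}{N-\am-2}$ and leaves a Riesz-type convolution kernel on $\R^k$ whose order equals exactly $\vartheta = \frac{2-(p-1)\ap}{p}$, which is positive since $p < \frac{2+\ap}{\ap}$. Classical duality between the Bessel space $L_{\vartheta,p'}(\R^k)$ and the capacity $\mathrm{Cap}^{\R^k}_{\vartheta,p'}$ then yields \eqref{pro:cap-est}.

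Granted \eqref{pro:cap-est}, the Feyel--de la Pradelle decomposition theorem, applied to the measure $\nu$ (which is absolutely continuous with respect to $\mathrm{Cap}^{\R^k}_{\vartheta,p'}$), provides a series $\nu = \sum_{n \geq 1} \lambda_n$ with each $\lambda_n \geq 0$ lying in the dual of $L_{\vartheta,p'}(\R^k)$. Setting $\nu_m := \sum_{n \leq m} \lambda_n$, estimate \eqref{pro:cap-est} gives $\BBK_\mu[\nu_m] \in L^p(\Omega;\phi_\mu)$, hence $g(\BBK_\mu[\nu_m]) = \BBK_\mu[\nu_m]^p \in L^1(\Omega;\phi_\mu)$, so that the compatibility condition \eqref{g(GK)} of Theorem \ref{existGK} is fulfilled (with $\tau = 0$ and $g(t) = |t|^{p-1}t$) and yields a unique weak solution $u_m$ satisfying $0 \leq u_m \leq \BBK_\mu[\nu_m]$. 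The comparison principle coming from Theorem \ref{linear-problem}, combined with the monotonicity of $(\nu_m)$, makes $(u_m)$ nondecreasing and, by monotone convergence in \eqref{pro:cap-est}, dominated a.e.\ by $\BBK_\mu[\nu] \in L^p(\Omega;\phi_\mu)$. Passing to the limit $m \to \infty$ in the weak formulation \eqref{nlinearweakform} via monotone convergence yields a weak solution $u$ of \eqref{problempower-1} with $\tr(u) = \nu$, proving that $\nu$ is $p$--good.

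The main obstacle is the capacitary estimate \eqref{pro:cap-est}: from the sharp Martin kernel bounds of \cite{GkiNg_linear} one must isolate the correct Riesz-type kernel on $\R^k$ so as to recover the exact Bessel exponent $\vartheta = \frac{2-(p-1)\ap}{p}$ paired with the capacity $\mathrm{Cap}^{\R^k}_{\vartheta,p'}$. This requires a delicate joint analysis of the blow-up rate $d_\Sigma^{-\am}$ appearing both in $K_\mu$ and in $\phi_\mu$, and of the weighted transverse integration in the normal directions to $\Sigma$; the quantitative sharpness of the two-sided kernel estimates is precisely what makes such an analysis feasible.
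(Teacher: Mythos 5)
Your overall architecture is the same as the paper's: your capacitary inequality \eqref{pro:cap-est} is exactly the role played by Theorem \ref{potest} (proved via Lemma \ref{besov}: localization on coordinate charts of $\Sigma$, transverse integration, and Wolff/Bessel potential comparisons), the decomposition of an absolutely continuous measure into an increasing sequence of measures belonging to $B^{-\vartheta,p}$ is the same appeal to Dal Maso and Feyel--de la Pradelle, and each approximate problem is solved by the sub/super solution theorem with $w=\BBK_\mu[\nu_m]$, $v=0$, with monotonicity of $u_m$ coming from Theorem \ref{linear-problem}.

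There is, however, a genuine gap in your limit passage: you claim that $(u_m)$ is ``dominated a.e.\ by $\BBK_\mu[\nu]\in L^p(\Omega;\phi_\mu)$''. Absolute continuity of $\nu$ with respect to $\mathrm{Cap}^{\R^k}_{\vartheta,p'}$ does \emph{not} imply $\nu\in B^{-\vartheta,p}(\Sigma)$ (the approximants $\nu_m$ lie in that space, but their norms may blow up as $m\to\infty$), and by the \emph{two-sided} estimate of Theorem \ref{potest}, $\BBK_\mu[\nu]\in L^p(\Omega;\phi_\mu)$ is equivalent to $\nu\in B^{-\vartheta,p}(\Sigma)$; so your domination fails in general. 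Indeed, if it held, the entire Feyel--de la Pradelle decomposition would be superfluous: one could apply the sub/super solution theorem directly with $w=\BBK_\mu[\nu]$. Consequently the convergence of $u_m^p$ in $L^1(\Omega;\phi_\mu)$ is not justified as stated. The paper closes this step by testing the weak formulation of $u_m$ with $\zeta=\phi_\mu$, which gives
\begin{equation*}
\int_\Omega\left(\lambda_\mu u_m+u_m^p\right)\phi_\mu\,\dd x=\lambda_\mu\int_\Omega\BBK_\mu[\nu_m]\,\phi_\mu\,\dd x\leq\lambda_\mu\int_\Omega\BBK_\mu[\nu]\,\phi_\mu\,\dd x<\infty,
\end{equation*}
since $\BBK_\mu[\nu]\in L^1(\Omega;\phi_\mu)$ for \emph{every} finite measure; this uniform $L^1(\Omega;\phi_\mu)$ bound on $u_m$ and $u_m^p$, plus monotone convergence (also applied to $\BBK_\mu[\nu_m]\uparrow\BBK_\mu[\nu]$ on the right-hand side), allows passage to the limit in the weak formulation. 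With this substitute your argument is complete. A minor inaccuracy: in your sketch of \eqref{pro:cap-est}, convergence of the transverse integration is governed by $p<\frac{2+\ap}{\ap}$ (equivalently $(p+1)\am<N-k$), not by $p\geq\frac{N-\am}{N-\am-2}$; the lower bound on $p$ enters only to guarantee $\vartheta<k$, which is needed for the Wolff--Bessel comparison on $\R^k$.
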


A pivotal ingredient in the proof of Theorem \ref{supcrK} is a sophisticated potential estimate on the Martin kernel (see Theorem \ref{potest}) inspired by \cite{MV-Pisa}, which allows us to implement an approximation procedure to derive the existence of a solution to \eqref{problempower-1}.

In case $p \geq \frac{N+1}{N-1}$ and $\nu$ is concentrated in $\partial \Omega$, we show that  the absolute continuity of $\nu$ with respect to a suitable Bessel capacity is not only a sufficient condition, but also a necessary condition for $\nu$ to be $p$-good.
\begin{theorem}\label{supcromega} Assume $\mu \leq H^2$, $p \geq \frac{N+1}{N-1}$ and $\xn\in\mathfrak M^+(\partial\Gw\cup \Sigma)$ with compact support in $\partial \xO$. Then $\nu$ is a $p$-good measure  if and only if it is absolutely continuous with respect to  $\mathrm{Cap}^{\BBR^{N-1}}_{\frac{2}{p},p'}$.
\end{theorem}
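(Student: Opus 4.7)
The plan is to prove the two implications separately, exploiting the crucial fact that $\mathrm{supp}(\nu)$ is a compact subset of $\partial\Omega$ lying at positive distance $\delta_0>0$ from $\Sigma$. In a neighborhood of $\mathrm{supp}(\nu)$ the potential $\mu d_\Sigma^{-2}$ is bounded, so the two-sided estimates on the Martin kernel from \cite{GkiNg_linear} give $K_\mu(x,y) \asymp d(x)|x-y|^{-N}$ for $y\in\mathrm{supp}(\nu)$ and $x$ near $\mathrm{supp}(\nu)$, i.e.\ the classical Poisson-kernel behavior up to bounded multiplicative factors. This reduces the capacity analysis to the standard $(N-1)$-dimensional Bessel framework on $\partial\Omega$.

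\textbf{Sufficiency.} Assume $\nu$ is absolutely continuous with respect to $\mathrm{Cap}^{\BBR^{N-1}}_{2/p,p'}$. The first step is a potential estimate: using the comparability with the Poisson kernel noted above together with the classical Marcus--V\'eron identification of Bessel potentials with boundary kernels, one shows
\[
 \int_\Omega \BBK_\mu[\nu]^{p}\,\phi_\mu\,\dd x \leq C\,\|\nu\|_{(B^{2/p,p}(\partial\Omega))^*}^{p},
\]
whenever the right-hand side makes sense. Next, by a Hahn--Banach/Fuglede-type argument (see e.g.\ \cite{MVbook}), the capacitary absolute continuity of $\nu$ yields an increasing sequence of bounded measures $\nu_n \uparrow \nu$ with compact support in $\partial\Omega$ such that $\BBK_\mu[\nu_n]\in L^p(\Omega;\phi_\mu)$. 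For each $n$, condition \eqref{g(GK)} with $g(t)=|t|^{p-1}t$, $\tau=0$ and $\nu=\nu_n$ is satisfied, so Theorem \ref{existGK} provides a solution $u_n$ of \eqref{problempower-1} with $0\leq u_n \leq \BBK_\mu[\nu_n]$. Monotonicity of $\{u_n\}$, uniform $L^1(\Omega;\phi_\mu)$-bound on $g(u_n)$ (from the weak formulation tested with $\phi_\mu$), and the linear stability result derived from Theorem \ref{linear-problem} allow passing to the limit to obtain a solution $u$ of \eqref{problempower-1} with boundary trace $\nu$.

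\textbf{Necessity.} Suppose $\nu$ is $p$-good with solution $u$. I will adapt the Baras--Pierre method. Let $E\subset\partial\Omega$ be a compact set with $\mathrm{Cap}^{\BBR^{N-1}}_{2/p,p'}(E)=0$. Working in local boundary coordinates, choose $f_n\in L^{p'}(\BBR^{N-1})$ with $f_n\geq 0$, $\CB_{N-1,2/p}\ast f_n \geq 1$ on a neighborhood of $E$, and $\|f_n\|_{L^{p'}}\to 0$. Lift the Bessel potentials to test functions $\zeta_n\in\mathbf{X}_\mu(\Omega\setminus\Sigma)$ by solving an auxiliary Dirichlet problem for $L_\mu$ with boundary data concentrated on $\partial\Omega$ (the Hardy term is inert near $E$); the regularity of this problem yields $\zeta_n\geq 0$, $\zeta_n/\tilde W \to \chi_E$ on $\partial\Omega$, and the crucial estimate $\|\phi_\mu^{-1}L_\mu\zeta_n\|_{L^{p'}(\Omega;\phi_\mu)} \lesssim \|f_n\|_{L^{p'}}\to 0$. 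Inserting $\zeta_n$ in the weak formulation \eqref{nlinearweakform} and applying H\"older's inequality yields
\[
 \int_{\partial\Omega\cup\Sigma}\!\!\!\frac{\zeta_n}{\tilde W}\,\dd\nu \;\leq\; \bigl(\|u\|_{L^p(\Omega;\phi_\mu)} + \|u\|_{L^1(\Omega;\phi_\mu)}\bigr)\,\|\phi_\mu^{-1}L_\mu \zeta_n\|_{L^{p'}(\Omega;\phi_\mu)}\;\longrightarrow\;0,
\]
and Fatou's lemma gives $\nu(E)=0$.

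\textbf{Main obstacle.} The principal technical difficulty lies in the construction of the test functions $\zeta_n$: one needs simultaneously (i) the pointwise comparison $\zeta_n \lesssim \tilde W\cdot(\CB_{N-1,2/p}\ast f_n)$ near $\partial\Omega$ to recover $\chi_E$ in the trace, and (ii) the weighted $L^{p'}$ bound on $\phi_\mu^{-1}L_\mu\zeta_n$. While these are classical in the free-potential case, here we must verify they survive the presence of the Hardy potential; the fact that $E\cap\Sigma=\emptyset$ and $\phi_\mu\asymp d$ near $\partial\Omega\setminus\Sigma$ is what makes the verification go through, essentially reducing to the Marcus--V\'eron estimates used for \eqref{problempower-1} with $\mu=0$.
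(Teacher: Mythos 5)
Your necessity argument and your final monotone-approximation step ($\nu_n\uparrow\nu$ via Dal Maso/Feyel--de la Pradelle, then passage to the limit) follow the same route as the paper, which handles the necessity part by reducing to the Marcus--V\'eron argument exactly as you do, using $\phi_\mu\approx d$ and $K_\mu(x,y)\approx d(x)|x-y|^{-N}$ away from $\Sigma$. The problem is in your sufficiency step for $\nu_n\in B^{-2/p,p}(\partial\Omega)$: the global potential estimate
$\int_\Omega \BBK_\mu[\nu]^p\,\phi_\mu\,\dd x\lesssim \|\nu\|^p_{B^{-2/p,p}(\partial\Omega)}$
is false in part of the admissible range, and with it your appeal to Theorem \ref{existGK} collapses. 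Indeed, even for $\nu$ supported in $\partial\Omega$ the Martin kernel still blows up near $\Sigma$: by \eqref{Martinest1}, $K_\mu(x,y)\approx d(x)d_\Sigma(x)^{-\am}|x-y|^{-N}$, and since $|x-y|$ and $d(x)$ are bounded away from $0$ for $x\in\Sigma_{\beta_0}$, one has $\BBK_\mu[\nu]\approx \nu(\partial\Omega)\,d_\Sigma^{-\am}$ there, while $\phi_\mu\approx d\,d_\Sigma^{-\am}$. Hence
$\int_{\Sigma_{\beta_0}}\BBK_\mu[\nu]^p\phi_\mu\,\dd x\approx \nu(\partial\Omega)^p\int_{\Sigma_{\beta_0}}d_\Sigma^{-(p+1)\am}\dd x=+\infty$
as soon as $(p+1)\am\geq N-k$ (e.g.\ $N=3$, $k=0$, $\mu=\tfrac14$, so $\am=\tfrac12$, and any $p\geq 5$, which is allowed since the theorem only requires $p\geq\frac{N+1}{N-1}=2$). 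So condition \eqref{g(GK)} fails for every nonzero $\nu_n$, and $\BBK_\mu[\nu_n]$ is not an admissible supersolution in the sense of Theorem \ref{existGK}; the "Hardy term is inert near $\mathrm{supp}(\nu)$" heuristic does not control the region near $\Sigma$, where the weight $\phi_\mu$ and the kernel conspire against global $p$-integrability.

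The paper's proof shows how to get around exactly this point: from \cite[Theorem A]{MV-JMPA01} one only gets the local statement $\BBK_\mu[\nu]\in L^p(\Omega\setminus\Sigma_\beta;\ei)$ for each $\beta>0$. One then truncates the nonlinearity, $g_n(t)=\max\{\min\{|t|^{p-1}t,n\},-n\}$, for which \eqref{g(GK)} is trivial, obtains a non-increasing sequence $v_n$ with $0\leq v_n\leq\BBK_\mu[\nu]$ and trace $\nu$, and deduces $v^p\in L^1(\Omega;\ei)$ for the limit $v$ by Fatou from the identity obtained by testing with $\ei$ (not from any global bound on $\BBK_\mu[\nu]^p$). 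The remaining task, which your outline skips, is to identify the boundary trace of $v$: writing $v+\BBG_\mu[v^p]=\BBK_\mu[\overline\nu]$ via the Representation Theorem, the paper proves $\overline\nu=\nu$ by testing with cut-off functions $\psi_\beta\zeta$ supported in $\overline\Omega_\beta$, where the domination $v_n\leq\BBK_\mu[\nu]\in L^p(\Omega_{4\beta};\ei)$ does hold and dominated convergence applies. Without either this truncation-and-localization scheme or some substitute supersolution that decays near $\Sigma$, your sufficiency argument does not go through.
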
 \medskip

\noindent \textbf{Organization of the paper.} In Section \ref{pre}, we present main properties of the submanifold $\Sigma$ and recall important facts about the first eigenpair, Green kernel and Martin kernel of $-L_\mu$. In Section \ref{sec:BVP}, we prove the sub and super solution theorem (see Theorem \ref{existencesubcr}), which is an important tool in the prove of Theorem \ref{existGK} and Theorem \ref{exist-subGK}. Section \ref{sec:BVP-partialO} and Section \ref{sec:BVP-Sigma} are devoted to the proof of Theorem \ref{measureO} and Theorem \ref{exist-measureK} respectively. Next we establish Keller-Osserman estimates in Section \ref{sec:KOestimate}, which is a crucial ingredient in the proof of Theorem \ref{remov-1} and Theorem \ref{remove-2} in Section \ref{sec:removable}. Then we provide the proof of Theorems \ref{supcrK}--\ref{supcromega} in Section \ref{sec:goodmeasure}. Finally, in Appendix, we construct a barrier function and demonstrate some useful estimates.  \medskip

\subsection{Notations} \label{subsec:notations} We list below notations that are frequently used in the paper.

$\bullet$ Let $\phi$ be a positive continuous function in $\Omega \setminus \Sigma$ and $\kappa \geq 1$. Let $L^\kappa(\Omega;\phi)$ be the space of functions $f$ such that
\bal \| f \|_{L^\kappa(\Omega;\phi)} := \left( \int_{\Omega} |f|^\kappa \phi \, \dd x \right)^{\frac{1}{\kappa}}.
\eal

The weighted Sobolev space $H^1(\Omega;\phi)$ is the space of functions $f \in L^2(\Omega;\phi)$ such that $\nabla f \in L^2(\Omega;\phi)$. This space is endowed with the norm
$$ \| f \|_{H^1(\Omega;\phi)}^2= \int_{\Omega} |f|^2 \phi \,\dd x +  \int_{\Omega} |\nabla f|^2 \phi \,\dd x.
$$
The closure of $C_c^\infty(\Omega)$ in $H^1(\Omega;\phi)$ is denoted by $H_0^1(\Omega;\phi)$.

Denote by $\mathfrak{M}(\Omega;\phi)$ the space of Radon measures $\tau$ in $\Omega$ such that \bal \| \tau\|_{\mathfrak{M}(\Omega;\phi)}:=\int_{\Omega}\phi \, \dd|\tau|<\infty,
\eal
and by $\mathfrak{M}^+(\Omega;\phi)$ its positive cone. Denote by $\GTM(\partial \Omega \cup \Sigma)$ the space of finite measure $\nu$ on $\partial \Omega \cup \Sigma$, namely
$$ \| \nu \|_{\GTM(\partial \Omega \cup \Sigma)}:=|\nu|(\partial \Omega \cup \Sigma) < \infty,
$$
and by $\GTM^+(\partial \Omega \cup \Sigma)$ its positive cone.

$\bullet$ For a measure $\omega$, denote by $\omega^+$ and $\omega^-$ the positive part and negative part of $\omega$ respectively.

$\bullet$ For $\beta>0$, let $ \Omega_{\beta}=\{ x \in \Omega: d(x) < \beta\}$ and $\Sigma_{\beta}=\{ x \in \R^N \setminus \Sigma:  d_\Sigma(x)<\beta \}$.

$\bullet$ We denote by $c,c_1,C...$ the constant which depend on initial parameters and may change from one appearance to another.

$\bullet$ The notation $A \gtrsim B$ (resp. $A \lesssim B$) means $A \geq c\,B$ (resp. $A \leq c\,B$) where the implicit $c$ is a positive constant depending on some initial parameters. If $A \gtrsim B$ and $A \lesssim B$, we write $A \approx B$. \textit{Throughout the paper, most of the implicit constants depend on some (or all) of the initial parameters such as $N,\Omega,\Sigma,k,\mu$ and we will omit these dependencies in the notations (except when it is necessary).}

$\bullet$ For $a,b \in \BBR$, denote $a \wedge b = \min\{a,b\}$, $a \lor b =\max\{a,b \}$.

$\bullet$ For a set $D \subset \R^N$, $\1_D$ denotes the indicator function of $D$.

\medskip
\noindent \textbf{Acknowledgement.} K. T. Gkikas acknowledges support by the Hellenic Foundation for Research and Innovation
(H.F.R.I.) under the “2nd Call for H.F.R.I. Research Projects to support Post-Doctoral Researchers” (Project
Number: 59). P.-T. Nguyen was supported by Czech Science Foundation, Project GA22-17403S.
\section{Preliminaries} \label{pre}

\subsection{Assumptions on $\Sigma$.} \label{assumptionK} Throughout this paper, we assume that $\Sigma \subset \Omega$ is a $C^2$ compact submanifold in $\mathbb{R}^N$ without boundary, of dimension $k$, $0\leq k < N-2$. When $k = 0$ we assume that $\Sigma = \{0\}$.

For $x=(x_1,...,x_k,x_{k+1},...,x_N) \in \R^N$, we write $x=(x',x'')$ where $x'=(x_1,..,x_k) \in \R^k$ and $x''=(x_{k+1},...,x_N) \in \R^{N-k}$. For $\beta>0$, we denote by $B^k(x',\beta)$ the ball  in $\R^k$ with center at $x'$ and radius $\beta.$ For any $\xi\in \Sigma$, we set
\begin{align} \nonumber  \Sigma_\beta &:=\{ x \in \R^N \setminus \Sigma: d_\Sigma(x) < \beta \}, \\
\label{Vxi}
V(\xi,\xb)&:=\{x=(x',x''): |x'-\xi'|<\beta,\; |x_i-\Gamma_i^\xi(x')|<\xb,\;\forall i=k+1,...,N\},
\end{align}
for some functions $\Gamma_i^\xi: \R^k \to \R$, $i=k+1,...,N$.

Since $\Sigma$ is a $C^2$ compact submanifold in $\mathbb{R}^N$ without boundary, we may assume the existence of $\xb_0$ such that the followings hold.

\begin{itemize}
\item $\Sigma_{6\beta_0}\Subset \Omega$ and for any $x\in \Sigma_{6\beta_0}$, there is a unique $\xi \in \Sigma$  satisfies $|x-\xi|=d_\Sigma(x)$.

\item $d_\Sigma \in C^2(\Sigma_{4\beta_0})$, $|\nabla d_\Sigma|=1$ in $\Sigma_{4\beta_0}$ and there exists $\eta \in L^\infty(\Sigma_{4\beta_0})$ such that
\be \label{laplaciand}
\Delta d_\Sigma(x)=\frac{N-k-1}{d_\Sigma(x)}+ \eta(x) \quad \text{in } \Sigma_{4\beta_0} .
\ee
(See \cite[Lemma 2.2]{Vbook} and \cite[Lemma 6.2]{DN}.)

\item For any $\xi \in \Sigma$, there exist $C^2$ functions $\Gamma_i^\xi \in C^2(\R^k;\R)$, $i=k+1,...,N$, such that (upon relabeling and reorienting the coordinate axes if necessary), for any $\beta \in (0,6\beta_0)$, $V(\xi,\beta) \subset \Omega$ and
\be \label{straigh}
V(\xi,\beta) \cap \Sigma=\{x=(x',x''): |x'-\xi'|<\beta,\;  x_i=\Gamma_i^\xi (x'), \; \forall i=k+1,...,N\}.
\ee

\item There exist $m_0 \in \N$ and points $\xi^{j} \in \Sigma$, $j=1,...,m_0$, and $\beta_1 \in (0, \beta_0)$ such that
\be \label{cover}
\Sigma_{2\xb_1}\subset \cup_{j=1}^{m_0} V(\xi^j,\beta_0)\Subset \Omega.
\ee
\end{itemize}

Now for $\xi \in \Sigma$, set
\bel{dist2} \xd_\Sigma^\xi(x):=\left(\sum_{i=k+1}^N|x_i-\Gamma_i^\xi(x')|^2\right)^{\frac{1}{2}}, \qquad x=(x',x'')\in V(\xi,4\beta_0).\ee

Then we see that there exists a constant $C=C(N,\Sigma)$ such that
\be\label{propdist}
d_\Sigma(x)\leq	\xd_\Sigma^{\xi}(x)\leq C \| \Sigma \|_{C^2} d_\Sigma(x),\quad \forall x\in V(\xi,2\beta_0),
\ee
where $\xi^j=((\xi^j)', (\xi^j)'') \in \Sigma$, $j=1,...,m_0$, are the points in \eqref{cover} and
\be \label{supGamma}
\| \Sigma \|_{C^2}:=\sup\{  || \Gamma_i^{\xi^j} ||_{C^2(B_{5\beta_0}^k((\xi^j)'))}: \; i=k+1,...,N, \;j=1,...,m_0 \} < \infty.
\ee
Moreover, $\beta_1$ can be chosen small enough such that for any $x \in \Sigma_{\beta_1}$,
\bel{BinV} B(x,\beta_1) \subset V(\xi,\beta_0),
\ee
where $\xi \in \Sigma$ satisfies $|x-\xi|=d_\Sigma(x)$.

\subsection{Eigenvalue of $-L_\mu$} \label{subsec:eigen} Let $H$ be defined in \eqref{H} and $\am$ and $\ap$ be defined in \eqref{apm}.
We summarize below main properties of the first eigenfunction of the operator $-L_\mu$ in $\Omega \setminus \Sigma$ from \cite[Lemma 2.4 and Theorem 2.6]{DD1} and \cite[page 337, Lemma 7, Theorem 5]{DD2}.

(i) For any $\mu \leq H^2$, it is known that
\be\label{Lin01} \lambda_\mu:=\inf\left\{\int_{\Gw}\left(|\nabla u|^2-\frac{\xm }{d_\Sigma^2}u^2\right)\dd x: u \in H_c^1(\Omega), \int_{\Gw} u^2 \dd x=1\right\}>-\infty.
\ee

\smallskip

(ii) If $\mu < H^2$, there exists a minimizer $\gf_{\xm }$ of \eqref{Lin01} belonging to $H^1_0(\Gw)$. Moreover, it satisfies $-L_\mu \phi_\mu= \lambda_\mu \phi_\mu$  in $\Omega \setminus \Sigma$ and $\phi_{\mu }\approx d_\Sigma^{-\am}$ in $\Sigma_{\beta_0}$.

\smallskip

(iii) If $\xm =H^2$, there is no minimizer of \eqref{Lin01} in $H_0^1(\Gw)$, but there exists a nonnegative function $\phi_{H^2}\in H_{loc}^1(\xO)$  such that $-L_{H^2}\phi_{H^2}=\lambda_{H^2}\phi_{H^2}$ in the sense of distributions in $\Omega \setminus \Sigma$ and
$\phi_{H^2}\approx d_\Sigma^{-H}$ in $\Sigma_{\beta_0}$. In addition, the function $d_\Sigma^{-H}\xf_{H^2} \in H^1_0(\Gw; d_\Sigma^{-2H})$.

From (ii) and (iii) we deduce that
\be \label{eigenfunctionestimates}
\xf_\xm \approx d\,d^{-\am}_\Sigma \quad \text{in } \Omega \setminus \Sigma.
\ee

\subsection{Estimates on Green kernel and Martin kernel} \label{subsec:GreenMartin}
Recall that throughout the paper, we always assume that \eqref{assump1} holds. Let $G_\mu$ and $K_{\mu}$ be the Green kernel and Martin kernel of $-L_\mu$ in $\Omega \setminus \Sigma$ respectively.
Let us recall two-sided estimates on Green kernel.

\begin{proposition}[{\cite[Proposition 4.1]{GkiNg_linear}}]  \label{Greenkernel} ~~
	
	(i) If $\mu< \left( \frac{N-2}{2}\right)^2$ then for any $x,y \in \Omega \setminus \Sigma$, $x \neq y$,
	\bel{Greenesta} \BAL
	G_{\mu}(x,y)&\approx |x-y|^{2-N} \left(1 \wedge \frac{d(x)d(y)}{|x-y|^2}\right)  \left(\frac{|x-y|}{d_\Sigma(x)}+1\right)^\am
	\left(\frac{|x-y|}{d_\Sigma(y)}+1\right)^\am \\
	&\approx |x-y|^{2-N} \left(1 \wedge \frac{d(x)d(y)}{|x-y|^2}\right) \left(1 \wedge \frac{d_\Sigma(x)d_\Sigma(y)}{|x-y|^2} \right)^{-\am}.
	\EAL \ee
	
	(ii) If $k=0$, $\Sigma=\{0\}$ and $\mu = \left( \frac{N-2}{2}\right)^2$ then for any $x,y \in \Omega \setminus \Sigma$, $x \neq y$,
	\bel{Greenestb} \BAL
	G_{\mu}(x,y) &\approx |x-y|^{2-N} \left(1 \wedge \frac{d(x)d(y)}{|x-y|^2}\right) \left(\frac{|x-y|}{|x|}+1\right)^{\frac{N-2}{2}}
	\left(\frac{|x-y|}{|y|}+1\right)^{\frac{N-2}{2}}\\
	& \quad +(|x||y|)^{-\frac{N-2}{2}}\left|\ln\left(1 \wedge \frac{|x-y|^2}{d(x)d(y)}\right)\right| \\
	&\approx |x-y|^{2-N} \left(1 \wedge \frac{d(x)d(y)}{|x-y|^2}\right) \left(1 \wedge \frac{|x||y|}{|x-y|^2} \right)^{-\frac{N-2}{2}}\\
	& \quad +(|x||y|)^{-\frac{N-2}{2}}\left|\ln\left(1 \wedge \frac{|x-y|^2}{d(x)d(y)}\right)\right|.
	\EAL \ee
	
	The implicit constants in \eqref{Greenesta} and \eqref{Greenestb} depend on $N,\Omega,\Sigma,\mu$.
\end{proposition}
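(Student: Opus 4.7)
\medskip
\noindent\emph{Proof plan.} My overall strategy is to reduce the two-sided bounds to the behavior of the heat kernel $p_\mu^{\Omega\setminus\Sigma}(t,x,y)$ of $-L_\mu$, then integrate in time. The key fact to exploit is the factorization $\phi_\mu(x) \approx d(x)\,d_\Sigma(x)^{-\am}$ recalled in \eqref{eigenfunctionestimates}, which already captures the joint behavior near $\partial\Omega$ (linear vanishing) and near $\Sigma$ (blow-up/vanishing of order $-\am$). The plan is therefore to first establish a Gaussian upper bound of the intrinsic form
\bal
p_\mu^{\Omega\setminus\Sigma}(t,x,y) \approx t^{-N/2}\Bigl(1\wedge\tfrac{d(x)d(y)}{t}\Bigr)\Bigl(1\wedge\tfrac{d_\Sigma(x)d_\Sigma(y)}{t}\Bigr)^{-\am} e^{-c|x-y|^2/t},
\eal
and then to compute $G_\mu(x,y) = \int_0^\infty p_\mu^{\Omega\setminus\Sigma}(t,x,y)\,dt$. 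The time integral is elementary once the Gaussian estimate is in hand: splitting at $t \approx |x-y|^2$ produces the polynomial factor $|x-y|^{2-N}$ and the two boundary/singular set correction factors displayed in \eqref{Greenesta}.

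For the heat kernel upper bound, my approach is perturbative. When $\mu\leq 0$ the potential is dissipative and standard Dirichlet heat kernel estimates on Lipschitz domains give the bound without the $d_\Sigma$ factor; the $d_\Sigma$ correction can then be introduced using the intrinsic ultracontractivity technique based on the ground state $\phi_\mu$, since by (ii)–(iii) of Subsection~\ref{subsec:eigen} we have $\phi_\mu \approx d\,d_\Sigma^{-\am}$. When $0<\mu<H^2$ I would exploit the Doob $h$-transform $u \mapsto \phi_\mu^{-1} u$ which conjugates $-L_\mu - \lambda_\mu$ into a second-order divergence-form operator with weight $\phi_\mu^2$; the associated kernel admits standard upper/lower Gaussian bounds by Grigoryan–Saloff-Coste type volume-doubling/Poincaré machinery once one checks that $\phi_\mu^2\,dx$ is a doubling measure on $\Omega\setminus\Sigma$ (this follows from $\phi_\mu\approx d\,d_\Sigma^{-\am}$ and the $C^2$-regularity of $\Sigma$ and $\partial\Omega$). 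The lower bound is then obtained by a chaining argument along a Harnack chain staying away from $\partial\Omega\cup\Sigma$, combined with the boundary Harnack principle for $-L_\mu$ to transport the bound down to $\partial\Omega$ and with a comparison principle against radial barriers of the form $d_\Sigma^{-\am}$ to transport it to $\Sigma$; in tubular neighborhoods of $\Sigma$ one passes to Fermi coordinates and uses that, in the transverse variable, $d_\Sigma^{-\am}$ and $d_\Sigma^{-\ap}$ span the kernel of the radial part of $-L_\mu$.

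For case (ii), with $k=0$ and $\mu=H^2=(\tfrac{N-2}{2})^2$, the two radial exponents $\am,\ap$ coalesce to $\tfrac{N-2}{2}$, and the second independent solution of the transverse ODE becomes $|x|^{-H}\log|x|$. This is exactly the source of the additional logarithmic term $(|x||y|)^{-(N-2)/2}\bigl|\log(1\wedge|x-y|^2/(d(x)d(y)))\bigr|$ in \eqref{Greenestb}. Concretely, the plan is to redo the barrier/Harnack step with the two-parameter family $\{|x|^{-H},\,|x|^{-H}\log|x|\}$ in place of $\{d_\Sigma^{-\am},d_\Sigma^{-\ap}\}$, which produces a Green function for the associated radial Bessel-type operator with an explicit logarithm, and then transfer this via the $h$-transform and boundary Harnack to the full Green kernel. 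Equivalently, one may view case (ii) as a limit $\mu \to H^2{}^-$ of case (i) via l'Hôpital applied to the $\am$-dependent barriers, which automatically generates the log.

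The main obstacle I anticipate is the lower bound near $\Sigma$ in the supercritical Hardy regime, where the potential is almost scale-invariant and standard parabolic Harnack fails on small balls that meet $\Sigma$. To handle this I would replace Euclidean balls by the natural ``Whitney-type'' sets $\{y : \tfrac12 d_\Sigma(x)<d_\Sigma(y)<2d_\Sigma(x),\; |y-x|<\tfrac14 d_\Sigma(x)\}$, on which $L_\mu$ is uniformly elliptic up to multiplicative constants; the Harnack chain connecting $x$ and $y$ is then built inside $\Omega\setminus\Sigma$ with geometric control on the number of links in terms of the hyperbolic-type distance $\log(|x-y|^2/(d_\Sigma(x)d_\Sigma(y)))$, which yields exactly the polynomial factor $(1\wedge d_\Sigma(x)d_\Sigma(y)/|x-y|^2)^{-\am}$ after exponentiation. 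The corresponding upper bound uses the same chain together with the ground state comparison. The critical case (ii) requires extra care in this step, since the chain count produces a logarithmic rather than a polynomial factor, matching the extra term in \eqref{Greenestb}.
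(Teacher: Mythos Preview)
This proposition is not proved in the present paper: it is stated in the Preliminaries section with the explicit attribution ``[Proposition 4.1]{GkiNg\_linear}'' and no argument is given here. The authors simply recall the two-sided Green kernel estimates from their companion paper and use them as a black box. Consequently there is no ``paper's own proof'' to compare your plan against.

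That said, a brief comment on your plan. The heat-kernel route via the Doob $h$-transform by $\phi_\mu$ is a standard and reasonable strategy for Green estimates of Schr\"odinger operators with critical Hardy-type potentials, and the factorization $\phi_\mu \approx d\, d_\Sigma^{-\am}$ is indeed what drives the form of the answer. The step that would need the most care is the verification that the weighted measure $\phi_\mu^2\,dx$ satisfies volume doubling together with a scale-invariant Poincar\'e inequality on $\Omega\setminus\Sigma$; near $\Sigma$ this is a computation in Fermi coordinates using $2\am < N-k$, and near $\partial\Omega$ it reduces to the classical Dirichlet case since $\Sigma\Subset\Omega$. Your Harnack-chain heuristic for the exponent $-\am$ is correct in spirit, but note that the actual proof in the cited reference does not proceed via heat kernels: it is based instead on sharp two-sided estimates obtained through comparison with explicit local barriers (powers of $d_\Sigma$ and, in the critical case, $d_\Sigma^{-H}\log d_\Sigma$), the boundary Harnack principle, and a 3G-type inequality. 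Either route can be made to work; the barrier/BHP approach tends to be more robust with respect to the geometry of $\Sigma$ and avoids the parabolic machinery entirely.
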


\begin{proposition}[{\cite[Theorem 1.2]{GkiNg_linear}}] \label{Martin} ~~
	
	(i) If $\mu< \left( \frac{N-2}{2}\right)^2$ then
	\be \label{Martinest1}
	K_{\mu}(x,\xi) \approx\left\{
	\BAL
	&\frac{d(x)d_\Sigma(x)^{-\am}}{|x-\xi|^N}\quad &&\text{if } x \in \Omega \setminus \Sigma,\;  \xi \in \partial\xO \\
	&\frac{d(x)d_\Sigma(x)^{-\am}}{|x-\xi|^{N-2-2\am}} &&\text{if } x \in \Omega \setminus \Sigma,\; \xi \in \Sigma.
	\EAL \right.
	\ee
	
	(ii) If  $k=0$, $\Sigma=\{0\}$ and $\mu= \left( \frac{N-2}{2}\right)^2$ then
	\be\label{Martinest2}
	K_{\mu}(x,\xi) \approx\left\{
	\BAL
	&\frac{d(x)|x|^{-\frac{N-2}{2}}}{|x-\xi|^N} \quad &&\text{if } x \in \Omega \setminus \{0\},\; \xi \in \partial\xO \\
	&d(x)|x|^{-\frac{N-2}{2}}\left|\ln\frac{|x|}{\CD_\Omega}\right| &&\text{if } x \in \Omega \setminus \{0\},\; \xi=0,
	\EAL \right.
	\ee
	where $\CD_\Omega:=2\sup_{x \in \Omega}|x|$.
	
	The implicit constant depends on $N,\Omega,\Sigma,\mu,p$.
\end{proposition}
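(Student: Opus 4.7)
The plan is to derive the two-sided bounds for $K_\mu$ from the Green-kernel estimates in Proposition \ref{Greenkernel} via the representation
\[
K_\mu(x,\xi) \;=\; \lim_{\substack{y\to\xi \\ y\in\Omega\setminus\Sigma}} \frac{G_\mu(x,y)}{G_\mu(x_0,y)},
\]
where $x_0 \in \Omega\setminus\Sigma$ is a fixed reference point. Under the standing assumption $\lambda_\mu > 0$, existence of this limit as a positive $L_\mu$-harmonic function of $x$ follows from the Martin boundary construction for $-L_\mu$, which in turn rests on interior and boundary Harnack inequalities for $-L_\mu$ together with the Green-kernel bounds already in hand. Granting the limit, the claimed two-sided estimates reduce to substituting \eqref{Greenesta}, or \eqref{Greenestb} in the critical case, into numerator and denominator and tracking the limiting behavior of each factor.

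I would first handle the subcritical range $\mu < (N-2)^2/4$ and split according to whether $\xi \in \partial\Omega$ or $\xi \in \Sigma$. If $\xi \in \partial\Omega$, then $d(y)\to 0$ while $d_\Sigma(y)\to d_\Sigma(\xi)>0$; the factor $d(y)$ in $1 \wedge d(x)d(y)|x-y|^{-2}$ cancels in the quotient and produces a contribution comparable to $d(x)|x-\xi|^{-2}$, the Euclidean prefactor yields $|x-\xi|^{2-N}$, and the Hardy factor $(1\wedge d_\Sigma(x)d_\Sigma(y)|x-y|^{-2})^{-\am}$ is bounded above and below by constants times $d_\Sigma(x)^{-\am}$ after splitting according to the two regimes $d_\Sigma(x)d_\Sigma(\xi) \leq |x-\xi|^2$ and its reverse. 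Multiplying the three factors delivers the first line of \eqref{Martinest1}. If instead $\xi \in \Sigma$, then $d(y) \to d(\xi) \geq \dist(\Sigma,\partial\Omega) > 0$ and $d_\Sigma(y)\to 0$, so for $y$ close enough to $\xi$ the Hardy factor is exactly $(d_\Sigma(x)d_\Sigma(y)|x-y|^{-2})^{-\am}$; the $d_\Sigma(y)^{-\am}$ piece cancels with the corresponding term for $x_0$, leaving $d_\Sigma(x)^{-\am}|x-\xi|^{2\am}$. Combined with the Euclidean factor $|x-\xi|^{2-N}$ and the $1\wedge d(x)d(y)|x-y|^{-2}$ factor, which contributes a multiple of $d(x)$ since $d(\xi)$ is bounded below by a positive constant, this gives $d(x)d_\Sigma(x)^{-\am}|x-\xi|^{-(N-2-2\am)}$, the second line of \eqref{Martinest1}.

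The main difficulty is the critical case $k=0$, $\Sigma=\{0\}$, $\mu = (N-2)^2/4$, where $\am=\ap=H=\tfrac{N-2}{2}$ is a double root and the Green kernel carries an extra logarithmic term in \eqref{Greenestb}. For $\xi \in \partial\Omega$ the argument of the previous paragraph applies essentially unchanged and yields the first line of \eqref{Martinest2}. For $\xi = 0$, the non-logarithmic part of $G_\mu$ gives, after normalization, a contribution of order $d(x)|x|^{-\frac{N-2}{2}}$, but this must be compared with the logarithmic piece $(|x||y|)^{-\frac{N-2}{2}}|\ln(1 \wedge |x-y|^2 d(x)^{-1}d(y)^{-1})|$. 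Since $|x-y|\to|x|$ and $d(y)\to d(0)>0$ as $y\to 0$, the argument of the logarithm tends to $|x|^2/(d(x)d(0))$; the $|y|^{-\frac{N-2}{2}}$ factors cancel against the corresponding factor in $G_\mu(x_0,y)$, and the surviving logarithm is comparable to $|\ln(|x|/\CD_\Omega)|$ up to additive constants absorbed in the implicit bounds. Verifying that the two terms in \eqref{Greenestb} pass cleanly to the limit, with the logarithmic term dominating the polynomial one for $|x|$ small and the latter being absorbed into the bound otherwise, is the delicate point and requires a brief case analysis on the relative sizes of $|x|$, $d(x)$ and $\CD_\Omega$.
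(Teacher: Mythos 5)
The paper itself does not prove this proposition: it is imported verbatim from \cite[Theorem 1.2]{GkiNg_linear}, so there is no internal argument to compare against, and your sketch is best judged against the cited source, whose route it essentially reproduces. Your reduction of \eqref{Martinest1}--\eqref{Martinest2} to the Green bounds \eqref{Greenesta}--\eqref{Greenestb} via the quotient $G_\mu(x,y)/G_\mu(x_0,y)$ is the natural argument, and the regime analysis is sound: for $\xi\in\partial\Omega$ one uses $d_\Sigma(\xi)\approx 1$ and the fact that $|x-\xi|$ small forces $d_\Sigma(x)\gtrsim \dist(\Sigma,\partial\Omega)$ to get $\bigl(1\wedge d_\Sigma(x)d_\Sigma(\xi)|x-\xi|^{-2}\bigr)^{-\am}\approx d_\Sigma(x)^{-\am}$, and symmetrically for $\xi\in\Sigma$ one uses $d(\xi)\approx 1$ to get $1\wedge d(x)d(\xi)|x-\xi|^{-2}\approx d(x)$; your critical-case bookkeeping (the $|y|^{-\frac{N-2}{2}}$ cancellation, the logarithm surviving as $\bigl|\ln\bigl(1\wedge |x|^2/(d(x)d(0))\bigr)\bigr|$, and the case analysis showing the bracket is $\approx d(x)\,|\ln(|x|/\CD_\Omega)|$) also checks out.

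The genuine gap is the step you dispose of in one sentence: that the kernel $K_\mu$ of this paper --- the one entering the Representation Theorem \ref{th:Rep} and the harmonic-measure framework --- is actually realized as $\lim_{y\to\xi}G_\mu(x,y)/G_\mu(x_0,y)$ for Euclidean approach $y\to\xi$, for every $\xi\in\partial\Omega\cup\Sigma$. Two-sided Green estimates alone only control $\liminf$ and $\limsup$ of the quotient up to multiplicative constants; they give neither existence of the limit nor the identification of the Martin boundary of $-L_\mu$ in $\Omega\setminus\Sigma$ with $\partial\Omega\cup\Sigma$ (equivalently, uniqueness of the kernel function with pole at $\xi$ and basis $x_0$, cf.\ \cite[Proposition 6.6]{GkiNg_linear}). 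That uniqueness/boundary-Harnack analysis is precisely the hard content of the cited theorem, and invoking ``the Martin boundary construction'' for it makes your argument conditional on the very result being proved. Granting that identification, your computation does yield the stated two-sided estimates; without it, you have only bounded subsequential limits of Green quotients, not $K_\mu$ itself.
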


The Green operator and Martin operator are respectively
\ba \label{Grt}
\BBG_\mu[\tau](x)=\int_{\xO \setminus \Sigma}G_{\mu}(x,y) \, \dd\tau(y), \quad \tau \in \GTM(\Omega \setminus \Sigma;\phi_\mu), \\
\label{Mrt} \mathbb{K}_\mu[\gn](x)=\int_{\partial\xO \cup \Sigma}K_{\mu}(x,y) \, \dd\xn(y), \quad \gn\in \mathfrak{M}(\partial\xO\cup \Sigma).
\ea

Next we recall the Representation theorem.
\begin{theorem}[{\cite[Theorem 1.3]{GkiNg_linear}}] \label{th:Rep} For any $\nu \in \GTM^+(\partial \Omega \cup \Sigma)$, the function $\BBK_{\mu}[\nu]$ is a positive $L_\mu$-harmonic function (i.e. $L_\mu \BBK_{\mu}[\nu]=0$ in the sense of distributions in $\Omega \setminus \Sigma$). Conversely, for any positive $L_\mu$-harmonic function $u$ (i.e. $L_\mu u = 0$ in the sense of distribution in $\Omega \setminus \Sigma$), there exists a unique measure $\nu \in \GTM^+(\partial \Omega \cup \Sigma)$ such that $u=\BBK_{\mu}[\nu]$.
\end{theorem}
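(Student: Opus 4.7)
The plan has two directions: verifying that $\BBK_\mu[\nu]$ is $L_\mu$-harmonic for every positive measure $\nu$, and conversely representing every positive $L_\mu$-harmonic function as $\BBK_\mu[\nu]$ for a unique $\nu$.

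First I would address the forward direction. For each fixed $y\in\partial\Omega\cup\Sigma$, the kernel $K_\mu(\cdot,y)$ is $L_\mu$-harmonic in $\Omega\setminus\Sigma$ by its construction (as a suitable limit of normalized Green functions $G_\mu(\cdot,y_n)/G_\mu(x_0,y_n)$ with $y_n\to y$). Given $\nu\in\GTM^+(\partial\Omega\cup\Sigma)$ and a test function $\zeta\in C_c^\infty(\Omega\setminus\Sigma)$, the sharp estimates in Proposition~\ref{Martin} imply that $K_\mu(x,y)$ is bounded for $x\in\supp\zeta$ uniformly in $y\in\partial\Omega\cup\Sigma$, since $\dist(\supp\zeta,\Sigma)>0$ and $\dist(\supp\zeta,\partial\Omega)>0$. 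Fubini's theorem then gives
\[
\int_\Omega \BBK_\mu[\nu](x)\,L_\mu\zeta(x)\,\dx \;=\; \int_{\partial\Omega\cup\Sigma}\!\!\left(\int_\Omega K_\mu(x,y)\,L_\mu\zeta(x)\,\dx\right)\dd\nu(y)\;=\;0,
\]
so $\BBK_\mu[\nu]$ is $L_\mu$-harmonic in the sense of distributions.

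For uniqueness in the converse direction, I would invoke the boundary-trace property $\tr(\BBK_\mu[\nu])=\nu$ recalled earlier in the introduction: if $\BBK_\mu[\nu_1]=\BBK_\mu[\nu_2]$, applying $\tr$ to both sides immediately yields $\nu_1=\nu_2$, and positivity is preserved.

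The existence half of the converse is the substantive part and is essentially the classical Martin representation theorem adapted to the singular operator $L_\mu$ on $\Omega\setminus\Sigma$. My plan is to fix a reference point $x_0\in\Omega\setminus\Sigma$ and introduce the normalized kernel $M_\mu(x,y):=G_\mu(x,y)/G_\mu(x_0,y)$. Using the two-sided Green estimates of Proposition~\ref{Greenkernel}, I would show that for each fixed $x$ the map $y\mapsto M_\mu(x,y)$ extends continuously to $\partial\Omega\cup\Sigma$ and that the limiting values coincide, up to the normalization $K_\mu(x_0,\xi)=1$, with $K_\mu(x,\xi)$ as given in Proposition~\ref{Martin}. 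Next I would verify that the abstract Martin compactification of $\Omega\setminus\Sigma$ associated to $-L_\mu$ has $\partial\Omega\cup\Sigma$ as its full minimal Martin boundary, by means of Harnack-type inequalities for $L_\mu$ together with the sharp behaviour of $K_\mu(\cdot,\xi)$ near $\xi$. The Martin--Choquet representation theorem then yields, for every positive $L_\mu$-harmonic $u$ normalized by $u(x_0)=1$, a unique probability measure on this minimal boundary; removing the normalization produces the desired $\nu\in\GTM^+(\partial\Omega\cup\Sigma)$ with $u=\BBK_\mu[\nu]$.

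The hard part will be verifying minimality of each boundary point $\xi\in\partial\Omega\cup\Sigma$. For $\xi\in\partial\Omega$ this is the classical boundary Harnack principle on a $C^2$ domain, applied away from $\Sigma$ where $L_\mu$ is uniformly elliptic. For $\xi\in\Sigma$ the degeneracy of the inverse-square potential forces a delicate analysis: one must use the weight $\tilde W$ from \eqref{tildeW} to compare $K_\mu(\cdot,\xi)$ with the generic behaviour of positive $L_\mu$-harmonic functions near $\Sigma$, and exploit the ratio estimate for
\[
\frac{G_\mu(x,y)}{K_\mu(x,\xi)} \;\approx\; \frac{|x-y|^{2-N}\bigl(1\wedge d_\Sigma(x)d_\Sigma(y)/|x-y|^2\bigr)^{-\am}(\cdots)}{d(x)d_\Sigma(x)^{-\am}|x-\xi|^{-(N-2-2\am)}}
\]
as $y\to\xi$ non-tangentially, to conclude that if a positive $L_\mu$-harmonic $v$ satisfies $v\le c\,K_\mu(\cdot,\xi)$, then $v$ is proportional to $K_\mu(\cdot,\xi)$. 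Once minimality is established, uniqueness of the representing measure follows from the boundary-trace step and the proof is complete.
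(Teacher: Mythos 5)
A structural remark first: the paper itself gives no proof of Theorem \ref{th:Rep} --- it is quoted verbatim from \cite[Theorem 1.3]{GkiNg_linear} --- so there is no in-paper argument to compare yours against; one can only judge your outline on its own and against the cited work. Your forward direction is fine: $K_\mu(\cdot,y)$ is $L_\mu$-harmonic by construction, $K_\mu$ is bounded on compact subsets of $\Omega\setminus\Sigma$ uniformly in $y$ by Proposition \ref{Martin}, and Fubini closes the argument. Using $\tr(\BBK_{\mu}[\nu])=\nu$ (Proposition \ref{traceKG}) for uniqueness is legitimate inside this paper's framework, although if you intend a self-contained proof you should check that the trace identity in \cite{GkiNg_linear} is not itself obtained as a consequence of the representation theorem, otherwise the uniqueness step is circular; the safer route is to get uniqueness directly from minimality within the Martin-theoretic machinery you set up.

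The genuine gap is that the substantive half of the converse is asserted rather than proved. The continuity of $y\mapsto G_\mu(x,y)/G_\mu(x_0,y)$ up to $\partial\Omega\cup\Sigma$, the fact that the Martin compactification produces no boundary points other than $\partial\Omega\cup\Sigma$, and above all the minimality of $K_\mu(\cdot,\xi)$ for $\xi\in\Sigma$ constitute essentially the entire content of the theorem. The two-sided bounds of Propositions \ref{Greenkernel}--\ref{Martin} give comparability with unspecified constants, and such estimates can never by themselves force the conclusion that a positive $L_\mu$-harmonic $v\le c\,K_\mu(\cdot,\xi)$ is a \emph{multiple} of $K_\mu(\cdot,\xi)$; your displayed ratio of Green and Martin kernels is a heuristic, not an argument. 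What is actually needed is a boundary Harnack or oscillation-decay result for $L_\mu$ near $\Sigma$ in the $\tilde W$-normalization, i.e. precisely the uniqueness of kernel functions with pole at $\xi$ and basis at $x_0$ that the present paper itself imports as \cite[Proposition 6.6]{GkiNg_linear}. So your plan follows the right (and, in substance, the cited paper's) route, but as written it defers the heart of the proof to a sketch that would not survive scrutiny.
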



\subsection{Notion of boundary trace} \label{subsec:boundarytrace}

Let $z \in \Omega \setminus \Sigma$ and $h\in C(\partial\Omega \cup \Sigma)$ and denote $\CL_{\mu ,z}(h):=v_h(z)$ where $v_h$ is the unique solution of the Dirichlet problem
\be \label{linear} \left\{ \BAL
L_{\mu}v&=0\qquad \text{in}\;\;\xO\setminus \Sigma\\
v&=h\qquad \text{on}\;\;\partial\xO\cup \Sigma.
\EAL \right. \ee
Here the boundary value condition in \eqref{linear} is understood in the sense that
\bal
\lim_{\dist(x,F)\to 0}\frac{v(x)}{\tilde W(x)}=h \quad \text{for every compact set } \; F\subset \partial \Omega \cup \Sigma.
\eal
The mapping $h\mapsto \CL_{\mu,z}(h)$ is a linear positive functional on $C(\partial\Omega \cup \Sigma)$. Thus there exists a unique Borel measure on $\partial\Omega \cup \Sigma$, called {\it $L_{\mu}$-harmonic measure in $\partial \Omega \cup \Sigma$ relative to $z$} and  denoted by $\omega_{\Omega \setminus \Sigma}^{z}$, such that
\bal
v_{h}(z)=\int_{\partial\Omega\cup \Sigma}h(y) \, \dd\omega_{\Omega \setminus \Sigma}^{z}(y).
\eal
Let $x_0 \in \Omega \setminus \Sigma$ be a fixed reference point. Let $\{\xO_n\}$ be an increasing sequence of bounded $C^2$ domains  such that
\ba\label{Omegan}  \overline{\xO}_n\subset \xO_{n+1}, \quad \cup_n\xO_n=\xO, \quad \mathcal{H}^{N-1}(\partial \Omega_n)\to \mathcal{H}^{N-1}(\partial \Omega),
\ea
where $\mathcal{H}^{N-1}$ denotes the $(N-1)$-dimensional Hausdorff measure in $\R^N$.
Let $\{\Sigma_n\}$ be a decreasing sequence of bounded $C^2$ domains  such that
\ba\label{Kn} \Sigma \subset \Sigma_{n+1}\subset\overline{\Sigma}_{n+1}\subset \Sigma_{n}\subset\overline{\Sigma}_{n} \subset\Omega_n, \quad \cap_n \Sigma_n=\Sigma.
\ea
For each $n$, set $O_n=\xO_n\setminus \Sigma_n$  and assume that $x_0 \in O_1$. Such a sequence $\{O_n\}$ will be called a {\it $C^2$ exhaustion} of $\Gw\setminus \Sigma$.

Then $-L_\mu$ is uniformly elliptic and coercive in $H^1_0(O_n)$ and its first eigenvalue $\lambda_\mu^{O_n}$ in $O_n$ is larger than its first eigenvalue $\lambda_\mu$ in $\Omega \setminus \Sigma$.

For $h\in C(\prt O_n)$, the following problem
\be\label{sub12} \left\{ \BAL
-L_{\xm } v&=0\qquad&&\text{in } O_n\\
v&=h\qquad&&\text{on } \prt O_n,
\EAL \right.
\ee
admits a unique solution which allows to define the $L_{\xm }$-harmonic measure $\omega_{O_n}^{x_0}$ on $\prt O_n$
by
\be\label{redu2}
v(x_0)=\myint{\prt O_n}{}h(y) \,\dd\gw^{x_0}_{O_n}(y).
\ee

Let $G^{O_n}_\xm(x,y)$ be the Green kernel of $-L_\mu$ on $O_n$.  Then $G^{O_n}_\xm(x,y)\uparrow G_\mu(x,y)$ for $x,y\in\xO\setminus \xS, x \neq y$.

We recall below the definition of boundary trace which is defined in a \textit{dynamic way}.

\begin{definition}[Boundary trace] \label{nomtrace}
	A function $u\in W^{1,\kappa}_{loc}(\xO\setminus\xS)$ for some $\kappa>1,$ possesses a \emph{boundary trace}  if there exists a measure $\nu \in\GTM(\partial \Omega \cup \Sigma)$ such that for any $C^2$ exhaustion  $\{ O_n \}$ of $\Omega \setminus \Sigma$, there  holds
	\be\label{trab}
	\lim_{n\rightarrow\infty}\int_{ \partial O_n}\phi u\, \dd \omega_{O_n}^{x_0}=\int_{\partial \Omega \cup \Sigma} \phi \,\dd \nu \quad\forall \phi \in C(\overline{\Omega}).
	\ee
	The boundary trace of $u$ is denoted by $\tr(u)$.
\end{definition}

\begin{proposition}[Proposition 1.8 in \cite{GkiNg_linear}] \label{traceKG} ~~
	
	(i) For any $\nu \in \GTM(\partial \Omega \cup \Sigma)$, $\tr(\BBK_{\mu}[\nu])=\nu$.
	
	(ii) For any $\tau \in \GTM(\Omega \setminus \Sigma;\ei)$, $\tr(\BBG_\mu[\tau])=0$.
\end{proposition}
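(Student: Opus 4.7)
The plan is to verify the dynamic definition of boundary trace (Definition \ref{nomtrace}) for $\BBK_\mu[\nu]$ and $\BBG_\mu[\tau]$ separately, using Fubini's theorem to reduce to the analysis of the kernels $K_\mu(\cdot,y)$ and $G_\mu(\cdot,y)$ against the harmonic measures $\omega_{O_n}^{x_0}$ on $\partial O_n$, and then exploiting the fact that $K_\mu(\cdot,y)$ is $L_\mu$-harmonic while $G_\mu(\cdot,y)$ has zero boundary trace. By linearity, both parts reduce to the case of positive measures.

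For part (i), fix $\phi\in C(\overline{\Omega})$ and, by Fubini, rewrite
\bal
\int_{\partial O_n}\phi\,\BBK_\mu[\nu]\,\dd\omega_{O_n}^{x_0}
=\int_{\partial\Omega\cup\Sigma}I_n^{\phi}(y)\,\dd\nu(y),
\qquad
I_n^{\phi}(y):=\int_{\partial O_n}\phi(x)K_\mu(x,y)\,\dd\omega_{O_n}^{x_0}(x).
\eal
By Theorem \ref{th:Rep}, $K_\mu(\cdot,y)$ is a positive $L_\mu$-harmonic function, so the mean value identity with respect to $\omega_{O_n}^{x_0}$ gives $I_n^{1}(y)=K_\mu(x_0,y)=1$ (under the standard Martin normalization at $x_0$). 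Hence the measures $\mu_n^y:=K_\mu(\cdot,y)\,\omega_{O_n}^{x_0}\!\!\upharpoonright_{\partial O_n}$ are probability measures on $\partial O_n$, and $|I_n^{\phi}(y)|\leq \|\phi\|_{L^\infty}$ uniformly in $n,y$. The core step is to show that $\mu_n^y\rightharpoonup \delta_y$ weakly for each $y\in\partial\Omega\cup\Sigma$: this follows from the Martin kernel estimates in Proposition \ref{Martin}, which force $K_\mu(x,y)$ to blow up as $x\to y$ while remaining controlled away from $y$, so that the probability mass of $\mu_n^y$ concentrates in any prescribed neighborhood of $y$ as $O_n$ exhausts $\Omega\setminus\Sigma$. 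This yields $I_n^{\phi}(y)\to\phi(y)$, and dominated convergence against the finite measure $\nu$ with the bound $\|\phi\|_{L^\infty}$ gives $\int\phi\,\BBK_\mu[\nu]\,\dd\omega_{O_n}^{x_0}\to\int\phi\,\dd\nu$, which is exactly $\tr(\BBK_\mu[\nu])=\nu$.

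For part (ii), apply Fubini similarly to get
\bal
\int_{\partial O_n}\phi\,\BBG_\mu[\tau]\,\dd\omega_{O_n}^{x_0}
=\int_{\Omega\setminus\Sigma}J_n^{\phi}(y)\,\dd\tau(y),
\qquad
J_n^{\phi}(y):=\int_{\partial O_n}\phi(x)G_\mu(x,y)\,\dd\omega_{O_n}^{x_0}(x).
\eal
Fix $y\in\Omega\setminus\Sigma$ and, for $n$ so large that $y\in O_n$, compare $G_\mu(\cdot,y)$ with the Green kernel $G_\mu^{O_n}(\cdot,y)$ of $-L_\mu$ on $O_n$, which vanishes on $\partial O_n$. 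Then $G_\mu(\cdot,y)-G_\mu^{O_n}(\cdot,y)$ is $L_\mu$-harmonic in $O_n$, so the mean value identity gives $\int_{\partial O_n}\phi\,G_\mu(\cdot,y)\,\dd\omega_{O_n}^{x_0}$ is controlled by $\|\phi\|_{L^\infty}\bigl(G_\mu(x_0,y)-G_\mu^{O_n}(x_0,y)\bigr)$, which tends to $0$ by the monotone convergence $G_\mu^{O_n}(x_0,y)\uparrow G_\mu(x_0,y)$ recalled in Subsection \ref{subsec:boundarytrace}. Therefore $J_n^{\phi}(y)\to 0$ pointwise on $\Omega\setminus\Sigma$, while $|J_n^{\phi}(y)|\leq \|\phi\|_{L^\infty}G_\mu(x_0,y)\lesssim \|\phi\|_{L^\infty}\phi_\mu(y)$ by Proposition \ref{Greenkernel} and \eqref{eigenfunctionestimates}; the latter is $\tau$-integrable since $\tau\in\GTM(\Omega\setminus\Sigma;\phi_\mu)$. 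Dominated convergence then gives the right-hand side goes to $0$, proving $\tr(\BBG_\mu[\tau])=0$.

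The main obstacle is the concentration statement $\mu_n^y\rightharpoonup\delta_y$ in part (i), because the harmonic measures $\omega_{O_n}^{x_0}$ on the inner boundary near $\Sigma$ do not have a simple form and must be handled using the sharp two-sided estimates on $K_\mu$ near $\Sigma$ from Proposition \ref{Martin}, together with (boundary) Harnack-type comparisons between $K_\mu(\cdot,y)$ and $K_\mu(\cdot,y')$ for $y'$ near $y$; the estimate must be uniform enough that the mass outside any fixed neighborhood of $y$ decays to zero as $n\to\infty$, simultaneously at the exterior piece $\partial\Omega_n$ and the interior piece $\partial\Sigma_n$.
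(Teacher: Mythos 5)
Your overall strategy—Fubini plus the mean value identity $\int_{\partial O_n}u\,\dd\omega_{O_n}^{x_0}=u(x_0)$ for $L_\mu$-harmonic $u$, which is legitimate because $\overline{O_n}\Subset\Omega\setminus\Sigma$ and $-L_\mu$ is coercive on $O_n$—is sound, and part (ii) is essentially complete: $\int_{\partial O_n}G_\mu(\cdot,y)\,\dd\omega_{O_n}^{x_0}=G_\mu(x_0,y)-G_\mu^{O_n}(x_0,y)\downarrow 0$ does the job. The one inaccuracy there is your dominating function: the bound $G_\mu(x_0,\cdot)\lesssim\phi_\mu$ fails near $x_0$, where $G_\mu(x_0,y)\approx|x_0-y|^{2-N}$ while $\phi_\mu$ is bounded, and $G_\mu(x_0,\cdot)$ need not even be $|\tau|$-integrable (take $\tau=\delta_{x_0}$, which lies in $\GTM(\Omega\setminus\Sigma;\phi_\mu)$). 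This is fixable: for $n\geq n_1$ the integrand is bounded by $G_\mu(x_0,y)-G_\mu^{O_{n_1}}(x_0,y)$, which is $\leq\sup_{z\in\partial O_{n_1}}G_\mu(z,y)$, hence bounded for $y$ near $x_0$, and is $\lesssim\phi_\mu(y)$ away from $x_0$, so monotone or dominated convergence still applies. (For the record, the present paper quotes this proposition from \cite{GkiNg_linear} without proof, so the comparison here is with what a complete argument requires.)

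The genuine gap is in part (i), exactly at the step you yourself flag: the concentration $\mu_n^y\rightharpoonup\delta_y$. The justification offered—that $K_\mu(\cdot,y)$ "remains controlled away from $y$"—is not correct near $\Sigma$ when $\am>0$: by Proposition \ref{Martin}, for $\xi\in\Sigma\setminus\{y\}$ (or for $y\in\partial\Omega$ and $\xi\in\Sigma$) one has $K_\mu(x,y)\approx d_\Sigma(x)^{-\am}\to\infty$ as $x\to\xi$, with a logarithmic correction when $\mu=H^2$. So the mass of $\mu_n^y$ on the inner boundary $\partial\Sigma_n$ away from $y$ cannot be dismissed by boundedness of the kernel: one must show quantitatively that the harmonic measures $\omega_{O_n}^{x_0}$ charge $\partial\Sigma_n$ only at the rate $d_\Sigma^{\ap}$ (resp. $d_\Sigma^{H}|\ln d_\Sigma|^{-1}$ if $\mu=H^2$), so that the spurious contribution is of order $d_\Sigma^{\ap-\am}\to 0$; this is precisely the information encoded in the normalization by $\tilde W$ and in statements like \cite[Proposition 6.12]{GkiNg_linear}, which is how the present paper argues in the proof of Theorem \ref{solL1}. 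A companion quantitative estimate is also needed on the outer pieces $\partial\Omega_n$ away from $y$ (there the factor $d(x)$ helps, but one still needs control of $\omega_{O_n}^{x_0}$ against surface measure, essentially the Poisson-kernel analysis of Proposition \ref{measureboundary}), and the passage to the limit against $\nu$ must be uniform enough in $y$ for dominated convergence. Since Proposition \ref{traceKG} is itself the tool one would normally invoke for such trace computations, these estimates have to be derived directly from the construction of $\omega_{O_n}^{x_0}$ and the two-sided bounds on $K_\mu$; as written, your proposal asserts rather than proves the decisive step, so part (i) is incomplete.
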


\subsection{Boundary value problem for linear equations}
\label{subsec:linear}
\begin{definition}
 Let $\tau\in\mathfrak{M}(\xO\setminus \Sigma;\ei)$ and $\nu \in \mathfrak{M}(\partial\xO\cup \Sigma)$. We will say that $u$ is a weak solution of
\be\label{NHL} \left\{ \BAL
- L_\gm u&=\tau\qquad \text{in }\;\Gw\setminus \Sigma,\\
\tr(u)&=\xn,
\EAL \right.
\ee
if $u\in L^1(\xO\setminus \Sigma;\ei)$ and $u$ satisfies
\be \label{lweakform}
	- \int_{\Gw}u L_{\xm }\xi \, \dd x=\int_{\Gw \setminus \Sigma} \xi \, \dd \tau - \int_{\Gw} \mathbb{K}_{\xm}[\xn]L_{\xm }\xi \, \dd x
	\qquad\forall \xi \in\mathbf{X}_\xm(\xO\setminus \Sigma).
	\ee
\end{definition}

\begin{theorem}[{\cite[Theorem 1.8]{GkiNg_linear}}] \label{linear-problem}
Let $\tau,\rho\in\mathfrak{M}(\xO\setminus \Sigma;\ei)$, $\xn \in \mathfrak{M}(\partial\xO\cup \Sigma)$ and $f\in L^1(\xO;\ei)$.  Then there exists a unique weak solution $u\in L^1(\xO;\ei)$ of \eqref{NHL}. Furthermore
\be \label{reprweaksol}
u=\mathbb{G}_{\mu}[\tau]+\mathbb{K}_{\xm}[\xn]
\ee
and for any $\zeta \in \mathbf{X}_\xm(\xO\setminus \Sigma),$ there holds
\ba \label{esti2}
\|u\|_{L^1(\Omega;\ei)} \leq \frac{1}{\lambda_\mu}\| \tau \|_{\GTM(\Omega \setminus \Sigma;\ei)} + C \| \nu \|_{\GTM(\partial\Omega \cup \Sigma)},
\ea
where $C=C(N,\Omega,\Sigma,\mu)$.
In addition, if $\dd \tau=f\dd x+\dd\rho$ then, for any $0 \leq \zeta \in \mathbf{X}_\xm(\xO\setminus \Sigma)$, the following estimates are valid
\be\label{poi4}
-\int_{\Gw}|u|L_{\xm }\zeta \, \dd x\leq \int_{\Gw}\sign(u)f\zeta\, \dd x +\int_{\Gw \setminus \Sigma}\zeta \, \dd|\rho|-
\int_{\Gw}\mathbb{K}_{\xm}[|\xn|] L_{\xm }\zeta \, \dd x,
\ee
\be\label{poi5}
-\int_{\Gw}u^+L_{\xm }\zeta \, \dd x\leq \int_{\Gw} \sign^+(u)f\zeta\, \dd x +\int_{\Gw \setminus \Sigma}\zeta\, \dd\rho^+-
\int_{\Gw}\mathbb{K}_{\xm}[\nu^+]L_{\xm }\zeta \,\dd x.
\ee
\end{theorem}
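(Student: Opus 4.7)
The plan is to prove Theorem \ref{linear-problem} in four stages: representation of the candidate solution, the a priori $L^1$-bound \eqref{esti2} via the first eigenfunction, the Kato-type inequalities \eqref{poi4}--\eqref{poi5} by a regularisation procedure on an exhaustion, and uniqueness as a consequence of the Kato inequalities.

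First, I would verify that $u=\BBG_\mu[\tau]+\BBK_\mu[\nu]$ belongs to $L^1(\Omega;\phi_\mu)$ and satisfies the weak formulation \eqref{lweakform}. Integrability and the estimate \eqref{esti2} follow from Fubini together with the identity
\begin{equation*}
\int_\Omega G_\mu(x,y)\phi_\mu(x)\,dx=\lambda_\mu^{-1}\phi_\mu(y),
\end{equation*}
obtained by formally pairing $G_\mu(\cdot,y)$ with $-L_\mu\phi_\mu=\lambda_\mu\phi_\mu$, and from the uniform bound $\int_\Omega K_\mu(\cdot,\xi)\phi_\mu\,dx\lesssim 1$ for $\xi\in\partial\Omega\cup\Sigma$, which one reads off from Propositions \ref{Greenkernel}--\ref{Martin} combined with \eqref{eigenfunctionestimates}. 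To check \eqref{lweakform}, for $\zeta\in\mathbf{X}_\mu(\Omega\setminus\Sigma)$ the bound $|\zeta|\lesssim\phi_\mu$ justifies a Fubini swap giving $-\int_\Omega\BBG_\mu[\tau]\,L_\mu\zeta\,dx=\int_{\Omega\setminus\Sigma}\zeta\,d\tau$, while the contribution from $\BBK_\mu[\nu]$ is exactly the boundary term on the right-hand side of \eqref{lweakform}.

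Second, and most delicate, is establishing the Kato inequalities \eqref{poi4}--\eqref{poi5}. I would fix the $C^2$ exhaustion $\{O_n\}$ from \eqref{Omegan}--\eqref{Kn} on which $-L_\mu$ is uniformly elliptic and coercive, solve the regularised problem $-L_\mu u_n=f_n+\rho_n$ on $O_n$ with $u_n=\BBK_\mu[\nu]$ on $\partial O_n$ for smooth data $f_n,\rho_n$, and apply the classical Brezis--Strauss regularisation: approximate $s\mapsto|s|$ by the smooth convex $\Phi_\varepsilon(s)=\sqrt{s^2+\varepsilon^2}-\varepsilon$, for which $L_\mu\Phi_\varepsilon(u_n)\ge \Phi_\varepsilon'(u_n)L_\mu u_n$ holds distributionally, test against a nonnegative $\zeta\in\mathbf{X}_\mu$, and send $\varepsilon\downarrow 0$. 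The positive-part analogue \eqref{poi5} is produced by replacing $\Phi_\varepsilon$ with $\tfrac12(s+\Phi_\varepsilon(s))$. Uniqueness is then an immediate corollary: if $u_1,u_2$ both solve \eqref{NHL} for the same data, $w=u_1-u_2$ is a weak solution with $\tau=0$, $\nu=0$, and choosing $\zeta=\phi_\mu\in\mathbf{X}_\mu$ with $-L_\mu\phi_\mu=\lambda_\mu\phi_\mu$ reduces \eqref{poi4} to $\lambda_\mu\int_\Omega|w|\phi_\mu\,dx\le 0$, forcing $w\equiv 0$ since $\lambda_\mu>0$.

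The main obstacle is the passage to the limit $n\to\infty$ in the Kato inequality, because the singular potential $\mu\,d_\Sigma^{-2}$ degenerates on $\Sigma$ and the boundary trace on $\partial\Omega\cup\Sigma$ is measured only in the dynamic sense \eqref{trab} weighted by the function $\tilde W$ of \eqref{tildeW}. The interior contribution handles itself through the monotone convergence $G_\mu^{O_n}\uparrow G_\mu$ together with dominated convergence against $\zeta\lesssim\phi_\mu$; the boundary contribution on $\partial O_n$ requires the sharp two-sided Martin estimates \eqref{Martinest1}--\eqref{Martinest2}, the normalisation built into $\tilde W$, and the identification $\tr(\BBK_\mu[\nu])=\nu$ from Proposition \ref{traceKG}, so that the integrals $\int_{\partial O_n}|u_n|\,\zeta\,d\omega^{x_0}_{O_n}$ converge to $\int_\Omega\BBK_\mu[|\nu|]L_\mu\zeta\,dx$ in the desired way.
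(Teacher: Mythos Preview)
The present paper does not contain a proof of Theorem~\ref{linear-problem}: it is stated with the citation \cite[Theorem 1.8]{GkiNg_linear} and invoked throughout as a known input from the authors' companion paper on the linear theory. There is therefore nothing in this paper to compare your proposal against.

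That said, your outline follows the standard architecture one would expect for such a result (and is in the spirit of the Marcus--V\'eron approach in \cite{MVbook} on which the cited linear paper is modelled): the representation $u=\BBG_\mu[\tau]+\BBK_\mu[\nu]$ together with the eigenfunction identity $\BBG_\mu[\phi_\mu]=\lambda_\mu^{-1}\phi_\mu$ gives \eqref{esti2}, and the Kato inequalities are obtained by approximation on a $C^2$ exhaustion $\{O_n\}$ where $-L_\mu$ is uniformly elliptic, followed by passage to the limit using the trace machinery of Definition~\ref{nomtrace} and Proposition~\ref{traceKG}. One point to be careful about in your sketch: the convexity inequality $L_\mu\Phi_\varepsilon(u_n)\ge \Phi_\varepsilon'(u_n)L_\mu u_n$ goes the wrong way for the operator $L_\mu=\Delta+\mu d_\Sigma^{-2}$ as written (recall $-L_\mu$ is the positive operator here), and the zero-order term $\mu d_\Sigma^{-2}$ does not interact with convexity the same way the Laplacian does; the correct route is to apply Kato's inequality to $-\Delta$ alone and treat the potential term separately, or to work with the ratio $u_n/\phi_\mu$ and the transformed divergence-form operator $-\mathrm{div}(\phi_\mu^2\nabla\cdot)$ as the paper does elsewhere (cf.\ \eqref{harmonicrel2}). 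If you intend to reconstruct the full proof, you should consult \cite{GkiNg_linear} directly.
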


\subsection{Weak Lebesgue estimates on Green kernel and Martin kernel}
In this subsection, we present sharp weak Lebesgue estimates for the Green kernel and Martin kernel.

We first recall the definition of weak Lebesgue spaces (or Marcinkiewicz spaces). Let $D \subset \R^N$ be a domain. Denote by $L^\kappa_w(D;\tau)$, $1 \leq \kappa < \infty$, $\tau \in \GTM^+(D)$, the
weak Lebesgue space (or Marcinkiewicz space) defined as follows: a measurable function $f$ in $D$
belongs to this space if there exists a constant $c$ such that
\bel{distri} \gl_f(a;\tau):=\tau(\{x \in D: |f(x)|>a\}) \leq ca^{-\kappa},
\forevery a>0. \ee
The function $\gl_f$ is called the distribution function of $f$ (relative to
$\tau$). For $\kappa \geq 1$, denote
\bal
L^\kappa_w(D;\tau)=\{ f \text{ Borel measurable}:
\sup_{a>0}a^\kappa\gl_f(a;\tau)<\infty\},
\eal
\bel{semi}
\norm{f}^*_{L^\kappa_w(D;\tau)}=(\sup_{a>0}a^\kappa\gl_f(a;\tau))^{\frac{1}{\kappa}}. \ee
The $\norm{.}_{L^\kappa_w(D;\tau)}^*$ is not a norm, but for $\kappa>1$, it is
equivalent to the norm
\bel{normLw} \norm{f}_{L^\kappa_w(D;\tau)}=\sup\left\{
\frac{\int_{A}|f|\dd\tau}{\tau(A)^{1-\frac{1}{\kappa}}}: A \sbs D, A \text{
	measurable},\, 0<\tau(A)<\infty \right\}. \ee
More precisely,
\bel{equinorm} \norm{f}^*_{L^\kappa_w(D;\tau)} \leq \norm{f}_{L^\kappa_w(D;\tau)}
\leq \frac{\kappa}{\kappa-1}\norm{f}^*_{L^\kappa_w(D;\tau)}. \ee

When $\dd\tau=\varphi \, \dd x$ for some positive continuous function $\varphi$, for simplicity, we use the notation $L_w^\kappa(D;\varphi)$.  Notice that
\bel{LpLpweak}
L_w^\kappa(D;\varphi) \sbs L^{r}(D;\varphi) \quad \text{for any } r \in [1,\kappa).
\ee
From \eqref{semi} and \eqref{equinorm}, one can derive the following estimate which is useful in the sequel. For any $f \in L_w^\kappa(D;\varphi)$, there holds
\bel{ue} \int_{\{x \in D: |f(x)| \geq s\} }\varphi \, \dd x \leq s^{-\kappa}\norm{f}^\kappa_{L_w^\kappa(D;\varphi)}.
\ee

Recall that $\am$ is defined in \eqref{apm}. Put
\be \label{p5}
p_{\am}:=\min\left\{\frac{N-\am}{N-2-\am},\frac{N+1}{N-1}\right\}.
\ee
Notice that if $\mu>0$ then $\am>0$, hence $p_{\am}=\frac{N+1}{N-1}$.
\begin{theorem}[Theorem 3.8 and Theorem 3.9 in \cite{GkiNg_source}] \label{lpweakgreen}
	There holds
	\bel{estgreen}
	\norm{\BBG_\mu[\gt]}_{L_w^{p_{\am}}(\Gw\setminus \Sigma;\ei)} \lesssim \norm{\gt}_{\mathfrak{M}(\xO\setminus \Sigma;\ei)}, \quad \forall \tau\in \mathfrak{M}^+(\xO\setminus \Sigma;\ei).
	\ee
	The implicit constant depends on $N,\Omega,\Sigma,\mu$.
\end{theorem}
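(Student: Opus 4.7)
The plan is to prove the weak-type bound by the standard distribution function / duality approach, and then reduce everything to a pointwise estimate on the Green kernel combined with careful geometric case analysis.

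First, using the equivalence \eqref{equinorm} and the dual characterization \eqref{normLw} of the weak-Lebesgue norm, it suffices to show that for every measurable $A \subset \Omega \setminus \Sigma$ and every $\tau \in \mathfrak{M}^+(\Omega \setminus \Sigma; \phi_\mu)$,
\[
\int_A \BBG_\mu[\tau](x)\,\phi_\mu(x)\,\dd x
\;\lesssim\; \|\tau\|_{\mathfrak{M}(\Omega\setminus\Sigma;\phi_\mu)}\,\Bigl(\int_A \phi_\mu\,\dd x\Bigr)^{1-1/p_{\am}}.
\]
By Tonelli's theorem (with $\tau\geq 0$) this follows from the pointwise-in-$y$ kernel estimate
\[
\int_A G_\mu(x,y)\,\phi_\mu(x)\,\dd x
\;\lesssim\; \phi_\mu(y)\,\Bigl(\int_A \phi_\mu\,\dd x\Bigr)^{1-1/p_{\am}},
\qquad y\in\Omega\setminus\Sigma,
\]
uniformly in $y$ and $A$, and this in turn is equivalent, via the layer-cake formula, to the level-set estimate
\[
\int_{\{x:\,G_\mu(x,y)>s\}}\phi_\mu(x)\,\dd x
\;\lesssim\;\phi_\mu(y)^{p_{\am}}\,s^{-p_{\am}},\qquad s>0.
\]
So the whole problem is to bound the $\phi_\mu$-measure of the super-level sets of $G_\mu(\cdot,y)$ uniformly in $y$.

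To estimate these super-level sets I will plug in the two-sided bound from Proposition~\ref{Greenkernel} together with $\phi_\mu(x)\approx d(x)\,d_\Sigma(x)^{-\am}$ from \eqref{eigenfunctionestimates}. In the subcritical range $\mu<(\tfrac{N-2}{2})^2$ this reads
\[
G_\mu(x,y)\;\lesssim\;|x-y|^{2-N}\Bigl(1\wedge\tfrac{d(x)d(y)}{|x-y|^2}\Bigr)\Bigl(1\wedge\tfrac{d_\Sigma(x)d_\Sigma(y)}{|x-y|^2}\Bigr)^{-\am}.
\]
I will then decompose $\Omega\setminus\Sigma$ into the four geometric regimes determined by whether each of the two factors $1\wedge(\cdot)$ equals $1$ or not, namely according to whether $|x-y|$ is small or large compared to $\sqrt{d(x)d(y)}$ and $\sqrt{d_\Sigma(x)d_\Sigma(y)}$. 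In the ``interior'' regime $|x-y|\ll d(x)\wedge d_\Sigma(x)$ and its $y$-counterpart, $G_\mu(x,y)\approx|x-y|^{2-N}$ and $\phi_\mu(x)\approx\phi_\mu(y)$ locally, which yields the classical bound with exponent $N/(N-2)$. In the ``boundary'' regime near $\partial\Omega$, the extra factor $d(x)d(y)/|x-y|^2$ combined with the weight $d(x)$ gives the Marcus--V\'eron-type exponent $(N+1)/(N-1)$. In the ``Hardy'' regime near $\Sigma$, the factor $(d_\Sigma(x)d_\Sigma(y)/|x-y|^2)^{-\am}$ together with the weight $d_\Sigma(x)^{-\am}$ is handled by using local Fermi coordinates around $\Sigma$ (as prepared in Section~\ref{assumptionK} via $\dd_\Sigma^\xi$) and integrating in $(N-k)$-dimensional slices; this produces the exponent $(N-\am)/(N-2-\am)$. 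Taking the minimum of the exponents obtained in all regimes gives exactly $p_{\am}$ defined in \eqref{p5}.

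The borderline case $\mu=(\tfrac{N-2}{2})^2$, $\Sigma=\{0\}$, is treated separately using the logarithmic estimate \eqref{Greenestb}; the extra term $(|x||y|)^{-(N-2)/2}|\ln(1\wedge|x-y|^2/d(x)d(y))|$ is controlled by noting that it lies in every $L^q$ for $q<N/(N-2)$ against $\phi_\mu$, which is harmless since here $\am=(N-2)/2$ and $p_{\am}=(N+2)/(N-2)$ is still governed by the boundary term. The main obstacle is the bookkeeping in the Hardy regime near $\Sigma$: one must verify, in a thin tubular neighbourhood where two singular weights compete, that after integration in the $k$-dimensional tangential and $(N-k)$-dimensional normal directions the level-set measure really behaves like $\phi_\mu(y)^{p_{\am}} s^{-p_{\am}}$ uniformly in $y$ (both for $y$ close to $\Sigma$ and $y$ far from $\Sigma$), and that the constants do not blow up as $\am\uparrow H$. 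This is where the geometric preparation of Subsection~\ref{assumptionK} and the sharpness of the Green function bound in Proposition~\ref{Greenkernel} are essential.
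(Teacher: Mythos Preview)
The paper does not prove this statement at all: Theorem~\ref{lpweakgreen} is quoted without proof from the companion paper \cite{GkiNg_source} (Theorems~3.8 and~3.9 there). So there is no ``paper's own proof'' to compare against; what you have written is a self-contained sketch of how such an estimate is typically established.

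Your strategy --- reduce via \eqref{normLw} and Tonelli to a uniform-in-$y$ bound on $\int_A G_\mu(x,y)\phi_\mu(x)\,\dd x$, convert this to a super-level-set estimate for $G_\mu(\cdot,y)$ with weight $\phi_\mu$, then plug in Proposition~\ref{Greenkernel} and \eqref{eigenfunctionestimates} and split according to the relative sizes of $|x-y|$, $d$, $d_\Sigma$ --- is exactly the standard route for weak-Lebesgue bounds on Green operators in weighted settings, and is almost certainly what is done in \cite{GkiNg_source}. The reduction steps you give are correct, including the equivalence between the set-integral bound and the level-set bound.

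One small numerical slip: in the critical case $k=0$, $\mu=\bigl(\tfrac{N-2}{2}\bigr)^2$ you have $\am=\tfrac{N-2}{2}>0$, hence by \eqref{p5} $p_{\am}=\min\bigl\{\tfrac{N+1}{N-1},\tfrac{N+2}{N-2}\bigr\}=\tfrac{N+1}{N-1}$, not $\tfrac{N+2}{N-2}$. This does not break your argument, since $\tfrac{N+1}{N-1}<\tfrac{N}{N-2}$, but be aware that for the logarithmic correction in \eqref{Greenestb} you must actually verify the level-set bound $\int_{\{\,\cdot\,>s\}}\phi_\mu\,\dd x\lesssim \phi_\mu(y)^{p_{\am}}s^{-p_{\am}}$ with the correct dependence on $y$, not merely membership in some $L^q(\Omega;\phi_\mu)$; a pure $L^q$ bound uniform only in $x$ is not enough. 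Likewise, in the Hardy regime near $\Sigma$ the bookkeeping you flag as ``the main obstacle'' is genuinely the crux, and one has to treat separately the cases $\am\ge 0$ and $\am<0$ (the factor $(1\wedge t)^{-\am}$ switches between amplifying and damping), and the cases $d_\Sigma(y)$ small versus bounded away from zero, before the exponent $\tfrac{N-\am}{N-2-\am}$ drops out cleanly.
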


%
%

\begin{theorem}[Theorem 3.10 in \cite{GkiNg_source}]\label{lpweakmartin1} ~~
	
\noindent	{\sc I.} Assume $\mu \leq H^2$ and $\gn\in \mathfrak{M}(\partial\xO\cup \Sigma)$ with compact support in $\partial\xO.$ Then
	\bel{estmartin1}
	\norm{\mathbb{K}_\mu[\nu]}_{L_w^{\frac{N+1}{N-1}}(\Gw\setminus \Sigma;\ei)} \lesssim \|\nu\|_{\mathfrak{M}(\partial\Omega)}.
	\ee
	
\noindent	{\sc II.} Assume $\gn\in \mathfrak{M}(\partial\xO\cup \Sigma)$ with compact support in $\Sigma$.
	
\noindent	(i) If $\mu < \left( \frac{N-2}{2} \right)^2$ then
	\bel{estmartin2}
	\norm{\mathbb{K}_{\mu}[\nu]}_{L_w^{\frac{N-\am}{N-\am-2}}(\Gw\setminus \Sigma;\ei)} \lesssim \norm{\nu}_{\mathfrak{M}(\Sigma)}.
	\ee
	
\noindent 	(ii) If $k=0$, $\Sigma=\{0\}$ and $\mu = \left( \frac{N-2}{2} \right)^2$ then for any $1<\theta<\frac{N+2}{N-2}$,
	\bel{estmartin2cr}
	\norm{\mathbb{K}_{\mu}[\nu]}_{L_w^{\theta}(\Gw\setminus \{0\};\ei )} \lesssim \norm{\nu}_{\mathfrak{M}(\Sigma)}.
	\ee
In addition, for $\lambda>0$, set
\ba \label{69a}
\tilde{A}_\xl(0):=\Big\{x\in \xO\setminus \{0\}:\;  \mathbb{K}_{\mu}[\xd_0](x)>\xl \Big \}, \quad \tilde{m}_{\xl}&:=\int_{\tilde{A}_\xl(0)}d(x)|x|^{-\frac{N-2}{2}} \dd x,
\ea
where $\xd_0$ is the Dirac measure concentrated at $0$. Then,
     \ba\label{54a}
\tilde{m}_{\xl}\lesssim (\xl^{-1}\ln\xl)^{\frac{N+2}{N-2}}, \quad \forall \xl>e.
\ea
The implicit constant depends on $N,\Omega,\Sigma,\mu$ and $\theta$.
\end{theorem}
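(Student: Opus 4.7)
The plan is to prove each of the three estimates by bounding the distribution function of $\mathbb{K}_{\mu}[\nu]$ with respect to $\phi_\mu\, dx$, leveraging the sharp two-sided bounds on $K_\mu$ from Proposition~\ref{Martin} together with $\phi_\mu \approx d\, d_\Sigma^{-\am}$ from \eqref{eigenfunctionestimates}. First I would use the norm characterization \eqref{normLw}: to prove $\|\mathbb{K}_\mu[\nu]\|_{L_w^\kappa(\Omega\setminus\Sigma;\phi_\mu)}\lesssim \|\nu\|_{\GTM}$ it suffices to show, for every Borel set $A\subset \Omega\setminus\Sigma$ with $0<\int_A\phi_\mu\,dx<\infty$,
\[
\int_A \mathbb{K}_\mu[\nu](x)\,\phi_\mu(x)\,dx \;\lesssim\; \|\nu\|_{\GTM}\,\Big(\int_A\phi_\mu\,dx\Big)^{1-\frac{1}{\kappa}}.
\]
By Fubini this reduces, in each of the three cases, to a uniform (in $\xi$) estimate
\[
\int_A K_\mu(x,\xi)\,\phi_\mu(x)\,dx \;\lesssim\; \Big(\int_A\phi_\mu\,dx\Big)^{1-\frac{1}{\kappa}},
\]
so that the whole problem is reduced to a single-source computation.

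For Part I, with $\xi\in\partial\Omega$ and $\kappa=\tfrac{N+1}{N-1}$, Proposition~\ref{Martin}(i)--(ii) combined with \eqref{eigenfunctionestimates} gives $K_\mu(x,\xi)\phi_\mu(x)\approx d(x)^2 d_\Sigma(x)^{-2\am}|x-\xi|^{-N}$. I would split $A=(A\cap B(\xi,r))\cup (A\setminus B(\xi,r))$ for a parameter $r>0$ to be optimized: on the near piece use $d(x)\le |x-\xi|$ and a dyadic annular decomposition to get a bound of order $r$; on the far piece use $|x-\xi|^{-N}\le r^{-N}$ and the crude bound $r^{-(N-2)}\int_A\phi_\mu\,dx$ (after noting $d(x)\lesssim d_\Sigma(x)$ away from $\Sigma$, so the weight $d(x)^2 d_\Sigma(x)^{-2\am}$ is controlled by $\phi_\mu$ times a factor of $d(x)$). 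Optimizing $r$ yields the exponent $1-\tfrac{N-1}{N+1}=\tfrac{2}{N+1}$ as desired.

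Part II(i) is analogous with $\xi\in\Sigma$: the estimate becomes $K_\mu(x,\xi)\phi_\mu(x)\approx d(x)^2 d_\Sigma(x)^{-2\am}|x-\xi|^{-(N-2-2\am)}$, and near $\Sigma$ one uses the local coordinates of subsection~\ref{assumptionK} together with \eqref{propdist} to flatten $\Sigma$, splitting the $k$ tangential from the $N-k$ normal directions. The scaling exponent in $|x-\xi|$ is now $N-2-2\am$, and the same split-and-optimize procedure with $r>0$ produces the weak-$L^{(N-\am)/(N-\am-2)}$ bound. For Part II(ii) the Martin kernel carries the extra factor $|\ln(|x|/\mathcal{D}_\Omega)|$; for any $\theta<\tfrac{N+2}{N-2}$ the slack in the exponent absorbs the logarithm and one again runs the split-and-optimize in $r$ on the integral $\int_A d(x)^2|x|^{-(N-2)}|\ln(|x|/\mathcal{D}_\Omega)|\,dx$ to conclude \eqref{estmartin2cr}.

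The main obstacle will be the refined estimate \eqref{54a}: computing the sharp asymptotics $\tilde m_\lambda\lesssim (\lambda^{-1}\ln\lambda)^{(N+2)/(N-2)}$ requires identifying the super-level set $\tilde A_\lambda(0)$ as $\{x: d(x)|x|^{-(N-2)/2}|\ln(|x|/\mathcal{D}_\Omega)|>\lambda\}$ and then, after passing to spherical coordinates $x=r\omega$, inverting the transcendental inequality $d(r\omega)\,r^{-(N-2)/2}\,|\ln(r/\mathcal{D}_\Omega)|>\lambda$. The correct way to do this is to observe that $d(r\omega)\lesssim \mathcal{D}_\Omega$, so the inequality forces $r$ to lie in a shrinking neighborhood of $0$ on which $|\ln r|\approx \ln\lambda$; substituting this estimate back and integrating $d(x)|x|^{-(N-2)/2}$ over the resulting set (using $d(x)\lesssim r$ near the origin when $0\in\Omega$) produces exactly the claimed power $(N+2)/(N-2)$ with the logarithmic correction. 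Subcritical cases and even the weak-$L^\theta$ bound for $\theta<\tfrac{N+2}{N-2}$ do not require this sharpness, which is precisely why the logarithm only surfaces in \eqref{54a}.
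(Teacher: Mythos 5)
A preliminary remark: the paper never proves this statement — it is imported verbatim as Theorem 3.10 of the companion paper \cite{GkiNg_source} — so there is no in-paper argument to compare against. Your overall strategy is nonetheless the standard (and surely the intended) one: reduce, via the norm \eqref{normLw} and Fubini/Minkowski, to a uniform-in-$\xi$ weak estimate for the single kernel $K_\mu(\cdot,\xi)$, and then exploit the two-sided bounds of Proposition \ref{Martin} together with \eqref{eigenfunctionestimates}. That reduction is sound, and the exponent bookkeeping in Parts I, II(i), II(ii) comes out right. However, two of your concrete steps are incorrect as written.

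(a) In Part I, on the far piece you assert that $d^2 d_\Sigma^{-2\am}$ is controlled by $\phi_\mu$ times a factor of $d$, i.e.\ $d_\Sigma^{-\am}\lesssim 1$; this fails in $\Sigma_{\beta_0}$ whenever $\mu>0$ (so $\am>0$), and the claimed far bound $r^{-(N-2)}\int_A\phi_\mu\,\dd x$ does not follow there. The neighborhood of $\Sigma$ must be handled separately: for $\xi\in\partial\Omega$ and $x$ near $\Sigma$ one has $|x-\xi|\gtrsim \dist(\Sigma,\partial\Omega)$ and $K_\mu(x,\xi)\phi_\mu(x)\approx d_\Sigma(x)^{-2\am}$, whose contribution is admissible because $2\am\le N-k-2$, but this requires its own small level-set (or rearrangement) argument rather than the pointwise domination you invoke. (b) In your treatment of \eqref{54a} you propose to use $d(x)\lesssim r$ near the origin; this is false, since $0$ is an interior point of $\Omega$ and $d(x)\approx\dist(0,\partial\Omega)$ is bounded below there, and if it were true it would produce the exponent $\frac{N+4}{N-2}$ rather than the claimed $\frac{N+2}{N-2}$. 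The correct computation is simply that $d\approx\mathrm{const}$ on $\tilde A_\lambda(0)\subset\{|x|\lesssim(\lambda^{-1}\ln\lambda)^{2/(N-2)}\}$ and $\int_{B(0,\rho)}|x|^{-\frac{N-2}{2}}\dd x\approx\rho^{\frac{N+2}{2}}$, which gives \eqref{54a} directly. With these repairs (and the flattening count $\int_{B(\xi,\rho)}d_\Sigma^{-\am}\dd x\approx\rho^{N-\am}$ made explicit in II(i)), your outline does yield the theorem.
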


\section{Boundary value problem for semilinear equations} \label{sec:BVP}
In the sequel, we assume that $g: \R\to \R$ is a nondecreasing continuous function such that $g(0)=0$.

\subsection{Sub and super solutions theorem}
We start with the definition of subsolutions and supersolutions of \eqref{NLin}.
\begin{definition} \label{def:subsupersol}
%
A function $u$ is a weak subsolution  (resp. supersolution) of \eqref{NLin} if $u\in L^1(\Omega;\ei)$, $g(u) \in L^1(\Omega;\ei)$  and
\be \label{nonlinearsubsupweakform}
- \int_{\xO}u L_{\xm }\zeta \, \dd x+ \int_{\xO}g(u)\zeta \, \dd x\leq (\text{resp.} \geq)\int_{\Omega \setminus \Sigma} \zeta \, \dd \tau - \int_{\Gw} \mathbb{K}_{\xm}[\xn]L_{\xm }\zeta \, \dd x
\qquad\forall 0\leq\zeta \in\mathbf{X}_\xm(\xO\setminus \Sigma).
\ee
\end{definition}

\begin{lemma}\label{weaksubsupersolution}
(i) Let $u\in L^1(\Omega;\ei)$ be a weak supersolution of \eqref{NLin}. Then there exist $\tau_u \in \mathfrak{M}^+(\xO\setminus \Sigma;\ei)$ and $\xn_u \in \mathfrak{M}^+(\partial\xO\cup \Sigma)$ such that $u$ is a weak solution of

\be\label{weaksup} \left\{ \BAL
- L_\gm u+g(u)&=\tau + \tau_u\qquad \text{in }\;\Gw\setminus \Sigma,\\
\tr(u)&=\nu+ \nu_u.
\EAL \right.
\ee
(ii) Let $u\in L^1(\Omega;\ei)$ be a weak subsolution of \eqref{NLin}. Then there exist $\tau_u\in \mathfrak{M}^+(\xO\setminus \Sigma;\ei)$ and $\xn_u\in \mathfrak{M}^+(\partial\xO\cup \Sigma)$ such that $u$ is a weak solution of

\be\label{weaksub} \left\{ \BAL
- L_\gm u+g(u)&=\tau-\tau_u\qquad \text{in }\;\Gw\setminus \Sigma,\\
\tr(u)&=\nu- \nu_u.
\EAL \right.
\ee
\end{lemma}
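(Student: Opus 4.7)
We focus on (i); part (ii) is obtained by applying (i) to $-u$ with the nondecreasing function $\tilde g(t) := -g(-t)$ and data $(-\tau,-\nu)$. The strategy has three stages: (a) absorb $g(u)$ into the linear source to compare $u$ with the linear weak solution $v := \mathbb{G}_\mu[\tau - g(u)] + \mathbb{K}_\mu[\nu]$; (b) show $u \geq v$ a.e., so that $w := u - v$ is a non-negative weak $L_\mu$-superharmonic function; (c) decompose $w$ via a Riesz--Martin argument to produce $\tau_u$ and $\nu_u$.

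Since $g(u)\in L^1(\Omega;\phi_\mu)$, the signed measure $\tau - g(u)\in\mathfrak{M}(\Omega\setminus\Sigma;\phi_\mu)$, and Theorem~\ref{linear-problem} furnishes $v$ as the unique $L^1(\Omega;\phi_\mu)$ weak solution of the linear problem with data $(\tau - g(u),\nu)$. Subtracting the linear identity \eqref{lweakform} for $v$ from the supersolution inequality \eqref{nonlinearsubsupweakform} cancels the source, boundary and nonlinear contributions, leaving
\begin{equation*}
   \int_\Omega (u-v)(-L_\mu\zeta)\,dx \ \geq\ 0 \qquad \forall\, 0\le\zeta\in\mathbf{X}_\mu(\Omega\setminus\Sigma). \tag{$\star$}
\end{equation*}
For $f \in C_c^\infty(\Omega \setminus \Sigma)$ with $f \geq 0$, the function $\zeta_f := \mathbb{G}_\mu[f]$ is a non-negative element of $\mathbf{X}_\mu(\Omega\setminus\Sigma)$ by the linear regularity theory of \cite{GkiNg_linear} and satisfies $-L_\mu\zeta_f = f\geq 0$; plugging into $(\star)$ yields $\int_\Omega(u-v)f\,dx\geq 0$ for every such $f$, hence $u\geq v$ a.e.\ in $\Omega\setminus\Sigma$.

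Set $w := u - v \geq 0$. For every $\psi\in C_c^\infty(\Omega\setminus\Sigma)$ with $\psi\geq 0$ one has $\psi\in\mathbf{X}_\mu(\Omega\setminus\Sigma)$, so by $(\star)$ the map $\psi \mapsto -\int_\Omega w\, L_\mu\psi\,dx$ is a positive distribution on $\Omega\setminus\Sigma$; by Riesz--Markov, there exists $\tau_u\in\mathfrak{M}^+(\Omega\setminus\Sigma)$ with $-L_\mu w = \tau_u$ distributionally on $\Omega\setminus\Sigma$. On each member of the exhaustion $\{O_n\}$ of Subsection~\ref{subsec:boundarytrace} the operator $-L_\mu$ is uniformly elliptic and coercive, and the classical interior Riesz decomposition yields $w|_{O_n} = \mathbb{G}_\mu^{O_n}[\tau_u|_{O_n}] + H_n$ with $H_n\geq 0$ an $L_\mu$-harmonic function on $O_n$ (since the Green potential $\mathbb{G}_\mu^{O_n}[\tau_u|_{O_n}]$ vanishes on $\partial O_n$ and $w \geq 0$). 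As $G_\mu^{O_n}\uparrow G_\mu$, monotone convergence gives $\mathbb{G}_\mu^{O_n}[\tau_u|_{O_n}]\uparrow \mathbb{G}_\mu[\tau_u]$; consequently $H_n$ converges locally uniformly to $h:=w-\mathbb{G}_\mu[\tau_u]\geq 0$, which is an $L_\mu$-harmonic function on $\Omega\setminus\Sigma$. By Theorem~\ref{th:Rep}, $h = \mathbb{K}_\mu[\nu_u]$ for a unique $\nu_u\in\mathfrak{M}^+(\partial\Omega\cup\Sigma)$.

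For finiteness, since $\mathbb{G}_\mu[\tau_u]$ and $\mathbb{K}_\mu[\nu_u]$ are non-negative and sum to $w\in L^1(\Omega;\phi_\mu)$, each lies in $L^1(\Omega;\phi_\mu)$. Fubini together with the eigenfunction identity $\int_\Omega G_\mu(x,y)\phi_\mu(x)\,dx = \phi_\mu(y)/\lambda_\mu$ (a consequence of $-L_\mu\phi_\mu = \lambda_\mu\phi_\mu$) gives $\int_\Omega \phi_\mu\,d\tau_u = \lambda_\mu\|\mathbb{G}_\mu[\tau_u]\|_{L^1(\Omega;\phi_\mu)} < \infty$, whence $\tau_u\in\mathfrak{M}^+(\Omega\setminus\Sigma;\phi_\mu)$; the two-sided Martin kernel estimates of Subsection~\ref{subsec:GreenMartin} analogously force $\|\nu_u\|_{\mathfrak{M}(\partial\Omega\cup\Sigma)}<\infty$. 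Substituting $v = \mathbb{G}_\mu[\tau - g(u)] + \mathbb{K}_\mu[\nu]$ into $u = v + \mathbb{G}_\mu[\tau_u] + \mathbb{K}_\mu[\nu_u]$ gives $u = \mathbb{G}_\mu[\tau + \tau_u - g(u)] + \mathbb{K}_\mu[\nu + \nu_u]$, and Theorem~\ref{linear-problem} identifies $u$ as the weak solution of \eqref{weaksup}. The main technical obstacle is the Riesz--Martin step: confirming that the harmonic remainders $H_n$ converge to an $L_\mu$-harmonic function admitting a boundary measure representation, which hinges on the two-sided kernel bounds of Propositions~\ref{Greenkernel}--\ref{Martin} to control $\{H_n\}$ uniformly on compacts of $\Omega\setminus\Sigma$.
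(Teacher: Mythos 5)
Your proposal is correct and follows essentially the same route as the paper: compare $u$ with the solution of the linearized problem with data $(\tau-g(u),\nu)$, deduce $u\geq v$ by a duality/positivity argument, identify $-L_\mu(u-v)$ as a nonnegative Radon measure $\tau_u$, use the exhaustion and $G_\mu^{O_n}\uparrow G_\mu$ to get $\mathbb{G}_\mu[\tau_u]\leq u-v$ and hence $\tau_u\in\mathfrak{M}^+(\Omega\setminus\Sigma;\phi_\mu)$, and apply the Representation Theorem to the nonnegative $L_\mu$-harmonic remainder to produce $\nu_u$. The only differences (choice of test functions $\mathbb{G}_\mu[f]$ versus the dual problem with right-hand side $\sign^+(w-u)\phi_\mu$, and the eigenfunction-Fubini identity versus the bound $G_\mu(x^*,\cdot)\gtrsim\phi_\mu$ for the finiteness of $\tau_u$) are cosmetic.
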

\begin{proof} (i) Let $w$ be the unique solution of

\be\label{weaksup1} \left\{ \BAL
- L_\gm w+g(u)&=\tau\qquad \text{in }\;\Gw\setminus \Sigma,\\
\tr(u)&=\xn.
\EAL \right.
\ee
Then
\be \label{z11}
	 -\int_{\xO}(w-u) L_{\xm }\zeta \, \dd x\leq 0
	\qquad\forall 0\leq\zeta \in\mathbf{X}_\xm(\xO\setminus \Sigma).
\ee
Let $\eta \in\mathbf{X}_\xm(\xO\setminus \Sigma) $ be such that $-L_\xm\eta=\sign^+(w-u)\ei$. Then by using $\eta$ as a test function in \eqref{z11}, we obtain that $w \leq u$ in $\Omega \setminus \Sigma$.

Set $v=u-w$ then $v \geq 0$ in $\Omega \setminus \Sigma$ and $-L_\mu v\geq0$ in the sense of distributions in $\xO\setminus \Sigma$. This implies the existence of a nonnegative Radon measure $\tau_u$ in $\xO\setminus \Sigma$ such that $-L_\xm v=\tau_u$ in the sense of distribution. By \cite[Corollary 1.2.3]{MVbook}, $v\in W^{1,\kappa}_{loc}(\xO\setminus \Sigma)$ for some $\kappa>1$.
Let $\{O_n\}$ be a smooth exhaustion of $\xO\setminus \Sigma$ and $\xz_n$ be the weak solution of
\be\label{zetan} \left\{ \BAL
- L_\gm \xz_n&=0\qquad &&\text{in }\;O_n,\\
\xz_n&=v \qquad &&\text{on }\;\partial O_n.
\EAL \right.
\ee
Therefore $v=\mathbb{G}_{\mu}^{O_n}[\tau_u]+\xz_n$. Since $\tau_u,\xz_n$ are nonnegative and $G_{\mu}^{O_n}(x,y)\nearrow G_{\mu}(x,y)$ for any $x\neq y$ and $x,y\in \xO\setminus \Sigma$, we obtain $0 \leq \mathbb{G}_{\mu}[\tau_u]\leq v$  a.e. in  $\Omega \setminus \Sigma$. In particular, $0 \leq \mathbb{G}_{\mu}[\tau_u](x^*) \leq v(x^*)$ for some point $x^* \in \Omega \setminus \Sigma$. This, together with the estimate $G_\mu(x^*,\cdot) \gtrsim \ei$ a.e. in $\Omega$, implies $\tau_u\in \mathfrak{M}(\xO\setminus \Sigma;\ei)$.

Moreover, we observe from above that $v-\mathbb{G}_{\mu}[\tau_u]$ is a nonnegative $L_\mu$-harmonic function in $\xO\setminus \Sigma$. Thus by Theorem \ref{th:Rep} there exists a unique $\nu_u\in \GTM^+(\partial\Omega \cup \Sigma)$ such that
$v-\mathbb{G}_{\mu}[\tau_u]=\mathbb{K}_{\xm}[\xn_u]$   a.e. in  $\Omega \setminus \Sigma$.
This, together with $w + \BBG_\mu[g(u)] = \BBG_\mu[\tau] + \BBK_{\mu}[\nu]$, yields
\bal u+\mathbb{G}_{\mu}[g(u)]=\mathbb{G}_{\mu}[\tau+\tau_u]+\mathbb{K}_{\xm}[\nu+\nu_u],
\eal
which  means that $u$ is a weak solution of \eqref{weaksup}.

(ii) The proof is similar to that of (i) and we omit it.
\end{proof}

The main result of this subsection is the following sub and super solution theorem.
\begin{theorem} \label{existencesubcr}
Assume $\tau \in \GTM(\Omega \setminus \Sigma;\ei)$ and $\nu \in \GTM(\partial \Omega \cup \Sigma)$. Let $v,w \in L^1(\Omega;\ei)$ be weak subsolution and supersolution of \eqref{NLin} respectively such that $v\leq w$  in $\Omega \setminus \Sigma$ and  $g(v), g(w) \in L^1(\Omega;\ei)$. Then problem \eqref{NLin} admits a unique weak solution $u \in L^1(\Omega;\ei)$ which satisfies $v \leq u \leq w$ in $\Omega \setminus \Sigma$.
\end{theorem}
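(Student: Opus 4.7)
The plan is to replace the nonlinearity $g$ by an $L^1(\Omega;\phi_\mu)$-dominated truncation that forces every solution to lie between $v$ and $w$, solve the truncated problem by Schauder's fixed-point theorem in $L^1(\Omega;\phi_\mu)$, and then verify via Kato's inequality that the fixed point is sandwiched between $v$ and $w$ and hence solves \eqref{NLin}.

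\emph{Step 1 (truncation).} Define, for $x\in\Omega\setminus\Sigma$ and $t\in\R$,
$$\tilde g(x,t) := g\bigl(\max(v(x),\min(w(x),t))\bigr).$$
Then $\tilde g(x,\cdot)$ is nondecreasing and continuous, coincides with $g(t)$ when $v(x)\leq t\leq w(x)$, and satisfies
$$g(v(x)) \leq \tilde g(x,t) \leq g(w(x)), \qquad |\tilde g(\cdot,u)| \leq |g(v)|+|g(w)| \in L^1(\Omega;\phi_\mu)$$
uniformly in $u$.

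\emph{Step 2 (Schauder).} Let $\mathcal{S}\colon L^1(\Omega;\phi_\mu) \to L^1(\Omega;\phi_\mu)$ be
$$\mathcal{S}(u) := \BBG_\mu[\tau - \tilde g(\cdot,u)] + \BBK_\mu[\nu].$$
The uniform $L^1(\phi_\mu)$-domination of $\tilde g(\cdot,u)$ and the linear estimate \eqref{esti2} show that $\mathcal{S}$ sends some closed ball $\mathcal{B}_M \subset L^1(\Omega;\phi_\mu)$ into itself. Continuity of $\mathcal{S}$ follows from the continuity of $g$, dominated convergence and \eqref{esti2} applied to $\BBG_\mu[\tilde g(\cdot,u_n)-\tilde g(\cdot,u)]$. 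For compactness of $\mathcal{S}(\mathcal{B}_M)$ in $L^1(\Omega;\phi_\mu)$, one combines the weak Lebesgue bound of Theorem~\ref{lpweakgreen}, interior elliptic regularity (so that a subsequence converges pointwise a.e.\ on compact subsets of $\Omega\setminus\Sigma$), and Vitali's theorem using the equi-integrable domination $|\mathcal{S}(u)|\leq \BBG_\mu[|\tau|+|g(v)|+|g(w)|]+\BBK_\mu[|\nu|]\in L^1(\Omega;\phi_\mu)$. Schauder's theorem then provides a fixed point $u^*$. By Theorem~\ref{linear-problem}, $u^*$ is a weak solution of $-L_\mu u^* + \tilde g(\cdot,u^*) = \tau$, $\tr(u^*)=\nu$.

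\emph{Step 3 (sandwich and identification).} We claim $v\leq u^* \leq w$. By Lemma~\ref{weaksubsupersolution},
$$v+\BBG_\mu[g(v)] = \BBG_\mu[\tau-\tau_v]+\BBK_\mu[\nu-\nu_v], \qquad u^*+\BBG_\mu[\tilde g(\cdot,u^*)] = \BBG_\mu[\tau]+\BBK_\mu[\nu],$$
with $\tau_v,\nu_v\geq 0$. Subtracting yields an equation for $v-u^*$ with data $-\tau_v,\,-\nu_v$ and forcing $g(v)-\tilde g(\cdot,u^*)$. On $\{v>u^*\}$ we have $\tilde g(x,u^*(x))=g(v(x))$, so the sign of $g(v)-\tilde g(\cdot,u^*)$ on $\{v>u^*\}$ is zero. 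Plugging into the Kato inequality \eqref{poi5} gives
$$-\int_\Omega (v-u^*)^+ L_\mu\zeta\, dx \leq -\int_\Omega \BBK_\mu[\nu_v] L_\mu\zeta\, dx \qquad \forall\, 0\leq\zeta\in\mathbf{X}_\mu,$$
and choosing $\zeta$ with $-L_\mu\zeta = \sign^+(v-u^*)\phi_\mu$ (as in the proof of Lemma~\ref{weaksubsupersolution}) forces $(v-u^*)^+\equiv 0$. The inequality $u^*\leq w$ is symmetric. Since $v\leq u^* \leq w$, we have $\tilde g(\cdot,u^*)=g(u^*)$, so $u^*$ is a solution of \eqref{NLin}. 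Uniqueness is immediate from \eqref{poi4} and the monotonicity of $g$: if $u_1,u_2$ are two solutions, testing $u_1-u_2$ with the Kato inequality \eqref{poi4} and using $\sign(u_1-u_2)(g(u_1)-g(u_2))\geq 0$ yields $u_1=u_2$.

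\emph{Main obstacle.} The most delicate step is the compactness of $\mathcal{S}(\mathcal{B}_M)$ in $L^1(\Omega;\phi_\mu)$: weak Lebesgue estimates give boundedness in $L^{p_{\alpha_-}}_w(\Omega;\phi_\mu)$, and extracting a strongly convergent subsequence in $L^1(\phi_\mu)$ requires coupling interior regularity with the Vitali argument based on the $L^1(\phi_\mu)$-majorant of $\tilde g$. The rest of the argument is a careful bookkeeping of the Kato inequalities from Theorem~\ref{linear-problem}.
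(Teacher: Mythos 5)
Your proposal is essentially the paper's argument (truncate between the sub- and supersolution, run Schauder in $L^1(\Omega;\phi_\mu)$, then use Lemma \ref{weaksubsupersolution} and the Kato inequalities \eqref{poi4}--\eqref{poi5} to sandwich the fixed point and identify it as a solution of \eqref{NLin}); the uniqueness step is identical. The one genuine difference is that you perform a single truncation $\tilde g(x,t)=g(\max(v,\min(w,t)))$ and apply Schauder once, whereas the paper first cuts $g$ off at level $n$ (so the truncated nonlinearity is bounded), solves each truncated problem by Schauder, and then passes $n\to\infty$ using the majorant $\mathbb{G}_{\mu}[|g(v)|+|g(w)|+|\tau|]+\mathbb{K}_{\mu}[|\nu|]$. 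Your shortcut is legitimate precisely because the hypothesis $g(v),g(w)\in L^1(\Omega;\ei)$ already supplies that $L^1(\ei)$-majorant, so the invariant ball, the continuity of $\mathcal S$, and the compactness (local $W^{1,\kappa}$ bounds plus Vitali/dominated convergence) go through without the level-$n$ cutoff; in effect you merge the paper's two limiting procedures into one, at no loss of rigor.

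One bookkeeping slip in Step 3 should be corrected: for $v-u^*$ the boundary datum is $-\nu_v$, so \eqref{poi5} produces the term $-\int_\Omega \mathbb{K}_{\mu}[(-\nu_v)^+]L_\mu\zeta\,\dd x=0$, not $-\int_\Omega \mathbb{K}_{\mu}[\nu_v]L_\mu\zeta\,\dd x$ as you wrote; with your displayed right-hand side (which is nonnegative for the test function you choose) the conclusion $(v-u^*)^+\equiv 0$ would not follow. The correct application gives
\begin{equation*}
-\int_\Omega (v-u^*)^+ L_\mu\zeta\,\dd x \;\leq\; -\int_\Omega \sign^+(v-u^*)\bigl(g(v)-\tilde g(\cdot,u^*)\bigr)\zeta\,\dd x \;=\;0,
\end{equation*}
and then either your choice of $\zeta$ with $-L_\mu\zeta=\sign^+(v-u^*)\ei$ or simply $\zeta=\ei$ (as in the paper, using $\lambda_\mu>0$) yields $(v-u^*)^+\equiv 0$; the case $u^*\leq w$ is symmetric. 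With this correction the proof is complete.
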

\begin{proof} \textit{Uniqueness.} If $u_1$ and $u_2$ are two solutions of \eqref{NLin} then $u_1-u_2$ satisfies
\bal \left\{ \BAL
- L_\mu (u_1-u_2)+g(u_1)-g(u_2)&=0 \qquad \text{in }\;\Omega\setminus \Sigma,\\
\tr(u_1-u_2)&=0.
\EAL \right.
\eal
Then by using \eqref{poi4} with $u=u_1-u_2$, $f=-(g(u_1)-g(u_2))$, $\rho=0$ and $\nu=0$, we have
\bal -\int_{\Omega}|u_1-u_2| L_\mu \zeta \, \dd x + \int_{\Omega}\sign(u_1-u_2)(g(u_1)-g(u_2))\zeta \,\dd x \leq 0.
\eal
Choosing $\zeta=\ei$ and keeping in mind that $g$ is nondecreasing, we obtain from the above estimate that $u_1=u_2$ in $\Omega \setminus \Sigma$. \medskip

\noindent \textit{Existence.} We follow some ideas of the proof of \cite[Theorem 2.2.4]{MVbook}.
Define
\be \label{gnt}
g_n(t):= \max\{ -n, \min\{g(t),n  \} \}.
\ee
Set
\bal
\tilde g_n(z(x)):=\left\{ \BAL
&g_n(w(x))\quad&&\text{if } z(x) \geq w(x),\\
&g_n(z(x))\quad&&\text{if } v(x)< z(x)< w(x),\\
&g_n(v(x))\quad&&\text{if } z(x)\leq v(x).
\EAL \right.
\eal

Let $u\in L^1(\Omega;\ei)$ and denote by $\mathbb{T}(u)$ the unique solution of
\be\label{fixed} \left\{ \BAL
- L_\gm \varphi +\tilde g_n(u)&=\tau\qquad \text{in }\;\Gw\setminus \Sigma,\\
\tr(\varphi)&=\xn.
\EAL \right.
\ee
Then $\mathbb{T}(u) \in L^1(\Omega;\ei)$ and
\be\label{fixedweaksol}
\mathbb{T}(u)=-\mathbb{G}_{\mu}[\tilde g_n(u)]+\mathbb{G}_{\mu}[\tau]+\mathbb{K}_{\xm}[\xn].
\ee
By \cite[Remark 5.5]{GkiNg_linear}, $\BBG_\mu[1](x) \lesssim d(x)d_\xS(x)^{\min\{\am,0\}}$ for a.e. $x \in \Omega \setminus \Sigma$. Therefore, there exists a constant $C=C(\Omega,\Sigma,N,\mu)>0$ such that
\be
|\mathbb{T}(u)|\leq C nd\,d_\xS^{\min\{\am,0\}}+\mathbb{G}_{\mu}[|\tau|]+\mathbb{K}_{\xm}[|\xn|]. \label{fragmafixed0}
\ee
By Theorems \ref{lpweakgreen} -- \ref{lpweakmartin1}, estimate \eqref{LpLpweak} (with $D=\Omega \setminus \Sigma$ and $\varphi=\ei$), estimate \eqref{eigenfunctionestimates},  and the above inequality we can show that there exists $C_1=C_1(\xO,\Sigma,N,\xm)>0$ such that
\be \label{fragmafixed}
\| \mathbb{T}(u)\|_{L^1(\Omega;\ei)} \leq C_1(n+\norm{\tau}_{\mathfrak{M}(\xO\setminus \Sigma;\ei)}+\norm{\xn}_{\mathfrak{M}(\partial\xO\cup \Sigma)}).
\ee

We will use the Schauder fixed point theorem to prove the existence of a fixed point of $\BBT$ by examining the following criteria.

\emph{The operator $\mathbb{T}: L^1(\Omega;\ei) \to L^1(\Omega;\ei)$ is continuous.} Indeed, let $\{\varphi_m\}$ be a sequence such that $\varphi_m\rightarrow \varphi$ in $L^1(\Omega;\ei)$ as $m \to \infty$. Since $g_n$ is continuous and bounded, we can easily show that $\tilde g_n(\varphi_m)\rightarrow \tilde g_n(\varphi)$ in  $L^1(\Omega;\ei),$ which implies $\mathbb{T}(\varphi_m)\to \mathbb{T}(\varphi)$ as $m \to \infty$  in  $L^1(\Omega;\ei),$ by \eqref{fixedweaksol} and \eqref{estgreen}.

\textit{The operator $\mathbb{T}$ is compact}. Indeed, let $\{\varphi_m\}$ be a sequence in $L^1(\Omega;\ei)$ then by \eqref{fragmafixed} and \cite[Theorem 1.2.2]{MVbook}, $\{\mathbb{T}(\varphi_m)\}$ is uniformly bounded in $W^{1,\kappa}(D)$ for any $1<\kappa<\frac{N}{N-1}$ and any open set $D\Subset \xO\setminus \Sigma$. Therefore there exist $\psi\in W^{1,\kappa}_{loc}(\xO\setminus \Sigma) $ and a subsequence still denoted by $\{\mathbb{T}(\varphi_m)\}$ such that $\mathbb{T}(\varphi_m)\rightarrow \psi$ in $L^\kappa_{loc}(\xO\setminus \Sigma)$ and a.e. in $\Omega \setminus \Sigma$. By \eqref{fragmafixed0} and the dominated convergence theorem, we deduce that $\mathbb{T}(\varphi_m)\rightarrow \psi$ in $L^1(\Omega;\ei)$.

Now set
\bal \CA:=\{ \varphi \in L^1(\Omega;\ei):  \|\varphi\|_{L^1(\Omega;\ei)}\leq C_1(n+\norm{\tau}_{\mathfrak{M}(\xO\setminus \Sigma;\ei)}+\norm{\xn}_{\mathfrak{M}(\partial\xO\cup \Sigma)})  \}.
\eal
Then $\CA$ is a closed, convex subset of $ L^1(\Omega;\ei)$ and $\BBT( \CA) \sbs \CA$.
Thus we can apply Schauder fixed point theorem to obtain the existence of a function $u_n\in \CA$ such that $\mathbb{T}(u_n)=u_n$. This means $u_n$ satisfies
\be\label{fixed2} \left\{ \BAL
- L_\gm u_n+\tilde g_n(u_n)&=\tau\qquad \text{in }\;\Gw\setminus \Sigma,\\
\tr(u_n)&=\xn.
\EAL \right. \ee
Then
\be \label{fragmazn}
|u_n|=|-\mathbb{G}_{\mu}[\tilde g_n(u)]+\mathbb{G}_{\mu}[\tau]+\mathbb{K}_{\xm}[\xn]|\leq\mathbb{G}_{\mu}[ |g(w)|+|g(v)|]+\mathbb{G}_{\mu}[|\tau|]+\mathbb{K}_{\xm}[|\xn|],
\ee
which implies
\be\label{l1fragmazn}
 \norm{u_n}_{L^1(\Omega;\ei)}\leq C_2( \norm{g(w)}_{L^1(\Omega;\ei)}+ \norm{g(v)}_{L^1(\Omega;\ei)}+\norm{\tau}_{\mathfrak{M}(\xO\setminus \Sigma;\ei)}+\|\xn\|_{\mathfrak{M}(\partial\xO\cup \Sigma)}),
\ee
for some positive constant $C_2=C_2(\xO,\Sigma,N,\xm).$

Thus by \cite[Theorem 1.2.2]{MVbook}, $\{u_n\}$ is uniformly bounded in $W^{1,\kappa}(D)$ for any $1<\kappa<\frac{N}{N-1}$ and any open set $D\Subset \xO\setminus \Sigma$. Therefore there exist $u\in W^{1,\kappa}_{loc}(\xO\setminus \Sigma) $ and a subsequence still denoted by $\{u_n\}$ such that $u_n\rightarrow u$ in $L^\kappa_{loc}(\xO\setminus \Sigma)$
and a.e. in $\xO\setminus \Sigma$. By \eqref{fixedweaksol} and the dominated convergence theorem, we deduce that $u_n\rightarrow u$ in $L^1(\Omega;\ei)$.
Taking into account that $|\tilde g_n(u_n)|\leq |g(w)|+|g(v)|,$ we can easily show that $\tilde g_n(u_n)\to \tilde g(u)$ in $L^1(\Omega;\ei)$, where
\be \label{tilg}
\tilde g(u(x))=\left\{ \BAL
&g(w(x))\quad&&\text{if } u(x)\geq w(x),\\
&g(u(x))\quad&&\text{if } v(x)\leq u(x)\leq w(x),\\
&g(v(x))\quad&&\text{if } u(x)\leq v(x).
\EAL \right. \ee
Combining all above we deduce that $u$ is a weak solution of
\be\label{fixed3} \left\{ \BAL
- L_\gm u +\tilde g(u)&=\tau\qquad \text{in }\;\Gw\setminus \Sigma,\\
\tr(u)&=\xn.
\EAL \right.
\ee
Since $w$ is a supersolution of \eqref{NLin}, by Lemma \ref{weaksubsupersolution} there exist measures $\tau_w\in \mathfrak{M}^+(\xO\setminus \Sigma;\ei)$ and $\xn_w\in \mathfrak{M}^+(\partial\xO\cup \Sigma)$ such that $w$ is a weak solution of
\be\label{fixed4} \left\{ \BAL
- L_\gm w+g(w)&=\tau+\tau_w\qquad \text{in }\;\Gw\setminus \Sigma,\\
\tr(w)&=\nu + \xn_w.
\EAL \right. \ee
From \eqref{fixed3} and \eqref{fixed4}, we deduce
\be\label{fixed5} \left\{ \BAL
- L_\gm (u-w) &= -(\tilde g(u) - g(w))-\tau_w\qquad \text{in }\;\Gw\setminus \Sigma,\\
\tr(u-w)&= -\nu_w.
\EAL \right. \ee
Applying \eqref{poi5} for \eqref{fixed5} yields
\bal -\int_{\Omega}(u-w)^+L_\mu \zeta \,\dd x \leq - \int_{\Omega}\sign^+(u-w)(\tilde g(u)- g(w)) \zeta \,\dd x \qquad \forall \zeta \in {\bf X}_\mu(\Omega \setminus \Sigma).
\eal
By taking $\zeta=\ei$ and taking into account the definition of $\tilde g(u)$ in \eqref{tilg}, we derive that
$ \int_{\Omega}(u-w)^+\, \ei \dd x \leq 0$,
which implies $u \leq w$.

Similarly we can show that $u \geq v$ in $\Omega \setminus \Sigma$. Therefore $\tilde g(u) = g(u)$ and thus $u$ is a weak solution of \eqref{NLin}.
\end{proof}

\subsection{Sufficient conditions for existence}
We first prove Theorem \ref{existGK}.

\begin{proof}[\textbf{Proof of Theorem \ref{existGK}}.]
Put $U_1=-\BBG_\mu[\tau^-] - \BBK_{\mu}[\nu^-]$ and $U_2=\BBG_\mu[\tau^+] + \BBK_{\mu}[\nu^+]$. By Theorems \ref{lpweakgreen}--\ref{lpweakmartin1} and \eqref{LpLpweak} (with $D=\Omega \setminus \Sigma$ and $\varphi=\ei$), $U_1, U_2 \in L^1(\Omega;\ei)$ and by the assumption, $g(U_1), g(U_2) \in L^1(\Omega;\ei)$. Moreover, we see that $U_1$ and $U_2$ are subsolution and supersolution of \eqref{NLin} respectively. Therefore, by Theorem \ref{existencesubcr}, there exists a unique solution $u$ of \eqref{NLin} which satisfies \eqref{U12}. The proof is complete.
\end{proof}


In order to prove Theorem \ref{exist-subGK}, we need the following result.
\begin{lemma}[{\cite[Lemma 5.1]{GkiNg_source}}] \label{subcrcon} Assume
	\ba \label{subcd0} \int_1^\infty  s^{-q-1}(\ln s)^{m} (g(s)-g(-s)) \,\dd s<\infty
	\ea
	for $q,m \in \R$, $q >1$ and $m \geq 0$. Let $v$ be a function defined in $\Omega \setminus \Sigma$. For $s>0$, set
	\bal E_s(v):=\{x\in \xO\setminus \Sigma:| v(x)|>s\} \quad \text{and} \quad e(s):=\int_{E_s(v)} \ei \,\dd x.
	\eal
	Assume that there exists a positive constant $C_0$ such that
	\ba \label{e}
	e(s) \leq C_0s^{-q}(\ln s)^m, \quad \forall s>e^\frac{2 m}{q}.
	\ea
	Then for any $s_0>e^\frac{2 m}{q}$ there hold
	\ba\label{53}
	\norm{g(|v|)}_{L^1(\Omega;\ei)}&\leq \int_{(\Omega \setminus \Sigma) \setminus E_{s_0}(v)} g(|v|)\ei \,\dd x + C_0 q \int_{s_0}^\infty  s^{-q-1}(\ln s)^{m} g(s) \,\dd s, \\ \label{53-a}
	\norm{g(-|v|)}_{L^1(\Omega;\ei)}&\leq  -\int_{(\Omega \setminus \Sigma) \setminus E_{s_0}(v)} g(-|v|)\ei \, \dd x - C_0 q \int_{s_0}^\infty  s^{-q-1}(\ln s)^{m} g(-s) \,\dd s.
	\ea
\end{lemma}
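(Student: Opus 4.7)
My plan is to prove \eqref{53} by a layer-cake representation combined with a Stieltjes integration by parts. The companion estimate \eqref{53-a} will follow by applying the same argument to $\tilde g(t) := -g(-t)$ for $t \geq 0$, which is again nondecreasing, continuous, vanishes at $0$, and inherits the integrability hypothesis \eqref{subcd0} (since $\tilde g(s) - \tilde g(-s) = g(s) - g(-s)$). The threshold $s_0 > e^{2m/q}$ is precisely what renders the weight $s \mapsto s^{-q}(\ln s)^m$ strictly decreasing on $[s_0, \infty)$ with an easily controlled derivative.

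First I will split
\begin{equation*}
\|g(|v|)\|_{L^1(\Omega;\ei)} = \int_{(\Omega\setminus\Sigma)\setminus E_{s_0}(v)} g(|v|)\,\ei\,dx + \int_{E_{s_0}(v)} g(|v|)\,\ei\,dx,
\end{equation*}
leaving the first piece exactly as it appears on the right-hand side of \eqref{53} and focusing on the second. For $x \in E_{s_0}(v)$ I write $g(|v(x)|) = g(s_0) + \int_{s_0}^{|v(x)|} dg(s)$, where $dg$ is the positive Borel measure associated with the nondecreasing continuous $g$; Fubini then gives
\begin{equation*}
\int_{E_{s_0}(v)} g(|v|)\,\ei\,dx = g(s_0)\,e(s_0) + \int_{s_0}^\infty e(s)\,dg(s).
\end{equation*}
Next I will insert the bounds $e(s) \leq C_0 s^{-q}(\ln s)^m$ for $s \geq s_0$ and $e(s_0) \leq C_0 s_0^{-q}(\ln s_0)^m$, and integrate by parts in the remaining Stieltjes integral:
\begin{equation*}
\int_{s_0}^\infty s^{-q}(\ln s)^m\,dg(s) = -\,s_0^{-q}(\ln s_0)^m g(s_0) - \int_{s_0}^\infty g(s)\,d\bigl[s^{-q}(\ln s)^m\bigr],
\end{equation*}
after justifying that the boundary term at infinity vanishes. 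The final step uses
\begin{equation*}
\frac{d}{ds}\bigl[s^{-q}(\ln s)^m\bigr] = s^{-q-1}(\ln s)^{m-1}\bigl(m - q\ln s\bigr),
\end{equation*}
which is negative and has absolute value at most $q s^{-q-1}(\ln s)^m$ precisely because $s \geq s_0 > e^{2m/q}$ forces $q\ln s > 2m$. Multiplying by $C_0$ and recombining, the two boundary terms at $s_0$ (namely $C_0 g(s_0) s_0^{-q}(\ln s_0)^m$ from $g(s_0)e(s_0)$ and its negative from the integration by parts) cancel exactly, leaving the clean bound $C_0 q \int_{s_0}^\infty g(s) s^{-q-1}(\ln s)^m\,ds$, which is \eqref{53}.

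The only nonroutine step is verifying $\lim_{s\to\infty} s^{-q}(\ln s)^m g(s) = 0$. I would deduce this from the integrability hypothesis \eqref{subcd0} combined with the monotonicity of $g$: if the product stayed above some $c>0$ along a sequence $s_n \to \infty$, then by monotonicity $g(t)t^{-q-1}(\ln t)^m \gtrsim c/t$ on an unbounded set, contradicting $\int_1^\infty g(s)s^{-q-1}(\ln s)^m\,ds < \infty$. Everything else is standard Stieltjes calculus, so I anticipate no further difficulty.
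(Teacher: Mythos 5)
Your proposal is correct and follows essentially the standard argument behind the cited \cite[Lemma 5.1]{GkiNg_source}: decompose over the level sets $E_{s_0}(v)$, write $\int_{E_{s_0}(v)}g(|v|)\ei\,\dd x = g(s_0)e(s_0)+\int_{s_0}^\infty e(s)\,\dd g(s)$ (Tonelli, using that $\dd g$ has no atoms), insert \eqref{e}, and integrate by parts in $s$, with the boundary term at infinity eliminated via \eqref{subcd0} and the monotonicity of $g$, the terms at $s_0$ cancelling, and \eqref{53-a} obtained by applying \eqref{53} to $t\mapsto -g(-t)$. The only cosmetic remark is that your derivative bound $|\frac{\dd}{\dd s}[s^{-q}(\ln s)^m]|\le q s^{-q-1}(\ln s)^m$ already holds for $s>e^{m/q}$, so the threshold $e^{2m/q}$ is not what makes your argument work, but this does not affect the validity of the proof.
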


We are ready to demonstrate Theorem \ref{exist-subGK} and Theorem \ref{exist-measureK}.

\begin{proof}[\textbf{Proof of Theorem \ref{exist-subGK}}.]
	Let $U_1$ and $U_2$ as in Theorem \ref{existGK}. Then by Theorem \ref{lpweakgreen} and Theorem \ref{lpweakmartin1}, $U_1, U_2 \in L_w^{p_{\am}}(\Omega \setminus \Sigma;\ei)$ (recall that $\am$ is defined in \eqref{p5}). Applying Lemma \ref{subcrcon} for $q=\frac{N+1}{N-1}$ and $m=0,$ we deduce $g(U_1), g(U_2) \in L^1(\Omega;\ei)$. Finally, due to Theorem \ref{existGK}, there exists a unique solution $u$ of \eqref{NLin} which satisfies \eqref{U12}. The proof is complete.	
\end{proof}



\section{Boundary data concentrated in $\partial \Omega$} \label{sec:BVP-partialO}
In this section, we consider the following problem
\be \label{BVP-O} \left\{ \BAL
-L_\mu u + g(u) &= 0 \quad \text{in } \Omega \setminus \Sigma, \\
\tr(u) &= \nu,
\EAL \right. \ee
where $\nu$ is concentrated in $\partial \Omega$. 

\subsection{Poisson kernel and $L_\mu$-harmonic measure on $\partial\Omega$}

The following result asserts the existence of the Poisson kernel and its properties.

\begin{proposition}  \label{measureboundary}
For any $x \in \xO\setminus \Sigma$,  $G_{\mu}(x,\cdot) \in C^{1,\gamma}(\overline{\Omega} \setminus (\Sigma \cup \{x\})) \cap C^2(\Omega \setminus (\Sigma \cup  \{x\}))$ for all $\xg\in (0,1)$. Let $P_\mu$ be the Poisson kernel defined by
\be \label{Poisson}  P_\mu(x,y):=-\frac{\partial G_\mu}{\partial{\bf n}}(x,y), \quad x \in \Omega \setminus \Sigma, \; y \in \partial \Omega,
\ee
where ${\bf n}$ is the unit outer normal vector of $\partial \xO$. Let $x_0 \in \Omega \setminus \Sigma$ be the fixed reference point.

(i) There holds
\ba\label{KP}
P_\mu(x,y)=P_\mu(x_0,y)K_\mu(x,y), \quad x \in \Omega \setminus \Sigma, \; y \in \partial \Omega.
\ea

(ii) For any $h \in L^1(\partial \Omega \cup \Sigma; \dd \omega^{x_0}_{\xO\setminus\xS})$ with compact support in $\partial \Omega$, there holds
\be \label{Pformula}
\int_{\partial\Omega} h(y) \, \dd \omega^{x_0}_{\xO\setminus\xS}(y) = \BBP_\mu[h](x_0).
\ee
Here
\be \label{BBP} \mathbb{P}_{\mu}[h](x)=\int_{\partial\Omega}P_\mu(x,y)h(y) \, \dd S(y).
\ee
where $S$ is the $(N-1)$-dimensional surface measure on $\partial\Omega$.

Consequently, if $h \in L^1(\partial \Omega \cup \Sigma; \dd \omega^{x_0}_{\xO\setminus\xS})$ with compact support in $\partial \Omega$ then $h \in L^1(\partial \Omega)$. In particular, for any Borel set  $E\subset \partial\xO$ there holds
\be \label{PE} \omega^{x_0}_{\xO\setminus\xS}(E)=\BBP_\mu[\1_E](x_0).
\ee
\end{proposition}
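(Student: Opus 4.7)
\medskip

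\noindent\textbf{Proof plan.} The plan is to exploit two structural facts: first, by the defining property of the Green function, $G_\mu(x,\cdot)$ has zero boundary trace on $\partial\Omega$; second, on any neighbourhood of $\partial\Omega$ kept away from $\Sigma\cup\{x\}$, the potential $d_\Sigma^{-2}$ is $C^2$ and bounded, so $-L_\mu$ is a uniformly elliptic operator with smooth coefficients there. For part (i), since $\partial\Omega$ is $C^2$ and the boundary data vanish, classical boundary Schauder estimates (Gilbarg--Trudinger, Ch.~6) yield $G_\mu(x,\cdot)\in C^{1,\gamma}(\overline\Omega\setminus(\Sigma\cup\{x\}))$ for every $\gamma\in(0,1)$, while interior $C^2$ regularity away from the pole is immediate. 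In particular, the Poisson kernel $P_\mu(x,y)$ in \eqref{Poisson} is well defined, and Hopf's boundary point lemma applied to the positive $L_\mu$-superharmonic $G_\mu(x,\cdot)$ near $y\in\partial\Omega$ forces $P_\mu(x,y)>0$.

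\medskip

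For \eqref{KP} I would plug the first-order Taylor expansions
\begin{equation*}
G_\mu(x,z)=P_\mu(x,y)\,d(z)+o(d(z)),\qquad G_\mu(x_0,z)=P_\mu(x_0,y)\,d(z)+o(d(z)),
\end{equation*}
(legitimate by the $C^{1,\gamma}$ regularity just obtained, together with the vanishing of both Green functions on $\partial\Omega$) into the Martin-kernel limit
\begin{equation*}
K_\mu(x,y)=\lim_{\Omega\setminus\Sigma\,\ni\, z\to y}\frac{G_\mu(x,z)}{G_\mu(x_0,z)},\qquad y\in\partial\Omega,
\end{equation*}
so that the ratio tends to $P_\mu(x,y)/P_\mu(x_0,y)$, which is \eqref{KP}.

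\medskip

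For part (ii) I would reduce the identity to the classical smooth case through an exhaustion. Pick a $C^2$ exhaustion $\{O_n\}=\{\Omega_n\setminus\overline{\Sigma}_n\}$ as in Subsection~\ref{subsec:boundarytrace} with $\partial\Omega\subset\partial O_n$ for $n$ large. On $O_n$ the operator $-L_\mu$ is uniformly elliptic with smooth coefficients, its Green function $G_\mu^{O_n}$ vanishes on $\partial O_n$ and increases to $G_\mu$, and the $L_\mu$-harmonic measure $\omega^{x_0}_{O_n}$ is represented by the density $-\partial_{\mathbf{n}}G_\mu^{O_n}(x_0,\cdot)$ with respect to surface measure on $\partial O_n$. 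For $h\in C_c(\partial\Omega)$, extending $h$ by $0$ on $\partial\Sigma_n$ and applying the classical Poisson representation on $O_n$,
\begin{equation*}
\int_{\partial O_n}h\,\dd\omega^{x_0}_{O_n}=-\int_{\partial O_n}h(y)\,\partial_{\mathbf{n}}G_\mu^{O_n}(x_0,y)\,\dd S(y).
\end{equation*}
On a neighbourhood of $\supp h$ separated from $\Sigma$, the difference $G_\mu^{O_n}(x_0,\cdot)-G_\mu(x_0,\cdot)$ satisfies a uniformly elliptic equation with zero Dirichlet data on $\partial\Omega$ and tends to zero locally uniformly inside $\Omega$; $n$-uniform boundary Schauder estimates then upgrade this to convergence of the normal derivatives on $\supp h$. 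Passing to the limit, the left-hand side tends to $\int_{\partial\Omega}h\,\dd\omega^{x_0}_{\Omega\setminus\Sigma}$ by Definition~\ref{nomtrace}, while the right-hand side tends to $\BBP_\mu[h](x_0)$, proving \eqref{Pformula} for continuous $h$. A monotone-class/dominated-convergence argument extends it to all $h\in L^1(\partial\Omega\cup\Sigma;\dd\omega^{x_0}_{\Omega\setminus\Sigma})$ supported in a compact $F\subset\partial\Omega$. Since $P_\mu(x_0,\cdot)$ is continuous and strictly positive on $F$ (Hopf), the measures $P_\mu(x_0,y)\,\dd S(y)$ and $\dd\omega^{x_0}_{\Omega\setminus\Sigma}\big|_F$ are comparable on $F$, giving $h\in L^1(\partial\Omega)$; specialising to $h=\mathbf 1_E$ yields \eqref{PE}.

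\medskip

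The main obstacle I foresee is securing the $n$-uniform $C^{1,\gamma}$ control needed to upgrade $G_\mu^{O_n}(x_0,\cdot)\to G_\mu(x_0,\cdot)$ to convergence of normal derivatives on compact subsets of $\partial\Omega$. This should be manageable because only the inner component of $\partial O_n$ is $n$-dependent and it is contained in $\overline{\Sigma}_{1/n}$, hence far from $\supp h\subset\partial\Omega$, so the local Schauder constants near $\partial\Omega$ are independent of $n$; still, the argument must be presented with care.
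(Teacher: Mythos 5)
Your plan for part (ii) is essentially the paper's argument, while for part (i) you take a genuinely different route. The paper proves \eqref{KP} by observing that $P_\mu(\cdot,y)$ is a positive $L_\mu$-harmonic function in $\Omega\setminus\Sigma$ whose quotient by $\tilde W$ vanishes at every point of $(\partial\Omega\cup\Sigma)\setminus\{y\}$, and then invoking the uniqueness of kernel functions with pole at $y$ and basis at $x_0$ from \cite{GkiNg_linear}; you instead use the Green-quotient characterization $K_\mu(x,y)=\lim_{z\to y}G_\mu(x,z)/G_\mu(x_0,z)$ together with the boundary $C^{1,\gamma}$ expansion and Hopf's lemma to keep the denominator nondegenerate. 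This is legitimate provided you explicitly quote the quotient-limit description of the Martin kernel at points of $\partial\Omega$ from the linear paper (that is how the kernel is constructed there); both routes ultimately lean on \cite{GkiNg_linear}, yours trading the uniqueness-of-kernel-functions result for the Martin-limit formula plus a Taylor expansion, which is arguably more self-contained analytically but needs the strict positivity $P_\mu(x_0,y)>0$ up front rather than only at the end.

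Two points in part (ii) need repair. First, the exhaustion you describe ($O_n=\Omega_n\setminus\overline{\Sigma}_n$ with $\partial\Omega\subset\partial O_n$) is inconsistent with Subsection \ref{subsec:boundarytrace}, where $\overline{\Omega}_n\subset\Omega$; what you actually use, and what the paper uses, are the domains $\Omega\setminus\Sigma_n$ with $\Sigma_n\downarrow\Sigma$, whose outer boundary is exactly $\partial\Omega$ and on which $-L_\mu$ is uniformly elliptic and coercive (the paper also verifies solvability of the Dirichlet problem there via the auxiliary function $\phi_*$, a step you gloss over but which is standard in this setting). Second, and more substantively, the convergence $\int_{\partial O_n}h\,\dd\omega^{x_0}_{O_n}\to\int_{\partial\Omega}h\,\dd\omega^{x_0}_{\Omega\setminus\Sigma}$ does not follow from Definition \ref{nomtrace}: that definition concerns boundary traces of functions defined inside $\Omega\setminus\Sigma$, not weak convergence of the harmonic measures themselves. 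What is needed is the stability statement $v_n(x_0)\to v(x_0)$ for the Dirichlet solutions with data $h$ on the shrinking domains, which the paper imports from the proof of \cite[Proposition 6.12]{GkiNg_linear}. With that citation in place of your appeal to Definition \ref{nomtrace}, the rest of your limit argument (uniform $W^{2,\kappa}$, hence $C^1$, control of $G_\mu^{O_n}(x_0,\cdot)$ in a collar of $\partial\Omega$ away from $\Sigma$, so that $P_\mu^n(x_0,\cdot)\to P_\mu(x_0,\cdot)$ uniformly on $\partial\Omega$, then density and $\inf_{\partial\Omega}P_\mu(x_0,\cdot)>0$ to reach $L^1$ data and indicator functions) matches the paper's proof.
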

\begin{proof}
For any $x \in \Omega \setminus \Sigma$, the regularity of $G_{\mu}(x,\cdot)$ follows from the standard elliptic theory. Also, we note that
$P_\mu(\cdot,y)$ is $L_\mu$-harmonic in $\xO\setminus\xS$ and
\bal
\lim_{x\in\xO,\;x\rightarrow\xi}\frac{P_\mu(x,y)}{\tilde W(x)}=0\qquad\forall \xi\in \partial\xO\cup \Sigma\setminus \{y\}.
\eal
By the uniqueness of kernel functions with pole at $y$ and
 basis at $x_0$ (\cite[Proposition 6.6]{GkiNg_linear}), we deduce \eqref{KP}.

Now, let $\{\Sigma_n\}$ be  a decreasing sequence of bounded open smooth domains as in \eqref{Kn}. We denote by $\phi_*$ the unique solution of
\be\label{fi} \left\{  \BAL
-L_{\mu }u&=0 \qquad&&\text{in } \xO\setminus \Sigma\\
u&=1\qquad&&\text{on } \prt\Omega\\
u&=0\qquad&&\text{on } \Sigma.
\EAL \right.
\ee

Then by Lemma \cite[Lemma 5.6]{GkiNg_linear}, there exist constants $c_1=c_1(\Omega,\Sigma,\Sigma_n,\mu)$ and $c_2=c_2(\Omega,\Sigma,N,\mu)$ such that
$0<c_1 \leq\phi_*(x)\leq c_2 d_\Sigma(x)^{-\ap}$ for all $x \in \Omega \setminus \Sigma_n$.
By the standard elliptic theory, $\xf_*\in C^2(\Omega \setminus \Sigma) \cap C^{1,\xg}(\overline{\xO}\setminus \Sigma)$ for any $0<\xg<1$.

Let $\tilde \zeta \in C(\partial \Omega \cup \Sigma_n)$, we consider the problem
\be\label{harmonicrel1} \left\{  \BAL
-L_{\mu }v&=0 \qquad&&\text{in } \xO\setminus \Sigma_n\\
v&=\tilde\xz\qquad&&\text{on } \prt\xO\cup \partial \Sigma_n.
\EAL \right.
\ee

We observe that $v$ satisfies \eqref{harmonicrel1} if and only if $w=v/\phi_*$ satisfies
\be\label{harmonicrel2} \left\{  \BAL
-\text{div}(\phi_*^2\nabla w)&=0 \qquad&&\text{in } \Omega \setminus \Sigma_n\\
w&=\frac{\tilde\zeta}{\phi_*}\qquad&&\text{on } \prt\xO\cup \partial \Sigma_n.
\EAL \right.
\ee
We note that for any $\tilde\zeta \in C(\partial \Omega \cup \partial \Sigma_n)$, there exists a unique solution of \eqref{harmonicrel2}. From the above observation, we deduce that there exists a unique solution of \eqref{harmonicrel1}. Thus, for any $n$ and $x \in \Omega \setminus \Sigma$, there exists $L_\mu$-harmonic measure $\omega_n^{x}$  on $\partial \Omega \cup \partial \Sigma_n$. Denote by $v_n$ the solution of \eqref{harmonicrel1}, then
\be \label{wnrep1}
v_n(x)=\int_{\partial\xO\cup \partial \Sigma_n}\tilde\xz(y)\, \dd\xo_n^x(y).
\ee
For any $\xz \in C(\partial\xO),$ we set $\tilde\xz=\xz$ if $x\in \partial\xO,$ $\tilde\xz=0$ otherwise. In view of the proof of \cite[Proposition 6.12]{GkiNg_linear} and \eqref{wnrep1}, we may deduce that $v_n(x)\rightarrow v(x)=\int_{\partial\Omega \cup \Sigma} \zeta(y) \,\dd \omega^{x}_{\xO\setminus\xS}(y).
$

On the other hand, for any $n \in \N$,  the Green function of $-L_\mu$ in $\Omega \setminus \Sigma_n$ exists, denoted by $G^n_{\mu}$. We see that $G^n_{\mu}(x,y)\nearrow G_{\mu}(x,y)$ for any $x\neq y$ and $x,y\in \xO\setminus \Sigma$.

Denote the Poisson kernel of $-L_\mu$ in $\Omega \setminus \Sigma_n$ by
\bal P_\mu^n(x,y)=-\frac{\partial G_\mu^n}{\partial {\bf n}^n}(x,y), \quad x \in \Omega \setminus \Sigma_n, y \in \partial \Omega \cup \partial \Sigma_n,
\eal
where ${\bf n}^n$ is the unit outer normal vector of $\partial \Omega \cup \partial \Sigma_n$. Then we have the representation
\be \label{wnrep2} v_n(x) = \int_{\partial \Omega \cup \partial \Sigma_n}P_\mu^n(x,y)\tilde\zeta(y)\,\dd S(y),
\ee
where $S$ is the $(N-1)$-dimensional surface measure on $\partial\Omega \cup \partial \Sigma_n$. From \eqref{wnrep1} and \eqref{wnrep2} and using the fact that $\tilde\xz$ has compact support in $\partial\xO,$ we obtain

\be \label{zP}
\int_{\partial \Omega}\zeta(y)\dd \omega_n^{x}(y)=\int_{\partial \Omega}P_\mu^n(x,y)\zeta(y)\, \dd S(y).
\ee
Put $\beta=\frac{1}{2}\min\{d(x), \text{dist}(\partial\xO, \Sigma)\}$. Let $\Omega_\beta=\{ x \in \Omega: d(x)<\beta \}$. Then $\{G_{\mu}^n(x,\cdot)\}_n$ is uniformly bounded with respect to $W^{2,\kappa}(\Omega_\beta)$-norm for any $\kappa>1$. Thus, by compact embedding, there exists a subsequence, still denoted by $\{G_{\mu}^n(x,\cdot)\}_n$, which converges to $G_{\mu}(x,\cdot)$ in $C^1(\overline{\Omega_\beta})$ as $n \to \infty$. In particular $P_\mu^n(x,\cdot) \to P_\mu(x,\cdot)$ uniformly on $\partial \Omega$ as $n \to \infty$.

Therefore, by letting $n \to \infty$ in \eqref{zP}, we obtain
\be \label{zP1} \begin{aligned} \int_{\partial\Omega} \zeta(y) \,\dd \omega^{x}_{\xO\setminus\xS}(y)&=\lim_{n\to\infty}\int_{\partial\Omega} \zeta(y) \,\dd \omega^{x}_n(y) \\
&=\lim_{n\to\infty}\int_{\partial\Omega} P_{\mu}^n(x,y)\zeta(y) \, \dd S(y)=\int_{\partial\Omega} P_{\mu}(x,y)\zeta(y) \,\dd S(y).
\end{aligned} \ee
Since $\inf_{y \in \partial \Omega}P_\mu(x_0,y)>0$ and \eqref{zP1} holds for any $\xz\in C(\partial\xO),$ we have that \eqref{PE} is valid, which implies \eqref{Pformula}. The proof is complete.
\end{proof}

\begin{proposition}\label{weaksolution0}
(i) For any $h \in L^1(\partial\xO \cup \Sigma;\dd \omega^{x_0}_{\xO\setminus\xS})$ with support on $\partial \Omega$, there holds
\be \label{weakfor1}
-\int_\xO \BBK_\mu[h\dd \omega^{x_0}_{\xO\setminus\xS}] L_\xm\eta \, \dd x=-\int_{\partial\xO}\frac{\partial \eta}{\partial{\bf n}}(y)h(y)\, \dd S(y),\quad \forall \eta \in \mathbf{X}_\xm(\xO\setminus \Sigma).
\ee

(ii) For any $\xn\in \mathfrak{M}(\prt\xO \cup \Sigma)$ with support on $\partial \Omega$, there holds
\be \label{weakfor2}
-\int_\xO \BBK_\mu[\nu] L_\xm\eta \, \dd x=-\int_{\partial\xO}\frac{\partial \eta}{\partial{\bf n}}(y)\frac{1}{P_\mu(x_0,y)}\, \dd \nu(y),\quad \forall \eta \in \mathbf{X}_\xm(\xO\setminus \Sigma),
\ee
where $P_\mu(x_0,y)$ is defined in \eqref{Poisson} and ${\bf X}_\mu(\Gw\setminus \Sigma)$ is defined by \eqref{Xmu}.

\end{proposition}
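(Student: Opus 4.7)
The plan is to reduce both (i) and (ii) to a single key identity valid for every $\eta \in \mathbf{X}_\mu(\Omega \setminus \Sigma)$ and every $y \in \partial\Omega$:
\begin{equation}\label{keyid-new}
\frac{\partial \eta}{\partial {\bf n}}(y) = \int_\Omega P_\mu(z,y)\, L_\mu \eta(z)\, \mathrm{d}z,
\end{equation}
and then finish by Fubini. For (i), the relations $K_\mu(x,y) = P_\mu(x,y)/P_\mu(x_0,y)$ and $\mathrm{d}\omega^{x_0}_{\Omega\setminus\Sigma}(y) = P_\mu(x_0,y)\, \mathrm{d}S(y)$ on $\partial\Omega$, coming from \eqref{KP} and \eqref{PE}, would yield the identification $\BBK_\mu[h\, \mathrm{d}\omega^{x_0}_{\Omega\setminus\Sigma}] = \BBP_\mu[h]$, so that swapping the order of integration in \eqref{keyid-new} gives the claim. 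For (ii), I would write $\BBK_\mu[\nu](x) = \int_{\partial\Omega} P_\mu(x,y)/P_\mu(x_0,y)\, \mathrm{d}\nu(y)$ and apply Fubini once more, using that $1/P_\mu(x_0,\cdot)$ is bounded on the compact set $\partial\Omega$, since $P_\mu(x_0,\cdot)$ is continuous (by Proposition \ref{measureboundary}) and strictly positive by the Hopf boundary point lemma.

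To prove \eqref{keyid-new}, I would first observe that every $\eta \in \mathbf{X}_\mu(\Omega \setminus \Sigma)$ has boundary trace zero: from $|\eta| \lesssim \ei$ (recalled in the paragraph after \eqref{Xmu}) together with $\ei \approx d\, d_\Sigma^{-\am}$ and the definition \eqref{tildeW} of $\tilde W$, one checks that $\eta/\tilde W$ decays like $d$ near $\partial\Omega$ and like $d_\Sigma^{\ap - \am}$ near $\Sigma$ (with a logarithmic correction in the critical case), so that $\mathrm{tr}(\eta) = 0$. Applying Theorem \ref{linear-problem} to the datum $\tau := -L_\mu \eta$, which belongs to $\GTM(\Omega \setminus \Sigma;\ei)$ because $|L_\mu\eta| \lesssim \ei$, with $\nu = 0$, would produce the Green representation
\[
\eta(x) = -\int_\Omega G_\mu(x,z)\, L_\mu \eta(z)\, \mathrm{d}z, \qquad x \in \Omega \setminus \Sigma.
\]
Proposition \ref{measureboundary} guarantees that $G_\mu(\cdot,z) \in C^{1,\gamma}(\overline{\Omega} \setminus (\Sigma \cup \{z\}))$, so the inward normal derivative at $y \in \partial\Omega$ exists; differentiating under the integral and using the symmetry $G_\mu(x,z)=G_\mu(z,x)$ together with definition \eqref{Poisson} of $P_\mu$, one obtains $-\partial_{{\bf n}_x} G_\mu(x,z)|_{x=y} = P_\mu(z,y)$, which produces \eqref{keyid-new}.

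The main obstacle I expect is the rigorous justification of the interchange of normal differentiation with the Green integral. My strategy is a uniform dominated convergence argument: near a fixed $y \in \partial\Omega$, the difference quotients of $G_\mu(\cdot,z)$ should be dominated by a multiple of $K_\mu(z,y)$ (using the boundary Harnack inequality together with \eqref{KP}), and the majorant $K_\mu(z,y)\, |L_\mu\eta(z)|$ is integrable in $z$ because $|L_\mu\eta| \lesssim \ei$ and $\int_\Omega K_\mu(z,y)\ei(z)\,\mathrm{d}z \lesssim 1$ uniformly in $y \in \partial\Omega$, which is \eqref{esti2} applied with $\tau=0$ and $\nu = \delta_y$. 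The same majorant makes both subsequent Fubini swaps absolutely convergent, so those steps are routine. The extension of (i) from continuous $h$ to a general $h \in L^1(\partial \Omega \cup \Sigma; \mathrm{d}\omega^{x_0}_{\Omega\setminus\Sigma})$ with support in $\partial\Omega$ would then be a standard density argument relying on the equivalence $\mathrm{d}\omega^{x_0}_{\Omega\setminus\Sigma} \approx \mathrm{d}S$ on $\partial\Omega$, itself a consequence of \eqref{PE} and the uniform positivity and boundedness of $P_\mu(x_0,\cdot)$ there.
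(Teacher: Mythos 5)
Your route is genuinely different from the paper's. The paper proves (i)--(ii) by working on the approximating domains $\Omega\setminus\Sigma_n$: it invokes the classical weak formulation on smooth domains (\cite[Proposition 1.3.7]{MVbook}) for the solutions $v_n$ of \eqref{harmonicrel1}, replaces the test function by the solution $\eta_n$ of a Dirichlet problem with datum $-L_\mu\eta$ on $\Omega\setminus\Sigma_n$, passes to the limit $n\to\infty$ using $C^1$ convergence of $G_\mu^n(x,\cdot)$ and $P_\mu^n$ up to $\partial\Omega$, and then extends from continuous boundary data to $L^1$ (resp.\ measure) data by density, weak Lebesgue estimates and Vitali's theorem. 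You instead reduce both items to the single kernel identity $\frac{\partial\eta}{\partial{\bf n}}(y)=\int_\Omega P_\mu(z,y)L_\mu\eta(z)\,\dd z$, obtained by differentiating the Green representation of $\eta\in\mathbf{X}_\mu(\Omega\setminus\Sigma)$, and then conclude by Fubini via \eqref{KP} and \eqref{PE}. This is an attractive shortcut (no domain exhaustion, no approximation of the boundary data), but it shifts all the work onto two steps that you have not fully secured.

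First, the representation $\eta=-\BBG_\mu[L_\mu\eta]$ does not follow from ``$\tr(\eta)=0$ plus Theorem \ref{linear-problem}'': to identify $\eta$ with the unique weak solution of \eqref{NHL} for $\tau=-L_\mu\eta$, $\nu=0$, you must verify the defining identity \eqref{lweakform}, i.e.\ the symmetry $\int_\Omega \eta\,L_\mu\xi\,\dd x=\int_\Omega \xi\,L_\mu\eta\,\dd x$ for all $\xi\in\mathbf{X}_\mu(\Omega\setminus\Sigma)$; the boundary-trace computation you sketch is not a substitute for this integration-by-parts fact (it is true, and essentially available in \cite{GkiNg_linear}, but it must be stated and used). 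Second, your proposed majorant for the difference quotients, a constant multiple of $K_\mu(z,y)$, works in the subcritical case via \eqref{Greenesta} (note $|x-z|\geq\frac12|y-z|$ along the normal segment), but it fails pointwise in the critical case $\mu=H^2$, $\Sigma=\{0\}$: the logarithmic term in \eqref{Greenestb} makes $t^{-1}G_\mu(y-t{\bf n},z)$ exceed any fixed multiple of $K_\mu(z,y)\approx d(z)|z|^{-\frac{N-2}{2}}|z-y|^{-N}$ for $z$ very close to the normal segment with $d(z)\approx t$. The interchange of $\partial_{\bf n}$ with the integral can still be justified there (e.g.\ by checking that $\sup_{0<t<t_0}t^{-1}G_\mu(y-t{\bf n},\cdot)$ is integrable against $\ei$, the logarithm being harmless after integration, or by treating the log term separately), but as written this step is a genuine gap in the critical case. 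Once these two points are repaired, the Fubini arguments and the identifications $\dd\omega^{x_0}_{\Omega\setminus\Sigma}=P_\mu(x_0,\cdot)\,\dd S$ and $K_\mu(\cdot,y)P_\mu(x_0,y)=P_\mu(\cdot,y)$ on $\partial\Omega$ do yield \eqref{weakfor1} and \eqref{weakfor2}, so your plan is viable and arguably more direct than the paper's, at the price of heavier reliance on fine boundary estimates for $G_\mu$.
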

\begin{proof} (i) Let $\{\Sigma_n\}$ be as in \eqref{Kn}. Let $\eta\in \mathbf{X}_\xm(\xO\setminus \Sigma),$ $\xz\in C(\partial\xO\cup \partial \Sigma_n)$ with compact support in $\partial\xO$ and $v_n$ be the solution of \eqref{harmonicrel1}.

In view of the proof of Proposition \ref{measureboundary}, $v_n\in C(\overline{\xO\setminus \Sigma_n})$ and
\bal
v_n(x)=\int_{\partial\xO} \xz(y) \, \dd\xo^{x}_n(y) =\int_{\partial\Omega} P_{\mu}^n(x,y)\zeta(y) \, \dd S(y).
\eal
Put
\bal
v(x)=\int_{\partial\xO} \zeta(y) \, \dd\xo^{x}(y) \quad \text{and} \quad w(x)=\int_{\partial\xO} |\xz(y)| \, \dd\xo^{x}(y).
\eal
Then $v_n(x)\rightarrow v(x)$ and $|v_n(x)|\leq w(x)$. By \cite[Proposition 1.3.7]{MVbook},

\be
-\int_{\xO\setminus \Sigma_n}v_n L_\xm Z \, \dd x=-\int_{\partial\xO}\xz\frac{\partial Z}{\partial{\bf n}} \, \dd S,\quad \forall Z \in C^2_0(\xO\setminus \Sigma_n).
\ee
By approximation, the above equality is valid for any $Z \in C^{1,\xg}(\overline{\xO\setminus \Sigma_n}),$ for some  $\xg\in(0,1)$ and $\xD Z\in L^\infty.$ Hence, we may choose $Z=\eta_n$, where $\eta_n$ satisfies
\bal \left\{  \BAL
-L_{\mu }\eta_n&=-L_\xm\eta \qquad&&\text{in } \xO\setminus \Sigma_n\\
\eta_n &=0\qquad&&\text{on } \prt\xO\cup \partial \Sigma_n,
\EAL \right.
\eal
we obtain
\be
-\int_{\xO \setminus \Sigma_n} v_n L_\xm \eta_n \, \dd x=-\int_{\partial\xO}\xz\frac{\partial \eta_n}{\partial{\bf n}} \, \dd S.
\ee
We note that $\eta_n\rightarrow \eta$ a.e. in $\xO\setminus \Sigma$ and in $C^1(\overline{\xO\setminus \Sigma_1})$. Therefore by the dominated convergence theorem, we obtain
\be
-\int_{\xO} v L_\xm \eta \, \dd x=-\int_{\partial\xO}\xz\frac{\partial \eta}{\partial{\bf n}}\, \dd S.\label{weak1}
\ee

Now let $h \in L^1(\partial \Omega \cup \Sigma;\dd \omega^{x_0}_{\xO\setminus\xS})$ with support on $\partial \Omega$ and $\{h_n\}$ be a sequence of functions in $C(\partial\xO \cup \Sigma)$ with support on $\partial \Omega$ such that $h_n\rightarrow h$ in $L^1(\partial\xO \cup \Sigma;\dd \omega^{x_0}_{\xO\setminus\xS})$, i.e.
\be \label{conver} \lim_{n \to \infty}\int_{\partial\Omega}|h_n(y) - h(y)| \,\dd\omega_{\Omega \setminus \Sigma}^{x_0}(y) = 0.
\ee
This, together with \eqref{Pformula} with $h$ replaced by $|h_n-h|$ and the fact $P_\mu(x_0,\cdot) \in C(\partial \Omega)$, yields
\bal \lim_{n \to \infty}\int_{\partial\Omega}P_\mu(x_0,y)|h_n(y)-h(y)|\,\dd S(y) = \lim_{n \to \infty}\int_{\partial\Omega}|h_n(y) - h(y)|\, \dd\omega_{\Omega \setminus \Sigma}^{x_0}(y) = 0.
\eal
As a consequence, $h_n \to h$ in $L^1(\partial \Omega)$ due to the fact that $\inf_{y \in \partial \Omega}P_\mu(x_0,y)>0$.

Put
\bal u_n(x)=\int_{\partial \Omega} K_\mu(x,y)h_n(y)\,\dd \omega^{x_0}_{\xO\setminus\xS}(y), \quad x \in \Omega \setminus \Sigma.
\eal
By \eqref{conver} and the fact that $K_\mu(\cdot,y)$ is bounded in any compact subset of $\Omega \setminus \Sigma$ (the bound depends on the distance from the compact subset to $\partial \Omega$ and $\Sigma$), we deduce that $u_n \to u$ locally uniformly in $\Omega \setminus \Sigma$ where \bal u(x)=\int_{\partial\xO}K_{\mu}(x,y)h(y) \,\dd\omega_{\Omega \setminus \Sigma}^{x_0}(y).
\eal
Therefore, up to a subsequence, $u_n \to u$  in $\Omega \setminus \Sigma$.

Again, since $K_\mu(x,\cdot), h_n \in C(\partial \Omega)$, by \eqref{Pformula}, we derive
\bal u_n(x)=\int_{\partial \Omega}K_\mu(x,y)P_\mu(x_0,y)h_n(y)\,\dd S(y).
\eal
By Theorem \ref{lpweakmartin1} and \eqref{LpLpweak} and the fact that $0<\max_{y \in \partial \Omega}P_\mu(x_0,y)<\infty$ and $\| h_n\|_{L^1(\partial \Omega)} \leq C\|h\|_{L^1(\partial \Omega)}$, we deduce that for any $1<\kappa<\frac{N+1}{N-1}$, there exists a positive constant $C=C(N,\Omega,\Sigma,\mu, \kappa)$ such that
$\| u_n \|_{L^\kappa(\Omega;\ei)}  \leq C\|h\|_{L^1(\partial \Omega)}$  for all $n \in \N$.
This in turn implies that $\{u_n\}$ is equi-integrable in $L^1(\Omega;\ei)$. Therefore, by Vitali's convergence theorem, up to a subsequence, $u_n \to u$ in $L^1(\Omega;\ei)$.

Next applying \eqref{weak1} with $v=u_n$ and $\zeta=h_n$, we obtain
\be \label{unfn} - \int_{\Omega} u_n L_\mu \eta \,\dd x = - \int_{\partial \Omega} h_n \frac{\partial \eta}{\partial {\bf n}}\,\dd S.
\ee
Since $u_n \to u$ in $L^1(\Omega;\ei)$, $h_n \to h$ in $L^1(\partial \Omega)$ and $|\frac{\partial \eta}{\partial {\bf n}}|$ is bounded on $\partial \Omega$, by letting $n \to \infty$ in \eqref{unfn}, we conclude \eqref{weakfor1}. \medskip

(ii) Let $\{h_n\}$ be a sequence in $C(\partial\xO)$ converging weakly to $\nu$, i.e.
\be  \label{weakcon}  \int_{\partial\xO}\xz h_n \, \dd S\rightarrow \int_{\partial\xO}\xz \dd \nu \quad \forall \zeta \in C(\partial\xO),
\ee
and $\|h_n\|_{L^1(\partial\xO)} \leq C\|\xn\|_{\mathfrak{M}(\prt\xO)}$ for every $n \geq 1$. Put
\bal u_n(x)=\int_{\partial\xO}K_{\mu}(x,y)\frac{h_n(y)}{P_\mu(x_0,y)} \,\dd\omega_{\Omega \setminus \Sigma}^{x_0}(y).
\eal
Since $P_\mu(x_0,\cdot),\;K_{\mu}(x,\cdot)\in C(\partial\Omega)$ and $\inf_{y\in\partial\Omega} P_\mu(x_0,y)>0$, by \eqref{Pformula} and \eqref{weakcon}, we have
\bal u_n(x)=\int_{\partial \Omega}K_{\mu}(x,y)h_n(y)\,\dd S(y)
\rightarrow\int_{\partial\xO}K_{\mu}(x,y)\,\dd \nu(y)=u(x).
\eal
Therefore $u_n \to u$ a.e. in $\Omega \setminus \Sigma$.

On the other hand, by Theorem \ref{lpweakmartin1} and \eqref{LpLpweak}, for any $1<\kappa<\frac{N+1}{N-1},$ there exists a positive constant $C=C(N,\Omega,\Sigma,\mu,\kappa)$ such that
$\norm{u_n}_{L^\kappa(\xO;\ei)}\leq C\norm{\xn}_{\mathfrak{M}(\prt\xO)}$.
By a similar argument as in the proof of (i), we can show that $u_n \to u $  in $L^1(\xO;\ei)$.
Hence by applying \eqref{weak1} with $v=u_n$ and $\zeta=h_n/P_\mu(x_0,\cdot)$, and then letting $n \to \infty$, we conclude \eqref{weakfor2}.
\end{proof}

\subsection{Existence and uniqueness}
We start with a result on the solvability in $L^1$ setting.
\begin{theorem} \label{solL1}
	Assume $\mu \leq H^2$ and $h \in L^1(\partial\xO \cup \Sigma;\dd\omega_{\Omega \setminus \Sigma}^{x_0})$ with compact support in $\partial \Omega$. Then there exists a unique weak solution of \eqref{pro:g-tau=0} and $\dd \nu=h\,\dd \omega^{x_0}_{\xO\setminus\xS}$. Furthermore there holds
	\be
	-\int_{\Omega} u L_\xm\eta \,\dd x+\int_{\Omega} g(u)\eta \, \dd x=-\int_{\partial\xO}\frac{\partial \eta}{\partial{\bf n}}(y)h(y) \, \dd S(y),\quad \forall \eta \in \mathbf{X}_\xm(\xO\setminus \Sigma)\label{weakl1}
	\ee
	and
	\be \label{K=P}
	u+\mathbb{G}_{\mu}[g(u)]=\mathbb{K}_{\xm}[h\,\dd\omega_{\Omega \setminus \Sigma}^{x_0}] =\mathbb{P}_{\xm}[h],
	\ee
	where $\BBP_\mu(x,y)$ is defined in \eqref{BBP}.
\end{theorem}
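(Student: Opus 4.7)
Formula \eqref{K=P} is immediate: by \eqref{Pformula}, $\dd\omega^{x_0}_{\Omega\setminus\Sigma}|_{\partial\Omega}=P_\mu(x_0,\cdot)\,\dd S$, and substituting \eqref{KP} gives
\begin{equation*}
\mathbb{K}_\mu[h\,\dd\omega^{x_0}_{\Omega\setminus\Sigma}](x)=\int_{\partial\Omega}K_\mu(x,y)P_\mu(x_0,y)h(y)\,\dd S(y)=\mathbb{P}_\mu[h](x);
\end{equation*}
in particular $\mathbb{P}_\mu[|h|]\in L^1(\Omega;\phi_\mu)$ by Theorem \ref{lpweakmartin1} and \eqref{LpLpweak}. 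To construct the solution without any growth restriction on $g$, the plan is to use a double approximation: truncate the absorption by $g_k(t):=\max\{-k,\min(g(t),k)\}$ as in \eqref{gnt}, and approximate $h$ by $\{h_n\}\subset C(\partial\Omega\cup\Sigma)$ that are bounded, supported in a fixed compact subset of $\partial\Omega$, and converge to $h$ in $L^1(\partial\Omega\cup\Sigma;\dd\omega^{x_0}_{\Omega\setminus\Sigma})$. Since $g_k$ is bounded, $g_k(\pm\mathbb{P}_\mu[|h_n|])\in L^\infty(\Omega)\subset L^1(\Omega;\phi_\mu)$, so Theorem \ref{existGK} yields a unique weak solution $u_{n,k}$ of the problem with absorption $g_k$ and datum $h_n\,\dd\omega^{x_0}_{\Omega\setminus\Sigma}$, with $|u_{n,k}|\le\mathbb{P}_\mu[|h_n|]$ by \eqref{U12}.

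The passage $n\to\infty$ for fixed $k$ is controlled by applying the Kato inequality \eqref{poi4} to $u_{n,k}-u_{n',k}$ with the eigenfunction test $\zeta=\phi_\mu$ (so that $L_\mu\phi_\mu=-\lambda_\mu\phi_\mu$). Monotonicity of $g_k$ turns the sign term into $|g_k(u_{n,k})-g_k(u_{n',k})|$, and one obtains
\begin{equation*}
\lambda_\mu\!\int_\Omega|u_{n,k}-u_{n',k}|\phi_\mu\,\dd x+\!\int_\Omega|g_k(u_{n,k})-g_k(u_{n',k})|\phi_\mu\,\dd x\le C\,\|h_n-h_{n'}\|_{L^1(\dd\omega^{x_0}_{\Omega\setminus\Sigma})},
\end{equation*}
with constant $C$ independent of $k$ (the right-hand side comes from bounding $\mathbb{P}_\mu[|h_n-h_{n'}|]$ in $L^1(\Omega;\phi_\mu)$ via \eqref{esti2}). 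Letting $n\to\infty$ produces a limit $u_k\in L^1(\Omega;\phi_\mu)$ that weakly solves the problem with absorption $g_k$ and datum $h\,\dd\omega^{x_0}_{\Omega\setminus\Sigma}$; the same Kato identity applied directly to $u_k$ yields the uniform bound $\int_\Omega|g_k(u_k)|\phi_\mu\,\dd x\le C\,\|h\|_{L^1(\dd\omega^{x_0}_{\Omega\setminus\Sigma})}$.

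The passage $k\to\infty$ is the main technical hurdle. Applying \eqref{poi4} to $u_k-u_l$ with $\zeta=\phi_\mu$ and decomposing $g_k(u_k)-g_l(u_l)=[g_k(u_k)-g_k(u_l)]+[g_k(u_l)-g_l(u_l)]$, monotonicity handles the first bracket favourably, so the remaining error reduces to showing $\int_\Omega|g_k(u_l)-g_l(u_l)|\phi_\mu\,\dd x\to0$ as $k,l\to\infty$; this is obtained from the uniform $L^1$-bound on $\{g_k(u_k)\}$ together with a.e.\ convergence $u_k\to u$ along a subsequence via Vitali's theorem. Consequently $u_k\to u$ and $g_k(u_k)\to g(u)$ in $L^1(\Omega;\phi_\mu)$, and passing to the limit in \eqref{nlinearweakform} yields a weak solution $u$ of \eqref{pro:g-tau=0} with trace $h\,\dd\omega^{x_0}_{\Omega\setminus\Sigma}$. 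Uniqueness follows immediately from \eqref{poi4} applied to the difference of two solutions with $\zeta=\phi_\mu$ together with the monotonicity of $g$, and \eqref{weakl1} is obtained by combining Definition \ref{def:weak-sol} with the integration-by-parts identity \eqref{weakfor1} from Proposition \ref{weaksolution0}(i). The principal obstacle is the equi-integrability step in the $k\to\infty$ passage, where the Kato inequality with eigenfunction test function supplies the $L^1$-contraction that renders the whole scheme insensitive to the growth of $g$.
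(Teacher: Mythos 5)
There is a genuine gap in the passage $k\to\infty$, which you yourself flag as the main hurdle. Your argument rests on the claim that $\int_\Omega|g_k(u_l)-g_l(u_l)|\phi_\mu\,\dd x\to0$ follows "from the uniform $L^1$-bound on $\{g_k(u_k)\}$ together with a.e.\ convergence via Vitali's theorem". Vitali's theorem requires \emph{uniform integrability}, and a uniform bound in $L^1(\Omega;\ei)$ does not provide it; since the theorem imposes no growth restriction on $g$ (no condition of type \eqref{subcricondintro}), there is no soft way to rule out that the measures $g_k(u_k)\ei\,\dd x$ concentrate near $\Sigma$, where the only available pointwise control is $u_k\lesssim d_\Sigma^{-\am}$ and $g(Cd_\Sigma^{-\am})$ need not be integrable against $\ei$. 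For the same reason $g(u_l)$ itself need not lie in $L^1(\Omega;\ei)$ for fixed $l$, so your decomposition $g_k(u_k)-g_l(u_l)=[g_k(u_k)-g_k(u_l)]+[g_k(u_l)-g_l(u_l)]$ cannot be closed by smallness of the second bracket. This is not a cosmetic issue: it is exactly the point where the paper's proof does something structurally different.

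The paper never proves $L^1(\Omega;\ei)$-convergence of the truncated absorptions up to $\Sigma$. Instead it dominates the approximate solutions by $v_h=\BBK_\mu[h\,\dd\omega^{x_0}_{\xO\setminus\xS}]\lesssim d_\Sigma^{-\am}$ (estimate \eqref{vf-est}), obtains convergence of $g_n(u_n)$ only in $L^1(\Omega\setminus\Sigma_\beta;\ei)$, gets $g(u)\in L^1(\Omega;\ei)$ by Fatou, and thus the weak formulation first only for test functions supported away from $\Sigma$. The missing boundary behaviour is then recovered by writing $u+\BBG_\mu[g(u)]=\BBK_\mu[\nu]$ via the Representation Theorem \ref{th:Rep}, excluding mass of $\nu$ on $\Sigma$ through the dynamic-trace contradiction argument \eqref{contra1}--\eqref{contra2} (comparing $u\lesssim d_\Sigma^{-\am}$ with $\tilde W\approx d_\Sigma^{-\ap}$, or the logarithmic weight when $\mu=H^2$), and identifying $\nu=h\,\dd\omega^{x_0}_{\xO\setminus\xS}$ through Proposition \ref{weaksolution0}, after which the test functions are extended to all of ${\bf X}_\mu(\Omega\setminus\Sigma)$ by a cut-off near $\Sigma$. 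Your proposal contains no substitute for this block (exclusion of boundary mass on $\Sigma$ and identification of the trace), and without it the limit you construct is not known to solve \eqref{pro:g-tau=0} with the prescribed datum. The remaining parts of your scheme (the identity \eqref{K=P}, the Kato-based $L^1$ contraction in $n$, uniqueness, and the derivation of \eqref{weakl1} from Proposition \ref{weaksolution0}(i)) are consistent with the paper's argument.
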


\begin{proof} The uniqueness is obtained by a similar argument as in the proof of Theorem \ref{existencesubcr}.
	
Next we prove the existence. First we assume that $h \in C(\partial\xO)$ and $h \geq 0$ on $\partial \Omega$.  Let $g_n$ be the function defined in \eqref{gnt} then $g_n\in L^\infty(\mathbb{R})\cap C(\mathbb{R})$. Put $v_h=\BBK_{\mu}[h\,\dd \omega^{x_0}_{\xO\setminus\xS}]$, by Theorem \ref{lpweakmartin1} and \eqref{LpLpweak}, $v_h \in L^1(\Omega; \ei)$. Moreover, by Proposition \ref{measureboundary} and Proposition \ref{Martin}, for $x \in \Omega \setminus \Sigma$,
\be \label{vf-est} \begin{aligned} 0 \leq v_h(x) &= \int_{\partial \Omega}K_{\mu}(x,y)P_\mu(x_0,y)h(y)\,\dd S(y) \\
&\lesssim  \| h \|_{L^\infty(\partial \Omega)} d_\Sigma(x)^{-\am}\int_{\partial \Omega} d(x)|x-y|^{-N} \,\dd S(y) \lesssim d_\Sigma(x)^{-\am}.
\end{aligned} \ee

Since $v_h$ and $0$ are supersolution and subsolution of \eqref{BVP-O} with $g=g_n$ and $\dd\nu= h\,\dd\omega_{\Omega \setminus \Sigma}^{x_0}$ and $0$ respectively, by Theorem \ref{existencesubcr}, there exists a unique weak solution $u_n\in L^1(\Omega; \ei)$ of
\be\label{NLin2} \left\{ \BAL
- L_\mu u+g_n(u)&=0\qquad \text{in }\;\Gw\setminus \Sigma,\\
\tr(u)&=h\,\dd\omega_{\Omega \setminus \Sigma}^{x_0},
\EAL \right. \ee
such that $0\leq u_n\leq v_h$ in $\Omega \setminus \Sigma.$ By Proposition \ref{weaksolution0} (i), $u_n$ satisfies
\be \label{weakl2}
-\int_{\Omega} u_n L_\xm\eta \, \dd x+\int_{\Omega} g_n(u_n)\eta \, \dd x=- \int_{\Omega} v_h L_{\xm }\eta \,\dd x = - \int_{\partial \Omega} \frac{\partial \eta}{\partial {\bf n} }h \, \dd S, \quad \forall \eta \in \mathbf{X}_\xm(\xO\setminus \Sigma).
\ee

By applying \eqref{poi4} with $\zeta=\ei$, $f=-g_n(u_n)$, $\rho=0$, $\dd \nu=h\,\dd \omega^{x_0}_{\xO\setminus\xS}$ and using Theorem \ref{lpweakmartin1} and \eqref{LpLpweak}, we assert that
\be\label{unest1}
\| u_n\|_{L^1(\Omega;\ei)} +\| g_n(u_n) \|_{L^1(\Omega;\ei)} \lesssim \| h \|_{L^1(\partial \Omega \cup \Sigma;\dd \omega^{x_0}_{\xO\setminus\xS})}.
\ee

Owing to standard local regularity, $\{u_n\}$ is uniformly bounded in $W^{1,\kappa}(D)$ for any $1<\kappa<\frac{N}{N-1}$ and any open $D\Subset \Omega \setminus \Sigma$. By a compact embedding, there exist a subsequence, say $\{u_n\}$, and a nonnegative function $u$ such that $u_n \to u$ a.e. in $\Omega \setminus \Sigma$. Since $|u_n|\leq v_h \in L^1(\Omega;\ei)$, by the dominated convergence theorem we have that $u_n \to u \in L^1(\Omega;\ei)$.  We also note that  $g_n(u_n) \to g(u)$ and $0 \leq g_n(u_n) \leq g(v_h)$ a.e. in $\Omega \setminus \Sigma$. From \eqref{vf-est}, we see that $g(v_h) \in L^1(\Omega \setminus \Sigma_\beta;\ei)$ for every $\beta \in (0,\beta_0)$. Therefore, by the dominated convergence theorem, we derive  $g_n(u_n)\to g(u)$ in $L^1(\Omega \setminus \Sigma_\beta;\ei)$ for every $\beta \in (0,\beta_0)$. By \eqref{unest1} and Fatou's lemma, $g(u)\in  L^1(\Omega;\ei)$. In addition, by letting $n \to \infty$ in \eqref{weakl2}, we derive that \eqref{weakl1} holds true
for all $\eta \in \mathbf{X}_\xm(\xO\setminus \Sigma)$ with $\supp \eta \Subset \overline \Omega \setminus \Sigma$.

We note that $u+\mathbb{G}_{\mu}[g(u)]$ is a nonnegative $L_\mu$-harmonic function in $\xO\setminus \Sigma$, hence by Theorem \ref{th:Rep}, there exists a unique measure $\nu \in \mathfrak{M}^+(\partial\xO\cup \Sigma)$ such that
\be \label{urep} u+\mathbb{G}_{\mu}[g(u)]=\mathbb{K}_{\xm}[\xn].
\ee
This, combined with the fact that $g(u) \in L^1(\Omega;\ei)$ and Proposition \ref{traceKG}, implies $\tr(u)=\nu$.

By choosing $\phi \in C(\overline \Omega)$ such that $0 \leq \phi \leq 1$ in $\overline \Omega$,  $\phi=0$ in $\overline \Omega_{\beta_0}$ and $\phi=1$ in $\overline \Sigma_{\beta_0}$ in Definition \ref{nomtrace}, we deduce
\be \label{contra1} \lim_{n \to \infty}\int_{\partial \Sigma_n} u \,\dd \omega_{O_n}^{x_0} = \int_{\Sigma}\dd \nu=\nu(\Sigma).
\ee
Here we choose the sequence $\{\Sigma_n\}$ such that $\dist(\Sigma_n,\Sigma)=\frac{1}{n}$.

Next we show that $\nu$ has compact support in $\partial \Omega$. Suppose by contradiction that $\nu(\Sigma)>0$. If $\mu<H^2$, then from the estimate $u(x) \leq v_h(x)\leq C d_\Sigma(x)^{-\am}$ for any $x\in\Omega \setminus \Sigma$, the definition of $\tilde W$ in \eqref{tildeW} and  \cite[Proposition 6.12]{GkiNg_linear} (with $\phi$ chosen as above) and \eqref{contra1}, we have
\be \label{contra2} \begin{aligned} \int_{\Sigma}\, \dd \omega^{x_0}_{\xO\setminus\xS}(x)&=\lim_{n \to \infty}\int_{\partial \Sigma_n} d_\Sigma(x)^{-\ap} \, \dd \omega_{O_n}^{x_0}(x) \\
&= \lim_{n \to \infty} n^{\ap-\am} \int_{\partial \Sigma_n}d_\Sigma(x)^{-\am} \, \dd \omega_{O_n}^{x_0}(x) \\
&\gtrsim \lim_{n \to \infty} n^{\ap-\am} \int_{\partial \Sigma_n} u(x) \, \dd \omega_{O_n}^{x_0}(x) = +\infty,
\end{aligned} \ee
which yields a contradiction since $\omega^{x_0}_{\xO\setminus\xS} \in \GTM^+(\partial \Omega \cup \Sigma)$ (note that $\ap - \am>0$). If $\mu=H^2$ then by a similar argument, we obtain
\bal \int_{\Sigma}\dd \omega^{x_0}_{\xO\setminus\xS}(x) &=\lim_{n \to \infty}\int_{\partial \Sigma_n} d_\Sigma(x)^{-H} |\ln d_\Sigma(x)| \, \dd \omega_{O_n}^{x_0}(x) \\
&= \lim_{n \to \infty} \ln(n) \int_{\partial \Sigma_n}d_\Sigma(x)^{-H} \, \dd \omega_{O_n}^{x_0}(x) \\
&\gtrsim \lim_{n \to \infty} \ln(n) \,\nu(\Sigma) = +\infty,
\eal
which is a contradiction.
Therefore $\nu$ has compact support in $\partial \Omega$.

Since $u$ satisfies \eqref{urep}, by using Proposition \ref{weaksolution0} (ii), we obtain
\be \label{weakl4}
-\int_{\Omega} u L_\xm\eta \, \dd x+\int_{\Omega} g(u)\eta \, \dd x=  - \int_{\Omega}\BBK_{\mu}[\nu] L_\mu \eta \, \dd x = -\int_{\partial\xO}\frac{\partial \eta}{\partial{\bf n}}(y)\frac{1}{P_\xm(x_0,y)}\, \dd \nu(y),
\ee
for all $\eta \in \mathbf{X}_\xm(\Omega \setminus \Sigma)$.
Combining \eqref{weakl1} (which holds for all $\eta \in \mathbf{X}_\xm(\xO\setminus \Sigma)$ with $\supp \eta \Subset \overline \Omega \setminus \Sigma$) and \eqref{weakl4} yields
\be \label{comparePf}
-\int_{\partial\xO}\frac{\partial \eta}{\partial{\bf n}}(y)\frac{1}{P_\xm(x_0,y)}\, \dd \nu(y)=-\int_{\partial\xO}\frac{\partial \eta}{\partial{\bf n}}(y)h(y)\, \dd S(y),
\ee
for all $\eta \in {\bf X}_\mu(\Omega \setminus \Sigma)$ with $\supp \eta \Subset \overline \Omega \setminus \Sigma$.

Let $\eta \in {\bf X}_\mu(\Omega \setminus \Sigma)$ and $\xf$ be the cut-off function above \eqref{contra1}. Using the test function $\tilde\eta=(1-
\xf)\eta$  in \eqref{comparePf}, we can show that \eqref{comparePf} holds for all $\eta \in {\bf X}_\mu(\Omega \setminus \Sigma)$. This in turn implies that \eqref{weakl1} holds for any $\eta \in {\bf X}_\mu(\Omega \setminus \Sigma)$. Combining \eqref{weakl1} and Proposition \ref{weaksolution0} (i), we deduce that
\bal
-\int_{\Omega} u L_\xm\eta \, \dd x+\int_{\Omega} g(u)\eta \, \dd x=- \int_{\Omega} \BBK_{\mu}[h\dd \omega^{x_0}_{\xO\setminus\xS}] L_{\xm }\eta \,\dd x,
\eal
which means  $u$ is a weak  solution of \eqref{BVP-O} with $\dd \nu=h\dd \omega^{x_0}_{\xO\setminus\xS}$.

Next we still assume that $h \in C(\partial \Omega)$, but drop  the assumption that $h \geq 0$ on $\partial \Omega$. Let $u_n$ and $\tilde u_n$ are weak solutions of \eqref{NLin2} with boundary datum $h\dd \omega^{x_0}_{\xO\setminus\xS}$ and $|h|\dd \omega^{x_0}_{\xO\setminus\xS}$ respectively. Then by \eqref{poi5}, $|u_n| \leq \tilde u_n$ in $\Omega \setminus \Sigma$. Moreover, by local regularity results, $\{u_n\}$ is uniformly bounded in $W^{1,\kappa}(D)$ for any $1<\kappa<\frac{N}{N-1}$ and $D \Subset \Omega \setminus \Sigma$. By the compact embedding, up to a subsequence, $u_n \to u$ a.e. in $\Omega \setminus \Sigma$. As a consequence, $g_n(u_n) \to g(u)$ a.e. in $\Omega \setminus \Sigma$ and $|g_n(u_n)| \leq g_n(\tilde u_n)-g_n(-\tilde u_n)$ a.e. in $\Omega \setminus \Sigma$. Therefore $u_n \to u$ and $g_n(u_n) \to g(u)$ in $L^1(\Omega;\ei)$. Consequently $u$ is a weak  solution of \eqref{BVP-O} with $\dd \nu=h\dd \omega_{\Omega \setminus \Sigma}^{x_0}$.

If $h \in L^1(\partial \Omega; \dd \omega_{\Omega \setminus \Sigma}^{x_0})$, let $\{h_n\}\subset C(\partial\Omega)$ such that $h_n \to h$ in $L^1(\partial\Omega;\dd \omega_{\Omega \setminus \Sigma}^{x_0})$ and $u_n$ be the respective solution with boundary datum $h_n \dd \omega_{\Omega \setminus \Sigma}^{x_0}$. By \eqref{poi4}, Theorem \ref{lpweakmartin1} and \eqref{LpLpweak}, there exists a positive constant $C$ such that
\be \label{un-um} \| u_n -u_l \|_{L^1(\Omega;\ei)} + \| g(u_n) - g(u_l) \|_{L^1(\Omega;\ei)} \leq C \| h_n-h_l \|_{L^1(\partial \Omega; \dd \omega_{\Omega \setminus \Sigma}^{x_0})}.
\ee
This implies that $\{u_n\}$ and $\{g(u_n)\}$ are Cauchy sequences in $L^1(\Omega;\ei)$, hence there exists $u\in L^1(\Omega;\ei)$ such that $u_n \to u$  and $g(u_n) \to g(u)$ in $L^1(\Omega;\ei)$. Thus $u$ is a weak solution of \eqref{BVP-O} with $\dd \nu=h\dd \omega_{\Omega \setminus \Sigma}^{x_0}$.

Formula \eqref{weakl1} follows from  formula \eqref{nlinearweakform} with $\dd \nu=h\dd \omega_{\Omega \setminus \Sigma}^{x_0}$ and Proposition \ref{weaksolution0} (i).

The first equality in \eqref{K=P} follows from \eqref{reprweaksol} with $\dd \nu=h\dd \omega_{\Omega \setminus \Sigma}^{x_0}$. The second equality in \eqref{K=P} follows from Proposition \ref{KP}.
\end{proof}

\begin{proof}[\textbf{Proof of Theorem \ref{measureO}}.] 	
Put $U_1=-\BBK_{\mu}[\nu^-]$ and $U_2=\BBK_{\mu}[\nu^+]$. Then by Theorem \ref{lpweakmartin1}, $U_1, U_2 \in L^1(\Omega;\ei)$. Moreover, from Theorem \ref{lpweakmartin1} and Lemma \ref{subcrcon} with $m=0$ and $q=\frac{N+1}{N-1}$, we have $g(U_1),$ $g(U_2) \in L^1(\Omega;\ei)$. We also note that $U_1$ and $U_2$ are subsolution and supersolution with $U_1 \leq 0 \leq U_2$. By applying Theorem \ref{existencesubcr}, we deduce that there exists a unique weak solution $u$ of \eqref{BVP-O} which satisfies \eqref{K<u<K}.
\end{proof}

\section{Boundary data concentrated in $\xS$} \label{sec:BVP-Sigma}
In this Section, we consider the case where the measure data are concentrated in $\Sigma$. Below is a regularity result in weak Lebesgue spaces.
\begin{lemma}\label{weaklpsmooth2}
	Assume $1 \leq k <N-2$ and $S_\Sigma$ is the $k$-dimensional surface measure on $\Sigma$.
	
	\noindent (i) If $\mu<H^2$ then $\mathbb{K}_{\mu}[S_\Sigma]\in L_w^{\frac{N-k-\am}{\ap}}(\Gw\setminus \Sigma;\ei)$.
	
	\noindent (ii) If $\mu=H^2$ then $\mathbb{K}_{\mu}[S_\Sigma]\in L_w^{\theta}(\Gw\setminus \Sigma;\ei)$ for all $1<\theta<\frac{N-k+2}{N-k-2}$. In addition, for $\lambda>0$, set
	\ba
	\tilde{A}_\xl(0):=\Big\{x\in \xO\setminus \{0\}:\;  \mathbb{K}_{\mu}[S_\Sigma](x)>\xl \Big \}, \quad \tilde{m}_{\xl}&:=\int_{\tilde{A}_\xl(0)}d(x)|x|^{-\frac{N-2}{2}} \, \dd x.\label{69b}
	\ea
	Then
	\ba\label{54b}
	\tilde{m}_{\xl}\lesssim (\xl^{-1}\ln\xl)^{\frac{N+k+2}{N+k-2}}, \quad \forall \xl>e.
	\ea
	The implicit constant depends on $N,\Omega,\Sigma,\mu$ and $\theta$.
\end{lemma}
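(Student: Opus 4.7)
The plan is to reduce both claims to a sharp pointwise upper bound on $\mathbb{K}_\mu[S_\Sigma]$ near $\Sigma$, obtained by integrating the Martin-kernel estimates of Proposition \ref{Martin} over $\Sigma$ in tubular coordinates, and then to convert this bound into a weak-Lebesgue estimate by a level-set analysis against the weight $\phi_\mu\approx d\,d_\Sigma^{-\am}$ (which equals $d\,d_\Sigma^{-H}$ in the critical case). Since the Martin kernel is uniformly bounded for $x$ outside a fixed tubular neighborhood $\Sigma_{\beta_1}$ of $\Sigma$ and $S_\Sigma(\Sigma)<\infty$, the function $\mathbb{K}_\mu[S_\Sigma]$ is bounded on $\Omega\setminus\Sigma_{\beta_1}$ and only its behavior inside $\Sigma_{\beta_1}$ matters for large level values.

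For part (i), fix $x\in\Sigma_{\beta_1}$ and let $\xi^*\in\Sigma$ realize $|x-\xi^*|=d_\Sigma(x)$. Using the $C^2$ chart $V(\xi^*,\beta_0)$ from Subsection \ref{assumptionK}, $\Sigma\cap V(\xi^*,\beta_0)$ is the graph of the functions $\Gamma_i^{\xi^*}$ over $B^k((\xi^*)',\beta_0)$, and the bound $\|\Sigma\|_{C^2}<\infty$ yields $|x-\xi|^2\approx d_\Sigma(x)^2+|y'-(\xi^*)'|^2$ for $\xi=(y',\Gamma^{\xi^*}(y'))$. Proposition \ref{Martin}(i) gives
\begin{align*}
\mathbb{K}_\mu[S_\Sigma](x)\lesssim d(x)\,d_\Sigma(x)^{-\am}\int_\Sigma|x-\xi|^{-(N-2-2\am)}\,dS_\Sigma(\xi),
\end{align*}
and, since $\mu<H^2$ forces $\am<H$ and hence $N-2-2\am>k$, polar coordinates on $\mathbb{R}^k$ yield
\begin{align*}
\int_\Sigma|x-\xi|^{-(N-2-2\am)}\,dS_\Sigma(\xi)\lesssim d_\Sigma(x)^{k-(N-2-2\am)}+C.
\end{align*}
Using the identity $2H-\am=\ap$ and $d\approx 1$ on $\Sigma_{\beta_1}$ then produces the key pointwise bound $\mathbb{K}_\mu[S_\Sigma](x)\lesssim d(x)\,d_\Sigma(x)^{-\ap}$. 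Consequently $\{\mathbb{K}_\mu[S_\Sigma]>s\}\subset\{d_\Sigma<cs^{-1/\ap}\}$ for large $s$, and the coarea formula together with $\phi_\mu\approx d_\Sigma^{-\am}$ on $\Sigma_{\beta_1}$ give $\int_{\{d_\Sigma<r\}}\phi_\mu\,dx\lesssim r^{N-k-\am}$ (note $N-k-\am>0$). Taking $r=cs^{-1/\ap}$ yields $\lambda_{\mathbb{K}_\mu[S_\Sigma]}(s;\phi_\mu\,dx)\lesssim s^{-(N-k-\am)/\ap}$, which is (i).

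For part (ii), $\mu=H^2$ forces $\am=\ap=H$ and $N-2-2H=k$; the integration over $\Sigma$ is now logarithmically divergent and produces a factor $|\ln d_\Sigma(x)|$. Using the $\mu=H^2$ analogue of Proposition \ref{Martin}(ii) for general $k$ (available along the same lines as in \cite{GkiNg_linear}), the resulting pointwise estimate is $\mathbb{K}_\mu[S_\Sigma](x)\lesssim d(x)\,d_\Sigma(x)^{-H}|\ln d_\Sigma(x)|$ on $\Sigma_{\beta_1}$. Inverting the monotone map $t\mapsto t^{-H}|\ln t|$ for small $t$ gives the critical level $t(\lambda)\approx\lambda^{-1/H}(\ln\lambda)^{1/H}$ for large $\lambda$, and the volume computation from the previous step with weight $d_\Sigma^{-H}$ yields
\begin{align*}
\lambda_{\mathbb{K}_\mu[S_\Sigma]}(\lambda;\phi_\mu\,dx)\lesssim t(\lambda)^{N-k-H}=(\lambda^{-1}\ln\lambda)^{(N-k+2)/(N-k-2)}.
\end{align*}
This provides the weak $L^\theta$ bound for every $\theta<(N-k+2)/(N-k-2)$ and yields the quantitative estimate \eqref{54b}.

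The main obstacle is the uniform control of the tubular approximation $|x-\xi|^2\approx d_\Sigma(x)^2+|y'-(\xi^*)'|^2$: the implicit constants must be tracked through $\|\Sigma\|_{C^2}$ and through the finite covering \eqref{cover} so that the full integral over $\Sigma$ reduces to a model computation on $\mathbb{R}^k$; in the critical case, the logarithmic factor must then be handled carefully when inverting $t\mapsto t^{-H}|\ln t|$ to ensure the distribution-function estimate takes the correct polynomial-logarithmic shape.
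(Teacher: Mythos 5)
Your argument is correct and is essentially the paper's own proof: both derive the pointwise bounds $\mathbb{K}_\mu[S_\Sigma]\lesssim d_\Sigma^{-\ap}$ (resp. $d_\Sigma^{-H}|\ln d_\Sigma|$ when $\mu=H^2$) by integrating the Martin-kernel estimate of Proposition \ref{Martin} over $\Sigma$, and then convert them into the weak-Lebesgue statements via the level-set computation against the weight $\phi_\mu\approx d\,d_\Sigma^{-\am}$, which the paper simply delegates to Theorems 3.5--3.6 of its companion paper \cite{GkiNg_source}. Two minor remarks: since $k\geq 1$ implies $H^2<\left(\frac{N-2}{2}\right)^2$, Proposition \ref{Martin}(i) with $\am=H$ already covers the critical case, so no ``analogue of Proposition \ref{Martin}(ii) for general $k$'' is needed; and your distribution-function exponent $\frac{N-k+2}{N-k-2}$ is stronger than (hence implies) the stated bound \eqref{54b}, because $\lambda^{-1}\ln\lambda<1$ for $\lambda>e$.
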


\begin{proof}
	By \eqref{Martinest1}, we have, for $x \in \Omega \setminus \Sigma$,
	\ba \label{martinKest1}
	\BBK_{\mu}[S_\Sigma](x) = \int_{\Sigma}K_{\mu}(x,y)\dS_\Sigma(y)\lesssim d_\Sigma(x)^{-\am}\int_{\Sigma}|x-y|^{-(N-2-2\am)}\dS_\Sigma(y).
	\ea
	%
	%
	%
	
	(i) If $\mu <H^2$ then $\am<H$.
	From \eqref{martinKest1}, we obtain $\BBK_{\mu}[S_\Sigma] \lesssim d_\Sigma^{-\ap}$ in $\Omega \setminus \Sigma$. Then we can proceed as in the proof of \cite[Theorem 3.5 (i)]{GkiNg_source} to derive
	$\mathbb{K}_{\mu}[S_\Sigma]\in L_w^{\frac{N-k-\am}{\ap}}(\Gw\setminus \Sigma;\ei).
	$
	
	(ii) If $\mu=H^2$ then $\am=H$. From \eqref{martinKest1} we can show that $\BBK_{\mu}[S_\Sigma] \lesssim d_\Sigma^{-H}|\ln \frac{d_\Sigma}{\CD_\xO}|$, where $\CD_\Omega = 2\sup_{x \in \Omega}|x|$.
	%
	Then by proceeding as in the proof of \cite[Theorem 3.6]{GkiNg_source}, we may obtain the desired result.
\end{proof}

\begin{theorem} \label{bdr-Sigma-1} (i) Assume $\mu < H^2$ and $g$ satisfies \eqref{subcd0} with $q= \frac{N-k-\am}{\ap}$ and $m=0$. Then for any $h \in L^1(\partial \Omega \cup \Sigma;\dd S_\Sigma)$ with compact support in $\Sigma$, problem \eqref{BVP-O} with $\dd \nu=h\, \dd S_\Sigma$ admits a unique weak solution.
	
	(ii) Assume $\mu =H^2$ and $g$ satisfies \eqref{subcd0} with $q=m= \frac{N+k+2}{N-k-2}.$ Then for any $h \in L^1(\partial \Omega \cup \Sigma;\dd S_\Sigma)$ with compact support in $\Sigma$, problem \eqref{BVP-O} with $\dd \nu=h\,\dd S_\Sigma$ admits a unique weak solution.
\end{theorem}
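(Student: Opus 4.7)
The plan is to imitate the proof of Theorem \ref{measureO} with the boundary datum concentrated on $\Sigma$ instead of $\partial\Omega$, using Lemma \ref{weaklpsmooth2} in place of Theorem \ref{lpweakmartin1} to control the Martin potential. \emph{Uniqueness} follows the standard route: if $u_1, u_2$ are two weak solutions, then applying \eqref{poi4} to $u_1 - u_2$ with $\tau = 0$, $\nu = 0$, $f = -(g(u_1)-g(u_2))$, and $\zeta = \ei$ (so that $L_\mu \zeta = -\lambda_\mu \ei$), combined with the monotonicity of $g$, forces $u_1 = u_2$ a.e.

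For \emph{existence}, I would first handle $h \in L^\infty(\Sigma; dS_\Sigma) \cap L^1(\Sigma; dS_\Sigma)$ with compact support in $\Sigma$, then pass to general $L^1$-data by approximation. Set $U_1 := -\BBK_\mu[|h|\,dS_\Sigma]$ and $U_2 := \BBK_\mu[|h|\,dS_\Sigma]$; these lie in $L^1(\Omega;\ei)$ by \eqref{esti2} and are respectively a subsolution and a supersolution of \eqref{BVP-O} with $d\nu = h\,dS_\Sigma$. Since $|U_i| \leq \|h\|_\infty\, \BBK_\mu[S_\Sigma]$, Lemma \ref{weaklpsmooth2} yields a distribution function estimate
\[
e(s) := \int_{\{|U_i|>s\}} \ei\,dx \leq
\begin{cases}
C\,s^{-q} & \text{in case (i), with } q = (N-k-\am)/\ap,\ m=0,\\[3pt]
C\,s^{-q}(\ln s)^{m} & \text{in case (ii), with } q = m = (N+k+2)/(N-k-2),
\end{cases}
\]
for all sufficiently large $s$ (in case (ii) this is the sharpened bound \eqref{54b} after rescaling by $\|h\|_\infty$). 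Feeding this and the subcritical hypothesis \eqref{subcd0} into Lemma \ref{subcrcon} gives $g(U_1), g(U_2) \in L^1(\Omega;\ei)$, and Theorem \ref{existencesubcr} then produces a unique weak solution $u \in L^1(\Omega;\ei)$ with $U_1 \leq u \leq U_2$.

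For general $h \in L^1(\Sigma; dS_\Sigma)$ with compact support in $\Sigma$, I would truncate $h_n := h\,\1_{\{|h|\leq n\}}$ and let $u_n$ denote the bounded-data solution above. Applying \eqref{poi4} to the difference $u_n - u_\ell$ with $\rho = 0$, $f = -(g(u_n)-g(u_\ell))$, $\nu = (h_n - h_\ell)\,dS_\Sigma$, $\zeta = \ei$, and using the monotonicity of $g$ together with \eqref{esti2} for the Martin kernel, yields
\[
\lambda_\mu \|u_n - u_\ell\|_{L^1(\Omega;\ei)} + \|g(u_n) - g(u_\ell)\|_{L^1(\Omega;\ei)} \leq C\,\|h_n - h_\ell\|_{L^1(\Sigma; dS_\Sigma)},
\]
so $\{u_n\}$ and $\{g(u_n)\}$ are Cauchy in $L^1(\Omega;\ei)$. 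Passing to the limit in the weak formulation \eqref{nlinearweakform} (the Martin term converges by \eqref{esti2}, since $h_n\,dS_\Sigma \to h\,dS_\Sigma$ in $\GTM(\Sigma)$) yields the desired solution.

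The main obstacle is case (ii), where $\BBK_\mu[S_\Sigma]$ exhibits only the logarithmically corrected decay \eqref{54b}; one must carefully match the pair of exponents $(q,m)$ in Lemma \ref{subcrcon} to the exponent $\frac{N+k+2}{N-k-2}$ appearing in the hypothesis \eqref{subcd0}, using the identity $\ap = H = (N-k-2)/2$ at $\mu = H^2$ to translate the kernel bound into the correct distributional form. A secondary technical point is that the right vehicle for the limiting passage is the Cauchy property in $L^1(\Omega;\ei)$ — one does not need (nor can one obtain) uniform $L^\infty$ bounds on the sequence $\{u_n\}$ — and the truncation $h_n = h\,\1_{\{|h|\leq n\}}$ automatically preserves compact support in $\Sigma$.
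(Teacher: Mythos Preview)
Your proposal is correct and follows essentially the same route as the paper: for bounded data you combine Lemma~\ref{weaklpsmooth2} with Lemma~\ref{subcrcon} to put $g(\pm\BBK_\mu[|h|\,dS_\Sigma])$ in $L^1(\Omega;\ei)$ and invoke Theorem~\ref{existencesubcr}, then pass to general $h\in L^1(\Sigma;dS_\Sigma)$ via the Kato--Cauchy estimate in $L^1(\Omega;\ei)$ exactly as in \eqref{un-um}. The only cosmetic differences are that the paper uses $U_{n,1}=-\BBK_\mu[(h_n)^-]$, $U_{n,2}=\BBK_\mu[(h_n)^+]$ rather than $\pm\BBK_\mu[|h_n|\,dS_\Sigma]$, and takes an arbitrary $L^\infty$ approximating sequence rather than your explicit truncation $h\,\1_{\{|h|\le n\}}$.
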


\begin{proof}
	Let $h \in L^1(\partial \Omega \cup \Sigma;\dd S_\Sigma)$ with compact support in $\Sigma$. Let $\{h_n\}\subset L^\infty(\partial \Omega \cup \Sigma)$ with compact support in $\Sigma$ be such that $h_n\rightarrow h$ in $L^1(\Sigma;\dd S_\Sigma)$. For each $n$, set $U_{n,1}=-\BBK_{\mu}[(h_n)^-]$ and $U_{n,2}=\BBK_{\mu}[(h_n)^+]$.
	
	(i) Assume $\mu < H^2$ and $g$ satisfies \eqref{subcd0} with $q= \frac{N-k-\am}{\ap}$ and $m=0$. For $i=1,2$, by Lemma \ref{weaklpsmooth2}, \eqref{ue} and Lemma \ref{subcrcon} for $q=\frac{N-k-\am}{\ap}$ and $m=0,$ we have $g(U_{n,i}) \in L^1(\Omega; \ei)$, $i=1,2$. Moreover, we see that $U_{n,1}$ and $U_{n,2}$ are respectively subsolution and supersolution of \eqref{BVP-O} with $\nu=h_n$  with $U_{n,1} \leq U_{n,2}$ in $\Omega \setminus \Sigma$. Therefore, by Theorem \ref{existencesubcr}, there exists a unique solution $u_n$ of \eqref{BVP-O} with $\nu=h_n$ which satisfies $U_{n,1} \leq u_n \leq U_{n,2}$ in $\Omega \setminus \Sigma$.  Furthermore $|u_n|^p \in L^1(\Omega;\ei)$ and there holds
	\be \label{nlinearweakform11}
	- \int_{\Omega}u_n L_{\xm }\zeta \, \dd x+ \int_{\Omega} |u_n|^{p-1}u_n \zeta \, \dd x=\int_{\Omega \setminus \Sigma} \zeta \, \dd \tau - \int_{\Omega} \BBK_{\mu}[h_n]L_{\xm }\zeta \, \dd x,
	\quad\forall \zeta \in\mathbf{X}_\xm(\xO\setminus \Sigma).
	\ee
	
	In addition, by using a similar argument leading to \eqref{un-um} and Proposition \ref{weaklpsmooth2}, we can show that
	there exists a positive constant $C$ such that
	\bal
	\| u_n -u_l \|_{L^1(\Omega;\ei)} + \| g(u_n) - g(u_l) \|_{L^1(\Omega;\ei)} \leq C \| h_n-h_l \|_{L^1(\Sigma; \dd S_\Sigma)}.
	\eal
	The result follows by using the above inequality and argument following \eqref{un-um}. \medskip
	
	The proof of (ii) is similar and we omit it.
\end{proof}

Similarly we can show that
\begin{theorem} \label{bdr-Sigma-2} (i) Assume $\mu < H^2$ and $g$ satisfies \eqref{subcd0} with $q= \frac{N-k-\am}{\ap}$ and $m=0$. Then for any $h \in L^1(\partial \Omega \cup \Sigma;\xo^{x_0}_{\xO\setminus \xS})$ with compact support in $\Sigma$, problem \eqref{BVP-O} with $\dd \nu=h\,\dd \xo^{x_0}_{\xO\setminus \xS}$ admits a unique weak solution.
	
	(ii) Assume $\mu =H^2$ and $g$ satisfies \eqref{subcd0} with $q=m= \frac{N+k+2}{N-k-2}.$ Then for any $h \in L^1(\partial \Omega \cup \Sigma;\xo^{x_0}_{\xO\setminus \xS})$ with compact support in $\Sigma$, problem \eqref{BVP-O} with $\dd \nu=h\,\dd \xo^{x_0}_{\xO\setminus \xS}$ admits a unique weak solution.
\end{theorem}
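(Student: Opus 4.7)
The plan is to mirror the proof of Theorem \ref{bdr-Sigma-1} almost verbatim, with $\dd S_\Sigma$ replaced throughout by $\dd\omega^{x_0}_{\Omega\setminus\Sigma}$. Uniqueness follows from the standard Kato argument used at the beginning of the proof of Theorem \ref{existencesubcr}: if $u_1,u_2$ are two solutions with the same boundary trace, applying \eqref{poi4} to $u_1-u_2$ with $\zeta=\ei$ and using monotonicity of $g$ forces $u_1=u_2$. So the work is in the existence part.

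For existence, I would first approximate $h\in L^1(\partial\Omega\cup\Sigma;\dd\omega^{x_0}_{\Omega\setminus\Sigma})$ (with compact support in $\Sigma$) by a sequence $\{h_n\}\subset L^\infty(\partial\Omega\cup\Sigma)$, each supported in a fixed compact subset of $\Sigma$, with $h_n\to h$ in $L^1(\Sigma;\dd\omega^{x_0}_{\Omega\setminus\Sigma})$. For each $n$, set
\[ U_{n,1}:=-\mathbb{K}_\mu\!\left[(h_n)^-\,\dd\omega^{x_0}_{\Omega\setminus\Sigma}\right],\qquad U_{n,2}:=\mathbb{K}_\mu\!\left[(h_n)^+\,\dd\omega^{x_0}_{\Omega\setminus\Sigma}\right]. \]
Since $h_n$ is bounded and compactly supported in $\Sigma$, one has $|h_n\,\dd\omega^{x_0}_{\Omega\setminus\Sigma}|\leq \|h_n\|_\infty\,\dd\omega^{x_0}_{\Omega\setminus\Sigma}|_{\Sigma}$, so $U_{n,i}$ inherits the weak-Lebesgue bound for $\mathbb{K}_\mu[\dd\omega^{x_0}_{\Omega\setminus\Sigma}|_\Sigma]$. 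In case (i), I expect $U_{n,i}\in L^{(N-k-\am)/\ap}_w(\Omega\setminus\Sigma;\ei)$ and in case (ii), the analogue of \eqref{54b} with the logarithmic correction holds; both should follow from the pointwise bounds on $K_\mu$ in Proposition \ref{Martin} combined with the Carleson-type bound on $\omega^{x_0}_{\Omega\setminus\Sigma}$ used in the proof of Lemma \ref{weaklpsmooth2}. Applying Lemma \ref{subcrcon} with the prescribed $(q,m)$, the distribution-function estimate \eqref{ue} then yields $g(U_{n,i})\in L^1(\Omega;\ei)$.

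With $U_{n,1}\leq 0\leq U_{n,2}$ constructed as subsolution and supersolution of the approximating problem
\[ \left\{ \BAL -L_\mu u + g(u) &= 0 \quad \text{in } \Omega\setminus\Sigma,\\ \tr(u) &= h_n\,\dd\omega^{x_0}_{\Omega\setminus\Sigma}, \EAL \right. \]
Theorem \ref{existencesubcr} provides a unique weak solution $u_n$ with $U_{n,1}\leq u_n\leq U_{n,2}$ and $g(u_n)\in L^1(\Omega;\ei)$. To pass to the limit, I would test the difference of two such problems against $\zeta=\ei$ via \eqref{poi4}, which together with the weak-Lebesgue estimates on the Martin operator yields a Cauchy-type estimate
\[ \|u_n-u_l\|_{L^1(\Omega;\ei)}+\|g(u_n)-g(u_l)\|_{L^1(\Omega;\ei)}\leq C\,\|h_n-h_l\|_{L^1(\Sigma;\dd\omega^{x_0}_{\Omega\setminus\Sigma})}, \]
exactly as in the passage following \eqref{un-um} in Theorem \ref{solL1}. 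Hence $u_n\to u$ and $g(u_n)\to g(u)$ in $L^1(\Omega;\ei)$, and the limit $u$ is a weak solution with trace $h\,\dd\omega^{x_0}_{\Omega\setminus\Sigma}$.

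The main obstacle I foresee is verifying that the Carleson-type argument used in the proof of Lemma \ref{weaklpsmooth2} actually works with $\dd\omega^{x_0}_{\Omega\setminus\Sigma}|_\Sigma$ in place of $\dd S_\Sigma$, so that $U_{n,i}$ genuinely lies in the correct weak Lebesgue space with the announced exponent $q$ (and, in case (ii), the announced logarithmic factor $m$). Once that estimate is in hand, the remainder is a mechanical transcription of the proof of Theorem \ref{bdr-Sigma-1}, with the approximation carried out in $L^1(\Sigma;\dd\omega^{x_0}_{\Omega\setminus\Sigma})$ rather than $L^1(\Sigma;\dd S_\Sigma)$.
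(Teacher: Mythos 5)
Your route is essentially the paper's: approximate $h$ by bounded functions supported in $\Sigma$, build ordered sub/supersolutions $U_{n,1}\leq 0\leq U_{n,2}$ from the Martin operator, invoke Theorem \ref{existencesubcr}, and pass to the limit via the Cauchy estimate coming from \eqref{poi4}, exactly as after \eqref{un-um}. The one point you leave open --- and correctly flag as the main obstacle --- is the weak Lebesgue membership of $\BBK_\mu[h_n^{\pm}\,\dd\omega^{x_0}_{\xO\setminus\xS}]$ with the announced exponent (and logarithmic factor when $\mu=H^2$). The paper closes this gap more cheaply than the Carleson-type argument you anticipate: by \cite[Lemma 5.6]{GkiNg_linear} one has the pointwise bound $\BBK_\mu[\omega^{x_0}_{\xO\setminus\xS}]\lesssim d_\Sigma^{-\ap}$ if $\mu<H^2$ and $\lesssim d_\Sigma^{-H}\left|\ln\frac{d_\Sigma}{\CD_\xO}\right|$ if $\mu=H^2$, for the \emph{full} harmonic measure. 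Since $0\leq h_n^{\pm}\leq\|h_n\|_{L^\infty}$ and restriction to $\Sigma$ only decreases the potential, the same pointwise bound (up to the factor $\|h_n\|_{L^\infty}$) holds for $U_{n,1},U_{n,2}$, and from that pointwise bound the weak-$L^q$ estimates with $q=\frac{N-k-\am}{\ap}$, $m=0$ (resp.\ the logarithmic estimate analogous to \eqref{54b}) follow by exactly the computation used in Lemma \ref{weaklpsmooth2}, i.e.\ the arguments of \cite[Theorems 3.5--3.6]{GkiNg_source}. With that estimate supplied, Lemma \ref{subcrcon} applies with the prescribed $(q,m)$ and the rest of your transcription of the proof of Theorem \ref{bdr-Sigma-1} goes through unchanged; so the proposal is sound once you replace the anticipated Carleson-type verification by this citation-plus-comparison step.
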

\begin{proof}
	By \cite[Lemma 5.6]{GkiNg_linear}, we have that
	\bal
	\BBK_{\mu}[\xo^{x_0}_{\xO\setminus \xS}]\lesssim \left\{
	\BAL
	&d_\Sigma^{-\ap} &&\quad \text{if}\; \mu < H^2,\\
	&d_\Sigma^{-H}|\ln \frac{d_\Sigma}{\CD_\xO}| &&\quad\text{if}\; \mu =H^2.
	\EAL\right.
	\eal
	By the same arguments as in the proof of Theorem \ref{bdr-Sigma-1}, we may deduce the desired result.
\end{proof}

\begin{proof}[\textbf{Proof of Theorem \ref{exist-measureK}}.] (i)  The proof is similar to that of Theorem \ref{measureO} with some minor modification and hence we omit it.
	
(ii) Without loss of generality we assume that $\xn\geq0.$ Put $U_1=0$ and $U_2=\BBK_{\mu}[\nu]$. By \eqref{54a} and Lemma \ref{subcrcon} with $q=m=\frac{N+2}{N-2}$, we have that $g(U_2)\in L^1(\Omega;\ei).$ Proceeding as in the proof of Theorem \ref{measureO}, we can obtain the desired result.
\end{proof}

\section{Keller-Osserman estimates in the power case}
\label{sec:KOestimate}
In this Section, we prove Keller-Osserman type estimates on nonnegative solutions to equations with a power nonlinearity.
\begin{lemma} \label{lem:KO-1} Assume $p>1$. Let $u\in C(\overline{\xO}\setminus \Sigma)$ be a nonnegative solution of
	\be \label{eq:power1}
	-L_\mu u+|u|^{p-1}u=0
	\ee
	 in the sense of distributions in $\Omega \setminus \Sigma$. Assume that
	 \be \label{zero-bdr} \lim_{x\in\xO,\;x\to\xi}u(x)=0,\quad\forall \xi \in \partial\xO.
	 \ee
	Then there exists a positive constant $C=C(\xO,\Sigma,\xm,p)$ such that
	\be
	0 \leq u(x) \leq Cd(x)d_\Sigma(x)^{-\frac{2}{p-1}},\quad\forall x\in\xO\setminus \Sigma.\label{ko}
	\ee
\end{lemma}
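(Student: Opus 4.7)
The proof splits into an interior Keller--Osserman estimate near $\Sigma$ obtained by rescaling, plus a linear boundary barrier near $\partial\Omega$, followed by a patching argument. For the first step, given $x_0\in\Omega\setminus\Sigma$, set $R=\tfrac14\min\{d(x_0),d_\Sigma(x_0)\}$, so that $B(x_0,R)\Subset\Omega\setminus\Sigma$ and $d_\Sigma\in[3R,5R]$ there. The rescaled function $v(y):=R^{2/(p-1)}u(x_0+Ry)$ on $B(0,1)$ satisfies
\bal
-\Delta v+v^p=c(y)v,\qquad \|c\|_{L^\infty(B(0,1))}\le |\mu|/9,
\eal
using that $\Delta u=u^p-(\mu/d_\Sigma^2)u$ and that $sp=s-2$ for $s=-2/(p-1)$. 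A standard Keller--Osserman argument (compare with the explicit radial supersolution $c_p(1-|y|^2)^{-2/(p-1)}$ of $-\Delta w+\tfrac12 w^p=0$, applied to $v$ on the set $\{v\ge(2\|c\|_\infty)^{1/(p-1)}\}$) yields a universal $M=M(p,\mu)$ with $v(0)\le M$, whence $u(x_0)\le C_1R^{-2/(p-1)}$. In particular, since $\Sigma_{\beta_0}\Subset\Omega$ forces $d\ge d_0>0$ on $\Sigma_{\beta_0}$, for $x_0\in\Sigma_{\beta_0}$ one may take $R\approx d_\Sigma(x_0)$, yielding $u(x_0)\le C_2\,d_\Sigma(x_0)^{-2/(p-1)}$.

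For the boundary region, fix $\delta\in(0,d_0)$ small enough that $d_\Sigma\ge \beta_0$ on $\Omega_\delta=\{d<\delta\}$; then $\mu/d_\Sigma^2$ is bounded on $\Omega_\delta$ and $\Delta d\in L^\infty(\Omega_\delta)$ by the $C^2$ regularity of $\partial\Omega$. Take the barrier $V(x):=A\bigl(1-e^{-Cd(x)}\bigr)$; using $|\nabla d|=1$ one computes
\bal
-\Delta V=ACe^{-Cd}\bigl(C-\Delta d\bigr),\qquad -\tfrac{\mu}{d_\Sigma^2}V\ge -|\mu|A/\beta_0^2,
\eal
so for $C$ large (depending only on $\|\Delta d\|_{L^\infty(\Omega_\delta)}$, $|\mu|$, $\beta_0$, $\delta$) one obtains $-L_\mu V\ge A$, hence $-L_\mu V+V^p\ge 0$ on $\Omega_\delta$. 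On $\partial\Omega$ we have $V=0=u$; on $\{d=\delta\}$, the first step gives $u\le M_\delta:=C_1(\min(\delta,\beta_0)/4)^{-2/(p-1)}$, and fixing $A\ge M_\delta/(1-e^{-C\delta})$ ensures $V\ge u$ there. Subtracting the equations and linearizing $u^p-V^p=\xi^{p-1}(u-V)$ with $\xi\ge 0$, the function $(u-V)^+$ satisfies $-L_\mu(u-V)^+ +c(x)(u-V)^+\le 0$ in the weak sense on $\Omega_\delta$ with $c\ge 0$, and the maximum principle for $-L_\mu+c$ (valid thanks to $\lambda_\mu>0$ and the compactness of $\overline{\Omega_\delta}\subset\overline\Omega\setminus\Sigma$) forces $u\le V\le ACd$ on $\Omega_\delta$.

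Finally, the three regions are patched. On $\Sigma_{\beta_0}$, the bound $u\le C_2d_\Sigma^{-2/(p-1)}$ converts to the desired $u\le(C_2/d_0)\,d\,d_\Sigma^{-2/(p-1)}$ via $d\ge d_0$. On $\Omega_\delta$, the bound $u\le ACd$ converts to the desired one via $d_\Sigma\ge\beta_0$, giving $u\le AC\beta_0^{2/(p-1)}\,d\,d_\Sigma^{-2/(p-1)}$. On the compact intermediate region $\{d\ge\delta\}\cap\{d_\Sigma\ge\beta_0\}$, Step~1 bounds $u$ by a constant while $d\,d_\Sigma^{-2/(p-1)}$ is bounded below by a positive constant, so again the inequality holds after enlarging $C$. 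Taking the maximum of the three constants yields \eqref{ko}. The principal technical point is the universal Keller--Osserman bound for the perturbed equation on $B(0,1)$: it must be made quantitative in $\|c\|_\infty$ and $p$ alone, independent of $v$ itself, and this is precisely what the explicit radial supersolution delivers after isolating the region where $v^p$ dominates the linear term.
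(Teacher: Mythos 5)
Your proof is correct, but it takes a genuinely different route from the paper's. The paper obtains the interior bound $u\le C d_\Sigma^{-2/(p-1)}$ in one stroke from a single global supersolution $W_\varepsilon=C\left(1-\eta_{\beta_0}+\eta_{\beta_0}(d_\Sigma-\varepsilon)^{-2/(p-1)}\right)$ on $\Omega\setminus\overline{\Sigma}_\varepsilon$, comparing it with $u$ through the energy identity for $(u-W_\varepsilon)^+$ and absorbing the singular potential by the standing assumption $\lambda_\mu>0$; the boundary estimate $u\le Cd$ is then deduced from the boundedness of $u$ in a collar, \eqref{zero-bdr} and standard boundary elliptic estimates. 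You instead localize and rescale: on $B(x_0,R)$ with $R\approx\min\{d,d_\Sigma\}(x_0)$ the inverse-square potential becomes a bounded zero-order perturbation of size at most $|\mu|/9$, and a quantitative Keller--Osserman comparison yields $u\le C\min\{d,d_\Sigma\}^{-2/(p-1)}$. This exploits only the scale invariance of $d_\Sigma^{-2}$ and needs no Hardy-type inequality, so your interior step in fact also proves the paper's subsequent lemma, estimate \eqref{ko1}, without boundary condition; your boundary step replaces the paper's appeal to standard estimates by an explicit exponential barrier plus a comparison principle on the collar, for which $\lambda_\mu>0$ (by domain monotonicity of the principal eigenvalue, as the paper records) or simply the narrowness of the collar suffices. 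Both arguments are sound; the paper's is shorter because the same explicit supersolution is reused elsewhere (Proposition \ref{barr}), while yours is more local and makes the dependence on $\mu$ enter only through the rescaled perturbation.

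Three small repairs. (i) On $B(x_0,R)$ only the lower bound $d_\Sigma\ge 3R$ holds in general (the upper bound $5R$ fails when $d(x_0)<d_\Sigma(x_0)$), but that is all you actually use for $\|c\|_\infty\le|\mu|/9$. (ii) In the barrier computation the phrase ``for $C$ large'' is misleading: $Ce^{-C\delta}(C-\|\Delta d\|_{L^\infty})\to0$ as $C\to\infty$ with $\delta$ fixed, so you should either choose $C$ of order $1/\delta$, or fix $C>\|\Delta d\|_{L^\infty}$ and then shrink $\delta$; note also that $-L_\mu V\ge 0$ already suffices, since $V^p\ge0$. (iii) In the final patching on $\Omega_\delta$ you use $d_\Sigma\ge\beta_0$ in the wrong direction; what is needed is the upper bound $d_\Sigma\le\sup_\Omega d_\Sigma<\infty$, giving $u\le ACd\le AC\left(\sup_\Omega d_\Sigma\right)^{2/(p-1)}d\,d_\Sigma^{-2/(p-1)}$. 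None of these affects the validity of the argument.
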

\begin{proof}
Let $\beta_0$ be as in Subsection \ref{assumptionK} and  $\eta_{\beta_0} \in C_c^\infty(\R^N)$ such that
\bal 0\leq\eta_{\xb_0}\leq1, \quad \eta_{\xb_0}=1 \text{ in } \overline{\Sigma}_{\frac{\xb_0}{4}} \quad \text{and} \quad  \supp(\eta_{\beta_0}) \subset \Sigma_{\frac{\xb_0}{2}}.
\eal
Let $\varepsilon \in (0,\frac{\xb_0}{16})$, we define
\bal V_\xe:=1-\eta_{\xb_0}+\eta_{\xb_0}(d_\Sigma-\xe)^{-\frac{2}{p-1}} \quad \text{in } \overline \Omega \setminus \overline{\Sigma}_\varepsilon.
\eal
Then $V_\varepsilon \geq 0$ in $\overline \Omega \setminus \overline{\Sigma_\epsilon}$. It  can be checked that there exists  $C=C(\xO,\Sigma,\xb_0,\xm,p)>1$ such that the function $W_\xe:=CV_\xe$ satisfies
\be \label{Wep} -L_\xm W_\xe+W_\xe^p = C(-L_\mu V_\varepsilon + V_\varepsilon^p) \geq 0 \quad \text{in } \xO\setminus \overline{\Sigma}_\xe.
\ee
Since $u\in C(\xO\setminus \Sigma)$ is a nonnegative solution of equation \eqref{eq:power1}, by standard regularity results, $u\in C^2(\Omega \setminus \Sigma)$. Combining \eqref{eq:power1} and \eqref{Wep} yields
\be \label{u-Wep} -L_\mu (u-W_\varepsilon) + u^{p} - W_\varepsilon^p \leq 0 \quad \text{in } \Omega \setminus \overline{\Sigma}_\varepsilon.
\ee
We see that $(u-W_\xe)^+\in H_0^1(\xO\setminus\overline{\Sigma}_\xe)$ and  $(u-W_\xe)^+$ has compact support in $\xO\setminus\overline{\Sigma}_\xe$. By using $(u-W_\varepsilon)^+$ as a test function for \eqref{u-Wep}, we deduce that
\bal
	0 &\geq \int_{\xO\setminus \Sigma_\xe}|\nabla (u-W_\xe)^+|^2 \dd x - \xm\int_{\xO\setminus \Sigma_\xe}\frac{[(u-W_\xe)^+]^2}{d_\Sigma^2} \dd x+\int_{\xO\setminus \Sigma_\xe}(u^{p}-W_\xe^p)(u-W_\varepsilon)^+ \dd x\\
	&\geq \int_{\xO\setminus \Sigma_\xe}|\nabla (u-W_\xe)^+|^2 \dd x - \xm\int_{\xO\setminus \Sigma_\xe}\frac{[(u-W_\xe)^+]^2}{d_\Sigma^2} \dd x \geq \xl_\xm\int_{\xO\setminus \Sigma_\xe}| (u-W_\xe)^+|^2 \dd x.
\eal
	This and the assumption $\lambda_\mu >0$ imply $(u-W_\xe)^+ = 0$, whence $u\leq W_\xe$ in $\xO\setminus\overline{\Sigma}_\xe$. Similarly we can show that $-W_\xe\leq u$ in $\xO\setminus\overline{\Sigma}_\xe$. Thus $u \leq W_\varepsilon$ in $\xO\setminus\overline{\Sigma}_\xe$. Letting $\xe\to0,$ we obtain
	\be \label{KO-1}
	u \leq Cd_\Sigma^{-\frac{2}{p-1}} \quad \text{in } \xO\setminus \Sigma.
	\ee
	Let $0<\xd_0<\frac{1}{4}\dist(\partial\xO,\Sigma).$ Then by \eqref{KO-1}, $u\leq C(\xd_0,p)$ in  $\xO_{\xd_0}$.
	As a consequence, by  standard elliptic estimates, there exists a constant $C$ depending only on $\xd_0$ and the $C^2$ characteristic of $\xO$ such that
	\be \label{KO-2}
	u\leq Cd \quad \text{in } \xO_{\xd_0}.
	\ee
	Combining \eqref{KO-1} and \eqref{KO-2} gives \eqref{ko}.
\end{proof}

In case of lack of boundary condition on $\partial \Omega$, by adapting the above argument, we can show that
$u \leq Cd^{-\frac{2}{p-1}}$ in  $\Omega_{\delta_0}$.
Combining \eqref{KO-1} and \eqref{KO-2} leads to the following result whose proof is omitted.
\begin{lemma}
	Let $u\in C(\overline{\xO}\setminus \Sigma)$ be a nonnegative solution of \eqref{eq:power1} in the sense of distributions in $\Omega$. Then there exists a positive constant $C=C(\xO,\Sigma,\xm,p)$ such that
	\be \label{ko1}
	u(x)\leq C\left(\min\{d(x),d_\Sigma(x)\}\right)^{-\frac{2}{p-1}},\quad\forall x\in\xO\setminus \Sigma.
	\ee
\end{lemma}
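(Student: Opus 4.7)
The plan is to combine the two Keller--Osserman barriers into a single supersolution that blows up simultaneously near $\partial\Omega$ and near $\Sigma$, then invoke coercivity as in Lemma~\ref{lem:KO-1}. Since $\Sigma \Subset \Omega$, set $\delta_0 := \frac{1}{4}\dist(\partial\Omega,\Sigma)$. Choose $\chi \in C_c^\infty(\R^N)$ with $0\le\chi\le 1$, $\chi \equiv 1$ on $\Omega_{\delta_0/4}$, $\supp\chi \subset \Omega_{\delta_0/2}$, and retain $\eta_{\beta_0}$ from the proof of Lemma~\ref{lem:KO-1}, shrinking $\beta_0$ if necessary so $\supp\chi \cap \supp\eta_{\beta_0} = \emptyset$. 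Write $\alpha := \frac{2}{p-1}$, and for $\varepsilon \in (0, \min\{\beta_0, \delta_0\}/16)$ set
\begin{equation*}
V_\varepsilon := \chi\,(d-\varepsilon)^{-\alpha} + \eta_{\beta_0}\,(d_\Sigma-\varepsilon)^{-\alpha} + (1-\chi-\eta_{\beta_0})
\end{equation*}
on $\Omega^\varepsilon := \{x\in\Omega: d(x)>\varepsilon\} \setminus \overline{\Sigma}_\varepsilon$.

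The key claim is that for a sufficiently large constant $C = C(\Omega,\Sigma,\mu,p)$, the function $W_\varepsilon := C V_\varepsilon$ satisfies $-L_\mu W_\varepsilon + W_\varepsilon^p \geq 0$ in $\Omega^\varepsilon$, uniformly in $\varepsilon$. I would verify this region by region. On $\supp \eta_{\beta_0}$, where $\chi \equiv 0$, the computation of Lemma~\ref{lem:KO-1} applies verbatim, exploiting the identity $\alpha p = \alpha + 2$ and the fact that $C^{p-1}(d_\Sigma-\varepsilon)^{-\alpha p}$ dominates the Hardy term for large $C$. On $\supp \chi$, the potential $\mu d_\Sigma^{-2}$ is bounded since $d_\Sigma \geq \delta_0/2$; then using $|\nabla d| = 1$ and $\Delta d \in L^\infty(\Omega_{2\delta_0})$ one finds $|\Delta(d-\varepsilon)^{-\alpha}| \lesssim (d-\varepsilon)^{-\alpha-2}$, which is again absorbed by $C^{p-1}(d-\varepsilon)^{-\alpha p}$. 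In the transition zones, both $V_\varepsilon$ and $L_\mu V_\varepsilon$ are uniformly bounded, so choosing $C$ large makes the nonlinear term dominate.

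Once the supersolution is constructed, $u - W_\varepsilon \in C(\overline{\Omega^\varepsilon})$ with $(u-W_\varepsilon)^+$ vanishing on $\{d=\varepsilon\} \cup \{d_\Sigma = \varepsilon\}$ because $W_\varepsilon = +\infty$ on this set while $u$ is continuous on $\overline{\Omega} \setminus \Sigma$; hence $(u-W_\varepsilon)^+ \in H_0^1(\Omega^\varepsilon)$ with compact support. Testing $-L_\mu(u-W_\varepsilon) + u^p - W_\varepsilon^p \leq 0$ against $(u-W_\varepsilon)^+$, discarding the nonnegative nonlinear contribution, and using $\lambda_\mu > 0$ exactly as in Lemma~\ref{lem:KO-1} yields $(u-W_\varepsilon)^+ \equiv 0$, i.e.\ $u \leq C V_\varepsilon$ in $\Omega^\varepsilon$. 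Letting $\varepsilon \to 0^+$ gives $u \leq C(\chi d^{-\alpha} + \eta_{\beta_0} d_\Sigma^{-\alpha} + (1-\chi-\eta_{\beta_0}))$ in $\Omega \setminus \Sigma$. Since $d \leq d_\Sigma$ on $\supp\chi$ and $d_\Sigma \leq d$ on $\supp \eta_{\beta_0}$ (by construction), while both $d$ and $d_\Sigma$ are bounded below in the complement, the right-hand side is $\lesssim (\min\{d,d_\Sigma\})^{-\alpha}$, establishing \eqref{ko1}.

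The principal obstacle is verifying the supersolution inequality uniformly in $\varepsilon$ across the transition regions where the cutoffs $\chi$ and $\eta_{\beta_0}$ have nonzero gradients, because the terms $2\nabla\chi \cdot \nabla(d-\varepsilon)^{-\alpha}$ and $(\Delta\chi)(d-\varepsilon)^{-\alpha}$ can be large. This is managed by the disjointness $\supp \chi \cap \supp \eta_{\beta_0} = \emptyset$ and the fact that these transition regions lie strictly away from $\partial\Omega \cup \Sigma$, so these perturbation terms remain bounded and are absorbed by choosing $C$ large. An essentially identical argument (with $W_\varepsilon$ replaced by $-W_\varepsilon$ and nonnegativity of $u$ exploited to trivialize one side) shows $u \geq -W_\varepsilon$, but since $u \geq 0$ is assumed, this is automatic.
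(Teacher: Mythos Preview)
Your proof is correct. The paper actually omits the proof of this lemma, remarking only that one adapts the barrier argument of Lemma~\ref{lem:KO-1} to obtain $u\le C d^{-2/(p-1)}$ in $\Omega_{\delta_0}$ and then combines this with the bound near $\Sigma$. Your single barrier $V_\varepsilon$, blowing up simultaneously on $\{d=\varepsilon\}$ and $\{d_\Sigma=\varepsilon\}$, is a clean way to carry this out in one stroke and avoids having to argue separately that the barrier near one boundary can be compared with $u$ at the other; the underlying mechanism (supersolution plus coercivity via $\lambda_\mu>0$) is identical to the paper's.
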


\section{Removable singularities} \label{sec:removable}
In this Section, we show that singularities are removable in supercritical cases.
\begin{proof}[\textbf{Proof of Theorem \ref{remov-1}}.]
	Assume $\xm<H^2$ and $p=\frac{2+\ap}{\ap}$. Let $u$ be a nonnegative solution of \eqref{eq:power-a} satisfying \eqref{as1intro}. Denote $O_n=\xO\setminus \overline{\Sigma}_{\frac{1}{n}}$ and
	\bal V(x)=2C\diam(\xO)\int_{\Sigma}K_{\mu}(x,y)\dd\omega_{\Omega \setminus \Sigma}^{x_0}(y) = 2C\diam(\Omega)\BBK_\mu[\1_{\Sigma}\omega_{\Omega \setminus \Sigma}^{x_0}](x),
	\eal
	where $C$ is the constant in \eqref{ko}. Then by \cite[estimate (5.29)]{GkiNg_linear}, there exists $\tilde \beta>0$ such that
\be \label{V>C} V(x)\geq C\diam(\xO)d_\Sigma(x)^{-\ap} \quad\forall x\in \Sigma_{\tilde \beta}.
\ee
	
	Let $ n_0\in\mathbb{N}$ be large enough such that $\frac{1}{n}\leq \frac{\tilde \beta}{2}$ for any $n\geq n_0$. Let $v_n$ be the solution of
	\be\label{suba} \left\{ \BAL
	-L_{\xm }^{O_n}v_n + v_n^p&=0\qquad&&\text{in } O_n\\
	v_n&=0\qquad&&\text{on } \prt \xO,\\
	v_n&=V\qquad&&\text{on } \prt \Sigma_\frac{1}{n}.
	\EAL \right.
	\ee
	Then by \eqref{ko}, we have that $0 \leq u\leq v_n$ in $O_n$. Furthermore, $\{v_n\}$ is a non-increasing sequence. Let $G_\mu^{O_n}$ and $P_\mu^{O_n}$ be the Green function and Poisson kernel of $-L_\mu$ in $O_n$. Denote by $\BBG_\mu^{O_n}$ and $\BBP_\mu^{O_n}$ the corresponding Green operator and Poisson operator. We extend $V$ by zero on $\partial \Omega$ and use the same notation for the extension. Then, we deduce from \eqref{suba} that
	\be \label{vnexpr} v_n+\BBG_\xm^{O_n}[v_n^p]=\BBP_\mu^{O_n}[V] = V \quad \text{in } O_n.
	\ee
	This implies $v_n\leq V$ in $O_n$ for any $n \in \N$. Therefore $v_n \downarrow v$ locally uniformly and in $L^1(\xO;\ei)$. Using the fact that $ G_\mu^{O_n}\uparrow G_\mu$ and Fatou's Lemma, by letting $n \to \infty$ in \eqref{vnexpr}, we obtain
	$v+\mathbb{G}_\xm[v^p]\leq V$ in $\xO\setminus \Sigma$, which implies that $v\in L^p(\xO;\ei)$.
	
	Since $v+\mathbb{G}_\xm[v^p]$ is a nonnegative $L_\xm$ harmonic in $\xO\setminus \Sigma$, by the Representation Theorem \ref{th:Rep} and the fact that $v+\mathbb{G}_\xm[v^p]\leq V$, there exists  $\xn\in \GTM^+(\partial\xO\cup \Sigma)$ with compact support in $\Sigma$ such that
	\be \label{vGK} v+\mathbb{G}_\xm[v^p]=\BBK_{\mu}[\xn] \quad \text{in } \Omega \setminus \Sigma.
	\ee
	
Let $\tilde O_n=\xO_n\setminus \Sigma_n$ be a smooth exhaustion of $\Gw\setminus \Sigma$. We denote by  $\tilde v_n$ the solution of
	\be \label{tilvn} \left\{ \BAL
	-L_{\xm }^{\tilde O_n}\tilde v_n + \tilde v_n^p&=0\qquad&&\text{in } \tilde O_n\\
	\tilde v_n&=2v\qquad&&\text{on } \prt \tilde O_n.
	\EAL \right.
	\ee
	Then $\tilde v_n\leq 2v\leq 2V$ in $\tilde O_n,$ since $2v$ is a supersolution of \eqref{tilvn}. Hence, there exist a function $\tilde v$ and a subsequence, still denoted by $\{\tilde v_n\}$, such that $\tilde v_n\to \tilde v$ a.e. in $\xO\setminus \Sigma$. Let $G_\mu^{\tilde O_n}$ and $P_\mu^{ \tilde O_n}$ be the Green function and Poisson kernel of $-L_\mu$ in $\tilde O_n$. Denote by $\BBG_\mu^{ \tilde O_n}$ and $\BBP_\mu^{ \tilde O_n}$ the corresponding Green operator and Poisson operator.  From \eqref{tilvn}, we have that
	\bel{tildeu-express} \tilde v_n+\mathbb{G}_\xm^{\tilde O_n}[\tilde v_n^p]=2\BBP_\mu^{\tilde O_n}[v] \quad \text{in }  \tilde O_n.
	\ee
By \eqref{vGK}, we obtain
\bal
\BBP_\mu^{\tilde O_n}[v](x)=\int_{\partial\tilde O_n} v \, \dd x\xo^x_{\tilde O_n}=-\int_{\partial \tilde O_n}\BBG_\mu[v^p] \, \dd \xo^x_{\tilde O_n}+\BBK_\mu[\nu](x).
\eal
Since $\tr(\BBG_\mu[v^p])=0$ (see Proposition \ref{traceKG}), we derive from Definition \ref{nomtrace} and the above expression that $\BBP_\mu^{\tilde O_n}[v] \to \BBK_\mu[\nu]$ a.e. in $\Omega \setminus \Sigma$. Since $\tilde v_n \leq 2v \in L^p(\Omega;\ei),$  by dominated convergence theorem, we have $\mathbb{G}_\xm^{\tilde O_n}[\tilde v_n^p] \to \BBG_\mu[\tilde v^p]$ in $\Omega \setminus \Sigma$.  Letting $n \to \infty$ in \eqref{tildeu-express} yields
	\bal \tilde v+\mathbb{G}_\xm[\tilde v^p]= 2\BBK_{\mu}[\xn]\quad \text{in } \xO\setminus \Sigma.
	\eal

On the other hand, since $0 \leq \tilde v\in C^2(\xO\setminus \Sigma)$ satisfies $-L_\xm \tilde v+ \tilde v^{\frac{2+\ap}{\ap}}=0$, we deduce from Lemma \ref{lem:KO-1} that $\tilde v(x) \leq Cd(x)d_\Sigma(x)^{-\ap}$ for all $x\in\xO\setminus \Sigma$.
This and \eqref{V>C} implies that $\tilde v(x)\leq V(x)$ for all $x\in \partial\xS_{\frac{1}{n}}$. By the maximum principle, $\tilde v\leq v_n$ in $O_n$. Since $v_n\to v$ locally uniformly in $\xO\setminus \Sigma$, we derive that $\tilde v\leq v$ in $\xO\setminus \Sigma$. Consequently, $2\xn=\tr(\tilde v)\leq \tr(v)=\xn$, thus $\xn\equiv0$ and hence, by \eqref{vGK}, $v \equiv 0$. Thus $u \equiv 0$.
	
	When $p>\frac{2+\ap}{\ap}$ or $p=\frac{2+\ap}{\ap}$ if $\xm=H^2,$ the proof is similar  to the above case, hence we omit it.
\end{proof}

\begin{proof}[\textbf{Proof of Theorem \ref{remove-2}}.]
	Without loss of generality, we may assume that $z=0.$ Let $\xz:\mathbb{R}\to[0,\infty)$ be a smooth function such that $0\leq \xz\leq 1,$ $\xz(t)=0$ for $|t|\leq 1$ and $\xz(t)=1$ for $|t|>2$. For $\varepsilon>0$, we set $\xz_\xe(x)=\xz(\frac{|x|}{\xe}).$
	
	Since $u\in C(\xO \setminus \Sigma)$ by standard elliptic theory we have that $u\in C^2(\xO \setminus \Sigma)$ and hence
	\bal L_\mu (\xz_\xe u)=u\xD\xz_{\xe}+\xz_\xe u^p + 2\nabla\xz_\xe\nabla u \quad\text{in }\;\xO \setminus \Sigma.
	\eal

\noindent \textbf{Step 1:} We show that $L_\mu(\zeta_{\varepsilon}u) \in L^1(\Omega;\phi_\mu)$.

We first see that
\be \label{Lzetau}
\int_{\Omega}|L_\mu(\zeta_\varepsilon u)|\phi_\mu \, \dd x \leq \int_\Omega \xz_\xe u^p \ei \, \dd x + \int_\Omega u|\Delta\xz_{\xe}| \ei \, \dd x + 2\int_\Omega |\nabla\xz_\xe||\nabla u|\ei \, \dd x.
\ee
We note that there exists a constant $C>0$ that does not depend on $\xe$ such that
\bal |\nabla \xz_\xe|^2+ |\xD\xz_{\xe}|\leq C\xe^{-2}\1_{\{\xe\leq |x|\leq 2\xe\}}.
\eal
This, together with \eqref{3.4.24}, \eqref{3.4.24*}, \eqref{eigenfunctionestimates}, the estimate $\int_{\Sigma_{\beta}}d_{\Sigma}(x)^{-\alpha}\dd x \lesssim \beta^{N-\alpha}$ for $\alpha<N-k$, and the assumption $p\geq\frac{N-\am}{N-\am-2}$, yields
\be \label{Lzetau1} \begin{aligned}
&\int_\Omega \xz_\xe u^p \ei \, \dd x \lesssim \xe^{-\frac{2p}{p-1}+\am p}\int_{\xO\cap\{|x|>\xe\}}d_\xS(x)^{-(p+1)\am} \, \dd x \lesssim \varepsilon^{-\frac{2p}{p-1}-\am p}, \\
&\int_\Omega u|\Delta\xz_{\xe}| \ei \, \dd x \leq \xe^{-\frac{2}{p-1}+\am-2}\int_{\xO\cap\{|x|<\xe<2|x|\}} d_\xS(x)^{-2\am} \, \dd x \lesssim \varepsilon^{N-\frac{2}{p-1}-\am-2} \lesssim 1, \\
&\int_\Omega |\nabla\xz_\xe||\nabla u|\ei \dd x \lesssim \xe^{-\frac{2}{p-1}+\am-1}\int_{\xO\cap\{|x|<\xe<2|x|\}} d_\xS(x)^{-2\am-1} \, \dd x \lesssim \varepsilon^{N-\frac{2}{p-1}-\am-2} \lesssim 1.
\end{aligned} \ee
Estimates \eqref{Lzetau} and \eqref{Lzetau1} yield $L_\mu(\zeta_{\varepsilon}u) \in L^1(\Omega;\phi_\mu)$. \medskip

\noindent \textbf{Step 2:} We will show that $u \in L^p(\Omega;\ei)$.

By \cite[Lemma 7.4]{GkiNg_linear}, we have
	\bal
-\int_{\xO} \xz_\xe u L_\xm\eta \,\dd x=-\int_{\xO} \left(u\xD\xz_{\xe}+\xz_\xe u^p + 2\nabla\xz_\xe\nabla u\right)\eta \,\dd x,\quad\forall \eta \in \mathbf{X}_\xm(\xO\setminus \Sigma).
\eal
	Taking $\eta=\ei,$ we obtain
\bal
\xl_\xm\int_{\xO} \xz_\xe u \ei \, \dd x+\int_{\xO} \xz_\xe u^p\ei \, \dd x=-\int_{\xO} \left(u\xD\xz_\xe+2\nabla\xz_\xe\nabla u\right)\ei \, \dd x.
\eal
By the last two lines in \eqref{Lzetau1}, we have
	\bal
	\xl_\xm\int_{\xO} \xz_\xe u\ei \,\dd x+\int_{\xO} \xz_\xe u^p\ei \,\dd x\leq  C.\label{lpfragma}
	\eal
	By Fatou's lemma, letting $\xe\to 0,$ we deduce that
	\be \label{uup}
	\xl_\xm\int_{\xO}  u\ei \,\dd x+\int_{\xO}  u^p\ei \,\dd x\leq  C.
	\ee
	This implies that $u\in L^p(\xO;\ei)$. \medskip
	
\noindent	\textbf{Step 3: End of proof.} Let $\{O_n\}$ be a smooth exhaustion of $\Gw\setminus \Sigma.$ From Step 2, we see that $u+ \BBG_\mu[u^p]$ is a nonnegative $L_\mu$ harmonic function and by the Representation theorem, there exists $\rho \geq 0$ such that
	\be \label{btr}
	u+\BBG_\xm[u^p]=\xr K_{\mu}(\cdot,0)\quad \text{in } \xO\setminus \Sigma.
	\ee	
	
	We will show that $\rho=0$. Suppose by contradiction that $\rho>0$. Let $ n_0\in\mathbb{N}$ large enough such that $\frac{1}{n}\leq \frac{\xb_0}{16}$ for any $n\geq n_0$. For $1<M\in\mathbb{N}$, let $v_{M,n}$ be the positive solution of
	\be\label{sub} \left\{ \BAL
	-L_{\xm }^{O_n}v_{M,n} + v_{M,n}^p&=0\qquad&&\text{in } O_n\\
	v_{M,n}&=M u\qquad&&\text{on } \prt O_n.
	\EAL \right.
	\ee
	 Then $u\leq v_{M,n} \leq Mu$ in $O_n$, since $Mu$ is a supersolution of \eqref{sub}. Furthermore, by \eqref{ko}, there exist a function $v_M$ and a subsequence, still denoted by the same notation, such that $v_{M,n}\to v_M$ locally uniformly in $\xO\setminus \Sigma$. Moreover, from \eqref{sub}, we have
	\be \label{vnM} \BAL
v_{M,n}(x)+\mathbb{G}_\xm^{O_n}[v_{M,n}^p](x) = \BBP_\mu^{O_n}[Mu](x)  = \int_{\partial O_n}Mu\, \dd\xo^x_{ O_n} =: h_n(x),
\EAL
\quad\forall x\in O_n.
	\ee
Now, by \eqref{btr},
\bal
h_n(x)=\int_{\partial O_n}Mu\,\dd\xo^x_{ O_n}=-M\int_{\partial O_n}\BBG_\xm[u^p]\,\dd\xo^x_{ O_n}+M\xr K_{\mu}(x,0).
\eal
	Since $\tr(\BBG_\xm[u^p])=0$, by Definition \ref{nomtrace} (with $\phi=1$), it follows that $h_n(x)\to M\xr K_{\mu}(x,0)$ as $n \to \infty$. By dominated convergence theorem, letting $n \to \infty$ in \eqref{vnM}, we obtain
	\be \label{v_M} v_M(x)+\mathbb{G}_\xm[v^p_M](x)=M\rho K_{\mu}(x,0).
	\ee
	We observe that $\{v_M\}_{M=1}^\infty$ is nondecreasing and by \eqref{3.4.24}, it is locally uniformly bounded from above. Therefore, $v_M\to v$ locally uniformly in $\xO\setminus \Sigma$ as $M\to\infty$. For each $M>1$, we have $v_M \leq Mu$ in $\Omega \setminus \Sigma$, which implies that $v_M$ satisfies \eqref{as2intro}. Therefore, by using an argument similar to the one leading to \eqref{uup}, we deduce that $\{ v_M\}$ is uniformly bounded in $L^p(\Omega \setminus \Sigma; \ei)$. By the monotonicity convergence theorem, we deduce that $v_M \to v$ in $L^p(\Omega \setminus \Sigma;\ei)$, whence $\BBG_\mu[v_M^p] \to \BBG_\mu[v^p]$ a.e.  in $\Omega \setminus \Sigma$. Therefore, by letting $M \to \infty$ in \eqref{v_M}, we derive
	$\lim_{M\to\infty}(v_M(x)+\mathbb{G}_\xm[v^p_M](x))=\infty$,
	which is  a contradiction. Thus $\rho=0$ and hence by \eqref{btr}, $u \equiv 0$ in $\Omega \setminus \Sigma$. The proof is complete.
\end{proof}

\section{Good measures} \label{sec:goodmeasure}

In this section we investigate the problem
\be\label{mainproblempower} \left\{ \BAL -L_\mu u + \abs{ u}^{p-1}u  &= 0 \quad \text{in } \Gw\setminus \Sigma ,\\
\tr(u) &= \xn,
\EAL \right. \ee
where $p>1$ and $\nu \in \GTM(\partial \Omega \cup \Sigma)$.
Recall that a measure is called a $p$-good measure if problem \eqref{mainproblempower} admits a (unique) solution.

Let us first remark that if $1<p<\min\left\{\frac{N+1}{N-1},\frac{N-\am}{N-\am-2}\right\}$ then by Theorem \ref{exist-subGK}, problem \eqref{mainproblempower} admits a unique solution for any $\xn\in \mathfrak{M}(\partial\xO\cup \Sigma)$. Furthermore, if $\xn$  has compact support in $\partial\xO$ and $1<p<\frac{N+1}{N-1}$ (resp. $\xn$ has compact support in $\Sigma$ and $1<p<\frac{N-\am}{N-\am-2}$), then \eqref{mainproblempower} admits a unique weak solution by Theorem \ref{measureO} (resp. by Theorem \ref{exist-measureK}).

In order to characterize $p$-good measures, we make use of appropriate capacities. We recall below some notations concerning Besov space (see, e.g., \cite{Ad, Stein}). For $\gs>0$, $1\leq \kappa<\infty$, we denote by $W^{\gs,\kappa}(\BBR^d)$ the Sobolev space over $\BBR^d$. If $\gs$ is not an integer the Besov space $B^{\gs,\kappa}(\BBR^d)$ coincides with $W^{\gs,\kappa}(\BBR^d)$. When $\gs$ is an integer we denote $\Gd_{x,y}f:=f(x+y)+f(x-y)-2f(x)$ and
\bal B^{1,\kappa}(\BBR^d):=\left\{f\in L^\kappa(\BBR^d): \myfrac{\Gd_{x,y}f}{|y|^{1+\frac{d}{\kappa}}}\in L^\kappa(\BBR^d\times \BBR^d)\right\},
\eal
with norm
\bal \|f\|_{B^{1,\kappa}}:=\left(\|f\|^\kappa_{L^\kappa}+\int_{\BBR^d} \int_{\BBR^d}\frac{|\Gd_{x,y}f|^\kappa}{|y|^{\kappa+d}}\,\dd x \, \dd y\right)^{\frac{1}{\kappa}}.
\eal
Then
\bal B^{m,\kappa}(\BBR^d):=\left\{f\in W^{m-1,\kappa}(\BBR^d): D_x^\ga f\in B^{1,\kappa}(\BBR^d)\;\forall\ga\in \BBN^d \text{ such that } |\ga|=m-1\right\},
\eal
with norm
\bal \|f\|_{B^{m,\kappa}}:=\left(\|f\|^\kappa_{W^{m-1,\kappa}}+\sum_{|\ga|=m-1}\int_{\BBR^d} \int_{\BBR^d}\frac{|D_x^\ga\Gd_{x,y}f|^\kappa}{|y|^{\kappa+d}}\,\dd x \, \dd y\right)^{\frac{1}{\kappa}}.
\eal
These spaces are fundamental because they are stable under the real interpolation method  developed by Lions and Petree.
For $\ga\in\BBR$ we defined the Bessel kernel of order $\ga$ in $\R^d$ by $\CB_{d,\ga}(\xi):=\CF^{-1}\left((1+|.|^2)^{-\frac{\ga}{2}}\right)(\xi)$, where $\CF$ is the Fourier transform in the space $\CS'(\R^d)$ of moderate distributions in $\BBR^d$.
For $\kappa>1$, the Bessel space $L_{\ga,\kappa}(\BBR^d)$ is defined by
\bal L_{\ga,\kappa}(\BBR^d):=\{f=\CB_{d,\alpha} \ast g:g\in L^{\kappa}(\BBR^d)\},
\eal
with norm
\bal \|f\|_{L_{\ga,\kappa}}:=\|g\|_{L^\kappa}=\|\CB_{d,-\ga}\ast f\|_{L^\kappa}.
\eal
It is known that if $1<\kappa<\infty$ and $\ga>0$, $L_{\ga,\kappa}(\BBR^d)=W^{\ga,\kappa}(\BBR^d)$ if $\ga\in\BBN.$ If $\ga\notin\BBN$ then the positive cone of their dual coincide, i.e. $(L_{-\ga,\kappa'}(\BBR^d))^+=(B^{-\ga,\kappa'}(\BBR^d))$, always with equivalent norms. The Bessel capacity is defined for compact subsets
$K \subset\BBR^d$ by
\bal \mathrm{Cap}^{\BBR^d}_{\ga,\kappa}(K):=\inf\{\|f\|^\kappa_{L_{\ga,\kappa}}, f\in\CS'(\BBR^d),\,f\geq \1_K \}.
\eal

\begin{lemma}\label{besov}
	Let $k\geq1$, $ \max\left\{1,\frac{N-k-\am}{N-2-\am}\right\}< p<\frac{2+\ap}{\ap}$ and $\xn\in \mathfrak{M}^+(\mathbb{R}^k)$ with compact support in $B^k(0,\frac{R}{2})$ for some $R>0$. Let $\vartheta$ be as in \eqref{gamma}.
	For $x \in \mathbb{R}^{k+1}$, we write $x=(x_1,x') \in \mathbb{R} \times \mathbb{R}^{k}$.
	Then there exists a constant $C=C(R,N,k,\mu,p)>1$ such that
	\be\label{est-Bnu0} \BAL
	&C^{-1}\norm{\xn}^p_{B^{-\vartheta,p}(\mathbb{R}^k)}\\
	&\leq \int_{B^k(0,R)}\int_{0}^R x_{1}^{N-k-1-(p+1)\am}\left(\int_{B^k(0,R)}\left(|x_1|+|x'-y'|\right)^{-(N-2\am-2)}\dd \nu(y')\right)^p\,\dd x_1\,\dd x'\\
	&\leq C \norm{\xn}^p_{B^{-\vartheta,p}(\mathbb{R}^k)}. \EAL
	\ee
\end{lemma}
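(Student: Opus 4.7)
The plan is to rewrite the middle quantity in \eqref{est-Bnu0} as a Littlewood--Paley square function for $\nu$ associated with a generalized Poisson-type extension from $\R^k$ to the half-space $\R^{k+1}_+ = (0,\infty) \times \R^k$, and then invoke the classical equivalence between such square functions and Besov norms on $\R^k$ (in the spirit of \cite{MV-Pisa}). First I would perform an algebraic reduction. Introduce the kernel
\begin{equation*}
\Phi_r(z') := r^{\ap - \am}\bigl(r + |z'|\bigr)^{-(N-2-2\am)}, \qquad r > 0,\ z' \in \R^k,
\end{equation*}
and the convolution $(\Phi_r \ast \nu)(x') := \int_{\R^k} \Phi_r(x'-y') \, d\nu(y')$. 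Using $\am+\ap = N-k-2$ together with $p\vartheta = 2 - (p-1)\ap$, a direct computation gives $N - k - 1 - (p+1)\am - p(\ap-\am) = p\vartheta - 1$, so that the middle integral in \eqref{est-Bnu0} equals $\mathcal{J}(\nu) := \int_{B^k(0,R)}\int_0^R r^{p\vartheta - 1} (\Phi_r \ast \nu)(x')^p \, dr \, dx'$. The substitution $z' = rw$ shows $\Phi_r(z') = r^{-k}\Phi_1(z'/r)$, with $\Phi_1$ radial decreasing and $\int_{\R^k} \Phi_1 \in (0,\infty)$ when $\ap > \am$.

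Next I would reduce to the whole half-space. Because $\mathrm{supp}\,\nu \subset B^k(0, R/2)$, for $(r, x')$ outside $(0, R) \times B^k(0,R)$ one has $r + |x' - y'| \geq c\,(1 + r + |x'|)$ for some $c = c(R) > 0$, uniformly for $y' \in \mathrm{supp}\,\nu$, yielding a tail bound $(\Phi_r\ast\nu)(x') \leq C_R\, r^{\ap-\am}(1 + r + |x'|)^{-(N-2-2\am)} \|\nu\|_{\mathfrak{M}(\R^k)}$. Integrating against $r^{p\vartheta-1}\, dr\, dx'$ over the complement of $(0,R)\times B^k(0,R)$ is finite: integrability at $r=0$ uses $p\vartheta > 0$, while integrability at $r=\infty$ and $|x'|=\infty$ uses the strict lower bound $p > \frac{N-k-\am}{N-2-\am}$ in the hypothesis. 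Hence, up to $O\bigl(\|\nu\|_{\mathfrak{M}(\R^k)}^p\bigr)$, one may replace $\mathcal{J}(\nu)$ by $\int_0^\infty \int_{\R^k} r^{p\vartheta - 1}(\Phi_r\ast\nu)^p \, dx'\, dr$, and the mass term is in turn dominated by $\|\nu\|_{B^{-\vartheta,p}(\R^k)}^p$ via duality with any fixed smooth bump in $B^{\vartheta,p'}(\R^k)$ equal to $1$ on $B^k(0,R/2)$. The remaining ingredient is the classical Littlewood--Paley characterization of Besov norms (see, e.g., Stein, \emph{Singular Integrals}, Ch.~V, or Adams--Hedberg, \emph{Function Spaces and Potential Theory}): for the $L^1$-normalized, radial, decreasing approximate identity $\Phi_r(z') = r^{-k}\Phi_1(z'/r)$ of generalized Poisson type,
\begin{equation*}
\int_0^\infty \int_{\R^k} r^{p\vartheta - 1}\, (\Phi_r \ast \nu)(x')^p \, dx'\, dr \;\approx\; \|\nu\|_{B^{-\vartheta,p}(\R^k)}^p,
\end{equation*}
whenever both sides are finite. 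Combining this with the previous reductions yields \eqref{est-Bnu0}.

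The main obstacle is the lower bound in the Littlewood--Paley equivalence above, i.e.\ recovering the Besov norm from the square function. This is standardly handled via a Calder\'on reproducing identity $\mathrm{id} = c\int_0^\infty \tilde\Psi_r \ast \Phi_r\, \tfrac{dr}{r}$ for a suitable companion family $\{\tilde\Psi_r\}$ with cancellation, combined with Fefferman--Stein vector-valued maximal estimates. The upper bound is more direct: by Minkowski's integral inequality applied to $y'\mapsto \Phi_r(x'-y')$ and the pointwise identity $\int_0^\infty r^{p\vartheta - 1}\Phi_r(z')^p \, dr \approx |z'|^{p(\vartheta - k)}$, it reduces to the well-known equivalence $\|I_\vartheta \ast \nu\|_{L^p(\R^k)} \approx \|\nu\|_{B^{-\vartheta,p}(\R^k)}$ between the Riesz-potential norm and the Besov norm for positive measures of compact support.
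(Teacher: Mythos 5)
Your algebraic reduction is correct (indeed $N-k-1-(p+1)\am-p(\ap-\am)=p\vartheta-1$), but the key reduction that follows has a genuine gap: you cannot pass from the truncated integral over $(0,R)\times B^k(0,R)$ to the full half-space integral $\int_0^\infty\int_{\R^k}r^{p\vartheta-1}(\Phi_r\ast\xn)^p\,\dd x'\,\dd r$ "up to $O(\|\xn\|_{\mathfrak{M}}^p)$". For $r\geq R$ and any nontrivial positive $\xn$ supported in $B^k(0,\tfrac R2)$ one has $(\Phi_r\ast\xn)(x')\gtrsim r^{\ap-\am}(r+|x'|)^{-(N-2-2\am)}\|\xn\|_{\mathfrak M}$, and integrating gives a tail of size $\|\xn\|^p_{\mathfrak M}\int_R^\infty r^{p\vartheta-1-k(p-1)}\dd r$, which is \emph{infinite} unless $p\vartheta<k(p-1)$, i.e. unless $p>\frac{N-\am}{N-2-\am}$. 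The lemma only assumes $p>\frac{N-k-\am}{N-2-\am}$, which is strictly weaker (e.g. $N=10$, $k=2$, $\am=1$, $\ap=5$, $p=1.1$ is admissible but violates the needed bound), so in part of the admissible range your full half-space quantity is $+\infty$ while both sides of \eqref{est-Bnu0} are finite; consequently the "classical Littlewood--Paley equivalence over all $r\in(0,\infty)$" you invoke for this positive, non-cancellative kernel is false as stated (at best it is a characterization of an inhomogeneous Besov norm using only $r\in(0,1)$ plus a low-frequency term). The same issue infects your final step: $\|I_\vartheta\ast\xn\|_{L^p(\R^k)}\approx\|\xn\|_{B^{-\vartheta,p}(\R^k)}$ confuses the homogeneous Riesz scale with the inhomogeneous Bessel scale; globally the Riesz potential of a compactly supported measure decays like $|x'|^{\vartheta-k}$ and its $L^p(\R^k)$ norm is again infinite precisely when $p\le\frac{N-\am}{N-2-\am}$. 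Finally, the case $\am=\ap$ (i.e. $\mu=H^2$), which is within the scope of the lemma and is needed for Theorem \ref{supcrK}, is not covered by your framework at all, since then $\Phi_1(w)=(1+|w|)^{-k}\notin L^1(\R^k)$ and $\{\Phi_r\}$ is not an approximate identity.

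The fix is essentially to keep the truncation throughout, which is what the paper does: using Adams' representation of the inner integral by ball measures, then H\"older and Fubini, the middle quantity is bounded above by the truncated Wolff-type potential $\int_{\R^k}\BBW_{\vartheta,8R}[\xn]^p\,\dd x'$ and below by the $L^p$ norm of the truncated fractional maximal function $M_{\vartheta,R/2}$; both are then compared two-sidedly with the Bessel potential $\BBB_{k,\vartheta}[\xn]$ via \cite[Theorem 2.3]{BNV}, and $\|\BBB_{k,\vartheta}[\xn]\|_{L^p(\R^k)}^p\approx\|\xn\|^p_{B^{-\vartheta,p}(\R^k)}$ by \cite[Corollaries 3.6.3 and 4.1.6]{Ad}. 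This route needs no approximate-identity structure (so it covers $\am=\ap$) and no global decay, and it also supplies the lower bound without the Calder\'on reproducing identity you left unproved. Your local computations (the scaling of $\Phi_r$, the pointwise identity $\int_0^\infty r^{p\vartheta-1}\Phi_r(z')^p\dd r\approx|z'|^{p(\vartheta-k)}$) are correct and could be salvaged into a proof along these truncated lines, but as written the argument does not establish \eqref{est-Bnu0} on the stated range of $p$.
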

\begin{proof} The proof is inspired by the idea in \cite[Proposition 2.8]{BHV}. 

\noindent \textbf{Step 1:} We will prove the upper bound in \eqref{est-Bnu0}.

	Let $0<x_1<R$ and $|x'|<R$. In view of the proof of \cite[Lemma 3.1.1]{Ad}, we obtain
	\bal \BAL
	&\int_{B^k(0,R)}\left(x_1+|x'-y'|\right)^{-(N-2\am-2)}\,\dd \nu(y')\leq \int_{B^k(x',2R)}\left(x_1+|x'-y'|\right)^{-(N-2\am-2)}\,\dd \nu(y')\\
	&=(N-2\am-2)\left(\int_0^{2R}\frac{\xn(B^k(x',r))}{(x_1+r)^{N-2\am-2}}\frac{\dd r}{x_1+r}+\frac{\xn(B^k(x',2R))}{(x_1+2R)^{N-2\am-2}}\right)\\
	&\lesssim \int_0^{3R}\frac{\xn(B^k(x',r))}{(x_1+r)^{N-2\am-2}}\frac{\dd r}{x_1+r}
	\leq \int_{x_1}^{4R}\frac{\xn(B^k(x',r))}{r^{N-2\am-2}}\frac{\dd r}{r}.
	\EAL
	\eal
	It follows that
	\ba\BAL
	&\int_{0}^R x_{1}^{N-k-1-(p+1)\am}\left(\int_{B^k(0,R)}\left(|x_1|+|x'-y'|\right)^{-(N-2\am-2)}\dd \nu(y')\right)^p\, \dd x_1\\
	&\lesssim \int_{0}^R x_{1}^{N-k-1-(p+1)\am}\left(\int_{x_1}^{4R}\frac{\xn(B^k(x',r))}{r^{N-2\am-2}}\frac{\dd r}{r}\right)^p \, \dd x_1.
	\EAL
	\ea
	Since $p<\frac{2+\ap}{\ap}<\frac{N-k-\am}{\am}$, it follows that $N-k-(p+1)\am>0$. Let $\xe$ be such that $0<\xe<N-k-(p+1)\am$. By H\"older inequality and Fubini's theorem, we have
	\ba\BAL
	\int_{0}^R &x_{1}^{N-k-1-(p+1)\am}\left(\int_{x_1}^{4R}\frac{\xn(B^k(x',r))}{r^{N-2\am-2}}\frac{\dd r}{r}\right)^p \dd x_1\\
	&\leq \int_{0}^R x_{1}^{N-k-1-(p+1)\am}\left(\int_{x_1}^\infty r^{-\frac{\xe p'}{p}}\frac{\dd r}{r}\right)^{\frac{p}{p'}}\int_{x_1}^{4R}\left(\frac{\xn(B^k(x',r))}{r^{N-2\am-2-\frac{\xe}{p}}}\right)^p\frac{\dd r}{r}\, \dd x_1\\
	&=C(p,\xe)\int_{0}^R x_{1}^{N-k-1-(p+1)\am-\xe}\int_{x_1}^{4R}\left(\frac{\xn(B^k(x',r))}{r^{N-2\am-2-\frac{\xe}{p}}}\right)^p\frac{\dd r}{r}\, \dd x_1\\
	&\leq C(p,\xe,N,k,\am,R)\int_0^{4 R} \left(\frac{\xn(B^k(x',r))}{r^{N-2\am-2-\frac{N-k-(p+1)\am}{p}}}\right)^p\frac{\dd r}{r}.
	\EAL
	\ea
	From the assumption on $p$ and the definition of $\vartheta$ in \eqref{gamma}, we see that $0 < \vartheta <k$. Moreover,
\ba \label{tildegamma}
	N-2\am-2-\frac{N-k-(p+1)\am}{p} = k - \vartheta.
	\ea

	We have
	\ba\BAL
	\int_0^{4 R} \left(\frac{\xn(B^k(x',r))}{r^{k-\vartheta}}\right)^p\frac{\dd r}{r}
	&=\sum_{n=0}^\infty\int_{2^{-n+1}R}^{2^{-n+2}R}\left(\frac{\xn(B^k(x',r))}{r^{k-\vartheta}}\right)^p\frac{\dd r}{r}\\
	&\leq \ln2\sum_{n=0}^\infty 2^{p(n-1)(k-\vartheta)}\left(\frac{\xn(B^k(x',2^{-n+2}R))}{R^{k- \vartheta}}\right)^p\\
	&\leq \ln2\left( \sum_{n=0}^\infty 2^{(n-1)(k-\vartheta)}\frac{\xn(B^k(x',2^{-n+2}R))}{R^{k- \vartheta}}\right)^p\\
	&\leq 2^{p(k-\vartheta)}(\ln2)^{-(p-1)} \left( \sum_{n=0}^\infty\int_{2^{-n+2}R}^{2^{-n+3}R}\frac{\xn(B^k(x',r))}{r^{k- \vartheta}}\frac{\dd r}{r}\right)^p\\
	&= 2^{p(k-\vartheta)}(\ln2)^{-(p-1)}\left(\int_0^{8 R}\frac{\xn(B^k(x',r))}{r^{k-\vartheta}}\frac{\dd r}{r}\right)^p.
	\EAL
	\ea
	Set
	\ba \label{Wgamma}
	\BBW_{\vartheta,{8R}}[\xn](x') := \int_0^{8 R}\frac{\xn(B^k(x',r))}{r^{k-\vartheta}}\frac{\dd r}{r} \quad \text{and} \quad
	\BBB_{k,\vartheta}[\xn](x') :=\int_{\mathbb{R}^k}\CB_{k,\vartheta}(x'-y')\, \dd \nu(y').
	\ea
	Then
	\be \label{up-intB-1} \BAL
	&\int_{B^k(0,R)}\int_{0}^R x_{1}^{N-k-1-(p+1)\am}\left(\int_{B^k(0,R)}\left(x_1+|x'-y'|\right)^{-(N-2\am-2)}\, \dd \nu(y')\right)^p\, \dd x_1\, \dd x'\\
	&\lesssim \int_{\mathbb{R}^k} \BBW_{\vartheta,{8R}}[\nu](x')^p\, \dd x' \lesssim \int_{\mathbb{R}^k}\BB_{k,\vartheta}[\xn](x')^p\, \dd x',
	\EAL
	\ee
	where in the last inequality we have used \cite[Theorem 2.3]{BNV}. Note that the assumption on $p$ ensures that \cite[Theorem 2.3]{BNV} can be applied.
	
	By \cite[Corollaries 3.6.3 and 4.1.6]{Ad}, we obtain
	\ba \label{up-Bnu-1}
	\int_{\mathbb{R}^k}\BB_{k,\vartheta}[\xn](x')^p\, \dd x'\leq C(\vartheta,k,p) \norm{\xn}_{B^{-\vartheta,p}(\mathbb{R}^k)}^p.
	\ea
	Combining \eqref{up-intB-1} and \eqref{up-Bnu-1}, we obtain the upper bound in \eqref{est-Bnu0}. \medskip

\noindent \textbf{Step 2:} We will prove the lower bound in \eqref{est-Bnu0}.
	
	Let $0<x_1<R$ and $|x'|<R$. Then by \cite[Lemma 3.1.1]{Ad}, we have
	\ba\BAL
	\int_{B^k(0,R)}\left(x_1+|x'-y'|\right)^{-(N-2\am-2)}\,\dd \nu(y')
	&=
	(N-2\am-2)\int_{x_1}^{\infty}\frac{\xn(B^k(x',r-x_1))}{r^{N-2\am-2}}\frac{\dd r}{r}\\
	&\geq (N-2\am-2)\int_{2x_1}^{\infty}\frac{\xn(B^k(x',\frac{r}{2}))}{r^{N-2\am-2}}\frac{\dd r}{r}\\
	&\geq C(N,\am)\int_{x_1}^{\infty}\frac{\xn(B^k(x',r))}{r^{N-2\am-2}}\frac{\dd r}{r}.
	\EAL
	\ea
	It follows that
	\ba\BAL
	&\int_{0}^R x_{1}^{N-k-1-(p+1)\am}\left(\int_{B^k(0,R)}\left(x_1+|x'-y'|\right)^{-(N-2\am-2)}\dd \nu(y')\right)^p \, \dd x_1\\
	&\gtrsim \int_{0}^R x_{1}^{N-k-1-(p+1)\am}\left(\int_{x_1}^\infty\frac{\xn(B^k(x',r))}{r^{N-2\am-2}}\frac{\dd r}{r}\right)^p \, \dd x_1\\
	&\gtrsim \int_{0}^R x_{1}^{N-k-1-(p+1)\am}\left(\int_{x_1}^{2x_1}\frac{\xn(B^k(x',r))}{r^{N-2\am-2}}\frac{\dd r}{r}\right)^p \, \dd x_1\\
	&\gtrsim \int_{0}^R\left(\frac{\xn(B^k(x',x_1))}{x_1^{k-\vartheta}}\right)^p\frac{\dd x_1}{x_1}.
	\EAL
	\ea
	For $0<r<\frac{R}{2}$, we obtain
	\bal
	\int_{0}^R\left(\frac{\xn(B^k(x',x_1))}{x_1^{k-\vartheta}}\right)^p\frac{\dd x_1}{x_1} \geq \int_{r}^{2r}\left(\frac{\xn(B^k(x',x_1))}{x_1^{k-\vartheta}}\right)^p\frac{\dd x_1}{x_1} \gtrsim \left(\frac{\xn(B^k(x',r))}{r^{k-\vartheta}}\right)^p,
	\eal
	which implies
	\bal
	\int_{0}^R\left(\frac{\xn(B^k(x',x_1))}{x_1^{k-\vartheta}}\right)^p\frac{\dd x_1}{x_1}
	\gtrsim \left(\sup_{0<r<\frac{R}{2}}\frac{\xn(B^k(x',r))}{r^{k-\vartheta}}\right)^p.
	\eal
	Set
	\bal M_{\vartheta,\frac{R}{2}}(x'):=\sup_{0<r<\frac{R}{2}}\frac{\xn(B^k(x',r))}{r^{k-\vartheta}}.
	\eal
	Then, since $\xn$ has compact support in $B(0,\frac{R}{2})$,
	\be \label{lower1} \BAL
	&\int_{B^k(0,R)}\int_{0}^R x_{1}^{N-k-1-(p+1)\am}\left(\int_{B^k(0,R)}\left(x_1+|x'-y'|\right)^{-(N-2\am-2)}\,\dd \nu(y')\right)^p \, \dd x_1\, \dd x'\\
	&\gtrsim \int_{B^k(0,R)}M_{\vartheta,\frac{R}{2}}(x')^p \, \dd x' =
	\int_{\mathbb{R}^k}M_{\vartheta,\frac{R}{2}}(x')^p\, \dd x'.
	\EAL
	\ee
By \cite[Theorem 2.3]{BNV} and \cite[Corollaries 3.6.3 and 4.1.6]{Ad},
	\be \label{lower2}
	\int_{\mathbb{R}^k}M_{\vartheta,\frac{R}{2}}(x')^p \, \dd x'\gtrsim \int_{\mathbb{R}^k}\BB_{k,\vartheta}[\xn](x')^p \, \dd x' \gtrsim \norm{\xn}_{B^{-\vartheta,p}(\mathbb{R}^k)}^p.
	\ee
	Combining \eqref{lower1}--\eqref{lower2}, we obtain the lower bound in \eqref{est-Bnu0}.
\end{proof}

\begin{theorem}\label{potest}
	Let $k\geq1,$ $ \max\left\{1,\frac{N-k-\am}{N-\am-2}\right\}< p<\frac{2+\ap}{\ap}$ and $\xn\in \mathfrak{M}^+(\partial\xO\cup \Sigma)$ with compact support in $\Sigma$. Then there exists a constant $C=C(\xO,\Sigma,\mu)>1$ such that
	\be \label{2sideKnu} \BAL
	&C^{-1}\norm{\xn}_{B^{-\vartheta,p}(\Sigma)}\leq \norm{\BBK_{\mu}[\xn]}_{L^p(\xO;\ei)}\leq C \norm{\xn}_{B^{-\vartheta,p}(\Sigma)},\EAL
	\ee
	where $\vartheta$ is given in \eqref{gamma}.
\end{theorem}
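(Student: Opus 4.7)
The strategy is to reduce the estimate of $\|\BBK_\mu[\nu]\|_{L^p(\Omega;\phi_\mu)}^p$ to the integral appearing in Lemma \ref{besov} via a localization near $\Sigma$, followed by straightening of $\Sigma$ and polar coordinates in the normal directions.

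First I would substitute the two-sided bound \eqref{Martinest1} for $K_\mu(x,\xi)$ with $\xi\in\Sigma$ together with the estimate $\phi_\mu(x)\approx d(x)d_\Sigma(x)^{-\am}$ from \eqref{eigenfunctionestimates}. Since $\nu$ is supported in $\Sigma$,
\[
\BBK_\mu[\nu](x)^p\,\phi_\mu(x)\;\approx\;d(x)^{p+1}\,d_\Sigma(x)^{-(p+1)\am}\left(\int_\Sigma|x-\xi|^{-(N-2-2\am)}\,\dd\nu(\xi)\right)^p.
\]
Because $\supp\nu\subset\Sigma\Subset\Omega$, we have $d(x)\approx 1$ on a neighborhood of $\Sigma$, and far from $\Sigma$ both $\BBK_\mu[\nu]$ and $\phi_\mu$ are bounded and the corresponding contribution to $\|\BBK_\mu[\nu]\|_{L^p(\Omega;\phi_\mu)}^p$ is controlled by $\|\nu\|_{\GTM(\Sigma)}^p\lesssim\|\nu\|_{B^{-\vartheta,p}(\Sigma)}^p$ (using that $B^{-\vartheta,p}(\Sigma)$ dominates the total mass). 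So the main task is to estimate the integral restricted to $\Sigma_{\beta_1}$.

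Next I would use the cover \eqref{cover} $\Sigma_{2\beta_1}\subset\bigcup_{j=1}^{m_0}V(\xi^j,\beta_0)$ and a subordinate partition of unity to work chart by chart. In each chart $V(\xi^j,\beta_0)$ I use the $C^2$ straightening of $\Sigma$ described around \eqref{straigh}, writing $x=(x',x'')\in\R^k\times\R^{N-k}$ so that $\Sigma\cap V(\xi^j,\beta_0)$ becomes the graph of $(\Gamma_i^{\xi^j})_{i}$. By \eqref{propdist} and \eqref{dist2}, $d_\Sigma(x)\approx\delta_\Sigma^{\xi^j}(x)=|x''-\Gamma^{\xi^j}(x')|$; after the $C^2$-diffeomorphism that sends $(x',x'')\mapsto(x',x''-\Gamma^{\xi^j}(x'))$, $\Sigma$ becomes locally $\{x''=0\}$, $d_\Sigma(x)\approx|x''|$, and the Jacobian is bounded above and below. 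Passing to polar coordinates $x''=r\omega$, $r=|x''|$, $\omega\in S^{N-k-1}$, introduces a factor $r^{N-k-1}\,\dd r\,\dd\omega$. Pushing $\nu$ forward to $\R^k$ via the chart (the resulting measures on different charts being equivalent in the $B^{-\vartheta,p}$ norm by standard invariance of Besov spaces under $C^2$-diffeomorphisms, cf.~\cite{Ad}), and using $|x-\xi|\approx r+|x'-\xi'|$ for $\xi\in\Sigma$ near $\xi^j$, the $j$-th piece of the integral becomes, up to constants,
\[
\int_{B^k(0,R)}\int_0^R r^{N-k-1-(p+1)\am}\left(\int_{B^k(0,R)}(r+|x'-y'|)^{-(N-2-2\am)}\,\dd\nu_j(y')\right)^p\dd r\,\dd x',
\]
with $\nu_j$ the pushforward of $\nu$ restricted to the chart.

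Now I would invoke Lemma \ref{besov} (whose hypothesis $\max\{1,(N-k-\am)/(N-2-\am)\}<p<(2+\ap)/\ap$ coincides with ours), yielding the two-sided bound of this integral by $\|\nu_j\|_{B^{-\vartheta,p}(\R^k)}^p$. Summing over $j$ and combining with the bounded contribution away from $\Sigma$ gives
\[
\|\BBK_\mu[\nu]\|_{L^p(\Omega;\phi_\mu)}^p\approx\sum_j\|\nu_j\|_{B^{-\vartheta,p}(\R^k)}^p\approx\|\nu\|_{B^{-\vartheta,p}(\Sigma)}^p,
\]
where the last equivalence uses the definition of $B^{-\vartheta,p}(\Sigma)$ via a $C^2$ atlas and the diffeomorphism invariance of Besov norms (together with a finite partition-of-unity argument on $\Sigma$).

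The main obstacles are bookkeeping rather than conceptual: first, justifying that the straightening diffeomorphism and the finite partition of unity do not distort the $B^{-\vartheta,p}$ norm (since $0<\vartheta<k$ and $\Sigma$ is $C^2$-compact, this is standard but needs the invariance theorems of \cite{Ad,Stein}); second, handling the overlaps and the contribution from $\Omega\setminus\Sigma_{\beta_1}$, which is bounded but must be absorbed into the Besov norm via $\|\nu\|_{\GTM(\Sigma)}\lesssim\|\nu\|_{B^{-\vartheta,p}(\Sigma)}$ (valid because $\Sigma$ is compact and $\vartheta>0$); third, making sure the polar-coordinate reduction uses only regions where $|x-\xi|\approx r+|x'-\xi'|$ uniformly, which holds on $V(\xi^j,\beta_0)$ by the $C^2$ character of $\Sigma$ and the bound \eqref{supGamma}.
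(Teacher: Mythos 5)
Your plan is correct and follows essentially the same route as the paper's proof: localize via the chart cover \eqref{cover}, flatten $\Sigma$ with the maps $\Gamma_i^{\xi^j}$, pass to polar coordinates in the normal variables to reach the integral of Lemma \ref{besov}, and conclude using the $C^2$-diffeomorphism invariance of the $B^{-\vartheta,p}$ norm and a finite summation over charts. The only (immaterial) difference is bookkeeping: the paper absorbs the contribution away from the chart by showing it is dominated by the near-chart integral (both being comparable to $\nu(\Sigma\cap V(\xi^j,\beta_0/2))^p$), whereas you control it by the total mass together with $\|\nu\|_{\GTM(\Sigma)}\lesssim\|\nu\|_{B^{-\vartheta,p}(\Sigma)}$, which is also valid for positive measures on the compact manifold $\Sigma$.
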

\begin{proof}
	By \eqref{cover}, there exists $\xi^j \in \Sigma$, $j=1,2,...,m_0$ (where $m_0 \in\N$ depends on $N,\Sigma$), and $\xb_1 \in (0,\frac{\xb_0}{4})$ such that
	$
	\Sigma_{\beta_1} \subset \cup_{j=1}^{m_0} V(\xi^j,\frac{\beta_0}{4})\Subset \xO.
	$	
	
	\noindent \textbf{Step 1:} We establish local 2-sided estimates.
	
	Assume $\xn\in \mathfrak{M}^+(\partial\xO\cup \Sigma)$ with compact support in $\Sigma\cap  V(\xi^j,\frac{\beta_0}{2})$ for some $j\in\{1,...,m_0\}$. We write
	\be \label{K-split1}
	\int_{ \Omega}\ei \BBK_{\mu}[\xn]^p \,\dd x
	= \int_{\Omega \setminus  V(\xi^j,\beta_0)}\ei \BBK_{\mu}[\xn]^p \,\dd x+\int_{ V(\xi^j,\beta_0) }\ei \BBK_{\mu}[\xn]^p \,\dd x.
	\ee
	On one hand, by \eqref{eigenfunctionestimates} and Proposition \ref{Martin}, we have
	\be\label{K-split12} \BAL
	&\int_{ \xO\setminus V(\xi^j,\beta_0)}\ei \BBK_{\mu}^p[\xn] \, \dd x  \\
	&\approx \int_{\Omega \setminus  V(\xi^j,\beta_0)} d(x)d_{\Sigma}(x)^{-\am} \left( \int_{\Sigma \cap V(\xi^j,\beta_0/2)}\frac{d(x)d_\Sigma(x)^{-\am}} {|x-y|^{N-2-2\am}}\dd \nu(y) \right)^p\dd x \\
	&\lesssim \nu(\Sigma \cap V(\xi^j,\beta_0/2))^p\int_{\Omega \setminus \Sigma}d_\Sigma(x)^{-(p+1)\am} \, \dd x \lesssim \nu(\Sigma \cap V(\xi^j,\beta_0/2))^p.
	\EAL \ee
	In the last estimate we have used estimate $\int_{\Omega}d_\Sigma(x)^{-(p+1)\am} \dd x \lesssim 1$ since $(1+p)\am < N-k$.

	On the other hand, again by \eqref{eigenfunctionestimates} and Proposition \ref{Martin}, we have
	\be\label{nup-0} \BAL
	&\int_{ V(\xi^j,\beta_0)}\ei \BBK_{\mu}^p[\xn]\, \dd x  \\
	&\approx \int_{V(\xi^j,\beta_0)} d(x)d_{\Sigma}(x)^{-\am} \left( \int_{\Sigma \cap V(\xi^j,\beta_0/2)}\frac{d(x)d_\Sigma(x)^{-\am}} {|x-y|^{N-2-2\am}} \dd \nu(y) \right)^p\, \dd x \\
	&\gtrsim \nu(\Sigma \cap V(\xi^j,\beta_0/2))^p\int_{V(\xi^j,\beta_0)}d_\Sigma(x)^{-(p+1)\am} \dd x  \gtrsim \nu(\Sigma \cap V(\xi^j,\beta_0/2))^p.
	\EAL \ee
	
	Combining \eqref{K-split1}--\eqref{nup-0} yields

	\be \label{K-split2}
	\int_{ \Omega }\ei \BBK_{\mu}^p[\xn]\,\dd x
	\approx \int_{ V(\xi^j,\beta_0) }\ei \BBK_{\mu}^p[\xn]\,\dd x.
	\ee
	For any $x \in \R^N$, we write $x=(x',x'')$ where $x'=(x_1,\ldots,x_k)$ and $x''=(x_{k+1},\ldots,x_N)$, and define the $C^2$ function \bal \xF(x):=(x',x_{k+1}-\xG_{k+1}^{\xi^j}(x'),...,x_N-\xG_{N}^{\xi^j}(x')).
	\eal
	By \eqref{straigh}, $\xF:V(\xi^j,\beta_0)\to B^k(0,\xb_0)\times B^{N-k}(0,\xb_0) $ is $C^2$ diffeomorphism and
	$\xF(x)=(x',0_{\R^{N-k}})$ for $x=(x',x'') \in \Sigma$.
	In view of the proof of \cite[Lemma 5.2.2]{Ad}, there exists a measure $\overline{\xn}\in \GTM^+(\mathbb{R}^k)$ with compact support in $ B^k(0,\frac{\xb_0}{2})$ such that for any Borel $E\subset B^k(0,\frac{\xb_0}{2}) ,$ there holds
	$\overline{\xn}(E)=\xn(\xF^{-1}(E\times \{0_{\R^{N-k}}\}))$.
	
	Set $\psi=(\psi',\psi'')=\xF(x)$ then $\psi'=x'$ and $\psi''=(x_{k+1}-\xG_{k+1}^{\xi^j}(x'),...,x_N-\xG_{N}^{\xi^j}(x'))$. By \eqref{propdist}, \eqref{eigenfunctionestimates} and \eqref{Martinest1}, we have
	\bal
	&\ei(x)\approx |\psi''|^{-\am},\\
	 &K_{\mu}(x,y)\approx |\psi''|^{-\am}(|\psi''|+|\psi'-y'|)^{-(N-2\am-2)}, \;
	 \forall x\in  V(\xi^j,\beta_0)\setminus \Sigma,\;\forall y=(y',y'') \in  V(\xi^j,\beta_0)\cap \Sigma.
	\eal
	Therefore
	\ba \label{21} \BAL
	&\int_{ V(\xi^j,\beta_0) }\ei \BBK_{\mu}^p[\xn]\,\dd x\\
	&\approx \int_{B^k(0,\xb_0)}\int_{B^{N-k}(0,\xb_0)}|\psi''|^{-(p+1)\am}
	\left(\int_{B^k(0,\xb_0)}(|\psi''|+|\psi'-y'|)^{-(N-2\am-2)}\dd \overline{\nu}(y')\right)^p \, \dd\psi'' \, \dd\psi'\\
	&=C(N,k)\int_{B^k(0,\xb_0)}\int_{0}^{\xb_0}r^{N-k-1-(p+1)\am}
	\left(\int_{B^k(0,\xb_0)}(r+|\psi'-y'|)^{-(N-2\am-2)}\dd \overline{\nu}(y')\right)^p\dd r \, \dd\psi'.
	\EAL
	\ea
	
	Since $\xn\mapsto \gn\circ\xF^{-1}$ is a $C^2$ diffeomorphism between
	$\mathfrak M^+(\Sigma\cap V(\xi^j,\beta_0))\cap B^{-\vartheta,p}(\Sigma\cap V(\xi^j,\beta_0))$ and
	$\mathfrak M^+(B^k(0,\xb_0))\cap B^{-\vartheta,p}(B^k(0,\xb_0))$, using  \eqref{K-split2},\eqref{21} and
	Lemma \ref{besov}, we derive that
	\ba\BAL
	C^{-1}\norm{\xn}_{B^{-\vartheta,p}(\Sigma)}\leq \norm{\BBK_{\mu}[\xn]}_{L^p(\xO;\ei)}\leq C \norm{\xn}_{B^{-\vartheta,p}(\Sigma)},\EAL\label{22}
	\ea
	\noindent \textbf{Step 2:} We will prove global two-sided estimates.
	
	If $\xn\in \mathfrak{M}^+(\partial\xO\cup \Sigma)$ with compact support in $\Sigma,$ we may write $\xn=\sum_{j=1}^{m_0}\xn_j,$ where $\xn_j\in \mathfrak{M}^+(\partial\xO\cup \Sigma)$ with compact support in $V(\xi^j,\frac{\beta_0}{2}).$
	On one hand, by step 1, we have
	\be \label{glbK-1} \BAL
	\norm{\BBK_{\mu}[\xn]}_{L^p(\xO;\ei)}\leq \sum_{j=1}^{m_0}\norm{\BBK_{\mu}[\xn_j]}_{L^p(\xO;\ei)}\leq  C\sum_{j=1}^{m_0} \norm{\xn_j}_{B^{-\vartheta,p}(\Sigma)}
	\leq C m_0 \norm{\xn}_{B^{-\vartheta,p}(\Sigma)}.
	\EAL \ee
	On the other hand, we deduce from step 1 that
	\bal 
	\norm{\BBK_{\mu}[\xn]}_{L^p(\xO;\ei)} \geq {m_0}^{-1} \sum_{j=1}^{m_0}\norm{\BBK_{\mu}[\xn_j]}_{L^p(\xO;\ei)} 
	\geq  (Cm_0)^{-1}\sum_{j=1}^{m_0} \norm{\xn_j}_{B^{-\vartheta,p}(\Sigma)}
	\geq (Cm_0)^{-1}\norm{\xn}_{B^{-\vartheta,p}(\Sigma)}.
	\eal
	This and \eqref{glbK-1} imply \eqref{2sideKnu}. The proof is complete.
\end{proof}

Using Theorem \ref{potest}, we are ready to prove Theorem \ref{supcrK}.
\begin{proof}[\textbf{Proof of Theorem \ref{supcrK}}.]
	If $\xn$ is a positive measure which vanishes on Borel sets $E\subset \Sigma$ with $\mathrm{Cap}^{\BBR^{k}}_{\vartheta,p'}$-capacity zero, there exists an increasing sequence $\{\xn_n\}$ of positive measures in $B^{-\vartheta,p}(\Sigma)$ which converges weakly to $\xn$ (see \cite{DaM}, \cite {FDeP}). By Theorem \ref{potest}, we have that $\BBK_{\mu}[\xn_n]\in L^p(\xO\setminus \Sigma;\ei)$, hence we may apply Theorem \ref{existencesubcr} with $w=\BBK_{\mu}[\xn_n],$ $v=0$ and $g(t)=|t|^{p-1}t$ to deduce that there exists a unique nonnegative weak solution $u_n$ of \eqref{mainproblempower} with $\tr(u_n)=\xn_n.$
	
	Since $\{ \nu_n \}$ is an increasing sequence of positive measures, by Theorem \ref{linear-problem}, $\{u_n\}$ is increasing and its limit is denoted by $u$. Moreover,
	\be \label{lweakform-un}
	- \int_{\Gw}u_n L_{\xm }\zeta \, \dd x + \int_{\Gw} u_n^p \zeta \, \dd x = - \int_{\Gw} \mathbb{K}_{\xm}[\xn_n]L_{\xm }\zeta \, \dd x
	\qquad\forall \zeta \in\mathbf{X}_\xm(\xO\setminus \Sigma).
	\ee
	By taking $\zeta=\ei$ in \eqref{lweakform-un}, we obtain
	\bal \int_{\Gw} \left(\gl_\xm u_{n}+u_n^p\right)\ei \,\dd x=\gl_\xm\int_{\Gw} \BBK_{\mu}[\xn_n]\ei \,\dd x,
	\eal
	which implies that $\{u_n\}$ and $\{u_n^p\}$ are uniformly bounded in $L^1(\xO\setminus \Sigma;\ei)$. Therefore $u_n \to u$ in $L^1(\Omega;\ei)$ and in $L^p(\Omega;\ei)$. By letting $n \to \infty$ in \eqref{lweakform-un}, we deduce
	\bal \int_{\Gw}-uL_\xm\zeta \, \dd x +\int_{\Gw} u^p\zeta \, \dd x=-\int_{\Gw} \BBK_{\mu}[\xn]L_\xm\zeta \,\dd x\qquad\forall \zeta\in {\bf X}_\xm(\Gw\setminus \Sigma).
	\eal
	This means $u$ is the unique weak solution of \eqref{mainproblempower} with $\tr(u)=\xn$.
\end{proof}

Next we demonstrate Theorem \ref{supcromega}. \smallskip

\begin{proof}[\textbf{Proof of Theorem \ref{supcromega}}.] ~~
	
1. Suppose $u$ is a weak solution of \eqref{mainproblempower} with $\tr(u)=\xn$. Let $\beta>0$. Since
	\ba\label{23}
	\ei(x)\approx d(x)\quad\text{and}\quad K_{\mu}(x,y)\approx d(x)|x-y|^{-N}\quad\forall (x,y)\in (\xO\setminus \Sigma_\xb)\times \partial \xO,
	\ea
	proceeding as in the proof of \cite[Theorem 3.1]{MV-JMPA01}, we may prove that $\xn$ is absolutely continuous with respect to the Bessel capacity $\mathrm{Cap}^{\BBR^{N-1}}_{\frac{2}{p},p'}$. \medskip
	
	2. We assume that $\xn \in \GTM^+(\partial \Omega) \cap B^{-\frac{2}{p},p}(\partial\xO).$ Then by \eqref{23}, we may apply \cite[Theorem A]{MV-JMPA01} to deduce that  $\BBK_{\mu}[\xn]\in L^p(\xO\setminus \Sigma_\xb;\ei)$ for any $\xb>0$. Denote $g_n(t)=\max\{ \min\{|t|^{p-1}t,n\},-n\}$. Then by applying Theorem \ref{existencesubcr} with $w=\BBK_{\mu}[\xn],$ $v=0$ and $g=g_n$, we find that there exists a unique weak solution $v_n \in L^1(\Omega;\phi_\mu)$ of
	\be \left\{ \BAL
	- L_\gm v_n+g_n(v_n)&=0\qquad \text{in }\;\Gw\setminus \Sigma,\\
	\tr(v_n)&=\nu,
	\EAL \right. \ee
	such that $0 \leq v_n \leq \BBK_\mu[\nu]$ in $\Omega \setminus \Sigma$.
	Furthermore, by \eqref{poi5}, $\{v_n\}$ is non-increasing. Denote $v=\lim_{n \to \infty}v_n$ then $0 \leq v \leq \BBK_\mu[\nu]$ in $\Omega \setminus \Sigma$.
	
	We have
	\be \label{lweakform-un-1}
	- \int_{\Gw}v_n L_{\xm }\zeta \, \dd x + \int_{\Gw} g_n(v_n) \zeta \, \dd x = - \int_{\Gw} \mathbb{K}_{\xm}[\xn_n]L_{\xm }\zeta \, \dd x
	\qquad\forall \zeta \in\mathbf{X}_\xm(\xO\setminus \Sigma).
	\ee
	By taking $\ei$ as test function, we obtain
	\be \label{ungn} \int_{\Gw}\left(\gl_\xm v_n + g_n(v_n)\right)\ei \,\dd x=\gl_\xm\int_{\Gw} \BBK_{\mu}[\xn]\ei \,\dd x,
	\ee
	which, together with by Fatou's Lemma, implies that $v, v^p\in L^1(\xO;\ei)$ and
	\bal \int_{\Gw}\left(\gl_\xm v + v^p \right)\ei \,\dd x
	\leq \gl_\xm\int_{\Gw} \BBK_{\mu}[\xn]\ei \,\dd x.
	\eal
	 Hence $v+\BBG_\xm[v^p]$ is a nonnegative $L_\xm$ harmonic. By Representation Theorem \ref{th:Rep}, there exists a unique $\overline{\xn}\in\mathfrak{M}^+(\partial\xO\cup \Sigma)$ such that $v +\BBG_\xm[v^p]=\BBK_\mu[\overline{\xn}]$.
	Since $v \leq \BBK_\mu[\xn]$, by Proposition \ref{traceKG} (i), $\overline \nu = \tr(v) \leq \tr(\BBK_\mu[\nu]) =  \nu$ and hence $\overline{\xn}$ has compact support in $\partial\xO$.

Let	$\xz\in \mathbf{X}_\xm(\xO\setminus \Sigma)$ and $\xb>0$ be small enough such that $\xO_{4\xb}\cap \Sigma=\emptyset$ (recall that $\Omega_{\beta}$ is defined in Notations). We consider a cut-off function $\psi_{\beta} \in C^\infty(\R^N)$ such that $0\leq\psi_{\xb}\leq 1$ in $\R^N$, $\psi_\xb=1$ in $\xO_{\frac{\xb}{2}}$ and $\psi_{\xb}=0$ in $\xO\setminus \xO_\xb$. Then the function $\psi_{\beta,\xz}=\psi_{\beta}\xz \in \mathbf{X}_\xm(\xO\setminus \Sigma)$ has compact support in $\overline{\xO}_{\xb}$. Hence, by \eqref{weakfor2} and the fact that $\frac{\partial \psi_{\beta,\xz}}{\partial {\bf n}}= \frac{\partial \xz}{\partial {\bf n}}$ on $\partial \Omega$, we obtain
	
	\ba\label{24}
	\int_{\Gw}(-v L_\xm\psi_{\xb,\xz} + v^p\psi_{\xb,\xz})\,\dd x=-\int_{\partial\xO}\frac{\partial \xz}{\partial {\bf n}} \frac{1}{P_\mu(x_0,y)}\, \dd \overline{\nu}(y)=-\int_{\Gw}\BBK_{\mu}[\overline{\xn}]L_\xm\xz \,\dd x.
	\ea
	Also,
\ba \label{25}
	\int_{\Gw}(- v_nL_\xm\psi_{\xb,\xz}+g_n(v_n)\psi_{\xb,\xz})\, \dd x=-\int_{\partial\xO}\frac{\partial \xz}{\partial {\bf n}} \frac{1}{P_\mu(x_0,y)}\, \dd \nu(y)=-\int_{\Gw}\BBK_{\mu}[\xn]L_\xm\xz \, \dd x.
\ea
	
	Since $v \leq v_n\leq \BBK_{\mu}[\xn]$ and $\BBK_{\mu}[\xn]\in L^p(\xO_{4\xb};\ei)$, by letting $n\to\infty$ in \eqref{25}, we obtain by the dominated convergence theorem that
	\ba \label{26}
	\int_{\Gw}(-v L_\xm\psi_{\xb,\xz} + v^p\psi_{\xb,\xz})\, \dd x=-\int_{\Gw}\BBK_{\mu}[\xn]L_\xm\xz \, \dd x.
	\ea
	From \eqref{24} and \eqref{26}, we deduce that
	
\bal
\int_{\Gw}\BBK_{\mu}[\overline{\xn}]L_\xm\xz \, \dd x=\int_{\Gw}\BBK_{\mu}[\xn]L_\xm\xz \, \dd x,\quad \forall \xz\in \mathbf{X}_\xm(\xO\setminus \Sigma).
\eal
Since $\BBK_{\mu}[\overline{\xn}],\BBK_{\mu}[\xn]\in C^2(\xO\setminus \xS),$ by the above inequality, we can easily show that $\BBK_{\mu}[\overline{\xn}]=\BBK_{\mu}[\xn],$ which implies $\overline{\xn}=\xn$ by Proposition \ref{traceKG}. \medskip

	3. If $\xn \in \GTM^+(\partial \Omega)$ vanishes on Borel sets $E\subset \partial\xO$ with zero $\mathrm{Cap}^{\BBR^{N-1}}_{\frac{2}{p},p'}$-capacity, there exists an increasing sequence $\{\xn_n\}$  of positive measures in $B^{-\frac{2}{p},p}(\partial\xO)$ which converges to $\xn$ (see \cite{DaM}, \cite {FDeP}). Let $u_n$ be the unique weak solution of \eqref{mainproblempower} with boundary trace $\xn_n$.
	Since $\{ \nu_n \}$ is increasing, by \eqref{poi5}, $\{u_n\}$ is increasing. Moreover, $0 \leq u_n \leq \BBK_\mu[\nu_n] \leq \BBK_\mu[\nu]$.  Denote $u=\lim_{n \to \infty}u_n$. By an argument similar to the one leading to \eqref{ungn}, we obtain
	\bal \int_{\Gw}\left(\gl_\xm u_{n} + u_n^p\right)\ei \, \dd x=\gl_\xm\int_{\Gw} \BBK_{\mu}[\xn_n]\ei \, \dd x,
	\eal
	it follows that $u, u^p\in L^1(\xO;\ei)$. By the dominated convergence theorem, we derive
	\bal \int_{\Gw} \left(-u L_\xm\zeta + u^p\zeta\right)\, \dd x=-\int_{\Gw}\BBK_{\mu}[\xn]L_\xm\zeta \, \dd x\qquad\forall \zeta\in {\bf X}_\mu(\Gw\setminus \Sigma),
	\eal
	and thus $u$ is the unique weak solution of \eqref{mainproblempower}. \medskip
\end{proof}

\appendix
\renewcommand{\thesection}{\Alph{section}}
\setcounter{section}{0}
\section{A priori estimates}
\begin{proposition}\label{barr} There exists $R_0 \in (0,\beta_0)$ such that for any $z\in \Sigma$ and $0<R\leq R_0$, there is a supersolution $w:=w_{R,z}$ of \eqref{eq:power1} in $\Gw\cap B(z,R)$ such that
\bal &w \in C(\overline\Gw\cap B(z,R)), \quad  w = 0 \text{ on } \Sigma\cap B(z,R),\\
&w(x)\to \infty \text{ as } \dist (x,F)\to 0, \text{ for any compact subset } F\subset (\xO\setminus \Sigma)\cap \prt B(z,R).
\eal
More precisely, for $\gamma \in (\am,\ap)$, $w$ can be constructed as
	\be \label{BAR1}
	w(x)= \left\{\BA{lll} \Gl(R^2-|x-z|^2)^{-b}d_\Sigma(x)^{-\gamma} \quad & \text{ if }\;\mu<H^2, \\[2mm]
	\Gl(R^2-|x-z|^2)^{-b}d_\Sigma(x)^{-H}\sqrt{\ln \left(\frac{eR}{d_\Sigma(x)}\right)} \;&\text{ if }\;\xm=H^2,
	\EA\right.
	\ee
	with $b\geq \max\{\frac{2}{p-1},\frac{N-2}{2},1\}$ and $\Gl>0$ large enough depending only on $R_0, \xg,N,b,p$ and the $C^2$ characteristic of $\Sigma.$
\end{proposition}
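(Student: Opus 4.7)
\emph{Proof plan.} My plan is to verify the differential inequality $-L_\mu w+w^p\ge 0$ on $\Omega\cap B(z,R)$ by an explicit pointwise computation with the ansatz~\eqref{BAR1}, and then to choose $R_0$ small and $\Lambda$ large so that a leading positive term of $-L_\mu w$, together with the nonlinearity $w^p$, dominates all cross-terms. In the subcritical case $\mu<H^2$, write $\rho(x):=R^2-|x-z|^2$ and $d:=d_\Sigma$, so that $w=\Lambda\rho^{-b}d^{-\xg}$. Combining $\nabla\rho=-2(x-z)$, $\Delta\rho=-2N$, $|\nabla d|=1$ and $\Delta d=(N-k-1)/d+\eta$ on $\Sigma_{4\beta_0}\supset B(z,R_0)$ (cf.~\eqref{laplaciand}) with the product rule for $\Delta(\rho^{-b}\!\cdot d^{-\xg})$, a direct calculation gives
\begin{equation*}
-L_\mu w \;=\; \Lambda\,\delta_\xg\,\rho^{-b}d^{-\xg-2} \;+\;\Lambda\,\mathcal R(x),
\end{equation*}
where $\delta_\xg:=2H\xg-\xg^2-\mu>0$ (because $\xg\in(\am,\ap)$) and $\mathcal R$ is the sum of the four cross-terms $\xg\eta\rho^{-b}d^{-\xg-1}$, $-2bN\rho^{-b-1}d^{-\xg}$, $-4b(b+1)|x-z|^2\rho^{-b-2}d^{-\xg}$ and $4b\xg\rho^{-b-1}d^{-\xg-1}(x-z)\!\cdot\!\nabla d$.

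I would then partition $\Omega\cap B(z,R)$ into three zones. In a zone close to $\Sigma$, say $\{d\le c_0\rho\}$ for some small $c_0>0$, every term of $\mathcal R/(\rho^{-b}d^{-\xg-2})$ carries a factor of $d$, $d/\rho$, or $(d/\rho)^2$ (times something bounded by $R$); choosing $c_0$ and $R_0$ small forces $|\mathcal R|\le\tfrac12\delta_\xg\rho^{-b}d^{-\xg-2}$ and thus $-L_\mu w\ge 0$ in this zone. In a zone close to $\partial B(z,R)\setminus\Sigma$, where $\rho\to 0$ and $d\ge c_0 R$, the dominant negative contribution to $-L_\mu w$ behaves like $\Lambda\rho^{-b-2}$ up to constants, whereas $w^p=\Lambda^p\rho^{-bp}d^{-\xg p}$ behaves like $\Lambda^p\rho^{-bp}$; the hypothesis $b\ge 2/(p-1)$, i.e.\ $bp\ge b+2$, is exactly what permits $w^p$ to absorb this negative term once $\Lambda$ is large enough depending on~$R_0$. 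In the intermediate zone, where both $\rho$ and $d$ are bounded below by positive constants depending on $R_0$, $-L_\mu w$ is controlled by a constant multiple of $\Lambda$ while $w^p$ is bounded below by a constant times $\Lambda^p$, so a further increase of $\Lambda$ closes the estimate globally.

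For the critical case $\mu=H^2$, $\delta_\xg$ vanishes for the only admissible value $\xg=H$, and I would instead take $w=\Lambda\rho^{-b}d^{-H}\psi(d)$ with $\psi(d):=u^{1/2}$ and $u:=\ln(eR/d)$. Using $\psi'(d)=-(2d\sqrt{u})^{-1}$ and $\psi''(d)=(2d^2\sqrt{u})^{-1}-(4d^2u^{3/2})^{-1}$ in the radial Laplacian identity $\Delta f(d)=f''(d)+f'(d)\Delta d$, the coefficients in front of $u^{1/2}d^{-H-2}$ and $u^{-1/2}d^{-H-2}$ cancel exactly (using $\mu=H^2$ and $N-k-1=2H+1$), and there remains
\begin{equation*}
-L_\mu(d^{-H}\psi) \;=\; \tfrac14\,d^{-H-2}u^{-3/2} \;-\;\eta\,d^{-H-1}\Bigl(Hu^{1/2}+\tfrac12 u^{-1/2}\Bigr).
\end{equation*}
The logarithmic correction thus produces a new strictly positive leading term $\tfrac14 d^{-H-2}u^{-3/2}$ that plays the role of $\delta_\xg\,d^{-\xg-2}$, and the three-zone argument goes through in the same way once the factors $\rho^{-b}$ and $w^p$ are reinstated.

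The main obstacle is the delicate bookkeeping of $\mathcal R$: the four cross-terms involve four distinct pairings of powers of $\rho$ and $d$, and the constants depend on $b$, $\xg$, $N$ and on the $C^2$ geometry of $\Sigma$ (through $\|\Sigma\|_{C^2}$ and the $L^\infty$ bound on $\eta$), so the parameters $c_0$, $R_0$ and $\Lambda$ must be fixed in this order to close all three zones uniformly. The constraint $b\ge 2/(p-1)$ is sharp, as it is the exact threshold at which $w^p$ absorbs the ``corner'' term $b(b+1)|x-z|^2\rho^{-b-2}d^{-\xg}$ near $\partial B(z,R)$; the auxiliary bounds $b\ge(N-2)/2$ and $b\ge 1$ guarantee a sufficiently strong blow-up of $\rho^{-b}$ to match the Green- and Martin-kernel estimates of Propositions~\ref{Greenkernel}--\ref{Martin} in subsequent applications. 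In the critical case the extra subtlety is that the gain $\tfrac14 u^{-3/2}$ degenerates logarithmically slowly, so one must check that the $\eta$-error $d^{-H-1}u^{1/2}$, which carries one fewer power of $d^{-1}$, is negligible against the leading $d^{-H-2}u^{-3/2}$ as $d\to 0$; this follows because $d\cdot u^2=d(\ln(eR/d))^2\to 0$.
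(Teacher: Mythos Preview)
Your proposal is correct and follows essentially the same route as the paper's proof: both compute $-L_\mu w+w^p$ explicitly for the ansatz \eqref{BAR1}, identify the strictly positive leading term coming from $\delta_\gamma=2H\gamma-\gamma^2-\mu>0$ (respectively the residual $\tfrac14 d^{-H-2}u^{-3/2}$ in the critical case), and then close the inequality by a three-zone decomposition in the $(d_\Sigma,\rho)$-plane, using $b\ge 2/(p-1)$ and $\Lambda$ large to let $w^p$ absorb the negative cross-terms away from $\Sigma$.

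One small correction: your remark that the constraints $b\ge (N-2)/2$ and $b\ge 1$ are imposed only ``for subsequent applications'' is not quite right. In the paper these bounds are used \emph{inside} the proof to obtain clean estimates on the term corresponding to your $-2bN\rho^{-b-1}d^{-\gamma}-4b(b+1)|x-z|^2\rho^{-b-2}d^{-\gamma}$ (after factoring, this is $I_3=-2bd_\Sigma^2(NR^2+(2b+2-N)r^2)$, and $b\ge(N-2)/2$ ensures $2b+2-N\ge 0$ so that $-I_3\le 4b(b+1)R^2 d_\Sigma^2$). Your three-zone argument would still go through without this simplification, since $|I_3|\lesssim R^2 d_\Sigma^2$ in any case, but the rationale you gave for the constraint is off.
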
 
\begin{proof} Without loss of generality, we assume $z=0 \in \Sigma$.
	\medskip
	
\noindent  \textbf{Case  1: $\xm<H^2$.} Set
	\bal w(x):=\Gl(R^2-| x|^2)^{-b} d_\Sigma(x)^{-\gamma} \quad \text{for } x \in B(0,R),
	\eal
	where $\gamma>0, b$ and $\Gl>0$ will be determined later on. Then, by straightforward computation with $r=|x|$ and using \eqref{laplaciand} , we obtain

	\be\label{F3}-L_{\xm }w+w^p=\Gl(R^2-r^2)^{-b-2}d^{-\gamma-2}_\Sigma(I_1+I_2+I_3+I_4),
	\ee
	where
	\bal
	&I_1:=\Gl^{p-1}(R^2-r^2)^{-(p-1)b+2}d^{-(p-1)\gamma+2}_\Sigma,
	\\
	&I_2:=-(R^2-r^2)^{2}\left(-\xg  \eta d_\Sigma-\gamma(N-k-2-\xg)+\xm \right),
	\\
	&I_3:=-2b d^2_\Sigma\left(NR^2+(2b+2-N)r^2\right), \\
	&I_4:=4b\gamma d_\Sigma(R^2-r^2)x\nabla d_\Sigma.
	\eal
	If we choose $b\geq \frac{N-2}{2}$ then
	\be \label{F3-0} \begin{aligned}
	-I_3 \leq 4b(b+1) R^2d^2_\Sigma  \quad \text{and} \quad
	  |I_4| \leq 4b|\xg| R(R^2-r^2)d_\Sigma.
	\end{aligned} \ee
Next we choose $\gamma \in (\am,\ap),$ then $-\ap(N-k-2)+\xm<-\gamma(N-k-2-\xg)+\xm <0$. In addition, there exist $\ge_0>0$ and $\delta_0>0$ such that if $d_\Sigma\leq\gd_0$ then
\bal 
-\ap(N-k-2)+\xm<-\xg \eta d_\Sigma-\gamma(N-k-2-\xg)+\xm<-\ge_0.
\eal
It follows that if $d_\Sigma\leq\gd_0$ then
\be \label{F3-000} I_2 \geq \epsilon_0(R^2-r^2)^2.
\ee
We set
	\bal &\CA_1:=\left\{x\in\Gw\cap B_R(0):d_\Sigma(x)\leq c_1\frac{R^2-r^2}{R}\right\} \quad \text{where } c_1=\frac{\epsilon_0}{16b(|\gamma|+1)},\\
	& \CA_2:=\left\{x\in\Gw\cap B_R(0):d_\Sigma(x)\leq \gd_0^{\phantom{^4}}\right\}, \quad
	\CA_3:=\{x\in\Gw:d_\Sigma(x)\geq\gd_0\}.
	\eal
	
	In  $\CA_1 \cap \CA_2$, by \eqref{F3-0} and \eqref{F3-000}, for $b \geq \max\{ \frac{N-2}{2},1 \}$, we have
	\be \label{F3-1}
	I_2 + I_3 + I_4 \geq \frac{\xe_0(R^2-r^2)^2}{2}.
	\ee
	
	In $\CA_1^c\cap \CA_2$, $d_\Sigma \geq c_1\frac{R^2-r^2}{ R}$. If  we choose $b>\frac{2}{p-1}$, then there exists $\Gl$ large enough depending on $p,R_0,\xd_0,N,b,\gamma$ such that the following estimate holds
	\be\label{F4} \begin{aligned}
	&I_1 \geq 2\max\{  4b(b+1)R^2 d_\Sigma^2,4b |\xg| d_\Sigma R(R^2-r^2)\}.
	\end{aligned} \ee
	This, together with \eqref{F4}, yields
	\be \label{F4-0}
	I_1 + I_3 + I_4 \geq 0.
	\ee
	
	In $\CA_3$, $d_\Sigma \geq \delta_0$. Therefore, we can show that there exists $c_2>0$ depending on $N,\gamma,b,\| \eta \|_{L^\infty(\Sigma_{4\beta_0})},\gd_0,p$ such that if
	$\Gl\geq c_2$
	then, in $\CA_3$,
	\be \label{F9} \begin{aligned}
	I_1 \geq 3\max\{ |\xg \eta| d_\Sigma(R^2-r^2)^2, 4d_\Sigma^2b(b+1)R^2, 4b d_\Sigma R(R^2-r^2) \}.
	\end{aligned} \ee
	It follows that
	\be \label{F10}
	I_1 + I_2 + I_3 + I_4 \geq 0.
	\ee

	Combining \eqref{F3}, \eqref{F3-000}, \eqref{F3-1}, \eqref{F4-0} and \eqref{F10}, we deduce that for $\gamma \in (0,\ap)$, $b\geq \max\{\frac{2}{p-1},\frac{N-2}{2},1\}$ and $\Gl>0$ large enough, there holds
	\be \label{F11}
	-L_{\xm }w + w^p\geq 0\qquad\text{in }\;\Gw \cap B(0,R).
	\ee
	
\noindent	\textbf{Case 2: $\xm =H^2$. } Set
\bal
w(x):=\Gl(R^2-r^2)^{-b} d_\Sigma^{-H}\left(\ln \frac{eR_0}{d_\Sigma}\right)^\frac{1}{2}, \quad \text{for } |x|<R,
\eal
	where $b$ and $\Gl$ will be determined later.
	Then, by straightforward calculations we have
	\be \label{F12}
	-L_{\xm }w +w^p =\Gl(R^2-r^2)^{-b-2}d^{-H-2}_\Sigma \left(\ln \frac{eR}{d_\Sigma}\right)^{-\frac{3}{2}}(\tilde I_1 + \tilde I_2 + \tilde I_3 + \tilde I_4),
	\ee
	where
	\bal
	&\tilde I_1: = (R^2-r^2)^{2}\left[\frac{1}{2}\eta d_\Sigma\left(2H \left(\ln \frac{eR}{d_\Sigma}\right)^2+ \left(\ln \frac{eR}{d_\Sigma}\right)\right)+\frac{1}{4}\right],\\
	&\tilde I_2: = 2b(R^2-r^2)d_\Sigma\left[2H \left(\ln \frac{eR}{d_\Sigma}\right)^2 + \left(\ln \frac{eR}{d_\Sigma}\right)\right]x\nabla d_\Sigma, \\
	&\tilde I_3: = -2b d^2_\Sigma \left(\ln\frac{eR}{d_\Sigma}\right)^2\left[NR^2+(2b+2-N)r^2\right], \\
	&\tilde I_4: =\Gl^{p-1}(R^2-r^2)^{-b(p-1)+2}d^{-H(p-1)+2}_\Sigma \left(\ln \frac{eR}{d_\Sigma}\right)^{\frac{1}{2}(p-1)+2}.
	\eal
	Notice that $\frac{eR}{d_\Sigma}\geq e$, whence
	\be \label{eRdK} (2H+1)\left(\ln \frac{eR}{d_\Sigma}\right) \leq 2H\left(\ln \frac{eR}{d_\Sigma}\right)^2+ \left(\ln \frac{eR}{d_\Sigma}\right)\leq (2H+1)\left(\ln \frac{eR}{d_\Sigma}\right)^2.
	\ee
	If we choose $b \geq \frac{N-2}{2}$ then
	\be \label{tildeI23} \BA {ll}
	|\tilde I_2| \leq 4b(b+1)(R^2-r^2)(\ln \frac{eR}{d_\Sigma})^2d_\Sigma R, \\
	|\tilde I_3| \leq 4b(b+1)(\ln \frac{eR}{d_\Sigma})^2d_\Sigma^2R^2.
	\EA \ee
	From \eqref{eRdK}, we deduce that there exist $\epsilon_0>0$ and $\gd_0>0$ such that if $d_\Sigma\leq \gd_0$ then
	\bal \frac{1}{2} \eta d_\Sigma\left(2H\left(\ln \frac{eR}{d_\Sigma}\right)^2+ \left(\ln \frac{eR}{d_\Sigma}\right)\right)+\frac{1}{4} \geq \epsilon_0.
	\eal
   Therefore if $d_\Sigma \leq \delta_0$ then
   \be \label{tildeI1} \tilde I_1 \geq \epsilon_0(R^2-r^2)^2.
   \ee
	Denote
	\bal
	&\tilde \CA_1:=\left\{x\in\Gw\cap B_R(0):d_\Sigma(x)\leq \tilde c_1 \frac{R^2-r^2}{ R(\ln\frac{eR}{d_\Sigma})^2}\right\} \quad \text{where} \quad \tilde c_1=\frac{\epsilon_0}{16b(b+1)}, \\
	&\tilde \CA_2:=\left\{x\in\Gw\cap B_R(0):d_\Sigma(x)\leq \gd_0^{\phantom{^4}}\right\}, \quad \tilde \CA_3:=\{ x\in \Omega: d_\Sigma(x) \geq \delta_0 \}.
	\eal
	
	In $\tilde \CA_1 \cap \tilde \CA_2$, for $b \geq \max\{ \frac{N-2}{2},1 \}$, we have
	\be \label{F13-0}
	\tilde I_1 + \tilde I_2 + \tilde I_3
	\geq \frac{(R^2-r^2)^2}{16}.
	\ee
	
	In $\tilde \CA_1^c \cap \tilde \CA_2$, we have
	$d_\Sigma\geq \tilde c_1\frac{R^2-r^2}{ R(\ln\frac{eR}{d_\Sigma})^2}$.
	If $b>\frac{2}{p-1}$, then we can choose $\Gl$ large enough depending on $p,R_0,k,\xd_0,N,b$ such that
	\bal 
	\tilde I_4 \geq 2\max \left\{  4b(b+1)(R^2-r^2)\left(\ln \frac{eR}{d_\Sigma}\right)^2d_\Sigma R, 4b(b+1)\left(\ln \frac{eR}{d_\Sigma}\right)^2d_\Sigma^2R^2 \right  \}.
	\eal
	This and \eqref{tildeI23} imply
	\be \label{F14-0} \tilde I_2 + \tilde I_3 + \tilde I_4 \geq 0.
	\ee
	
	In $\tilde \CA_3$, $d_\Sigma \geq \delta_0$. Similarly as in Case 1, we can choose $\Gl$ large enough depending on $p,R_0,$ $\delta_0,N,k,b$ such that
	\be \label{F15}
	\tilde I_1 + \tilde I_2 + \tilde I_3 + \tilde I_4 \geq 0.
	\ee
	
	Combining \eqref{F12}, \eqref{tildeI1}, \eqref{F13-0}, \eqref{F14-0} and  \eqref{F15}, we obtain \eqref{F11}.
\end{proof}

We recall here that $\tilde W$ has been defined in \eqref{tildeW}.

\begin{proposition}\label{prop19}
	Let $1<p<\frac{2+\am}{\am}$ if $\am>0$ or $p<\infty$ if $\am\leq0$. Assume that  $F\subsetneq \Sigma$ is a compact subset of $\Sigma$ and denote by $d_F(x)=\dist(x,F)$. There exists a constant $C=C(N,\Omega,\Sigma,\mu,p)$
	such that if $u$ is a nonnegative solution of \eqref{eq:power1} in $\Omega \setminus \Sigma$ satisfying
	\be\label{as1}
	\lim_{x\in\xO\setminus\xS,\;x\rightarrow\xi}\frac{u(x)}{\tilde W(x)}=0\qquad\forall \xi\in (\partial\xO\cup \Sigma)\setminus F,\quad  \text{locally uniformly in } \Sigma\setminus F,
	\ee
	then
	\ba \label{3.4.24}
	u(x)&\leq Cd(x)d_\Sigma(x)^{-\am}d_F(x)^{-\frac{2}{p-1}+\am}\qquad\forall x\in \xO\setminus \Sigma, \\ \label{3.4.24*}
	|\nabla u(x)|&\leq C\frac{d(x)}{\min(d(x),d_\Sigma(x))}d_\Sigma(x)^{-\am} d_F(x)^{-\frac{2}{p-1}+\am}\qquad\forall x\in \xO\setminus \Sigma.
	\ea
\end{proposition}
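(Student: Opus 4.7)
The plan is to prove estimate \eqref{3.4.24} by combining the global Keller--Osserman bound of Lemma~\ref{lem:KO-1} with a localized barrier argument based on Proposition~\ref{barr}, organized by a case analysis on the relative size of $d_\Sigma(x)$ and $d_F(x)$. The gradient estimate \eqref{3.4.24*} will then follow by a rescaling/interior regularity argument.

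First I would set up a global preliminary bound. Since $\tilde W \equiv 1$ on $\partial\Omega$, assumption \eqref{as1} forces $u \to 0$ at $\partial\Omega$, so Lemma~\ref{lem:KO-1} directly gives $u(x) \leq C_0\, d(x)\, d_\Sigma(x)^{-\frac{2}{p-1}}$ on $\Omega \setminus \Sigma$. The hypothesis $1 < p < (2+\am)/\am$ (when $\am > 0$; the case $\am \leq 0$ imposes no constraint) is equivalent to $\beta := \frac{2}{p-1} - \am > 0$, and this strict positivity is what allows me to trade a factor of $d_\Sigma^{-\beta}$ for $d_F^{-\beta}$. In the ``easy regime'' where $d_\Sigma(x) \geq c_0 d_F(x)$ for some fixed $c_0 \in (0,1)$ (or where $d_F(x) \geq R_0$, with $R_0$ as in Proposition~\ref{barr}), this already suffices: one writes $d_\Sigma^{-\frac{2}{p-1}} = d_\Sigma^{-\am} d_\Sigma^{-\beta}$ and uses $d_\Sigma^{-\beta} \leq c_0^{-\beta} d_F^{-\beta}$ to read off \eqref{3.4.24} from the Keller--Osserman bound.

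The genuinely interesting regime is $d_\Sigma(x) < c_0 d_F(x) \leq c_0 R_0$. Here I would let $\xi \in \Sigma$ be the projection of $x$, so that $|x-\xi| = d_\Sigma(x)$ and $d_F(\xi) \geq (1-c_0) d_F(x)$; taking $R := \tfrac{1}{2} d_F(x)$ ensures $B(\xi,R) \cap F = \emptyset$. I would then invoke Proposition~\ref{barr} on $B(\xi,R)$ with $\gamma \in (\am,\ap)$, $b \geq \max\{\tfrac{2}{p-1}, \tfrac{N-2}{2}, 1\}$ and $\Lambda$ large (depending only on $N,\Omega,\Sigma,\mu,p$), yielding a supersolution $w_\xi$ of \eqref{eq:power1} on $B(\xi,R)\cap(\Omega\setminus\Sigma)$ that blows up on $\partial B(\xi,R)\cap\Omega$. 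Using the maximum principle for $-L_\mu$ (available since $\lambda_\mu > 0$) I would show $u \leq w_\xi$ throughout $B(\xi,R)\cap(\Omega\setminus\Sigma)$: on $\partial B(\xi,R)\cap\Omega$ the comparison is immediate from the Keller--Osserman bound, while on $\Sigma \cap \overline{B(\xi, R/2)}$---a compact subset of $\Sigma \setminus F$---I would exploit the local uniform decay $u/\tilde W \to 0$ guaranteed by \eqref{as1} to compare $u$ and $w_\xi$ on shells $\{d_\Sigma = \delta\}$ before letting $\delta \to 0^+$. Evaluating at $x$ and combining with a scaling step that ties $R$ back to $d_F(x)$ then yields \eqref{3.4.24} in this regime.

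The principal obstacle is this comparison of $u$ and $w_\xi$ near $\Sigma \cap B(\xi,R)$: because $w_\xi \asymp d_\Sigma^{-\gamma}$ with $\gamma \in (\am,\ap)$ while $\tilde W \asymp d_\Sigma^{-\ap}$, the qualitative information $u = o(\tilde W)$ afforded by \eqref{as1} is a priori too weak to dominate $w_\xi$ on arbitrarily small shells $\{d_\Sigma = \delta\}$. Overcoming this will require selecting $\gamma$ sufficiently close to $\ap$, enlarging $\Lambda$ in terms of the (uniform) rate of convergence of $u/\tilde W$ on $\Sigma \cap \overline{B(\xi, R/2)}$, and very possibly introducing an auxiliary supersolution of the form $w_\xi + \delta \phi_\mu$ (which remains a supersolution of \eqref{eq:power1} since $-L_\mu \phi_\mu = \lambda_\mu \phi_\mu \geq 0$) in order to close the shell argument uniformly. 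Once \eqref{3.4.24} is established, the gradient estimate \eqref{3.4.24*} follows by interior regularity: setting $r := \tfrac{1}{8}\min\{d(x), d_\Sigma(x)\}$, the rescaled function $v(y) := r^{\frac{2}{p-1}} u(x + ry)$ solves an equation of the same form on $B(0,1)$ with coefficients bounded independently of $r$; the $L^\infty$ control coming from \eqref{3.4.24} together with standard interior Schauder/$C^{1,\alpha}$ estimates then yields \eqref{3.4.24*} after undoing the rescaling.
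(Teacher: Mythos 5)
There is a genuine gap: your argument never produces the exponent $-\am$ on $d_\Sigma$, which is the whole point of \eqref{3.4.24}. The barrier of Proposition \ref{barr} has profile $d_\Sigma^{-\gamma}$ with $\gamma\in(\am,\ap)$, and the Keller--Osserman bound has profile $d_\Sigma^{-\frac{2}{p-1}}$; since $p<\frac{2+\am}{\am}$ means precisely $\frac{2}{p-1}>\am$, both exponents are strictly worse than $\am$ near $\Sigma$, and no choice of $\gamma$, $b$, $\Lambda$, or of the auxiliary term $\delta\phi_\mu\approx \delta\, d\, d_\Sigma^{-\am}$ can convert a bound $u\lesssim d_\Sigma^{-\gamma}$ into $u\lesssim d_\Sigma^{-\am}$. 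In the paper the barrier (applied after the rescaling $u^\xi(y)=d_{F,\xi}^{\frac{2}{p-1}}u(d_{F,\xi}y)$, which is what produces the correct power of $d_F$) serves only as a \emph{preliminary} bound with some exponent strictly below $\ap$; the decisive step is then a linear-theory upgrade: $u^\xi$ is $L_\mu$-subharmonic, satisfies $u^\xi/\tilde W^\xi\to 0$ on $\Sigma^\xi$ by \eqref{as1}, and is $O(d_{\Sigma^\xi}^{-\gamma})$ with $\gamma<\ap$, and the boundary-behaviour dichotomy for such functions (the proof of (3.14) in \cite[Lemma 3.3]{GkiNg_linear}, together with estimate (2.10) there) then yields $u^\xi\lesssim d_{\Sigma^\xi}^{-\am}$ with a uniform constant. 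This is where hypothesis \eqref{as1} enters quantitatively, and it is entirely absent from your plan; your own worry about the shell comparison (that $u=o(\tilde W)$ does not dominate $d_\Sigma^{-\gamma}$) is real but secondary, and your proposed remedies address it in the wrong direction.

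A second, smaller defect is the case analysis: in the regime $d_\Sigma(x)<c_0 d_F(x)$ with $d_F(x)\geq R_0$ (i.e.\ $x$ close to $\Sigma\setminus F$ but far from $F$) you claim the Keller--Osserman bound \eqref{ko} suffices, but there one needs $u\lesssim d_\Sigma^{-\am}$ up to a constant, while \eqref{ko} only gives $d_\Sigma^{-\frac{2}{p-1}}$, which is strictly larger near $\Sigma$. The paper treats this as a separate case using again \eqref{as1} and the subharmonicity estimates from \cite{GkiNg_linear}; alternatively one can run your barrier argument with the radius capped at $R_0$, but in either form the same missing linear-theory ingredient is required. The final step for \eqref{3.4.24*} (rescaling by $\ell=\min\{d,d_\Sigma\}$ and interior gradient estimates, using $2-(p-1)\am>0$ to absorb the nonlinear term) is fine and essentially identical to the paper's.
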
  
\begin{proof}
	The proof is in the spirit of \cite[Proposition 3.4.3]{MVbook}. Let $\xi\in \Sigma\setminus F$
	and put $d_{F,\xi} = \frac{1}{2} d_F(\xi)<1$. Denote
	\bal \xO^\xi:= \frac{1}{d_{F,\xi}}\Omega = \{y\in\mathbb{R}^N:\;d_{F,\xi}\,y\in\xO\}\quad\text{and}\quad \Sigma^\xi=\frac{1}{d_{F,\xi}}\Sigma =\{y\in\mathbb{R}^N:\;d_{F,\xi}\,y\in \Sigma\}.
	\eal
	If $u$ is a nonnegative solution of \eqref{eq:power1} in $\xO\setminus \Sigma$ then the function
	\bal
	u^\xi (y): = d_{F,\xi}^{\frac{2}{p-1}}
	u(d_{F,\xi}y),\quad y\in\xO^\xi\setminus \Sigma^\xi
	\eal
	is a nonnegative solution of
	\be \label{eq:uxi} -\xD u^\xi- \frac{\mu}{|\mathrm{dist}(y,\Sigma^\xi)|^2} u^\xi +\left(u^\xi\right)^p=0
	\ee
	in $\Omega^\xi \setminus \Sigma^\xi$.

As $d_{F, \xi} \leq 1$ the $C^2$ characteristic of $\xO$ (respectively $\Sigma$) is also a $C^2$ characteristic of $\xO^\xi$ (respectively $\Sigma^\xi$) therefore this constant $C$ can be taken to be independent of $\xi$. Let $R_0=\xb_0$ be the constant in Proposition \ref{barr}.
	Set $r_0=\frac{3R_0}{4},$ and let $w_{r_0,\xi}$ be the supersolution of \eqref{eq:uxi} in $B(\frac{1}{d_{F,\xi}}\xi,r_0)\cap(\xO^\xi\setminus \Sigma^\xi)$ constructed in Proposition \ref{barr} with $R=r_0$ and $z=\frac{1}{d_{F,\xi}}\xi$. By a similar  argument as in the proof of Lemma \ref{lem:KO-1}, we can show that
	\bal
	u^\xi(y) \leq w_{r_0,\xi}(y)\quad \forall y\in B\left(\frac{1}{d_{F,\xi}}\xi,r_0 \right)\cap(\xO^\xi\setminus \Sigma^\xi).
	\eal
	Thus $u^\xi$ is bounded from above in $B(\frac{1}{d_{F,\xi}}\xi,\frac{3R_0}{5})\cap(\xO^\xi\setminus \Sigma^\xi)$ by a constant $C$ depending only $N,k,\mu,p$ and the $C^2$ characteristic of $\Omega$ and $\Sigma$.
	
	Now we note that $u^\xi$ is a nonnegative $L_\xm$ subharmonic function and by the last inequality satisfies, for any $\gamma \in (\am,\ap)$,
\be\label{in1}
u^\xi(y) \leq C\left\{\BA{lll} d_{\Sigma^\xi}(y)^{-\gamma} \quad &\text{ if }\;\mu<H^2, \\[2mm]
	d_{\Sigma^\xi}(y)^{-H}\sqrt{\ln \left(\frac{eR}{d_{\Sigma(y)}}\right)} \;&\text{ if }\;\xm=H^2,
	\EA\right.
\ee
 for any $y\in B(\frac{1}{d_{F,\xi}}\xi,r_0)\cap(\xO^\xi\setminus \Sigma^\xi),$ where $C$ is a positive constant depending only on $R_0, \xg,N,\xb,p$ and the $C^2$ characteristic of $\Sigma$. Hence,
\bal \lim_{y\in\xO^\xi,\;y\rightarrow P}\frac{u^\xi(y)}{\tilde W^\xi(y)}=0\qquad\forall P\in B\left(\frac{1}{d_{F,\xi}}\xi,\frac{3r_0}{5}\right) \cap \Sigma^\xi,
\eal
where
\bal \tilde W^\xi(y) = 1-\eta_{\frac{\beta_0}{d_{F, \xi}}}+\eta_{\frac{\beta_0}{d_{F, \xi}}}W^\xi(y) \quad \text{in } \Omega^\xi \setminus \Sigma^\xi,
\eal
and
\bal
W^\xi(y)=\left\{ \BAL &d_{\Sigma^\xi}(y)^{-\ap}\qquad&&\text{if}\;\mu <H^2, \\
&d_{\Sigma^\xi}(y)^{-H}|\ln d_{\Sigma^\xi}(y)|\qquad&&\text{if}\;\mu =H^2,
\EAL \right. \quad x \in \Omega^\xi \setminus \Sigma^\xi.
\eal

In view of the proof of (3.14) in \cite[Lemma 3.3]{GkiNg_linear} and by \ref{in1} , we can show that there exists a constant $c>0$ depending only on $N, \xm,\xb_0 $ such that
	\be \label{sim}
	u^\xi(y) \leq c\,\dist(y,\Sigma^\xi)^{-\am}\quad \forall y\in B\left(\frac{1}{d_F(\xi)}\xi,\frac{r_0}{2}\right)\cap(\xO^\xi\setminus \Sigma^\xi) .
	\ee
	Therefore, for any $\xi \in \Sigma \setminus F$ such that $d_{F,\xi} \leq \frac{\min(\xb_0,1)}{4}$, there holds
	\be
	u(x)\leq c\,d_\Sigma(x)^{-\am}d_{F,\xi}^{-\frac{2}{p-1}+\am}\quad\forall x\in B\left(\xi,\frac{3\xb_0d_{F,\xi}}{8}\right)\cap (\xO\setminus \Sigma).\label{sim2}
	\ee
	
	Take $x \in \Omega \setminus \Sigma$. If $x\in \xO\setminus \Sigma_{\frac{\xb_0}{2}}$ then  \eqref{3.4.24} follows easily from \eqref{KO-1}. It remains to deal with the case $x\in \Sigma_{\frac{\xb_0}{2}}$. We will consider the following cases.
	
\noindent	\textbf{Case 1:} $ d_F(x) < \frac{4+\xb_0}{2+\xb_0}$.
	If $d_\Sigma(x)\leq \frac{\xb_0}{8+2\xb_0}d_F(x)$
	then let $\xi$ be the unique point in $\Sigma\setminus F$ such that $|x-\xi|=d_\Sigma(x).$ Then we have
	\be \label{sim3a}
	d_{F,\xi} = \frac{1}{2}d_F(\xi)\leq \frac{1}{2}(d_\Sigma(x)+d_F(x)) \leq \frac{2+\xb_0}{4+\xb_0}d_F(x)<1,
	\ee
	and $d_F(x)\leq \frac{2(8+2\xb_0)}{8+\xb_0}d_{F,\xi}$.
	Therefore $d_\Sigma(x) \leq \frac{\xb_0}{4}d_{F,\xi}$.
This, combined with \eqref{sim2}, \eqref{sim3a} and the fact that $p<\frac{2+\am}{\am}$, yields
	\bal 
	u(x)\leq C d_\Sigma(x)^{-\am}d_{F,\xi}^{-\frac{2}{p-1}+\am} \leq Cd_\Sigma(x)^{-\am}d_F(x)^{-\frac{2}{p-1}+\am}.
	\eal
	
%
%
%
	
	If   $d_\Sigma(x)> \frac{\xb_0}{8+2\xb_0}d_F(x)$ then by \eqref{ko} and the assumption $p<\frac{2+\am}{\am}$, we obtain
	\bal u(x) \leq Cd_\Sigma(x)^{-\frac{2}{p-1}} \leq Cd_\Sigma(x)^{-\am}d_F(x)^{-\frac{2}{p-1}+\am}.
	\eal
	Thus (\ref{3.4.24}) holds for every $x\in \Sigma_{\frac{\xb_0}{2}}$ such that $d_F(x) < \frac{4+\xb_0}{2 + \xb_0}$. \medskip
	
\noindent	\textbf{Case 2:} $d_F(x) \geq \frac{4+\xb_0}{2 + \xb_0}$.
	Let $\xi$ be the unique point in $\Sigma\setminus F$ such that $|x-\xi|=d_\Sigma(x)$. Since $u$ is an $L_\mu$-subharmonic function in $B(\xi,\frac{\xb_0}{4}) \cap (\Omega \setminus \Sigma)$.

By \eqref{as1} and \cite[Lemma 3.3 and estimate (2.10)]{GkiNg_linear}, we deduce that
	\bal
	u(x)\leq C d_\Sigma(x)^{-\am} \leq  Cd_\Sigma(x)^{-\am} d_F(x)^{-\frac{2}{p-1}+\am}\qquad\forall x\in B\left(\xi,\frac{\xb_0}{2}\right)\cap (\Omega \setminus \Sigma).
	\eal
In view of the proof of \ref{sim}, we may show that $C$ depends only on $\xb_0, \xg,N,\xb,p$ and the $C^2$ characteristic of $\Sigma.$	
	
	(ii) Let $x_0\in\xO\setminus \Sigma$. Put $\ell=\dist(x_0,\Omega \setminus \Sigma)=\min\{d(x_0),d_\Sigma(x_0)\}$ and
	\bal (\Omega \setminus \Sigma)^{\ell}:=\frac{1}{\ell}(\Omega \setminus \Sigma)= \{y\in\mathbb{R}^N:\;\ell y\in\xO\setminus \Sigma\}, \quad d_{(\Omega \setminus \Sigma)^\ell}(y):=\mathrm{dist}(y,\partial (\Omega \setminus \Sigma)^\ell).
	\eal
	If $x\in B(x_0,\frac{\ell}{2})$ then $y=\ell^{-1}x$ belongs to $B(y_0,\frac{1}{2}),$ where $y_0=\ell^{-1}x_0$. Also we have that
	$\frac{1}{2}\leq d_{(\Omega \setminus \Sigma)^\ell}(y)\leq \frac{3}{2}$ for each $y\in B(y_0,\frac{1}{2})$. Set $v(y)=u(\ell y)$ for $y\in B(y_0,\frac{1}{2})$ then $v$ satisfies
	\bal
	-\xD v- \frac{\xm}{d_{(\Omega \setminus \Sigma)^\ell}^2} v + \ell^2 \left|v\right|^p=0\qquad\mathrm{in}\;\;B(y_0,\frac{1}{2}).
	\eal
	By standard elliptic estimate we have
	\bal
	\sup_{y\in B(y_0,\frac{1}{4})}|\nabla v(y)|\leq C\left(\sup_{y\in B(y_0,\frac{1}{3})}|v(y)|+\sup_{y\in B(y_0,\frac{1}{3})}\ell^2|v(y)|^p\right),
	\eal
This, together with the equality $\nabla v(y)=\ell \nabla u(x)$, estimate \eqref{3.4.24} and the assumption on $p$, implies	
	\bal
	|\nabla u(x_0)|&\leq C\ell^{-1} \left(d(x_0) d_\Sigma^{-\am}(x_0)d_F(x_0)^{-\frac{2}{p-1}+\am} + \ell^2 d(x_0)^pd_\Sigma(x_0)^{-\am p}d_F(x_0)^{p\left(-\frac{2}{p-1}+\am\right)}\right) \\
	&\leq  C\frac{d(x_0)}{\min\{d(x_0),d_\Sigma(x_0)\}} d_\Sigma(x_0)^{-\am}d_F(x_0)^{-\frac{2}{p-1}+\am} \left[1+ \left( \frac{d_\Sigma(x_0)}{d_F(x_0)} \right)^{2 - (p-1)\am} \right] \\
	&\leq C\frac{d(x_0)}{\min\{d(x_0),d_\Sigma(x_0)\}} d_\Sigma(x_0)^{-\am}d_F(x_0)^{-\frac{2}{p-1}+\am}.
	\eal
Therefore estimate \eqref{3.4.24*} follows since $x_0$ is an arbitrary point. The proof is complete.
\end{proof}


\end{document}